\theoremstyle{plain}
\newtheorem{thm}{Theorem}[section]
\newtheorem{lem}[thm]{Lemma}
\newtheorem{conj}[thm]{Conjecture}
\newtheorem{coro}[thm]{Corollary}
\theoremstyle{definition} \theoremstyle{remark}
\newtheorem{rk}[thm]{Remark}
\newtheorem{df}[thm]{Definition}
\numberwithin{equation}{section}
\def\Pol{\mathcal{P}ol}
\def\ui{{\mathbf{i}}}
\def\uj{{\mathbf{j}}}
\def\uk{{\mathbf{k}}}
\def\bbC{\mathbb{C}}
\def\bbD{\mathbb{D}}
\def\bbN{\mathbb{N}}
\def\bbQ{\mathbb{Q}}
\def\bbZ{\mathbb{Z}}
\def\scrH{\mathscr{H}}
\def\scrL{\mathscr{L}}
\def\frakL{\mathfrak{L}}
\def\frakM{\mathfrak{M}}
\def\frakS{\mathfrak{S}}
\def\frakZ{\mathfrak{Z}}
\def\calA{\mathcal{A}}
\def\calD{\mathcal{D}}
\def\calE{\mathcal{E}}
\def\calF{\mathcal{F}}
\def\calG{\mathcal{G}}
\def\calH{\mathcal{H}}
\def\calL{\mathcal{L}}
\def\calO{\mathcal{O}}
\def\calP{\mathcal{P}}
\def\calQ{\mathcal{Q}}
\def\frakg{\mathfrak{g}}
\def\bff{\mathbf{f}}
\def\bfd{\mathbf{d}}
\def\bfk{\mathbf{k}}
\def\bfv{\mathbf{v}}
\def\bfS{\mathbf{S}}
\def\bfZ{\mathbf{Z}}
\def\bfF{\mathbf{F}}
\def\bfP{\mathbf{P}}
\def\grdim{\operatorname{grdim}}
\def\homo{\operatorname{\it \mathscr{H}\kern-.25em om}}
\def\ext{\operatorname{\it \mathscr{E}\kern-.25em xt}}
\def\edo{\operatorname{\it \mathscr{E}\kern-.25em nd}}
\def\der{\operatorname{\it \mathscr{D}\kern-.25em er}}
\def\Hom{\operatorname{Hom}\nolimits}
\def\End{\operatorname{End}\nolimits}
\def\Ext{\operatorname{Ext}\nolimits}
\def\Res{\operatorname{Res}\nolimits}
\def\Ind{\operatorname{Ind}\nolimits}
\def\Lie{\operatorname{Lie}\nolimits}
\def\Ker{\operatorname{Ker}\nolimits}
\def\Gr{\operatorname{Gr}\nolimits}
\def\Id{\operatorname{Id}\nolimits}
\author{Ruslan Maksimau}
\address{Universit\' e de Paris 7, 175 rue du Chevaleret, 75013 Paris, France}
\email{ruslmax@gmail.com}
\title
[Canonical basis, KLR-algebras and parity sheaves] {Canonical basis,
KLR-algebras and parity sheaves}
\begin{document}
\begin{abstract}
We give a construction of a basis of the positive part of the
Drinfeld-Jimbo quantum enveloping algebra associated with a Dynkin
quiver in terms of parity sheaves.
\end{abstract}

\maketitle

\setcounter{tocdepth}{2}

\tableofcontents

\section{Introduction}
By a variety we will always mean a reduced and quasi-projective
scheme of finite type over $\mathbb{C}$.

Let $\mathbf{k}$ be a field. Let $\Gamma$ be a Dynkin quiver with
the set of vertices $I$ and let $\nu$ be a dimension vector. Let
$E_V$ be the space of representations of $\Gamma$ on a
$\nu$-dimensional $I$-graded vector space $V$. Let $G_V$ be the
group of graded linear automorphisms of $V$. Let $Y_\nu$ be the set
of all pairs $y=(\ui,\mathbf{a})$ where $\ui=(i_1,i_2,\cdots,i_m)$
is a sequence of elements of $I$ and
$\mathbf{a}=(a_1,a_2,\cdots,a_m)$ is a sequence of positive integers
such that $\sum_{l=1}^ma_li_l=\nu$. Let $I^\nu\subset Y_\nu$ be the
set of all pairs $y=(\ui,\mathbf{a})$ such that $a_l=1$ for all l.
We will abbreviate $\ui$ for $y=(\ui,\mathbf{a})$ if $y\in I^\nu$.
For each $y\in Y_\nu$ let $\pi_{y}\colon\widetilde{F}_{y}\to E_V$ be
the Lusztig analogue of the Springer map. Let
$\calL_{V,\bfk}=\bigoplus\limits_{\ui\in
I^\nu}\pi_{\ui*}\underline{\mathbf{k}}_{\widetilde{F}_{\ui}}$ be the
Lusztig complex. Set $^\delta\calL_{V,\bfk}=\bigoplus\limits_{\ui\in
I^\nu}\pi_{\ui*}\underline{\mathbf{k}}_{\widetilde{F}_{\ui}}[\dim_\bbC\widetilde{F}_\ui]$.
Let $\bff$ be the positive part of the Drinfeld-Jimbo quantized
enveloping algebra associated with the quiver $\Gamma$. Set
$\calA=\bbZ[q,q^{-1}]$. Denote by $_\calA\bff$ the Lusztig's
integral form of $\bff$.

The KLR-algebras are recently introduced by Khovanov and Lauda
\cite{KL} and by Rouquier \cite{Rouq}. They yield categorification
of quantum groups. Denote by $R_{\nu,\mathbf{k}}$ the KLR-algebra
over $\mathbf{k}$ associated with the quiver $\Gamma$ and the
dimension vector $\nu$. We denote by $D_{G_V}(E_V, \bfk)$ the
bounded $G_V$-equivariant derived category of sheaves of
$\bfk$-vector spaces on $E_V$ constructible with respect to the
stratification of $E_V$ by $G_V$-orbits. Let $\mathcal{Q}_V$ be the
full additive subcategory of $D_{G_V}(E_V, \bfk)$ such that the
objects of $\calQ_V$ are the direct sums of shifts of direct factors
of $\calL_{V,\bfk}$. Denote by $\mathrm{proj}(R_{\nu,\mathbf{k}})$
the category of $\mathbb{Z}$-graded projective finitely generated
modules over $R_{\nu,\mathbf{k}}$.

The goal of Section 2 is to give a geometric construction of
KLR-algebras in arbitrary characteristic. The zero characteristic
case was done in \cite{Rouq2011} and \cite{VV}. The KLR-algebras in
positive characteristic were studied in \cite{KlRam}. More
precisely, we prove the following in Theorems
\ref{thm_isom-grad-KLR-sh}, \ref{thm_cat-equiv-grad}.
\begin{thm}\label{thm_sec2}\text{ }
\begin{enumerate}
    \item There exists a graded $\mathbf{k}$-algebra isomorphism $R_{\nu,\mathbf{k}}=\mathrm{Ext}_{G_V}^*({^\delta\calL_{V,\bfk}},{^\delta\calL_{V,\bfk})}$.
    \item The functor
$ \mathcal{Q}_V\to
\mathrm{proj}(R_{\nu,\mathbf{k}}),~\mathcal{F}\mapsto
\mathrm{Ext}_{G_V}^*(\mathcal{F},{^\delta\calL_{V,\bfk}}) $ yields
an equivalence of categories from the opposite category
$\calQ_V^{\mathrm{op}}$ of $\calQ_V$ to the category
$\mathrm{proj}(R_{\nu,\mathbf{k}})$.
\end{enumerate}
\end{thm}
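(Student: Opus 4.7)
The plan is to adapt the characteristic-zero geometric realisation of KLR-algebras due to Varagnolo-Vasserot \cite{VV} and Rouquier \cite{Rouq2011}, systematically replacing every invocation of the decomposition theorem by the formalism of parity sheaves. The point of departure is that each Lusztig map $\pi_\ui$ is proper, $\widetilde F_\ui$ is smooth and admits a stratified cellular structure, so that $\pi_{\ui*}\underline{\bfk}_{\widetilde F_\ui}[\dim_\bbC\widetilde F_\ui]$, and therefore ${}^\delta\calL_{V,\bfk}$, is an even parity complex on $E_V$ with respect to the stratification by $G_V$-orbits.

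For (1), I would decompose
\[
\Ext^*_{G_V}({}^\delta\calL_{V,\bfk},{}^\delta\calL_{V,\bfk})=\bigoplus_{\ui,\uj\in I^\nu}\Ext^*_{G_V}\bigl(\pi_{\ui*}\underline{\bfk}_{\widetilde F_\ui}[\dim\widetilde F_\ui],\pi_{\uj*}\underline{\bfk}_{\widetilde F_\uj}[\dim\widetilde F_\uj]\bigr),
\]
and, using properness of the $\pi_\ui$ and the $(\pi_\ui^*,\pi_{\ui*})$-adjunction, identify the $(\ui,\uj)$-summand with the $G_V$-equivariant Borel-Moore homology of the Steinberg-type fibre product $Z_{\ui,\uj}=\widetilde F_\ui\times_{E_V}\widetilde F_\uj$ with an explicit shift, the Yoneda product matching the convolution product. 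The KLR generators $e(\ui),\,x_k(\ui),\,\tau_k(\ui,\uj)$ are then realised geometrically as diagonal classes, first Chern classes of tautological line bundles on $\widetilde F_\ui$, and fundamental classes of natural codimension-one correspondences in $Z_{\ui,\uj}$ respectively. Checking the KLR relations reduces to Euler-class computations on smooth varieties, which are characteristic-independent.

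For (2), the functor $\Phi\colon\mathcal{F}\mapsto\Ext^*_{G_V}(\mathcal{F},{}^\delta\calL_{V,\bfk})$ takes values, by (1), in graded right $R_{\nu,\bfk}$-modules, and $\Phi({}^\delta\calL_{V,\bfk})=R_{\nu,\bfk}$; so for any direct summand of a shift of ${}^\delta\calL_{V,\bfk}$, its image under $\Phi$ is a direct summand of a shift of $R_{\nu,\bfk}$, hence finitely generated graded projective. Fully faithfulness reduces, by additivity and passage to direct summands and shifts, to the tautological identity $\mathrm{End}({}^\delta\calL_{V,\bfk})=R_{\nu,\bfk}$ from (1); essential surjectivity follows by lifting idempotents of $R_{\nu,\bfk}$ through the Karoubian additive category $\calQ_V$.

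The main obstacle lies in (1): in positive characteristic the Ext-computation can no longer rest on semisimplicity, so one must independently verify that ${}^\delta\calL_{V,\bfk}$ is a parity complex and that the equivariant cohomology of each $Z_{\ui,\uj}$ is concentrated in even degrees, failing which the convolution algebra could in principle contain extra terms obstructing the expected presentation. Once this parity statement is established---for instance through a Bia\l{}ynicki-Birula cell decomposition of $\widetilde F_\ui$ coming from its iterated Grassmannian-bundle structure---the characteristic-zero relation-checking of \cite{VV} transposes essentially verbatim.
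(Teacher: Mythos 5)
Your part (2) is essentially the paper's proof of Theorems \ref{thm_cat-equiv-nongrad} and \ref{thm_cat-equiv-grad} (projectivity of the image, reduction of full faithfulness to $\mathrm{End}({}^\delta\calL_{V,\bfk})=R_{\nu,\bfk}$, essential surjectivity by idempotent lifting in a Karoubian category), so that half is fine. The problem is in part (1), and it is concrete: you found your argument on the claim that ${}^\delta\calL_{V,\bfk}$ is an even parity complex on $E_V$. That claim is not known in general --- it is essentially Conjecture \ref{conj_even} of the paper, which is open outside special cases (the paper proves it only for type $A$, via Theorem \ref{thm_cell-dec-A}, and in characteristic zero via the decomposition theorem). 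Moreover your proposed justification, a Bia\l{}ynicki-Birula decomposition of the smooth total space $\widetilde F_\ui$, does not give parity of $\pi_{\ui*}\underline{\bfk}_{\widetilde F_\ui}$: by Lemma \ref{lem_ev-resol} what is needed is the vanishing of odd cohomology of each \emph{fibre} $\pi_\ui^{-1}(x)$, and cellularity of the total space says nothing about the fibres. So as written, your "main obstacle" is not merely an obstacle you defer --- it is an open problem, and your suggested resolution of it fails.

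Fortunately the parity of ${}^\delta\calL_{V,\bfk}$ is not actually needed for Theorem \ref{thm_sec2}, and the paper never uses it in Section 2. What is needed is much weaker: that $\mathbf{Z}_{V,\bbZ}=\mathrm{H}^{G_V}_*(Z_V,\bbZ)$ is a free $\bbZ$-module with an explicit $\mathbf{F}_{V,\bbZ}$-basis indexed by $\frakS_m$ (Lemma \ref{lem_free-F-mod}), which follows from the cellular-fibration filtration of $Z_V$ by the relative positions $q^{-1}(O_V^w)$ --- a statement about the Steinberg variety, not about sheaves on $E_V$. With that freeness in hand, the paper does not re-check the KLR relations by Euler-class computations at all: it uses the universal coefficient theorem to embed $\mathbf{Z}_{V,\bbZ}$ into $\mathbf{Z}_{V,\bbC}$, imports the relations and the isomorphism from the known characteristic-zero case of \cite{VV}, proves surjectivity of $\Phi_\bbZ$ using Lemmas \ref{lem_rel-sigma} and \ref{lem_mult-F-basis}, and then tensors up to an arbitrary $A$; the same base-change trick identifies the Yoneda product with convolution (Theorem \ref{thm_isom-KLR-Ext}). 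Your alternative of verifying the relations directly over any base is viable in principle (it is closer to \cite{Rouq2011}), but you would still need the freeness/basis statement for $\mathbf{Z}_{V,A}$ to see that the convolution algebra is no larger than $R_{\nu,A}$, and you should derive that from the affine fibrations over the partial flag varieties rather than from any parity property of the Lusztig complex on $E_V$.
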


Let us denote by $K(R_{\nu,\bfk})$ the split Grothendieck group of
the additive category $\mathrm{proj}(R_{\nu,\mathbf{k}})$. Set $
R_\bfk=\bigoplus_{\nu\in \bbN
I}R_{\nu,\mathbf{k}},~K(R_\bfk)=\bigoplus_{\nu\in \bbN
I}K(R_{\nu,\mathbf{k}}). $ Consider also the split Grothendieck
group $K(\calQ_V)$ of the additive category $\calQ_V$ and set $
\calQ=\bigoplus_{V}\calQ_V,~ K(\calQ)=\bigoplus_{V}K(\calQ_V), $
where the sum is taken over the isomorphism classes of $I$-graded
finite dimensional $\bbC$-vector spaces. The $\bbZ$-modules
$K(\calQ)$ and $K(R_\bfk)$ have the $\calA$-modules structures,
where $q$ acts by the shift of grading. The induction and
restriction functors yield $\calA$-algebra and $\calA$-coalgebra
structures on $K(R_\bfk)$. Theorem \ref{thm_sec2} (2) yields an
$\calA$-linear isomorphism $K(\calQ)\to K(R_\bfk)$. So we can
transfer the algebra structure from $K(R_\bfk)$ to $K(\calQ)$. It
happens that this algebra structure coincides with the algebra
structure given by Lusztig's induction and restriction functors, see
Section \ref{subs_new-bas}. This yields an $\calA$-algebra
isomorphism $\lambda_\calA\colon K(\calQ)\to {_\calA\bff}$, see
Theorem \ref{thm_isom-sh-f}. However, there is no geometric
construction of a coproduct on $K(\calQ)$ via the geometry
associated with $\calQ$.

In Section 3 we study parity sheaves on $E_V$. Let
$\mathrm{Par}_{G_V}(E_V)$ be the full additive subcategory of parity
complexes in $D_{G_V}(E_V, \bfk)$. Let $K(\mathrm{Par}_{G_V}(E_V))$
be its split Grothendieck group. We set
$$
\mathrm{Par}=\bigoplus_V\mathrm{Par}_{G_V}(E_V),\qquad
K(\mathrm{Par})=\bigoplus_VK(\mathrm{Par}_{G_V}(E_V)),
$$
where the sum is taken over the isomorphism classes of $I$-graded finite dimensional $\bbC$-vector spaces. The $\bbZ$-module $K(\mathrm{Par})$ has the $\calA$-module structure, where $q$ acts by the shift of grading. The Lusztig's restriction functor $\Res$ yields an $\calA$-coalgebra structure on $K(\mathrm{Par})$. However, a priori we have no geometric construction of an algebra structure on $K(\mathrm{Par})$. 
In Section 3 we get the following result.
\begin{thm}
\label{thm_sec3} There exists an $\calA$-coalgebra isomorphism
$\beta_\calA\colon K(\mathrm{Par})\to {_\calA}\bff$.
\end{thm}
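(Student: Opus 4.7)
My plan is to reduce Theorem~\ref{thm_sec3} to Theorem~\ref{thm_sec2} by showing that $\mathrm{Par}_{G_V}(E_V)$ and the Lusztig category $\calQ_V$ coincide as full additive subcategories of $D_{G_V}(E_V,\bfk)$, and then transporting the $\calA$-coalgebra isomorphism $\lambda_\calA\colon K(\calQ)\to{_\calA}\bff$ already discussed in the introduction.

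For $\calQ_V\subseteq\mathrm{Par}_{G_V}(E_V)$, I would observe that each $\widetilde F_\ui$ is smooth and admits an iterated Grassmannian bundle description, so its cohomology and the cohomology of every fiber of $\pi_\ui$ (which is itself iterated Grassmannian in the Dynkin case) are concentrated in even degrees. Hence $\pi_{\ui*}\underline{\bfk}_{\widetilde F_\ui}[\dim_\bbC\widetilde F_\ui]$ is an even parity complex in the sense of Juteau--Mautner--Williamson. Since parity complexes form a Krull--Schmidt additive category stable under shifts and direct summands, $\calQ_V$ embeds in $\mathrm{Par}_{G_V}(E_V)$. For the reverse inclusion one needs to show that, up to shift, every indecomposable parity sheaf $\calE(\overline O,\bfk)$ attached to a $G_V$-orbit $O\subset E_V$ occurs as a direct summand of some $\pi_{\ui*}\underline{\bfk}_{\widetilde F_\ui}[\dim_\bbC\widetilde F_\ui]$. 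For each orbit $O$ I would exhibit a sequence $\ui\in I^\nu$ such that $\pi_\ui$ restricts to a proper birational even resolution of $\overline O$; the modular analogue of the decomposition theorem for parity complexes then guarantees that $\calE(\overline O,\bfk)$ splits off from the pushforward.

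It then remains to verify that the $\calA$-coalgebra structure on $K(\mathrm{Par})$ induced by Lusztig's restriction $\Res$ transports to the $\calA$-coalgebra structure on $K(\calQ)$ that, via $\lambda_\calA$, matches the coproduct of ${_\calA}\bff$. Since $\Res$ is of the form $\kappa_!\iota^*$, where $\iota$ is the closed inclusion of a linear subvariety of $E_V$ (representations preserving a fixed graded subspace) and $\kappa$ is an affine bundle onto $E_{V'}\times E_{V''}$, both $\iota^*$ and $\kappa_!$ preserve parity complexes up to the appropriate cohomological shifts. Hence $\Res$ restricts to a functor $\mathrm{Par}_{G_V}(E_V)\to\mathrm{Par}_{G_{V'}\times G_{V''}}(E_{V'}\times E_{V''})$, and the coproduct it induces on $K(\mathrm{Par})$ agrees with that on $K(\calQ)$ under the identification from the previous paragraph. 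Defining $\beta_\calA$ as $\lambda_\calA$ composed with the inverse of the $\calA$-module isomorphism $K(\calQ)\to K(\mathrm{Par})$ therefore yields the desired $\calA$-coalgebra isomorphism.

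The main obstacle I expect is producing, for every $G_V$-orbit on $E_V$, a Lusztig sequence $\ui\in I^\nu$ whose Springer-type map $\pi_\ui$ provides an even resolution of the orbit closure $\overline O$, and then invoking the decomposition statement for parity complexes to extract the indecomposable $\calE(\overline O,\bfk)$. This geometric input, specific to representation varieties of Dynkin quivers and to the modular formalism of parity sheaves, is the technical heart of the proof; the coalgebra compatibility in the last step is comparatively formal once the categorical identification $\calQ_V=\mathrm{Par}_{G_V}(E_V)$ is established.
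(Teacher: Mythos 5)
Your proposal has a genuine gap at its very first step: the inclusion $\calQ_V\subseteq\mathrm{Par}_{G_V}(E_V)$, which you justify by asserting that the fibers of $\pi_{\ui}$ are ``iterated Grassmannian in the Dynkin case'' and hence have even cohomology. The total space $\widetilde F_{\ui}$ is indeed an iterated Grassmannian (vector) bundle over $F_{\ui}$, but the fibers $\pi_{\ui}^{-1}(x)$ over points $x\in E_V$ are varieties of $x$-stable flags and are \emph{not} of this form in general. The evenness of the complexes $\pi_{y*}\underline{\bfk}_{\widetilde F_y}$ is exactly Conjecture \ref{conj_even} of the paper, which is open for general Dynkin quivers; the paper establishes it only in type $A$ (Theorem \ref{thm_cell-dec-A}, via affine cell decompositions of the fibers) and in characteristic zero. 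Consequently the identification $\calQ_V=\mathrm{Par}_{G_V}(E_V)$, and the compatibility of $\lambda_\calA$ with the coproduct, are only proved under the evenness hypothesis (Theorem \ref{thm_ev-isom-Nak-Lus}); your argument therefore proves Theorem \ref{thm_sec3} only for even quivers, not unconditionally. The same issue infects your last step, since Lemma \ref{lem_restr-Lus} on the restriction of Lusztig sheaves also carries an evenness hypothesis.

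The paper circumvents this entirely by replacing the Lusztig resolutions with Nakajima quiver variety resolutions. Theorem \ref{thm_isom-Nak-Lus-quiv} identifies $E_V$ with $\frakM_0^\bullet(\infty,W)$; Nakajima's theorem (Lemma \ref{lem_fib-Nak}) guarantees that the fibers of $\pi_{T,W}$ have no odd cohomology over $\bbZ$ and torsion-free cohomology, so the pushforwards $\widetilde\scrL_{T,W}$ are even (Corollary \ref{coro_Nak-sh-even}); this yields the existence of the parity sheaves, an $\calA$-basis of $K(\mathrm{Par}_{G_V}(E_V))$ by the classes $[\widetilde\scrL_{T,W}]$ (Lemma \ref{lem_bas-parsh-Nak}), and an explicit decomposition of $\widetilde\Res_{V_1,V_2}(\widetilde\scrL_{T,W})$ (Lemma \ref{lem_restr-Nak}). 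The coalgebra isomorphism with ${}_\calA\bff$ is then obtained by comparison with the characteristic-zero case, using that the multiplicities $d_{\lambda,n}(\widetilde\scrL_{T,W})$ and the shifts $d_{T_1,T_2}$ are independent of the field. If you want to salvage your approach you would need to either prove Conjecture \ref{conj_even} or route the argument through the Nakajima resolutions as the paper does; note also that the paper stresses that, a priori, $K(\mathrm{Par})$ carries only a coalgebra structure and $K(\calQ)$ only an algebra structure, and their comparison (your Conjecture-free identification of $\beta_\calA$ with $\lambda_\calA$) is precisely Conjecture \ref{conj_bas-Nak=Lu}.
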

Theorem \ref{thm_sec3} yields an $\calA$-basis in $_\calA\bff$ in
terms of parity sheaves.  We say that the quiver $\Gamma$ is
$\bfk$-\emph{even} if the complexes
$\pi_{y*}\underline{\mathbf{k}}_{\widetilde{F}_{y}}$ are even for
each $y\in Y_\nu$. We say that $\Gamma$ is \emph{even} if it is
$\bfk$-even for each field $\bfk$. If the quiver $\Gamma$ is
$\bfk$-even then the categories $\calQ_V$ and
$\mathrm{Par}_{G_V}(E_V)$ coincide and we have a bialgebra
isomorphism $\lambda_\calA\colon K(\calQ)\to {_\calA}\bff$, see
Section \ref{subs_ev-quiv}.
Note that this is the case if the characteristic of $\bfk$ is zero.
However we have no example of a Dynkin quiver that is not
$\bfk$-even for some field $\bfk$ of positive characteristic. This
suggests the following conjecture.
\begin{conj}
\label{conj_even} Each Dynkin quiver is even.
\end{conj}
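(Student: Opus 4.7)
The plan is to reduce the parity question to a statement about the cohomology of the fibers of $\pi_y$, and then to prove that cohomological statement by constructing affine pavings of those fibers.

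First, a complex $\calF \in D_{G_V}(E_V,\bfk)$ is even with respect to the $G_V$-orbit stratification precisely when, for every orbit $\calO \subset E_V$ with inclusion $j_\calO$, both $j_\calO^*\calF$ and $j_\calO^!\calF$ are locally constant and concentrated in even degrees. Since $\widetilde{F}_y$ is smooth and $\pi_y$ is proper, the shifted pushforward $\pi_{y*}\underline{\bfk}_{\widetilde{F}_y}[\dim_\bbC \widetilde{F}_y]$ is Verdier self-dual, so costalks are controlled by stalks; and by proper base change, the stalks compute $H^*(\pi_y^{-1}(M);\bfk)$ for $M \in E_V$. Hence the conjecture reduces to showing that $H^{\mathrm{odd}}(\pi_y^{-1}(M);\bfk) = 0$ for every $y \in Y_\nu$, every $M \in E_V$, and every field $\bfk$.

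The fibers $\pi_y^{-1}(M)$ are quiver flag varieties parametrizing filtrations of the representation $M$ with prescribed subquotient types. To prove their cohomology is even in every characteristic, I would seek an affine paving via Bialynicki-Birula theory. By Gabriel's theorem, $M \cong \bigoplus_i M_i^{n_i}$ for indecomposables $M_i$ indexed by positive roots, so the centralizer satisfies $Z_{G_V}(M) \cong \prod_i \mathrm{GL}_{n_i}$, and a generic cocharacter of a maximal torus $T \subset Z_{G_V}(M)$ acts on $\pi_y^{-1}(M)$. The steps would be: (i) prove the $T$-fixed locus is finite, combinatorially described by splittings of the flag compatible with the decomposition of $M$; (ii) identify the attracting cells with affine spaces; (iii) note that these are \emph{complex} affine spaces, hence even-dimensional over $\mathbb{R}$. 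For type $A$ this program is essentially classical, because the subrepresentation lattice of each indecomposable is totally ordered.

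The principal obstacle is step (i) in types $D$ and $E$: the subrepresentation lattices of large-height indecomposables are intricate, and no uniform combinatorial description of $T$-fixed flags seems available. One possible route is induction on the dimension vector $\nu$, peeling off one step at a time from the flag and reducing to the one-step case of a quiver Grassmannian, where known affine-paving results can be brought to bear. An alternative, more indirect, strategy is to lift the characteristic-zero decomposition theorem --- which provides evenness over $\mathbb{Q}$ via semisimplicity of $\pi_{y*}\underline{\mathbb{Q}}_{\widetilde{F}_y}$ --- to an integral statement, ruling out $p$-torsion by geometric arguments on Lusztig's nilpotent varieties, for instance by exhibiting resolutions whose exceptional fibers have $\bfk$-torsion-free cohomology uniformly in $\bfk$. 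Each route requires genuinely new input, which is presumably why the statement remains a conjecture.
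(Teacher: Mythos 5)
The statement you are asked to prove is stated in the paper as a \emph{conjecture}, and the paper itself does not prove it; it only (i) records the reduction to the vanishing of odd cohomology of the fibers of $\pi_y$ (Lemma \ref{lem_ev-resol}), and (ii) settles the type $A$ case (Theorem \ref{thm_cell-dec-A} and its corollary). Your proposal does not prove the conjecture either, and you say so explicitly, so there is no hidden gap being papered over --- but it is worth being precise about how your sketch lines up with what the paper actually establishes.

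Your reduction step is exactly the paper's Lemma \ref{lem_ev-resol}: $\pi_y$ is proper with smooth source, so $\mathbb{D}\pi_{y*}\underline{\bfk}_{\widetilde F_y}=\pi_{y*}\underline{\bfk}_{\widetilde F_y}[2\dim_\bbC\widetilde F_y]$, proper base change identifies stalks with fiber cohomology, and evenness of $\pi_{y*}\underline{\bfk}_{\widetilde F_y}$ becomes equivalent to $\mathrm{H}^{\mathrm{odd}}(\pi_y^{-1}(x),\bfk)=0$ for all $x$. For type $A$ the paper does not invoke Bia{\l}ynicki-Birula theory; instead it constructs affine cell decompositions of the fibers directly, by an induction on the number of vertices that fibers the quiver flag variety over the analogous variety for the quiver with the last vertex deleted, with fibers of the form $Y_{\phi,\psi,\bfv,D}$ (Lemmas \ref{lem_varX}, \ref{lem_varY}, \ref{lem_cell-dec-A}); this is the ``peeling off one step at a time'' route you mention, carried out for vertices rather than flag steps. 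Your torus-action strategy would, if completed, give the same conclusion, but as you note the fixed-point analysis in types $D$ and $E$ is precisely where no argument is known. Finally, your ``indirect'' strategy of finding resolutions with uniformly torsion-free fiber cohomology is essentially what the paper does with Nakajima quiver varieties (Lemma \ref{lem_fib-Nak} and Corollary \ref{coro_Nak-sh-even}); but this only yields evenness of the complexes $\widetilde{\scrL}_{T,W}$ and hence existence of the parity sheaves $\calE(\lambda)$ --- it does not show that the Lusztig complexes $\calL_y$ themselves are parity, which is what the conjecture asserts. So the honest conclusion stands: the reduction and the type $A$ case are correct and match the paper, and the general case remains open.
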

To prove Conjecture \ref{conj_even} it is enough to verify that the
fibers of the morphisms $\pi_y$, $y\in Y_\nu$ have no odd cohomology
group over each field, see Lemma \ref{lem_ev-resol}.

Let $\Lambda_V$ be the set of $G_V$-orbits in $E_V$. For $\lambda\in
\Lambda_V$ we will write $\calO_\lambda$ for the corresponding
orbit. The parity sheaves in $D_{G_V}(E_V, \bfk)$ and the
indecomposable objects in $\calQ_V$ modulo shifts are parametrized
by the the set $\Lambda_V$, see Section \ref{subs_indec-QV} and
Remark \ref{rem_indec-compl} (a). We denote them respectively
$\calE(\lambda)$ and $R_\lambda$ with $\lambda\in\Lambda_V$. The
classes of the complexes $R_\lambda$, with $\lambda\in \Lambda_V$,
form an $\calA$-basis in $K(\calQ_V)$. Thus, the elements
$\lambda_\calA([R_\lambda])$ with $\lambda\in\Lambda_V$ and all $V$
form an $\calA$-basis in $_\calA\bff$. On the other hand the classes
of the complexes $\calE(\lambda)$ with $\lambda\in \Lambda_V$ form
an $\calA$-basis in $K(\mathrm{Par}_{G_V}(E_V))$ and thus the
elements $\beta_\calA([\calE(\lambda)]$ with $\lambda\in\Lambda_V$
and all $V$ form an $\calA$-basis in $_\calA\bff$. It is natural to
compare these bases. We expect that the following result holds.
\begin{conj}
\label{conj_bas-Nak=Lu} For each finite dimensional $I$-graded
$\bbC$-vector space $V$ and each $\lambda\in \Lambda_V$ we have $
\lambda_\calA([R_\lambda])=\beta_\calA([\calE(\lambda)]). $
\end{conj}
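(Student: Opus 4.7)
The plan is to identify both $\lambda_\calA([R_\lambda])$ and $\beta_\calA([\calE(\lambda)])$ with a common distinguished element of ${}_\calA\bff$, uniquely characterised by bar-invariance together with a triangular expansion in a suitable standard basis indexed by $\Lambda_V$. The natural candidate for the standard basis is given by the images of the classes $[\pi_{y*}\underline{\bfk}_{\widetilde{F}_y}]$, together with a PBW-type parametrisation by $\Lambda_V$ obtained from sequences $\ui\in I^\nu$ adapted to the open stratum $\calO_\lambda$. This is exactly the style of characterisation that underlies Lusztig's canonical basis and its parity-sheaf analogue, so verifying it on both sides should pin the two bases together.

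The first step is the $\bfk$-even case. By Section \ref{subs_ev-quiv}, if $\Gamma$ is $\bfk$-even then $\calQ_V=\mathrm{Par}_{G_V}(E_V)$ as subcategories of $D_{G_V}(E_V,\bfk)$, and the parametrisations of their indecomposables by $\Lambda_V$ agree, giving $R_\lambda\simeq\calE(\lambda)$. It then suffices to check that $\lambda_\calA$ and $\beta_\calA$ agree on this common subcategory; as both isomorphisms are manufactured from the same geometric input (multiplicities along the strata $\calO_\lambda$ and the restriction functors), this should reduce to an unravelling of definitions. In particular this settles the conjecture in characteristic zero, and, modulo Conjecture \ref{conj_even}, in arbitrary characteristic.

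The second step tackles the general case directly. One would show, by induction on the closure order on $\Lambda_V$, that both $\lambda_\calA([R_\lambda])$ and $\beta_\calA([\calE(\lambda)])$ admit an upper-triangular expansion of the form $[\mathrm{std}_\lambda]+\sum_{\mu<\lambda}c_{\mu\lambda}[\mathrm{std}_\mu]$ with $c_{\mu\lambda}\in q\bbZ[q]$, are bar-invariant (inherited from Verdier self-duality of $R_\lambda$ and of $\calE(\lambda)$ after the appropriate shifts), and that such an element of ${}_\calA\bff$ is uniquely determined. The inductive step compares the restrictions of $R_\lambda$ and $\calE(\lambda)$ to the open orbit $\calO_\lambda$, where both are a shift of $\underline{\bfk}_{\calO_\lambda}$, and controls the contributions from smaller strata via the coalgebra isomorphism $\beta_\calA$ and the fact that Lusztig's restriction intertwines both constructions.

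The main obstacle is that outside the even case $R_\lambda$ is genuinely not a parity sheaf, so the two classes cannot be compared as objects of $D_{G_V}(E_V,\bfk)$; one must really compare two a priori different isomorphisms $\lambda_\calA$ and $\beta_\calA$ on two a priori different bases of ${}_\calA\bff$. This is compounded by the fact, emphasised in the introduction, that $K(\mathrm{Par})$ carries no natively defined product: its algebra structure is only accessible through $\beta_\calA$ itself, so any inductive argument must transport the multiplicative structure through ${}_\calA\bff$ rather than apply it geometrically. A complete proof therefore seems to require either a proof of Conjecture \ref{conj_even} (reducing to the easy identification above), or a genuinely new geometric input---for instance, a comparison morphism in $K_0(D_{G_V}(E_V,\bfk))$ relating $[R_\lambda]$ to $[\calE(\lambda)]$ modulo classes supported on strictly smaller strata---that is not visible from the structures introduced so far.
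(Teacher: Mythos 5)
This statement is a \emph{conjecture} in the paper: no proof is given. The only thing the paper establishes about it is the reduction you carry out in your first step, namely that it follows from Conjecture \ref{conj_even}, since for a $\bfk$-even quiver one has $R_\lambda=\calE(\lambda)$ by Remark \ref{rem_indec-compl}~(b) and the categories $\calQ_V$ and $\mathrm{Par}_{G_V}(E_V)$ coincide (even there, note that $R_\lambda=\calE(\lambda)$ alone does not finish the job: you must still check that the two isomorphisms $\lambda_\calA$ and $\beta_\calA$ onto ${}_\calA\bff$ agree on the common category, which amounts to comparing Theorem \ref{thm_ev-isom-Nak-Lus} with the construction of $\beta_\calA$ in Theorem \ref{thm_coal-isom}; this is more than ``unravelling definitions'' and deserves an argument).

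Your second step contains a genuine gap. The proposed characterisation of a common element by bar-invariance together with a triangular expansion $[\mathrm{std}_\lambda]+\sum_{\mu<\lambda}c_{\mu\lambda}[\mathrm{std}_\mu]$ with $c_{\mu\lambda}\in q\bbZ[q]$ is exactly the Kazhdan--Lusztig/canonical-basis mechanism, and it is precisely what fails in positive characteristic: parity sheaves are defined by parity vanishing of stalks and costalks, \emph{not} by the degree bounds that would force $c_{\mu\lambda}\in q\bbZ[q]$, and the same is true of the $R_\lambda$ once they differ from $\mathrm{IC}(\calO_\lambda)$. So the uniqueness statement underlying your induction is false in the regime where the conjecture is actually open. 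Moreover, no bar-involution is defined in the paper on $K(\mathrm{Par})$ or on $K(\calQ)$, and Verdier self-duality of $R_\lambda$ (with the correct shift) is not established -- one only knows $\bbD R_\lambda$ is some shift of $R_\lambda$. To your credit, your final paragraph identifies essentially this obstruction and concludes that a proof requires either Conjecture \ref{conj_even} or new geometric input; that assessment is correct, and it is why the statement remains a conjecture rather than a theorem in the paper.
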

Note that Conjecture \ref{conj_bas-Nak=Lu} follows directly from
Conjecture \ref{conj_even} because if $\Gamma$ is $\bfk$-even we
have $R_\lambda=\calE(\lambda)$ for each $\lambda\in\Lambda_V$ and
each $V$, see Remark \ref{rem_indec-compl} (b).

\section{Geometric construction of KLR-algebras}
In this paper $A$ will always denote a commutative ring of finite
global dimension.

\subsection{Quivers\label{subs_quiv}}
Let $\Gamma$ be a quiver without loops. We denote by $I$ and $H$ the
sets of its vertices and arrows respectively. For an arrow $h\in H$
we will write $h'$ and $h''$ for its source and target respectively.
For a dimension vector $\nu=\sum_{i\in I}\nu_i\cdot i\in\mathbb NI$
we set
$$
E_V=\bigoplus_{h\in H}{\rm Hom}(V_{h'},V_{h''}),\qquad
|\nu|=\sum_{i\in I}\nu_i,
$$
 where $V_i$ is a $\mathbb C$-vector space of dimension $\nu_i$ for every $i\in I$. If $\Gamma$ is a Dynkin quiver, the natural action of $G_V=\prod_{i\in I}GL(V_i)$ on $E_V$ has finitely many orbits. This defines a stratification $\coprod_{\lambda\in\Lambda}\mathcal{O}_\lambda$ on $E_V$, where $\Lambda_V$ labels all orbits.

Let us introduce some notation for a later use. Let $h_{i,j}$ be the
number of arrows from $i$ to $j$ in $\Gamma$ and set
$$
i\cdot j=-h_{i,j}-h_{j,i},~~i\cdot i=2,~~i\ne j.
$$
Let $Y_\nu$ be the set of all pairs $y=(\ui,\mathbf{a})$ where
$\ui=(i_1,i_2,\cdots,i_m)$ is a sequence of elements of $I$ and
$\mathbf{a}=(a_1,a_2,\cdots,a_m)$ is a sequence of positive integers
such that $\sum_{l=1}^ma_li_l=\nu$. We will write
$y=(i_1^{(a_1)}\cdots i_m^{(a_m)})$. Let $I^\nu\subset Y_\nu$ be the
set of all pairs $y=(\ui,\mathbf{a})$ such that $a_l=1$ for all l.
We will abbreviate $\ui$ for $y=(\ui,\mathbf{a})$ if $y\in I^\nu$.
We denote by $F_y$ the variety of all flags
$$
\phi=(\{0\}=V^0\subset V^1\subset\cdots\subset V^m=V)
$$
in $V=\bigoplus_{i\in I}V_i$ such that the $I$-graded vector space
$V^r/V^{r-1}$ has graded dimension $a_r\cdot i_r$ for $r\in [1,m]$.
We denote by $\widetilde{F}_{y}$ the variety of pairs $(x,\phi)\in
E_V\times F_y$ such that $x(V^r)\subset V^r$ for
$r\in\{0,1,\cdots,m\}$. Let $\pi_{y}$ be the natural projection from
$\widetilde{F}_{y}$ to $E_V$, i.e.,
$ \pi_{y}:\widetilde{F}_y\to E_V,~(x,\phi)\mapsto x. $
For $y_1,y_2\in Y_\nu$ we denote by $Z_{y_1,y_2}$ the variety of
triples $(x,\phi_1,\phi_2)\in E_V\times F_{y_1}\times F_{y_2}$ such
that $x$ preserves $\phi_1$ and $\phi_2$. For $\ui\in I^\nu$ and
$l=1,2,\hdots,|\nu|$, we define
$\mathcal{O}_{\widetilde{F}_{\ui}}(l)$ to be the $G_V$-equivariant
line bundle over $\widetilde{F}_{\ui}$ whose fiber at the flag
$\phi$ is equal to $V^l/V^{l-1}$.

\subsection{KLR-algebra\label{subs_KLR}}
Set $m=|\nu|$. The group $\mathfrak{S}_m$ acts on $I^\nu$ by
$w\cdot(i_1,\hdots,i_m)=(i_{w^{-1}(1)},\hdots,i_{w^{-1}(m)})$. We
denote by $s_l$ the transposition $(l,l+1)\in \frakS_m$ for
$l\in[1,m-1]$. For each sequence $\ui=(i_1,i_2,\hdots,i_m)\in I^\nu$
and each integers $k,l\in[1,m]$, $l\ne m$ we abbreviate
$$
h_{\ui}(l)=h_{i_l,i_{l+1}}\text{ if } s_l(\ui)\ne
\ui,~~h_{\ui}(l)=-1\text{ if } s_l(\ui)=\ui.
$$
$$
a_{\ui}(l)=h_{\ui}(l)+h_{s_l(\ui)}(l)=-i_l\cdot i_{l+1}.
$$

\begin{df}
\label{def_KLR} The \emph{KLR-algebra} $R_{\nu,A}$ (or simply
$R_\nu$ if the ring of coefficients is clear) over the ring $A$
associated with $\Gamma$ and the dimension vector $\nu$ is the
$A$-algebra generated by $1_{\ui}$, $x_{\ui}(k)$, $\tau_{\ui}(l)$
with $\ui\in I^\nu$, $1\le k,l\le m$ and $l\ne m$, modulo the
following defining relations:
\begin{itemize}
    \item $1_{\ui}1_{\ui'}=\delta_{\ui,\ui'}1_{\ui}$,
    \item $\tau_{\ui}(l)=1_{s_l(\ui)}\tau_{\ui}(l)1_{\ui}$,
    \item $x_{\ui}(k)=1_{\ui}x_{\ui}(k)1_{\ui}$,
    \item $x_{\ui}(k)x_{\ui'}(k')=x_{\ui'}(k')x_{\ui}(k)$,
    \item $\tau_{s_l(\ui)}(l)\tau_{\ui}(l)=Q_{i,l}(x_{\ui}(l),x_{\ui}(l+1))$,
    \item $\tau_{s_l(\ui)}(l')\tau_{\ui}(l)=\tau_{s_{l'}(\ui)}(l)\tau_{\ui}(l')\text{ if }|l-l'|>1$,
    \item $\tau_{s_ls_{l+1}(\ui)}(l+1)\tau_{s_{l+1}(\ui)}(l)\tau_{\ui}(l+1)-\tau_{s_{l+1}s_l(\ui)}(l)\tau_{s_l(\ui)}(l+1)\tau_{\ui}(l)$

$=(Q_{\ui,l}(x_{\ui}(l+2),x_{\ui}(l+1))-Q_{\ui,l}(x_{\ui}(l),x_{\ui}(l+1)))(x_{\ui}(l+2)-x_{\ui}(l))^{-1}$
if $s_{l,l+2}(\ui)=\ui$ and 0 else,
    \item $\tau_{\ui}(l)x_{\ui}(k)-x_{s_l(\ui)}(s_l(k))\tau_{\ui}(l)=
\begin{cases}
-1_{\ui} & \text{ if } k=l,~s_l(\ui)=\ui,\\
1_{\ui} & \text{ if } k=l+1,~s_l(\ui)=\ui,\\
0 & \text{ else.}
\end{cases}$
\end{itemize}
Here we have set $s_{l,l+2}=s_ls_{l+1}s_l$ if $l\ne m-1,m$ and
$$
Q_{\ui,l}(u,v)=
\begin{cases}
(-1)^{h_{\ui}(l)}(u-v)^{a_{\ui}(l)} & \text{ if } s_l(\ui)\ne\ui,\\
0 & \text{ else.}
\end{cases}
$$
\end{df}
Note that the fraction
$$
(Q_{\ui,l}(x_{\ui}(l+2),x_{\ui}(l+1))-Q_{\ui,l}(x_{\ui}(l),x_{\ui}(l+1)))(x_{\ui}(l+2)-x_{\ui}(l))^{-1}
$$
is indeed an element of $A[x_{\ui}(1),\cdots x_{\ui}(m)]$. Note also
that $ R_{\nu,A}=\bigoplus_{\ui,\uj\in I^\nu}R_{\nu,A}^{\ui\uj}, $
where $R_{\nu,A}^{\ui\uj}=1_{\ui}R_{\nu,A}1_{\uj}$.

The KLR-algebra $R_{\nu,A}$ has a natural $\bbZ$-grading such that
$\deg 1_{\ui}=0$, $\deg x_{\ui}(k)=2$ and $\deg
\tau_{\ui}(l)=a_{\ui}(l)$. The relations in Definition \ref{def_KLR}
are homogeneous with respect to this grading.

\subsection{A faithful representation of $R_{\nu,A}$\label{subs_f-rep}}
Now, we consider the representation of $R_{\nu,A}$ on the space
$$
\Pol_{\nu,A}=\bigoplus_{\ui\in
I^{\nu}}\Pol_{\ui,A},\qquad\Pol_{\ui,A}=A[x_{\ui}(1),\cdots,x_{\ui}(m)],
$$
which is given by:
\begin{itemize}
    \item the element $1_{\ui}\in R_{\nu,A}$ acts on $\Pol_{\nu,A}$ by the projection on $\Pol_{\ui,A}$,
    \item the element $x_{\ui}(k)\in R_{\nu,A}$ acts on $\Pol_{\uj,A}$ by $0$ if $\ui\ne\uj$ and by multiplication by $x_k(\ui)$ if $\ui=\uj$,
   \item the element $\tau_{\ui}(l)$ acts on $\Pol_{\uj,A}$ by $0$ if $\ui\ne\uj$ and it sends $f\in\Pol_{\ui,A}$ to
\begin{itemize}
\item[\textbullet] $-\frac{f-s_l(f)}{x_{\ui}(l)-x_{\ui}(l+1)}$ if $s_l(\ui)=\ui$,
\item[\textbullet] $(x_{s_l(\ui)}(l)-x_{s_l(\ui)}(l+1))^{h_{\ui}(l)}s_l(f)$ if $s_l(\ui)\ne \ui$.
\end{itemize}
\end{itemize}
This representation is faithful, see \cite[Sec.~3.2.2]{Rouq}

\subsection{Reminder on Borel-Moore homology\label{subs_BM}}
In this section we suppose $G$ is an algebraic group which acts on a variety $X$. 
Denote by $D_G(X,A)$ the $G$-equivariant derived category of
sheaves of $A$-modules on $X$. Let $\mathcal{D}_{X,A}$ be the $G$-equivariant
dualizing complex on $X$, see \cite[Def.~3.1.16]{KSch} in the
non-equivariant case and \cite[Def.~3.5.1]{BL} in the
equivariant case.
The space of $G$-\emph{equivariant Borel-Moore homology} is
$\mathrm{H}_*^G(X,A)=\mathrm{H}_G^*(X,\mathcal{D}_{X,A})$.
Note also that $\mathcal{D}_{X,A}=\underline{A}_X[2d]$ if $X$ is a
smooth $G$-variety of pure dimension $d$, where we denote by
$\underline{A}_X$ the $G$-equivariant constant sheaf on $X$, see
\cite[Prop.~8.3.4]{CG}. In this case we have
$$
\mathrm{H}_*^G(X,A)=\mathrm{H}_G^*(X,\underline{A}_X[2d]). \eqno
(2.1)
$$
So we can identify $\mathrm{H}_*^G(X,A)$ with
$\mathrm{H}_G^*(X,A)[2d]$. The functor
$$
\mathbb{D}=\mathbb{D}_X: D_G(X,A)\to D_G(X,A),\qquad
\mathfrak{L}\mapsto \scrH{o m}(\mathfrak{L},\mathcal{D}_{X,A})
$$
has the following properties, see \cite[Thm.~3.5.2]{BL}:
\begin{itemize}
    \item $(\mathbb{D}_X)^2=\mathrm{Id}_{D_G(X,A)}$,
    \item $\mathbb{D}_Yf_!=f_*\mathbb{D}_X,~\forall f:X\to Y$ continuous $G$-map,
    \item $\mathbb{D}_Xf^*=f^!\mathbb{D}_Y,~\forall f:X\to Y$ continuous $G$-map.
\end{itemize}

\subsection{Algebra $\mathbf{Z}_{V,A}$\label{subs_alg-Z}}
Set $G=GL(V)$. Then $G_V$ is a closed reductive subgroup of $G$.
Consider the varieties $Z_V=\coprod\limits_{\ui,\uj\in
I^\nu}Z_{\ui,\uj}$ and $\widetilde{F}_V=\coprod\limits_{\ui\in
I^\nu}\widetilde{F}_{\ui}$, see Section \ref{subs_quiv}. For
$\ui,\uj\in I^\nu$ we set
$$
\mathbf{Z}_{V,A}=\mathrm{H}_*^{G_V}(Z_V,A),\qquad
\mathbf{Z}_{\ui,\uj,A}=\mathrm{H}_*^{G_V}(Z_{\ui,\uj},A),
$$
$$
\mathbf{F}_{V,A}=\mathrm{H}_*^{G_V}(\widetilde{F}_V,A),\qquad
\mathbf{F}_{\ui,A}=\mathrm{H}_*^{G_V}(\widetilde{F}_{\ui},A).
$$
We equip $\mathbf{Z}_{V,A}$ with the convolution product $\star$
relative to the closed embedding $Z_V\subset \widetilde{F}_V\times
\widetilde{F}_V$. See \cite[Sec.~2.7]{CG} for details. In the
same way we have the $\mathbf{Z}_{V,A}$-module structure on
$\mathbf{F}_{V,A}$. We will denote it also by $\star$.

Denote by $F$ the variety of all complete flags in $V$. Denote by
$F_V$ the variety of all complete flags in $V$ that agree with the
grading of $V$, i.e., $F_V=\coprod\limits_{\ui\in I^\nu}F_{\ui}$.
Then $F_V\subset F$. Let $T_V$ be a maximal torus in $G_V$. Denote
by $\mathfrak{S}_V$ and $\mathfrak{S}$ the Weyl groups of
$(G_V,T_V)$ and $(G,T_V)$ respectively. Fix once and for all a
$T_V$-stable flag $\phi_V$ in $F_V$. Write
$$
\phi_V=(0\subset D_1\subset D_1\oplus D_2\subset\cdots\subset
D_1\oplus D_2\oplus\cdots\oplus D_{m-1}\subset D_1\oplus
D_2\oplus\cdots\oplus D_m=V),
$$
where $D_1,\cdots,D_m$ are $T_V$-stable one-dimensional vector
spaces in $V$. The action of $\mathfrak{S}$ on the set
$\{D_1,D_2,\cdots,D_m\}$ identifies $\mathfrak{S}$ with the
symmetric group $\mathfrak{S}_m$. Note also that the action of
$\mathfrak{S}$ on the set $\{D_1,D_2,\cdots,D_m\}$ induces the
action of $\mathfrak{S}$ on the set of $T_V$-stable flags in $F_V$.
Set $\phi_{V,w}=w(\phi_V)$ for $w\in\mathfrak{S}$. Denote by $B$ and
$B_V$ the stabilizers of $\phi_V$ in $G$ and $G_V$ respectively. Let
$\Delta$ and $\Delta_V$ be the root systems of $(G,T_V)$ and
$(G_V,T_V)$ respectively. Let $\Delta^+$ be the subset of positive
root in $\Delta$ associated with $B$. We set
$$
\Delta^-=-\Delta^+,\qquad \Delta_V^+=\Delta^+\cap \Delta_V,\qquad
\Delta_V^-=\Delta^-\cap \Delta_V.
$$
Let $\Pi\subset \mathfrak{S}$ be the set of simple reflections. Let
$\chi_1,\chi_2,\cdots,\chi_m\in\mathfrak{t}_V^*$ be the weights of
the lines $D_1,D_2,\cdots,D_m$ respectively, where
$\mathfrak{t}_V=\mathrm{Lie}(T_V)$. The set of simple roots in
$\Delta^+$ is $ \{\chi_l-\chi_{l+1};~l=1,2,\hdots,m-1\}. $ Let
$s_l\in\Pi$ denote the reflection with respect to the simple root
$\chi_l-\chi_{l+1}$. Under the identification
$\mathfrak{S}=\mathfrak{S}_m$ the reflection $s_l$ is a
transposition $(l,l+1)$.

\subsection{Algebras $\mathbf{P}_{V,A}$ and $\mathbf{S}_{V,A}$\label{subs_alg-S-P}}
Denote by $\mathbf{P}_{V,A}$ the $A$-algebra
$\mathrm{H}_{T_V}^*(\bullet,A)$. The A-algebra $\mathbf{P}_{V,A}$ is
isomorphic to the algebra of polynomials on $\mathfrak{t}_V$:
$$
\mathbf{P}_{V,A}=A[\chi_1,\chi_2,\hdots,\chi_m],
$$
where $\chi_1,\hdots,\chi_m$ are as in Section \ref{subs_alg-Z}. Let
$\mathbf{S}_{V,A}$ be the $A$-algebra
$\mathrm{H}_{G_V}^*(\bullet,A)$. For each positive integer $n$ the
fundamental group of $GL_n(\mathbb{C})$ is equal to $\mathbb{Z}$.
Thus, the fundamental group of $G_V$ does not contain elements of
finite order. Hence, by \cite[Lem.~1]{Totaro} the torsion index of
$G_V$ is equal to $1$ (see \cite[Sec.~2]{Tor} for the definition
of a torsion index of a Lie group). So by \cite[Cor.~2.3]{Tor}
the algebra $\mathbf{S}_{V,A}$ is isomorphic to the algebra
$\mathbf{P}_{V,A}^{\mathfrak{S}_V}$ of $\mathfrak{S}_V$-invariant
polynomials on $\mathfrak{t}_V$.

\subsection{Stratification on $F_V\times F_V$\label{subs_str-FV}}
The group $G$ acts diagonally on $F\times F$. The action of the
subgroup $G_V$ preserves the subset $F_V\times F_V$. For $x\in
\mathfrak{S}$ let $O_V^x$ be the set of all pairs of flags in
$F_V\times F_V$ which are in relative position $x$. More precisely,
write $ \phi_{V,w',w}=(\phi_{V,w'},\phi_{V,w}),~\forall
w,w'\in\mathfrak{S}. $ Then we set $ O_V^x=(F_V\times F_V)\cap
G\phi_{V,e,x}. $

\subsection{Stratification on $Z_{V}$\label{subs_str-Z}}

Let $Z_V^x$ be the Zariski closure of the locally closed subset
$q^{-1}(O_V^x)$,  where $q$ is a natural map
$$
q:Z_V\to F_V\times F_V,~(x,\mathcal{F}_1,\mathcal{F}_2)\mapsto
(\mathcal{F}_1,\mathcal{F}_2).
$$
Put
$$
Z_V^{\leqslant x}=\bigcup_{w\leqslant
x}Z_V^w,\qquad\mathbf{Z}_{V,A}^{\leqslant
x}=\mathrm{H}_*^{G_V}(Z_V^{\leqslant
x},A),\qquad\mathbf{Z}_{V,A}^e=\mathrm{H}_*^{G_V}(Z_V^{\leqslant
e},A).
$$
$$
Z_V^{<x}=\bigcup_{w<x}Z_V^w,\qquad\mathbf{Z}_{V,A}^{<x}=\mathrm{H}_*^{G_V}(Z_V^{<x},A),
$$
where $\leqslant$ is the Bruhat order.

\subsection{Generators of $\mathbf{Z}_{V,A}$\label{subs_gen-Z}}
Denote by $1_{\ui}$ the fundamental class of $Z_{\ui,\ui}$ in
$\mathrm{H}_*^{G_V}(Z_{\ui,\ui},A)$ regarded as an element of
$\mathrm{H}_*^{G_V}(Z_V,A)$. Fix a simple reflection $s=s_l$ in
$\Pi$. The fundamental class of $Z_V^s$ yields an elements
$\sigma(l)\in \mathbf{Z}_{V,A}^{\leqslant s}$. For $\ui, \ui'\in
I^\nu$ we write
$$
\sigma_{\ui',\ui}(l)=1_{\ui'}\star \sigma(l)\star
1_{\ui}\in\mathbf{Z}_{V,A}^{\leqslant s}.
$$
Next, suppose $k\in[1,m]$. The pull-back of the first Chern class of
the line bundle
$\bigoplus\limits_{\ui}\mathcal{O}_{\widetilde{F}_{\ui}}(k)$ by the
first projection
$$
\begin{CD}
{Z_V^e\subset \widetilde{F}_V\times\widetilde{F}_V} @>pr_1>>
{\widetilde{F}_V}
\end{CD}
$$
belongs to $\mathrm{H}_{G_V}^*(Z_V^e,A)$, here
$\mathcal{O}_{\widetilde{F}_{\ui}}(k)$ is as in Section
\ref{subs_quiv}. It yields an element $x(k)\in\mathbf{Z}_{V,A}^e$.
We write
$$
x_{\ui',\ui}(k)=1_{\ui'}\star x(k)\star 1_{\ui}\in
\mathbf{Z}_{V,A}^e.
$$

We will need the following standard lemma, see \cite[Chap.~6]{CG}.
\begin{lem}
\label{lem_cl-emb-Z}
\begin{itemize}
    \item[(a)] The direct image by the closed embedding $Z_V^{\leqslant x}\subset Z_V$ gives an injective $A$-module homomorphism $\mathbf{Z}_{V,A}^{\leqslant x}\to\mathbf{Z}_{V,A}$.
    \item[(b)] For $x,y\in \mathfrak{S}$ such that $l(xy)=l(x)+l(y)$ the convolution product in $\mathbf{Z}_{V,A}$ yields an inclusion $\mathbf{Z}_{V,A}^{\leqslant x}\star\mathbf{Z}_{V,A}^{\leqslant y}\subset \mathbf{Z}_{V,A}^{\leqslant xy}$. In particular $\mathbf{Z}_{V,A}^{e}$ is a subalgebra of $\mathbf{Z}_{V,A}$.
\end{itemize}
\end{lem}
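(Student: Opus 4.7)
The plan is to follow the standard Steinberg-variety template of Chriss--Ginzburg, adapted to the $G_V$-equivariant setting over the ring $A$. The key input is a cellular-fibration analysis of the Bruhat stratification of $Z_V$ by the subsets $Z_V^x=\overline{q^{-1}(O_V^x)}$.

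First I would fix $x\in\mathfrak{S}$ and study the open stratum $q^{-1}(O_V^x)\subset Z_V^{\leqslant x}$. The orbit $O_V^x\subset F_V\times F_V$ is a single $G_V$-orbit (a Bruhat cell intersected with the $G_V$-invariant locus), and the restricted map $q\colon q^{-1}(O_V^x)\to O_V^x$ is a $G_V$-equivariant vector bundle, since for a fixed pair of flags $(\phi_1,\phi_2)$ the fiber is the linear subspace of $E_V$ consisting of maps preserving both flags. Hence $\mathrm{H}_*^{G_V}(q^{-1}(O_V^x),A)$ is a free $\mathbf{S}_{V,A}$-module concentrated in even Borel--Moore degrees. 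I would then argue by induction on $x$ in the Bruhat order using the long exact sequence of equivariant Borel--Moore homology associated with the open--closed pair $q^{-1}(O_V^x)\hookrightarrow Z_V^{\leqslant x}\hookleftarrow Z_V^{<x}$:
$$
\cdots\to \mathbf{Z}_{V,A}^{<x}\to \mathbf{Z}_{V,A}^{\leqslant x}\to \mathrm{H}_*^{G_V}(q^{-1}(O_V^x),A)\to\cdots.
$$
Even-degree concentration forces the connecting maps to vanish, so the sequence breaks into short exact sequences, $\mathbf{Z}_{V,A}^{\leqslant x}$ is free over $\mathbf{S}_{V,A}$ in even degrees, and applying the same reasoning to the stratification of $Z_V$ itself yields that the pushforward along $Z_V^{\leqslant x}\hookrightarrow Z_V$ is injective. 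This proves part (a).

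For part (b), I would use the geometric description of convolution: if $\alpha\in \mathbf{Z}_{V,A}^{\leqslant x}$ and $\beta\in \mathbf{Z}_{V,A}^{\leqslant y}$ are supported on $Z_V^{\leqslant x}$ and $Z_V^{\leqslant y}$ respectively, then their convolution $\alpha\star\beta$, obtained from $p_{13*}(p_{12}^{*}\alpha\cap p_{23}^{*}\beta)$ on $\widetilde{F}_V\times\widetilde{F}_V\times\widetilde{F}_V$, is set-theoretically supported on the geometric composition $Z_V^{\leqslant x}\circ Z_V^{\leqslant y}$. The classical Schubert-cell multiplication $\overline{O_V^x}\circ\overline{O_V^y}\subset\overline{O_V^{xy}}$, valid precisely under the length-additivity hypothesis $l(xy)=l(x)+l(y)$, lifts through $q$ to give $Z_V^{\leqslant x}\circ Z_V^{\leqslant y}\subset Z_V^{\leqslant xy}$, and hence the desired inclusion of Borel--Moore subspaces. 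The case $x=y=e$ then shows that $\mathbf{Z}_{V,A}^e$ is a subalgebra.

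The main obstacle is making the purity step run over an arbitrary commutative ring $A$ of finite global dimension rather than a field of characteristic zero. This hinges on the fact already invoked in Section \ref{subs_alg-S-P}: the torsion index of $G_V$ equals $1$, so $\mathbf{S}_{V,A}$ and $\mathbf{P}_{V,A}$ are free polynomial algebras over $A$, and the equivariant cohomology of each stratum $q^{-1}(O_V^x)$ remains a free module concentrated in even degrees after base change to any $A$. This is what ensures that the long exact sequences split integrally, not merely rationally, and is the only delicate point of the argument.
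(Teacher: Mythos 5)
Your argument is correct and is essentially the paper's: the paper likewise stratifies $Z_V^{\leqslant x}$ by the pieces $q^{-1}(O_V^w)$, shows each stratum has free, evenly concentrated equivariant Borel--Moore homology via an affine $G_V$-equivariant fibration (there projected onto $F_{\ui}$ rather than onto the orbit $O_V^w$ itself), and splits the resulting long exact sequences by the cellular-fibration argument of Chriss--Ginzburg, Section 6.2, while part (b) is dismissed as the standard support computation for convolution, exactly as you give it. The point you flag as delicate over an arbitrary ring $A$ --- freeness and even concentration of the strata's equivariant cohomology --- is the same one the paper secures via the torsion-index-one observation of Section \ref{subs_alg-S-P}.
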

\begin{proof}
All the statements are obvious except, perhaps, the injectivity in
$(a)$. It follows from the existence of a decomposition
$$
Z_V^{\leqslant x}=\coprod_{w\leqslant x}q^{-1}(O_V^w).
$$
We need the following obvious lemma.
\begin{lem}
\label{lem_auxil-fibr} For each $G_V$-orbit $\calO$ in $F_V\times
F_V$ the map
$p:q^{-1}(\calO) \to F_\ui$ such that $(x,\phi_1,\phi_2)\mapsto \phi_2$ for each $x\in E_V,~\phi_1,\phi_2\in F_V$
is an affine $G_V$-equivariant fibration, where $\ui$ is the unique
element of $I^\nu$ such that $\calO\subset F_V\times F_\ui$.
\end{lem}

Now we can conclude applying the arguments of \cite[Sec.~6.2]{CG}
and the exact sequence in equivariant Borel-Moore homology. We
should just replace \cite[Lem.~6.2.5]{CG} by Lemma
\ref{lem_auxil-fibr}.
\end{proof}

So we can consider $x_{\ui',\ui}(k)$ and $\sigma_{\ui',\ui}(l)$ as
elements of $\mathbf{Z}_{V,A}$.

\subsection{The $A$-algebra structure on $\mathbf{F}_{V,A}$\label{subs_alg-F}}
Let $\ui\in I^\nu$. Assigning to a formal variable $x_{\ui}(l)$ of
degree 2 the first Chern class of $G_V$-equivariant line bundle
$\mathcal{O}_{\widetilde{F}_{\ui}}(l)$ for $l=1,2,\hdots,m$ yields a
graded $A$-algebra isomorphism
$$
A[x_{\ui}(1),x_{\ui}(2),\hdots,x_{\ui}(m)]=\mathbf{F}_{\ui,A}.
$$
The multiplication of polynomials equips each $\mathbf{F}_{\ui,A}$
with an obvious graded $A$-algebra structure. Thus,
$\mathbf{F}_{V,A}=\bigoplus_{\ui\in I^\nu}\bfF_{\ui,A}$ is also a
graded $A$-algebra.

Now we can consider $\mathbf{Z}_{V,A}$ as a
$\mathbf{F}_{V,A}$-module such that
$f(x_{\ui}(1),\hdots,x_{\ui}(m))\in \mathbf{F}_{\ui,A}$ acts on
$\mathbf{Z}_{V,A}$ by the operator
$z\mapsto f(x_{\ui}(1),\hdots,x_{\ui}(m))\star z$.
We can interpret this $\mathbf{F}_{V,A}$-action on
$\mathbf{Z}_{V,A}$ as follows. Note that

\begin{eqnarray*}
\widetilde{F}_V & = & \{(x,\phi)\in E_V\times F_V;~x\text{ preserves }\phi\}=\\
& = & \{(x,\phi,\phi)\in E_V\times F_V\times F_V;~x\text{ preserves }\phi\}=\\
& = & Z_V^e.
\end{eqnarray*}

Then $\mathbf{F}_{V,A}=\mathbf{Z}_{V,A}^e$. So the
$\mathbf{F}_{V,A}$-action on $\mathbf{Z}_{V,A}$ is the action of the
subalgebra $\mathbf{Z}_{V,A}^e$ on $\mathbf{Z}_{V,A}$.

\subsection{KLR-algebra and Borel-Moore homology\label{subs_KLR-BM}}

For each sequence $\ui=(i_1,i_2,\hdots,i_m)\in I^\nu$ and for each
integers $k,l\in[1,m]$, $l\ne m$ we consider the following elements
$ x_{\ui}(k)=x_{\ui,\ui}(k),~\sigma_\ui(l)=\sigma_{s_l(\ui),\ui}(l)
$ in $\bfZ_{V,A}$.

\begin{thm}
\label{thm_isom-KLR-Z} We have an $A$-algebra isomorphism $
R_{\nu,A}\simeq\mathbf{Z}_{V,A}. $
\end{thm}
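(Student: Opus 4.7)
The plan is to build an explicit $A$-algebra morphism $\Phi\colon R_{\nu,A}\to\bfZ_{V,A}$ by sending the KLR generators to the geometric classes introduced in Section~\ref{subs_gen-Z}, namely
$1_\ui\mapsto 1_\ui$, $x_\ui(k)\mapsto x_{\ui,\ui}(k)$, $\tau_\ui(l)\mapsto \sigma_{s_l(\ui),\ui}(l)$, and then to show that $\Phi$ is well defined and bijective. The proof will follow the classical Varagnolo--Vasserot / Rouquier strategy, but carried out in a characteristic-free manner: no appeal to the decomposition theorem or to torus localization, relying instead on Lemma~\ref{lem_cl-emb-Z}, on the identification $\bfZ_{V,A}^e=\bfF_{V,A}$ of Section~\ref{subs_alg-F}, and on the triviality of the torsion index ensured in Section~\ref{subs_alg-S-P}.

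The verification that $\Phi$ respects the KLR relations of Definition~\ref{def_KLR} splits into cheap and expensive parts. The orthogonality $1_\ui 1_{\ui'}=\delta_{\ui,\ui'}1_\ui$, the idempotent compatibilities, the commutativity of the $x_\ui(k)$, and the distant commutation $\tau(l)\tau(l')=\tau(l')\tau(l)$ for $|l-l'|>1$ follow at once from the supports of the fundamental classes (they are carried by disjoint components or by Schubert cells whose product factorizes because $l(s_l s_{l'})=2$). The three remaining families -- the quadratic relation involving $Q_{\ui,l}$, the deformed braid relation, and the commutator of $\tau$ with $x$ -- are the only ones requiring genuine geometric input. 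To handle these uniformly, I would compute $\Phi$ composed with the convolution action of $\bfZ_{V,A}$ on $\bfF_{V,A}$, and compare with the faithful polynomial representation of $R_{\nu,A}$ on $\Pol_{\nu,A}$ recalled in Section~\ref{subs_f-rep}.

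Concretely, once $\bfF_{V,A}$ is identified with $\Pol_{\nu,A}$ by sending the first Chern class of $\calO_{\widetilde F_\ui}(k)$ to $x_\ui(k)$, the class $x_{\ui,\ui}(k)$ acts by multiplication and the class $\sigma_{s_l(\ui),\ui}(l)$ acts, by an explicit Gysin / base-change computation on the $G_V$-equivariant $\bbP^1$-bundle underlying $Z_V^{s_l}$, as $-\bigl(f-s_l(f)\bigr)/\bigl(x_\ui(l)-x_\ui(l+1)\bigr)$ when $s_l(\ui)=\ui$ and as $\bigl(x_{s_l(\ui)}(l)-x_{s_l(\ui)}(l+1)\bigr)^{h_\ui(l)} s_l(f)$ when $s_l(\ui)\ne\ui$. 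These are exactly the operators of Section~\ref{subs_f-rep}, so the KLR relations are automatically satisfied on a faithful module, hence hold in $\bfZ_{V,A}$, and simultaneously $\Phi$ is injective. The two cases $s_l(\ui)=\ui$ and $s_l(\ui)\ne\ui$ must be treated separately because in the equal case $Z_V^{s_l}\cap(\widetilde F_\ui\times\widetilde F_\ui)$ sits as a non-reduced looking subvariety whose fundamental class is correctly identified via the Euler class of the relevant tangent bundle; this is the most delicate point and is where I expect the main obstacle.

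For surjectivity I would use the Bruhat stratification of Section~\ref{subs_str-Z}. By Lemma~\ref{lem_cl-emb-Z}, $\bfZ_{V,A}$ is filtered by the $\bfZ_{V,A}^{\leqslant x}$, and by Lemma~\ref{lem_auxil-fibr} each associated graded piece $\bfZ_{V,A}^{\leqslant x}/\bfZ_{V,A}^{<x}$ is a free module over $\bfZ_{V,A}^e=\bfF_{V,A}$ of rank one per pair $(\ui,\uj)$ with $\ui=x(\uj)$, generated by the fundamental class of $Z_V^x$. For a reduced expression $x=s_{l_1}\cdots s_{l_r}$, the iterated convolution $\sigma(l_1)\star\cdots\star\sigma(l_r)$ lies in $\bfZ_{V,A}^{\leqslant x}$ and, modulo $\bfZ_{V,A}^{<x}$, equals the fundamental class of $Z_V^x$ up to a unit (this is the same Gysin computation as above, iterated). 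Combined with the fact that $\bfF_{V,A}=\bigoplus_\ui A[x_\ui(1),\ldots,x_\ui(m)]$ is generated by the $x_{\ui,\ui}(k)$ and $1_\ui$, an induction on the Bruhat order shows that the images of the KLR generators span $\bfZ_{V,A}$. This finishes the proof.
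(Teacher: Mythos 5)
Your overall strategy is genuinely different from the paper's, and it contains one real gap. The paper does \emph{not} verify the KLR relations geometrically over a general ring $A$: it observes that $\bfZ_{V,\bbZ}$ is a free $\bbZ$-module (Lemma \ref{lem_free-F-mod}), so that by universal coefficients $\bfZ_{V,\bbZ}\hookrightarrow\bfZ_{V,\bbC}$, imports the relations from the characteristic-zero computation of Varagnolo--Vasserot, proves surjectivity of $\Phi_\bbZ$ by the Bruhat-filtration argument (which is essentially identical to your surjectivity paragraph), and finally base-changes via $R_{\nu,A}=R_{\nu,\bbZ}\otimes A$, $\bfZ_{V,A}=\bfZ_{V,\bbZ}\otimes A$. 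You instead propose to redo the Gysin computations directly over $A$ and to check the relations ``on the polynomial representation.'' The Gysin/Euler-class computations themselves are indeed characteristic-free, so that part of your plan is sound in principle.

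The gap is in the sentence ``the KLR relations are automatically satisfied on a faithful module, hence hold in $\bfZ_{V,A}$.'' What you verify this way is that the relations hold among the images of your classes in $\End_A(\bfF_{V,A})$. To conclude that they hold in $\bfZ_{V,A}$ itself you need the convolution representation $\bfZ_{V,A}\to\End_A(\bfF_{V,A})$ to be \emph{injective}. The faithfulness recalled in Section \ref{subs_f-rep} is faithfulness of $\Pol_{\nu,A}$ as an $R_{\nu,A}$-module; it is a different statement and cannot be used before $\Phi$ is known to be well defined. Over $\bbC$ the injectivity of $\bfZ_V\to\End(\bfF_V)$ is proved in [VV] by equivariant localization, a rational-coefficients technique, and your proposal gives no substitute over an arbitrary $A$ (e.g.\ $A=\bbF_p$). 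You would either have to prove this faithfulness over $A$ (a leading-term argument in the free $\bfF_{V,A}$-basis $\{[Z_V^w]\}$ of Lemma \ref{lem_free-F-mod} can be made to work, but it is a genuine extra argument), or verify the relations directly in $\bfZ_{V,A}$ in that basis, or simply adopt the paper's shortcut: everything is free over $\bbZ$, so the relations descend from $\bbC$ to $\bbZ$ and then extend to $A$ by scalars. A minor further point: to match the operators of Section \ref{subs_f-rep} exactly you need the sign twist $\tau_{\ui}(l)\mapsto(-1)^{h_{\ui}(l)}\sigma_{\ui}(l)$ used in the paper, not $\tau_{\ui}(l)\mapsto\sigma_{s_l(\ui),\ui}(l)$ as written.
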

\begin{proof}
First, we claim that there is an $A$-algebra homomorphism $
\Phi_A:R_{\nu,A}\to\mathbf{Z}_{V,A} $ given by
$$
\Phi_A(1_{\ui})=1_{\ui},\qquad
\Phi_A(x_{\ui}(k))=x_{\ui}(k),\qquad\Phi_A(\tau_{\ui}(l))=(-1)^{h_{\ui}(l)}\sigma_{\ui}(l).
$$

To prove this we must check that the relations from Definition \ref{def_KLR} hold for the elements  $1_{\ui}$, $x_{\ui}(k)$, $(-1)^{h_{\ui}(l)}\sigma_{\ui}(l)$ in $\mathbf{Z}_{V,A}$. Now according to \cite[Thm.~3.6]{VV}, this holds for $A=\mathbb{C}$. We want to deduce from this that the relations hold for any $A$. To do this, we first construct an $\mathbf{F}_{V,A}$-basis of $\mathbf{Z}_{V,A}$ using the following lemma, which can be proved in the same way as Lemma \ref{lem_cl-emb-Z}. 

\begin{lem}
\label{lem_free-F-mod} We have $\mathbf{Z}_{V,A}^{\leqslant
x}=\bigoplus\limits_{w\leqslant x}\mathbf{F}_{V,A}\star[Z_V^w]$ and
$\mathbf{Z}_{V,A}^{<x}=\bigoplus\limits_{w<x}\mathbf{F}_{V,A}\star[Z_V^w]$
for $w\in\mathfrak{S}$. In particular $\mathbf{Z}_{V,A}$ is a free
graded $\mathbf{F}_{V,A}$-module of rank $m!$.
\end{lem}

By Lemma \ref{lem_free-F-mod} the $\mathbb{Z}$-module
$\mathbf{Z}_{V,\mathbb{Z}}$ is free. So by the universal coefficient
theorem we have an inclusion of rings $
j_A\colon\mathbf{Z}_{V,\mathbb{Z}}\to\mathbf{Z}_{V,A}. $ By
\cite[Thm.~2.5]{KL} the $\mathbb{Z}$-module $R_{\nu,\bbZ}$ is
free. So we have an inclusion $ i_A\colon R_{\nu,\mathbb{Z}}\to
R_{\nu,A}. $ Since $j_\mathbb{C}$ takes the elements $1_{\ui}$,
$x_{\ui}(k)$, $(-1)^{h_{\ui}(l)}\sigma_{\ui}(l)$ in
$\mathbf{Z}_{V,\bbZ}$ to the corresponding elements in
$\mathbf{Z}_{V,\bbC}$, the defining relations in $R_{\nu,\bbZ}$ are
satisfied by the generators of $\mathbf{Z}_{V,\bbZ}$. Thus, the ring
homomorphism
$\Phi_{\mathbb{Z}}:R_{\nu,\mathbb{Z}}\to\mathbf{Z}_{V,\mathbb{Z}}$
is well-defined and we have the following commutative diagram
$$
\begin{CD}
R_{\nu,\mathbb{C}} @>\Phi_{\mathbb{C}}>> \mathbf{Z_{V,\mathbb{C}}}\\
@AAi_\mathbb{C}A                       @AAj_\mathbb{C}A\\
R_{\nu,\mathbb{Z}} @>\Phi_{\mathbb{Z}}>> \mathbf{Z_{V,\mathbb{Z}}}.
\end{CD}
$$

Now we must prove that $\Phi_{\mathbb{Z}}$ is bijective. Note that
the map $\Phi_\mathbb{Z}$ is $\mathbf{F}_{V,\mathbb{Z}}$-linear.
Further, the morphism $\Phi_\mathbb{Z}$ is injective because
$i_\mathbb{C}$ and $\Phi_\mathbb{C}$ are. Let us prove that
$\Phi_{\bbZ}$ is surjective. We will need the following lemma.
\begin{lem}
\label{lem_rel-sigma} The following relations hold in
$\mathbf{Z}_{V,\mathbb{Z}}$:
\begin{itemize}
    \item[(a)] $\sigma_{\ui,\uj}(l)=0$ unless $\ui\in\{\uj,s_l(\uj)\}$,
\item[(b)]$\sigma_{\ui,\ui}(l)=(x_{\ui}(l+1)-x_{\ui}(l))^{h_{i_l,i_{l+1}}}$  if $s_l(\ui)\ne \ui$.
\end{itemize}
\end{lem}
\begin{proof}
By \cite[Prop.~2.22]{VV} and \cite[Lem.~1.8~(a)]{VV} these
relations hold in $\mathbf{Z}_{V,\mathbb{C}}$. Thus, they also hold
in $\mathbf{Z}_{V,\mathbb{Z}}$ by injectivity of the map
$j_{\mathbb{C}}$.
\end{proof}
The image of $\Phi_{\mathbb{Z}}$ contains the elements $x_{\ui}(k)$
and $1_{\ui}$ for $\ui\in I^\nu$ and $k\in[1,m]$. So it contains
$\mathbf{F}_{V,\mathbb{Z}}\star [Z_V^e]$ because
$$
[Z_V^e]=\sum\limits_{\ui\in I^\nu} 1_{\ui},
\qquad\mathbf{F}_{V,\mathbb{Z}}=\bigoplus_{\ui\in
I^\nu}\mathbb{Z}[x_{\ui}(1),x_{\ui}(2),\hdots,x_{\ui}(m)].
$$
Next, we prove that the image of $\Phi_\mathbb{Z}$ contains
$[Z_V^{s_l}]$ for $l\in[1,m-1]$. By Lemma \ref{lem_rel-sigma} (a) we
have
\begin{eqnarray*}
[Z_V^{s_l}] & = & \sigma(l)\\
& = &\sum_{\ui,\uj\in I^\nu}\sigma_{\ui,\uj}(l)\\
& = &\sum_{\ui\in I^\nu}\sigma_{s_l(\ui),\ui}(l)+\sum_{\ui\in I^\nu,~s_l(\ui)\ne\ui}\sigma_{\ui,\ui}(l)\\
& = & \sum_{\ui\in I^\nu}\sigma_{\ui}(l)+\sum_{\ui\in
I^\nu,~s_l(\ui)\ne\ui}\sigma_{\ui,\ui}(l).
\end{eqnarray*}
We know that
$\sigma_{\ui}(l)=\Phi_{\mathbb{Z}}((-1)^{h_{\ui}(l)}\tau_{\ui}(l))$.
If $s_l(\ui)\ne \ui$ then by Lemma \ref{lem_rel-sigma} (b) we have
$$
\sigma_{\ui,\ui}(l)=(x_{\ui}(l+1)-x_{\ui}(l))^{h_{i_l,i_{l+1}}}=\Phi_{\mathbb{Z}}((x_{\ui}(l+1)-x_{\ui}(l))^{h_{i_l,i_{l+1}}}).
$$
So $[Z_V^{s_l}]$ is in the image of $\Phi_{\mathbb{Z}}$.
To conclude we use Lemma \ref{lem_free-F-mod} and the following
lemma, see \cite[Lem.~3.7]{VV}.
\begin{lem}
\label{lem_mult-F-basis} If $l(s_lw)=l(w)+1$ we have
$[Z_V^{s_l}]\star[Z_V^w]=[Z_V^{s_lw}]$ in $\mathbf{Z}_V^{\le
s_lw}/\mathbf{Z}_V^{<s_lw}$.
\end{lem}

This proves that $\Phi_{\bbZ}$ is bijective. Thus $\Phi_A$ is
well-defined and bijective for arbitrary commutative ring $A$
because
$R_{\nu,A}=R_{\nu,\mathbb{Z}}\otimes
A$ and $\mathbf{Z}_{V,A}=\mathbf{Z}_{V,\mathbb{Z}}\otimes A$.
\end{proof}

\subsection{Stratified varieties\label{subs_strat-var}}
Let $X$ be a $G$-variety for some connected algebraic group $G$. We
fix a stratification $ X=\coprod_{\lambda\in\Lambda}X_\lambda $ of
$X$ into smooth connected locally closed $G$-stable subsets such
that the closure of each stratum is a union of strata. Let
$D_{G}(X,A)$ be the bounded $G$-equivariant derived category of
sheaves of $A$-modules on $X$ constructible with respect to the
stratification $ X=\coprod_{\lambda\in\Lambda}X_\lambda. $

\begin{lem}
\label{lem_Kr-Sch} Suppose that $A$ is either a complete discrete
valuation ring or a field. The category $D_G(X,A)$ is a
Krull-Schmidt category.
\end{lem}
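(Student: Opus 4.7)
The plan is to apply the standard Krull-Schmidt criterion: an additive or triangulated category is Krull-Schmidt if and only if it is idempotent-complete and every endomorphism ring in it is semiperfect. So I would verify both properties for $D_G(X,A)$. Idempotent-completeness is a general feature of bounded equivariant constructible derived categories: via the standard realization of $D_G(X,A)$ inside an ambient bounded derived category of equivariant sheaves, splittings of idempotents exist in the ambient category, and the summands remain constructible with respect to the fixed finite stratification $\coprod_\lambda X_\lambda$ and hence lie inside $D_G(X,A)$.

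The main task is to prove that $\End_{D_G(X,A)}(\mathcal{F})$ is semiperfect for every $\mathcal{F}$, and the key step is to show that $\End(\mathcal{F})$ is finitely generated as an $A$-module. Using boundedness of $\mathcal{F}$ and finiteness of the stratification, an induction via the open/closed recollement distinguished triangles would reduce the finite generation of $\Hom^0_{D_G(X,A)}(\mathcal{F},\mathcal{F})$ to the case of a single smooth connected $G$-stable stratum $X_\lambda$. On such a stratum, $\mathcal{F}|_{X_\lambda}$ is quasi-isomorphic to a finite complex of equivariant local systems, and $\Hom^0$ becomes a finite successive extension of $\Ext^p_G$-groups between equivariant local systems; on an orbit $X_\lambda\cong G/H$ these equal equivariant cohomology groups of $BH$ with coefficients in a finite-rank representation in a fixed degree $p$, each of which is finitely generated over the Noetherian ring $A$.

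Once $\End(\mathcal{F})$ is known to be a module-finite $A$-algebra, semiperfectness is standard. If $A$ is a field, then $\End(\mathcal{F})$ is a finite-dimensional, hence Artinian, $A$-algebra, which is automatically semiperfect. If $A$ is a complete DVR with maximal ideal $\mathfrak{m}$ and residue field $k$, then $\End(\mathcal{F})$ is $\mathfrak{m}$-adically complete, the reduction $\End(\mathcal{F})/\mathfrak{m}\End(\mathcal{F})$ is a finite-dimensional $k$-algebra and therefore Artinian, its orthogonal idempotents decompose into primitive ones, and they lift back to $\End(\mathcal{F})$ by Hensel's lemma; this provides the semiperfect structure.

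The principal obstacle is the finite-generation step for $\End(\mathcal{F})$ as an $A$-module, since equivariant cohomology of $X$ is in general not $A$-finite in total. Overcoming it requires using \emph{both} the boundedness of $\mathcal{F}$ (so only finitely many cohomological shifts contribute) and the Noetherianity of $A$ (so each individual cohomological degree on each stratum is $A$-finite), combined with a careful local-to-global bookkeeping along the finite stratification $\coprod_\lambda X_\lambda$.
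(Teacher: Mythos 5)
Your proof is correct and follows essentially the same route as the paper: both arguments rest on idempotent-completeness of $D_G(X,A)$ (the paper cites the Le--Chen criterion via the bounded $t$-structure), on the $A$-finiteness of $\End(\mathcal{F})$ (which the paper simply cites from the Bernstein--Lunts construction and you sketch via the stratification), and on lifting idempotents from the residue field using completeness of $A$. The only difference is packaging: you invoke the semiperfect-endomorphism-ring criterion for Krull--Schmidt, while the paper argues directly that every indecomposable object has local endomorphism ring by lifting a nontrivial idempotent decomposition of $\mathbf{k}\otimes_A\End(\mathcal{F})$.
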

\begin{proof}
By \cite[Thm.]{Kar} the category $D_G(X,A)$ is Karoubian because
its $t$-structure is bounded, see \cite[Def.~2.2.4,
Prop.~2.5.2]{BL}. Let $\bfk$ be the factor of $A$ by its
unique maximal ideal. We need to prove that the endomorphism ring
$\End(\calF)$ of each indecomposable object $\calF$ in $D_G(X,A)$ is
local. Note that under the assumption on the ring $A$ the
endomorphism ring $\End(\calF)$ is local if and only if the ring
$\bfk\otimes_A \End(\calF)$ is local.

Now, suppose that the ring $\bfk\otimes_A \End(\calF)$ is not local.
The endomorphism ring of every object of $D_G(X,A)$ is of finite
type over $A$, see the construction of the bounded equivariant
derived category in \cite[Sec.~2.1-2.2]{BL}. Thus the ring
$\bfk\otimes_A \End(\calF)$ is a finite dimensional $\bfk$-algebra.
In particular the ring $\bfk\otimes_A \End(\calF)$ is artinian. Then
the identity of $\bfk\otimes_A \End(\calF)$ can be written as a sum
of two orthogonal idempotents $e_1$ and $e_2$ because $\bfk\otimes_A
\End(\calF)$ is not local. Moreover, by \cite[Ex.~6.16]{CR} we
can lift these idempotents to the idempotents $\widetilde e_1$,
$\widetilde e_2$ of $\End(\calF)$ such that $\widetilde e_1 +
\widetilde e_2=\Id_\calF$. Thus by Karoubianness the object $\calF$
is not indecomposable.
\end{proof}

For each $\lambda\in\Lambda$ we denote by $i_\lambda$ the inclusion
$X_\lambda\to X$ and by $d_\lambda$ the complex dimension of
$X_\lambda$. For each $x\in X$ we denote by $i_x$ the inclusion
$\{x\}\to X$.
For all $\lambda\in\Lambda$, let $\underline{A}_\lambda$ be the
$G$-equivariant constant sheaf on $X_\lambda$.
For each $\lambda$ let ${\rm Loc}_{G}(X_\lambda,A)$ be the category
of $G$-equivariant local systems of free $A$-modules on $X_\lambda$.
Let $\underline{A}_X$ be the $G$-equivariant constant sheaf on $X$. 

\subsection{Lusztig category $\calQ_V$\label{subs_QV}}
From now on we suppose that $\Gamma$ is a Dynkin quiver. We have the
stratification $
E_V=\coprod_{\lambda\in\Lambda_V}\mathcal{O}_\lambda $ on $E_V$.
Here $\Lambda_V$ is the set of $G_V$-orbits in $E_V$. For
$\lambda\in \Lambda_V$ we will use the notation $i_\lambda$ and
$d_\lambda$ as in Section \ref{subs_strat-var}, i.e., the map
$i_\lambda$ is the inclusion $\calO_\lambda\to E_V$ and $d_\lambda$
is the complex dimension of $\calO_\lambda$. Consider the following
complex in $D_{G_V}(E_V,A)$:
$$
\mathcal{L}_{V,A}=\bigoplus_{\ui\in I^\nu}\mathcal{L}_{\ui},\qquad
\mathcal{L}_{\ui}=\pi_{\ui*}\underline{A}_{\widetilde{F}_{\ui}}, ~~
\ui\in I^\nu.
$$

Consider also the complex in $D_{G_V}(E_V,A)$:
$$
^\delta\mathcal{L}_{V,A}=\bigoplus_{\ui\in
I^\nu}{^\delta\mathcal{L}_{\ui}},\qquad
^\delta\mathcal{L}_{\ui}=\calL_\ui[d_\lambda], ~~ \ui\in I^\nu.
$$

\begin{df}
Let $\mathcal{Q}_V$ be the full additive subcategory of
$D_{G_V}(E_V,A)$ whose objects are the finite direct sums of shifts of direct factors of $\calL_{V,A}$.
\end{df}

\subsection{Local systems}
The following lemma is proved in \cite[Prop.~2.2.1]{Br}.
\begin{lem}
\label{lem_stab-GL} The reductive part of the stabilizer in $G_V$ of
an element in $E_V$ is isomorphic to
$\prod\limits_{i}GL_{n_i}(\mathbb{C})$ for some positive integers
$n_i$.
\end{lem}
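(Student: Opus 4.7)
The plan is to identify the stabilizer of $x\in E_V$ with the unit group of the endomorphism algebra of the associated quiver representation and then invoke Gabriel's theorem together with the structure theory of finite-dimensional associative algebras. Concretely, an element $x\in E_V$ is the same datum as a representation $(V,x)$ of the Dynkin quiver $\Gamma$, and
$$
\mathrm{Stab}_{G_V}(x)\;=\;\Aut_{\Gamma}(V,x)\;=\;\bigl(\End_{\Gamma}(V,x)\bigr)^{\times}.
$$
The algebra $\End_{\Gamma}(V,x)$ is finite-dimensional over $\bbC$, so its group of units is a connected linear algebraic group whose unipotent radical is $1+\rad\End_{\Gamma}(V,x)$; the Levi (reductive) quotient is thus isomorphic to the unit group of the semisimple $\bbC$-algebra $\End_{\Gamma}(V,x)/\rad$.

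The next step is to compute this semisimple quotient. By Krull--Schmidt for representations of $\Gamma$, decompose
$$
(V,x)\;=\;\bigoplus_{i}(V_i,x_i)^{\oplus n_i},
$$
where the $(V_i,x_i)$ are pairwise non-isomorphic indecomposables. Because $\Gamma$ is Dynkin, Gabriel's theorem says that each indecomposable has $\End_{\Gamma}(V_i,x_i)=\bbC$. Consequently every non-invertible map between distinct indecomposable summands (and every nilpotent endomorphism of a single summand) lies in the Jacobson radical, while
$$
\End_{\Gamma}(V,x)/\rad\;\cong\;\prod_{i}\End_{\Gamma}(V_i,x_i)/\rad\otimes M_{n_i}(\bbC)\;\cong\;\prod_{i}M_{n_i}(\bbC).
$$
Taking units yields $\prod_{i}GL_{n_i}(\bbC)$, which is precisely the claimed reductive part of the stabilizer.

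The only subtle point is the identification of the Levi quotient of the unit group of a finite-dimensional algebra with the unit group of the semisimple quotient, but this is standard: the projection $A\twoheadrightarrow A/\rad A$ induces a surjection $A^{\times}\twoheadrightarrow (A/\rad A)^{\times}$ whose kernel $1+\rad A$ is connected and unipotent, so the surjection realizes the Levi quotient. No original obstacle appears; the proof is a direct combination of Gabriel's theorem, Krull--Schmidt, and Wedderburn.
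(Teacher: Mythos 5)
Your proof is correct and is essentially the argument behind the result that the paper does not prove but simply cites (\cite[Prop.~2.2.1]{Br}): identify $\mathrm{Stab}_{G_V}(x)$ with $\Aut_\Gamma(V,x)=\End_\Gamma(V,x)^{\times}$, split off the unipotent radical $1+\rad\End_\Gamma(V,x)$, and apply Krull--Schmidt and Wedderburn to the semisimple quotient. One small remark: invoking Gabriel's theorem is more than you need, since for any quiver over $\bbC$ the endomorphism ring of a finite-dimensional indecomposable is local with residue field $\bbC$, which already yields $\End_\Gamma(V,x)/\rad\cong\prod_i M_{n_i}(\bbC)$.
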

We get the following result from Lemma \ref{lem_stab-GL} and
\cite[Lem.~8.4.11]{CG}.
\begin{coro}
\label{coro_loc-sys-triv} Let $\calO$ be a $G_V$-orbit in $E_V$.
Each local system in $\mathrm{Loc}_{G_V}(\calO)$ is a multiple of
the trivial $G_V$-equivariant local system $\underline A_\calO$ on
$\calO$.
\end{coro}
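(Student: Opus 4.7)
The plan is to reduce the classification of $G_V$-equivariant local systems on $\calO$ to the representation theory of the component group of a point stabilizer, and then show that this component group is trivial. Fix a point $x\in\calO$ and let $H$ denote its stabilizer in $G_V$, so that $\calO\cong G_V/H$ as a $G_V$-variety. By \cite[Lem.~8.4.11]{CG}, taking the fiber at $x$ gives an equivalence between $\mathrm{Loc}_{G_V}(\calO)$ and the category of representations of the component group $H/H^\circ$ on free $A$-modules; under this equivalence the trivial local system $\underline{A}_\calO$ corresponds to the trivial one-dimensional representation. Hence the assertion of the corollary is equivalent to showing that $H$ is connected.

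To prove that $H$ is connected, I would invoke the Levi decomposition $H=R_u(H)\rtimes L$, where $R_u(H)$ is the unipotent radical of $H$ and $L$ is a Levi factor, i.e.\ a maximal reductive subgroup. A unipotent algebraic group over $\bbC$ is isomorphic as a variety to an affine space via the exponential map, hence is connected; so $R_u(H)$ is connected. By Lemma \ref{lem_stab-GL} the reductive part $L$ is isomorphic to $\prod_i GL_{n_i}(\bbC)$, which is also connected. Therefore $H$, being a semidirect product of two connected groups, is connected, i.e.\ $H=H^\circ$.

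Consequently the component group $H/H^\circ$ is trivial, so every object of $\mathrm{Loc}_{G_V}(\calO)$ is a free $A$-module with trivial action and corresponds to a direct sum of copies of $\underline{A}_\calO$. The only non-trivial ingredients are the equivalence with representations of $H/H^\circ$ (which is cited) and the connectedness of the Levi part (which is exactly what Lemma \ref{lem_stab-GL} provides); the rest is structural. The one point that requires a moment of care is the connectedness of the unipotent radical, but over $\bbC$ this is standard.
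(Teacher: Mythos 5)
Your proof is correct and follows exactly the route the paper intends: the paper gives no written proof but derives the corollary from precisely the two ingredients you use, namely the equivalence of \cite[Lem.~8.4.11]{CG} with representations of the component group of the stabilizer, and Lemma \ref{lem_stab-GL} to see that the stabilizer is connected (its Levi factor being a product of general linear groups and its unipotent radical being connected in characteristic zero). Nothing is missing.
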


\subsection{KLR-algebra and Yoneda algebras\label{subs_KLR-Yon}}
Consider the $G_V$-equivariant extension group ${\rm
Ext}_{G_V}^*(\calL_{V,A},\calL_{V,A})$ with the $A$-algebra
structure given by the Yoneda product.
\begin{thm}
\label{thm_isom-KLR-Ext} We have the following $A$-algebra
isomorphism
$R_{\nu,A}={\rm Ext}_{G_V}^*(\calL_{V,A},\calL_{V,A})$.
\end{thm}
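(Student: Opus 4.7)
The plan is to identify $\mathrm{Ext}^*_{G_V}(\calL_{V,A},\calL_{V,A})$ with the convolution algebra $\bfZ_{V,A}$ from Section \ref{subs_alg-Z} and then invoke Theorem \ref{thm_isom-KLR-Z}. This is the standard Chriss--Ginzburg style description of a Steinberg convolution algebra as a Yoneda Ext algebra, and the work is to check that it survives in our $G_V$-equivariant setting with coefficients in an arbitrary commutative ring $A$.

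First I would assemble the geometric input. The disjoint union $\widetilde F_V=\coprod_{\ui\in I^\nu}\widetilde F_\ui$ is a smooth $G_V$-variety, as each $\widetilde F_\ui$ is an iterated Grassmannian bundle. The combined projection $\pi_V\colon\widetilde F_V\to E_V$ is proper because each $\pi_\ui$ is, and tautologically $\calL_{V,A}=\pi_{V*}\underline{A}_{\widetilde F_V}$. Moreover the variety $Z_V$ identifies with the fiber product $\widetilde F_V\times_{E_V}\widetilde F_V$, with the two projections $p_1,p_2$ onto the factors fitting into a Cartesian square with $\pi_V$.

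Next I would apply the $(\pi_{V!},\pi_V^!)$-adjunction together with properness of $\pi_V$ (giving $\pi_{V!}=\pi_{V*}$) and proper base change in $D_{G_V}$ to obtain
\begin{equation*}
\mathrm{Ext}^*_{G_V}(\calL_{V,A},\calL_{V,A})=\mathrm{H}^*_{G_V}\!\bigl(\widetilde F_V,\pi_V^!\pi_{V*}\underline{A}_{\widetilde F_V}\bigr)=\mathrm{H}^*_{G_V}\!\bigl(Z_V,p_1^!\underline{A}_{\widetilde F_V}\bigr).
\end{equation*}
Since $\widetilde F_V$ is smooth, $\underline{A}_{\widetilde F_V}$ equals $\calD_{\widetilde F_V,A}$ up to a shift on each connected component, so $p_1^!\underline{A}_{\widetilde F_V}$ is a component-wise shift of $\calD_{Z_V,A}$. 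Taking cohomology thus recovers $\bfZ_{V,A}=\mathrm{H}_*^{G_V}(Z_V,A)$ as a graded $A$-module, up to the grading convention matching the Ext degree with the Borel--Moore degree (this is the reason for the shifted complex ${}^\delta\calL_{V,A}$ appearing in Theorem \ref{thm_sec2}, but only the underlying algebra structure matters here).

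The main obstacle is checking that the Yoneda product on the Ext side corresponds to the convolution product $\star$ on $\bfZ_{V,A}$. This comparison is the core of \cite[Ch.~8]{CG} in the complex non-equivariant setting and proceeds by unwinding the adjunction and base-change isomorphisms; thanks to the equivariant six-functor formalism of \cite{BL}, the argument transports verbatim to $D_{G_V}(E_V,A)$ for arbitrary $A$. Once this compatibility is in place, composing with the isomorphism $R_{\nu,A}\simeq\bfZ_{V,A}$ from Theorem \ref{thm_isom-KLR-Z} yields the desired $A$-algebra identification $R_{\nu,A}=\mathrm{Ext}^*_{G_V}(\calL_{V,A},\calL_{V,A})$.
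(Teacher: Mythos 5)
Your overall strategy is the same as the paper's: compute $\mathrm{Ext}^*_{G_V}(\calL_{V,A},\calL_{V,A})$ via adjunction and base change as equivariant Borel--Moore homology of $Z_V$, match the Yoneda product with convolution, and then quote Theorem \ref{thm_isom-KLR-Z}. The additive identification you give (adjunction for proper $\pi_V$, base change over the Cartesian square, smoothness of $\widetilde F_V$ to convert $p_1^!\underline{A}_{\widetilde F_V}$ into a shift of the dualizing complex) is exactly the computation in the paper, done there component by component over the pairs $(\ui,\uj)$.

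The one place you diverge is the multiplicative step, and it is the delicate one. You assert that the Chriss--Ginzburg comparison of convolution with the Yoneda product ``transports verbatim'' to $D_{G_V}(E_V,A)$ for an arbitrary commutative ring $A$. The paper does not make this claim: it invokes \cite[Prop.~8.6.35]{CG} only for $A=\bbC$, and then bootstraps to general $A$ by a torsion-freeness argument. Concretely, Lemma \ref{lem_free-F-mod} shows $\mathbf{Z}_{V,\bbZ}$ is a free $\bbZ$-module, so by universal coefficients the map $\mathrm{H}^{G_V}_*(Z_V,\bbZ)\to\mathrm{H}^{G_V}_*(Z_V,\bbC)$ is an injective ring homomorphism; placing this in a square with the corresponding map of Ext-algebras forces the bottom bijection $\mathrm{H}^{G_V}_*(Z_V,\bbZ)\to\Ext^*_{G_V}(\calL_{V,\bbZ},\calL_{V,\bbZ})$ to be an algebra isomorphism, and the case of general $A$ then follows by extension of scalars. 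This reduction is precisely what lets the paper avoid re-verifying the Chriss--Ginzburg formalism with integral or torsion coefficients. Your route is defensible if you actually carry out that verification, but as written the ``verbatim'' claim is an assertion rather than an argument, at the exact point where something nontrivial must be checked; you should either supply that check or replace it with the paper's reduction to the known $\bbC$ case.
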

\begin{proof}
We have proved in Theorem \ref{thm_isom-KLR-Z} that there is an
$A$-algebra isomorphism
$$
R_{\nu,A}=\mathrm{H}_*^{G_V}(Z_V,A).
$$
Now it suffice to show that there exists as $A$-algebra isomorphism
$$
\mathrm{H}^{G_V}_*(Z_V,A)=\Ext_{G_V}^*(\calL_{V,A},\calL_{V,A}).
$$
For each $\ui,\uj\in I^\nu$ consider the following commutative
diagram
$$
\begin{CD}
Z_{\ui\uj} @>p_2>>\widetilde F_\uj\\
@V{p_1}VV         @V{\pi_\uj}VV\\
F_\ui      @>\pi_\ui>> E_V,
\end{CD}
$$
where $p_1$ and $p_2$ are the natural projections. For each
$n\in\bbN$ we have
\begin{eqnarray*}
\Ext_{G_V}^n((\pi_\ui)_*\underline A_{\widetilde F_\ui},(\pi_\uj)_*\underline A_{\widetilde F_\uj}) & = & \Ext_{G_V}^n(\underline A_{\widetilde F_\ui},(\pi_\ui)^!(\pi_\uj)_*\underline A_{\widetilde F_\uj})\\
& = & \Ext_{G_V}^n(\underline A_{\widetilde F_\ui},(p_1)_*(p_2)^!\underline A_{\widetilde F_\uj})\\
& = & \Ext_{G_V}^n((p_1)^*\underline A_{\widetilde F_\ui},(p_2)^!\underline A_{\widetilde F_\uj})\\
& = & \Ext_{G_V}^n(\underline A_{Z_{\ui\uj}},\calD_{Z_{\ui\uj},A}[-2\dim_{\bbC} \widetilde F_\uj])\\
& = & \mathrm{H}_{G_V}^{n-2\dim_{\bbC} \widetilde F_\uj}(Z_{\ui\uj},\calD_{Z_{\ui\uj},A})\\
& = & \mathrm{H}^{G_V}_{2\dim_{\bbC} \widetilde F_\uj-n}(Z_{\ui\uj},A).\\
\end{eqnarray*}
Thus we have a set bijection between $\mathrm{H}^{G_V}_*(Z_V,A)$ and
$\Ext_{G_V}^*(\calL_{V,A},\calL_{V,A})$. Now we need to show that
the convolution product on $\mathrm{H}^{G_V}_*(Z_V,A)$ coincides
with the Yoneda product on $\Ext_{G_V}^*(\calL_{V,A},\calL_{V,A})$.
At first note that this follows from \cite[Prop.~8.6.35]{CG}
for $A=\bbC$. Now we show this for $A=\bbZ$. We have a commutative
diagram
$$
\begin{CD}
\mathrm{H}^{G_V}_*(Z_V,\bbC) @>>> \Ext_{G_V}^*(\calL_{V,\bbC},\calL_{V,\bbC})\\
@AAA            @AAA\\
\mathrm{H}^{G_V}_*(Z_V,\bbZ) @>>> \Ext_{G_V}^*(\calL_{V,\bbZ},\calL_{V,\bbZ}),\\
\end{CD}
$$
where the left vertical map is an algebra inclusion, the right
vertical map is an algebra homomorphism, the top horizontal map is
an algebra isomorphism and the bottom horizontal map is a set
bijection. Thus the right vertical map is automatically an algebra
inclusion and the bottom horizontal map is automatically an algebra
isomorphism. The general case follows from the case $A=\bbZ$ by
extension of scalars.
\end{proof}
\begin{rk}
The results of this section are also true for each quiver
$\Gamma$ without loops.
\end{rk}
\subsection{Functor $Y'$\label{subs_Y'}}
We consider two different gradings on the algebra $R_{\nu,A}$:
\begin{enumerate}
    \item the grading introduced in Section \ref{subs_KLR},
    \item the grading
$ \bigoplus_{n\in\bbN}\Ext_{G_V}^n(\calL_{V,A},\calL_{V,A}), $ see
Theorem \ref{thm_isom-KLR-Ext}.
\end{enumerate}
Denote by $\mathrm{mod}(R_{\nu,A})$ (resp.
$\mathrm{mod}'(R_{\nu,A})$) the category of finitely generated
$\mathbb{Z}$-graded $R_{\nu,A}$-modules with respect to the first
(resp. second) grading. Denote by $\mathrm{proj}(R_{\nu,A})$ (resp.
$\mathrm{proj}'(R_{\nu,A})$) the category of projective finitely
generated $\mathbb{Z}$-graded $R_{\nu,A}$-modules with respect to
the first (resp. second) grading. Consider the following
contravariant functor:
$$
Y':D_{G_V}(E_V,A)\to {\rm mod}'(R_{\nu,A}),~\mathfrak{L}\mapsto {\rm
Ext}_{G_V}^*(\mathfrak{L},\calL_{V,A}).
$$
\begin{thm}
\label{thm_cat-equiv-nongrad} The restriction of the functor $Y'$ to
$\mathcal{Q}_V$ yields an equivalence of categories
$\mathcal{Q}_V^{\mathrm{op}}\to \mathrm{proj}'(R_{\nu,A})$.
\end{thm}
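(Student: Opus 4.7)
The plan is to deduce the equivalence from Theorem \ref{thm_isom-KLR-Ext} by a standard additive-categorical (Morita-type) argument. The single key ingredient, beyond the theorem, is that idempotents split in $D_{G_V}(E_V,A)$, which is the content of the proof of Lemma \ref{lem_Kr-Sch} (the cited result of [Kar]).

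\textbf{Step 1: $Y'$ takes $\calQ_V$ into $\mathrm{proj}'(R_{\nu,A})$.} By Theorem \ref{thm_isom-KLR-Ext}, the Yoneda action identifies $Y'(\calL_{V,A}) = \Ext^*_{G_V}(\calL_{V,A},\calL_{V,A})$ with $R_{\nu,A}$ as a graded right $R_{\nu,A}$-module for the second grading. Because $Y'$ is additive, contravariant, and converts the shift $[n]$ on the source into the grading shift $\langle -n\rangle$ on modules, it sends $\bigoplus_i \calL_{V,A}[n_i]$ to $\bigoplus_i R_{\nu,A}\langle -n_i\rangle$, and therefore sends direct summands of such to graded finitely generated projective $R_{\nu,A}$-modules. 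Hence $Y'(\mathfrak{L}) \in \mathrm{proj}'(R_{\nu,A})$ for every $\mathfrak{L}\in\calQ_V$.

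\textbf{Step 2: Full faithfulness.} For $\mathfrak{L}_1,\mathfrak{L}_2\in\calQ_V$ consider the canonical comparison map
$$
Y'_{\mathfrak{L}_2,\mathfrak{L}_1}\colon \Hom^\bullet_{D_{G_V}(E_V,A)}(\mathfrak{L}_2,\mathfrak{L}_1)\longrightarrow \Hom^\bullet_{R_{\nu,A}}(Y'(\mathfrak{L}_1),Y'(\mathfrak{L}_2))
$$
given by $f\mapsto Y'(f)$. Both sides are bi-additive and commute with direct sums and with the shift in each variable; thus, since every object of $\calQ_V$ is a direct summand of a finite direct sum of shifts of $\calL_{V,A}$, it suffices by a standard devissage to check bijectivity when $\mathfrak{L}_1=\mathfrak{L}_2=\calL_{V,A}$. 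There the left-hand side is $R_{\nu,A}$ by Theorem \ref{thm_isom-KLR-Ext}, the right-hand side is $\End^\bullet_{R_{\nu,A}}(R_{\nu,A})=R_{\nu,A}$, and the map is the identity.

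\textbf{Step 3: Essential surjectivity.} Any $P\in\mathrm{proj}'(R_{\nu,A})$ is the image of some graded idempotent $e$ in the endomorphism ring of a finite direct sum $M=\bigoplus_i R_{\nu,A}\langle -n_i\rangle$. By Step 2 applied to $\mathfrak{M}=\bigoplus_i \calL_{V,A}[n_i]\in\calQ_V$, the idempotent $e$ is of the form $Y'(\tilde e)$ for some idempotent $\tilde e\in\End(\mathfrak{M})$. The ambient category $D_{G_V}(E_V,A)$ is Karoubian, so $\tilde e$ splits, and its image is a direct summand of $\mathfrak{M}$, hence lies in $\calQ_V$ by definition of $\calQ_V$; $Y'$ sends this image to $P$.

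The main subtlety, such as it is, lies in Step 2: one must verify that the bifunctoriality and additivity of the comparison map really do reduce the bijectivity statement to the single case $\mathfrak{L}_1=\mathfrak{L}_2=\calL_{V,A}$, paying attention to the conversion of shifts $[n]$ on complexes into grading shifts $\langle -n\rangle$ on modules. Once this bookkeeping is in place, everything else is formal.
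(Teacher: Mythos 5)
Your proposal is correct and follows essentially the same route as the paper: establish that $Y'$ lands in graded finitely generated projectives via the direct-summand decomposition of $R_{\nu,A}=\Ext^*_{G_V}(\calL_{V,A},\calL_{V,A})$, prove full faithfulness by d\'evissage from the single case $\mathfrak{L}_1=\mathfrak{L}_2=\calL_{V,A}$, and obtain essential surjectivity by lifting the idempotent presenting $P$ through full faithfulness and splitting it in the Karoubian category $D_{G_V}(E_V,A)$. The only cosmetic difference is that you compare full graded Hom-spaces where the paper works degree by degree, which amounts to the same bookkeeping.
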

\begin{proof}
Suppose that $\calF$ is a direct factor of the complex $\calL_{V,A}$
in $D_{G_V}(E_V,A)$. We have $\calL_{V,A}=\calF\oplus\calG$ for some
complex $\calG$ in $D_{G_V}(E_V,A)$. We have
$$
R_{\nu,A}={\rm Ext}_{G_V}^*(\calL_{V,A},\calL_{V,A})={\rm
Ext}_{G_V}^*(\calF,\calL_{V,A})\oplus {\rm
Ext}_{G_V}^*(\calG,\calL_{V,A})=Y'(\calF)\oplus Y'(\calG).
$$
Thus $Y'(\calF)$ is a projective and finitely generated
$R_{\nu,A}$-module. From this we see that for every $X\in \calQ_V$
the $R_{\nu,A}$-module $Y'(X)$ is projective and finitely generated.
The $R_{\nu,A}$-module $Y'(X)$ is graded by
$Y'(X)_r=\Ext^r_{G_V}(X,\calL_{V,A})$ for each $r\in\bbZ$. Now we
prove that the functor $Y'\colon \calQ^{\mathrm{op}}_V\to
\mathrm{proj}'R_{\nu,A}$ is an isomorphism on morphisms. We need to
show that for $R,S\in\mathcal{Q}_V$ the morphism
$$
Y'_{R,S}:\mathrm{Hom}_{\mathcal{Q}_V}(R,S)\to
\mathrm{Hom}_{\mathrm{proj}'(R_{\nu,A})}(\mathrm{Ext}_{G_V}^*(S,\calL_{V,A}),\mathrm{Ext}_{G_V}^*(R,\calL_{V,A}))
$$
is an isomorphism. First, we consider the case $R=S=\calL_{V,A}$.
Then we have
$$
\mathrm{Hom}_{\mathcal{Q}_V}(R,S)=\mathrm{Hom}_{\mathcal{Q}_V}(\calL_{V,A},\calL_{V,A}),
$$
\begin{eqnarray*}
\mathrm{Hom}_{\mathrm{proj}'(R_{\nu,A})}(\mathrm{Ext}_{G_V}^*(S,\calL_{V,A}),\mathrm{Ext}_{G_V}^*(R,\calL_{V,A})) & = & \mathrm{Hom}_{\mathrm{proj}'(R_{\nu,A})}(R_{\nu,A},R_{\nu,A})\\
& = & R^0_{\nu,A}\\
& = & \mathrm{Ext}^0_{G_V}(\calL_{V,A},\calL_{V,A})\\
& = & \mathrm{Hom}_{\mathcal{Q}_V}(\calL_{V,A},\calL_{V,A}).
\end{eqnarray*}
Here $R^0_{\nu,A}$ means the $0$-graded component of $R_{\nu,A}$
with respect to the second grading. So for $R=S=\calL_{V,A}$ the
statement is true. Then it is also true if $R$ and $S$ are direct
factors of $\calL_{V,A}$. Then it is also true if $R$ and $S$ are
shifts of direct factors of $\calL_{V,A}$. Finally we see that the
statement is true for arbitrary $R,S\in \mathcal{Q}_V$.

To conclude we need to show that $Y'\colon
\mathcal{Q}_V^{\mathrm{op}}\to \mathrm{proj}'(R_{\nu,A})$ is
surjective on isomorphism classes. Let $P$ be a module in
$\mathrm{proj}'(R_{\nu,A})$. A choice of a finite number of
homogeneous generators of $P$ yields an epimorphism
$$
\pi:\bigoplus\limits_{i\in J}R_{\nu,A}[n_i]\to P
$$
for some finite set $J$ and some integers $n_i$. By projectivity
this epimorphism splits. Consider the complex
$\mathfrak{L}=\bigoplus\limits_{i\in I}\calL_{V,A}[-n_i]$. We have
$$
Y'(\mathfrak{L})=\mathrm{Hom}(\mathfrak{L},\calL_{V,A})=\bigoplus\limits_{i\in
J}R_{\nu,A}[n_i].
$$
The composition of the natural projection and inclusion
$$
Y'(\mathfrak{L})\stackrel{\pi_e}{\to} P\stackrel{i_e}{\to}
Y'(\mathfrak{L})
$$
yields an idempotent $e\in
\mathrm{End}_{\mathrm{proj}'(R_{\nu,A})}(Y'(\mathfrak{L}))$. The
functor $Y'\colon \mathcal{Q}_V^{\mathrm{op}}\to
\mathrm{proj}'(R_{\nu,A})$ is an isomorphism on morphisms. So we
have an idempotent $u\in\mathrm{End}_{\mathcal{Q}_V}(\mathfrak{L})$
such that $Y'(u)=e$. The idempotents $u$ and $1-u$ split in
$D_{G_V}(E_V,A)$ yielding a decomposition $\mathfrak{L}=X\oplus Y$,
where $X,Y\in D_{G_V}(E_V,A)$ and $u$ is a composition of the
natural projection and the natural inclusion $
\mathfrak{L}\stackrel{\pi_u}\to X\stackrel{i_u}{\to} \mathfrak{L}. $
By definition of $\mathcal{Q}_V$ the objects $X,Y$ belong to
$\mathcal{Q}_V$. Now we verify that the map
$\pi_e\circ Y'(i_u):~Y'(X)\to P$
is an isomorphism. Let us prove that the map $Y'(\pi_u) \circ i_e:~P\to Y'(X)$
is its inverse. We have
\begin{eqnarray*}
\pi_e\circ Y'(i_u)\circ Y'(\pi_u) \circ i_e & = & \pi_e\circ Y'(u) \circ i_e\\
& = & \pi_e\circ e \circ i_e\\
& = & (\pi_e\circ i_e)\circ (\pi_e \circ i_e)\\
& = & \mathrm{Id}_P
\end{eqnarray*}
and
\begin{eqnarray*}
Y'(\pi_u) \circ i_e\circ \pi_e\circ Y'(i_u) & = & Y'(\pi_u) \circ e \circ Y'(i_u)\\
& = & Y'(\pi_u) \circ Y'(u) \circ Y'(i_u)\\
& = & Y'(\pi_u\circ u\circ i_u)\\
& = & Y'((\pi_u\circ i_u)\circ (\pi_u\circ i_u))\\
& = & Y'(\mathrm{Id}_X)\\
& = & \mathrm{Id}_{Y'(X)}.
\end{eqnarray*}
\end{proof}

\subsection{Induction and restriction\label{subs_res-ind-KLR}}
Let $\bfk$ be a field.

In this section we describe the induction and the restriction
functors for representations of KLR-algebras. See \cite[Sec.~2.6]{KL} for more details. Consider $\nu_1,\nu_2\in\bbN I$ and set
$\nu=\nu_1+\nu_2$. We set
$$
|\nu_1|=m_1,\qquad |\nu_2|=m_2,\qquad |\nu|=m.
$$
There is a unique inclusion of graded $\bfk$-algebras
$R_{\nu_1,\bfk}\otimes R_{\nu_2,\bfk}\subset R_{\nu,\bfk}$
such that
$$
1_{\ui}\otimes 1_{\uj}\mapsto 1_{\uk},\qquad x_{\ui}(k)\otimes
1_{\uj}\mapsto x_{\uk}(k), \qquad 1_{\ui}\otimes x_{\uj}(k)\mapsto
x_{\uk}(m_1+k),
$$
$$
\tau_{\ui}(l)\otimes 1_{\uj}\mapsto \tau_{\uk}(l),\qquad
1_{\ui}\otimes \tau_{\uj}(l)\mapsto \tau_{\uk}(m_1+l)
$$
for each $k,l,\ui,\uj$. Let $1_{\nu_1,\nu_2}$ be the image of the
identity element by this inclusion. We get the following functors:
$$
{\Ind}_{\nu_1,\nu_2}:
\mathrm{mod}(R_{\nu_1,\bfk})\times\mathrm{mod}(R_{\nu_2,\bfk})\to\mathrm{mod}(R_{\nu,\bfk}),\quad
(M_1,M_2)\mapsto
R_{\nu,\bfk}1_{\nu_1,\nu_2}\bigotimes_{R_{\nu_1,\bfk}\otimes
R_{\nu_2,\bfk}}(M_1\otimes M_2),
$$
$$
{\Res}_{\nu_1,\nu_2}:\mathrm{mod}(R_{\nu,\bfk})\to\mathrm{mod}(R_{\nu_1,\bfk}\otimes
R_{\nu_2,\bfk}), \qquad M\mapsto 1_{\nu_1,\nu_2}M.
$$
These functors take projective modules to projective modules.
\subsection{Projective $R_{\nu,\mathbf{k}}$-modules\label{subs_proj-KLR-mod}} Denote by $\mathcal{A}$ the ring $\mathbb{Z}[q,q^{-1}]$. For $m\in \mathbb{N},~m>0$ consider the following elements of $\mathcal{A}$
$$
[m]=\sum_{l=1}^mq^{m+1-2l}=\frac{q^m-q^{-m}}{q-q^{-1}},\qquad
[m]!=\prod_{l=1}^m[l], \qquad l_{m}=m(m-1)/2.
$$
We also set $[0]!=1$, $l_0=0$. For $\mathbf{m}=(m_1,\cdots,m_k)$,
$m_l\in \bbN$ set
$$
[{\mathbf m}]!= \prod_{l=1}^k[m_l]!,\qquad
l_{\mathbf{m}}=\sum_{l=1}^kl_{m_l}.
$$
Given a pair $y =(\ui,\mathbf{a})\in Y_\nu$ we define a projective
$R_{\nu,\mathbf{k}}$-module $R_y$ as follows
\begin{itemize}
    \item If $I=\{i\}$, $\nu=mi$, $\ui=(i,i,\cdots,i)$ and $y=(i,m)$ we set
$$
\bfP_y=\bfP_{i,m}=\calP ol_{\nu,A}[l_m].
$$
$$
R_{\nu,\bfk}\simeq\oplus_{w\in \frakS_m}\bfP_{i,m}[2l(w)-l_m].
$$
So $\bfP_{i,m}$ is a direct summand of $R_{\nu,\bfk}[l_m]$. We
choose once and for all an idempotent $1_{i,m}\in R_{\nu,\bfk}$ such
that
$$
\bfP_{i,m}=(R_{\nu,\bfk}\cdot 1_{i,m})[l_m].
$$
    \item If $y=(\ui,\mathbf{a})$ with $\ui=(i_1,\cdots,i_k)$, $\mathbf{a}=(a_1,\cdots,a_k)$ we define the idempotent $1_y\in R_{\nu,\bfk}$
as the image of the element $\otimes_{l=1}^k1_{i_l,a_l}$ by the
inclusion of graded $\bfk$-algebras $\otimes_{l=1}^k
R_{i_la_l,\bfk}\subset R_{\nu,\bfk}$. Then we set
$$
\bfP_y=(R_{\nu,\bfk}\cdot 1_y)[l_{\mathbf{a}}].
$$
\end{itemize}
We have the following lemma, see \cite[Sec.~2.5-2.6]{KL}.
\begin{lem}
\label{lem_proj-mod-isom} We have the following graded projective
$R_{\nu,\bfk}$-module isomorphisms
\begin{itemize}
    \item[(a)] $\bfP_{\ui}\simeq[\mathbf a]!\bfP_y$, where $y=(i_1^{(a_1)}\cdots i_k^{(a_k)})\in Y_\nu$, $\ui=(i_1^{a_1}\cdots i_k^{a_k})\in I^\nu$, $\mathbf a=(a_1,\cdots,a_k)$,
    \item[(b)] $\Ind_{\nu_1,\nu_2}(\bfP_{y_1},\bfP_{y_2})\simeq\bfP_{y_1y_2}$, where $\nu_1,\nu_2\in \bbN I$, $y_1\in Y_{\nu_1},y_2\in Y_{\nu_2}$ and $y_1y_2\in Y_{\nu_1+\nu_2}$ is the concatenation of $y_1$ and $y_2$.
\end{itemize}
\end{lem}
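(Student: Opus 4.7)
My plan is to prove (b) first, and then deduce (a) by reducing the general case to the single-vertex case via iterated induction. Both parts are essentially bookkeeping of idempotents and graded shifts, so the main content is choosing the right decomposition.

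For part (b), I would unpack the definitions of $\bfP_{y_1}$ and $\bfP_{y_2}$ as shifted projective cyclic modules, $\bfP_{y_j}=(R_{\nu_j,\bfk}\cdot 1_{y_j})[l_{\mathbf{a}_j}]$, and then plug them into the definition of $\Ind_{\nu_1,\nu_2}$ from Section~\ref{subs_res-ind-KLR}. By construction, the idempotent $1_{y_1y_2}\in R_{\nu,\bfk}$ is precisely the image of $1_{y_1}\otimes 1_{y_2}$ under the inclusion $R_{\nu_1,\bfk}\otimes R_{\nu_2,\bfk}\subset R_{\nu,\bfk}$, so the universal property of tensor product collapses
\[
R_{\nu,\bfk}1_{\nu_1,\nu_2}\otimes_{R_{\nu_1,\bfk}\otimes R_{\nu_2,\bfk}}\bigl((R_{\nu_1,\bfk}\cdot 1_{y_1})\otimes(R_{\nu_2,\bfk}\cdot 1_{y_2})\bigr)
\]
onto $R_{\nu,\bfk}\cdot 1_{y_1y_2}$. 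The grading shifts $l_{\mathbf{a}_1}$ and $l_{\mathbf{a}_2}$ add up to $l_{\mathbf{a}_{y_1y_2}}$, because concatenating $\mathbf{a}_1$ and $\mathbf{a}_2$ simply sums the quantities $m_r(m_r-1)/2$.

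For part (a), I would first settle the single-vertex case: $I=\{i\}$, $\nu=mi$, $\ui=(i,\dots,i)$, $y=(i,m)$. Here $\bfP_\ui=R_{\nu,\bfk}\cdot 1_\ui=R_{\nu,\bfk}$, so the isomorphism $R_{\nu,\bfk}\simeq\bigoplus_{w\in\frakS_m}\bfP_{i,m}[2l(w)-l_m]$ recorded in the definition of $\bfP_{i,m}$ already gives
\[
\bfP_\ui\simeq\Bigl(\sum_{w\in\frakS_m}q^{2l(w)-l_m}\Bigr)\bfP_{i,m}.
\]
A direct calculation with the Poincar\'e polynomial of $\frakS_m$ yields $\sum_{w\in\frakS_m}q^{2l(w)}=q^{l_m}[m]!$, from which $\sum_w q^{2l(w)-l_m}=[m]!$, i.e.\ $\bfP_\ui\simeq[m]!\,\bfP_{(i,m)}$, as required.

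Finally, for general $\ui=(i_1^{a_1}\cdots i_k^{a_k})$ and $y=(i_1^{(a_1)}\cdots i_k^{(a_k)})$, I would write $\ui$ as the concatenation of the sequences $\ui^{(l)}=(i_l,\dots,i_l)$ of length $a_l$, with $y$ the concatenation of the $(i_l,a_l)$. Applying part (b) iteratively (and using that $\Ind$ is exact and commutes with the $\calA$-action $q^n\cdot[\cdot]=[\cdot\,][n]$) gives
\[
\bfP_\ui\simeq\Ind\bigl(\bfP_{\ui^{(1)}},\dots,\bfP_{\ui^{(k)}}\bigr),\qquad
\bfP_y\simeq\Ind\bigl(\bfP_{(i_1,a_1)},\dots,\bfP_{(i_k,a_k)}\bigr).
\]
Using the single-vertex case $\bfP_{\ui^{(l)}}\simeq[a_l]!\,\bfP_{(i_l,a_l)}$ and pulling the scalars $[a_l]!$ out of each induction slot yields $\bfP_\ui\simeq[\mathbf a]!\,\bfP_y$. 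The main potential pitfall is the correct tracking of the grading shifts $l_{\mathbf a}$ through induction, which is exactly the content ensuring part (b) above is an isomorphism of \emph{graded} modules; once that is in place, part (a) reduces to the single-vertex computation.
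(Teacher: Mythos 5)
Your argument is correct. Note that the paper itself offers no proof of this lemma: it simply cites \cite[Sec.~2.5-2.6]{KL}, and what you have written is essentially the standard argument from that reference, so there is nothing to compare against except to confirm the details. Part (b) is exactly the idempotent bookkeeping you describe: the compatibility of the inclusions $\otimes_l R_{i_la_l,\bfk}\subset R_{\nu_j,\bfk}$ and $R_{\nu_1,\bfk}\otimes R_{\nu_2,\bfk}\subset R_{\nu,\bfk}$ (immediate from the formulas on generators) gives $1_{y_1y_2}$ as the image of $1_{y_1}\otimes 1_{y_2}$, and then $M\otimes_A Ae\simeq Me$ does the rest; the additivity of $l_{\mathbf a}$ under concatenation handles the shift. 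For part (a), your Poincar\'e-polynomial computation $\sum_{w\in\frakS_m}q^{2l(w)-l_m}=[m]!$ is right, and the reduction of the general case to the single-vertex case via iterated induction is the intended route (iterating (b) tacitly uses associativity of $\Ind$, which again follows from compatibility of the inclusions; and you need only additivity of $\Ind$ and its commutation with shifts, not exactness). The one point worth being explicit about is that the entire single-vertex case rests on the decomposition $R_{mi,\bfk}\simeq\bigoplus_{w\in\frakS_m}\bfP_{i,m}[2l(w)-l_m]$ of the nilHecke algebra, which the paper records as given data in Section \ref{subs_proj-KLR-mod}; you are entitled to use it, but that is where the genuine content of (a) lives (it is proved in \cite[Sec.~2.2]{KL}), so your proof is a correct reduction to that input rather than a proof from first principles.
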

\subsection{The algebra $\bff$\label{subs_alg-f}}
We recall the definition and general properties of the negative part
$\bff$ of the Drinfeld-Jimbo quantized enveloping algebra associated
with the quiver $\Gamma$. See \cite{Lu} for more details.
\begin{df}
The algebra $\bff$ is the $\bbQ(q)$-algebra generated by the
elements $\theta_i$, $i\in I$ with the relations
$$
\sum_{a+b=1-i\cdot
j}(-1)^a\theta_i^{(a)}\theta_j\theta_i^{(b)},\qquad i\ne
j,~a\geqslant 0,b\geqslant 0,
$$
where $\theta_i^{(a)}=\theta_i^a/[a]!$.
\end{df}

The assignment $\deg\theta_i=i$ for $i\in I$ defines a
$\bbN[I]$-grading $ \bff=\bigoplus_{\nu\in\bbN[I]}\bff_\nu. $ For
each homogeneous element $x\in \bff_\nu$ we set $|x|=\sum_{i\in I}
\nu_i$. The tensor product (over $\bbQ(q)$) ${\bff}\otimes{\bff}$
has a $\bbQ(q)$-algebra structure defined by
$$
(x_1\otimes x_2)(x_1'\otimes x_2')=q^{-|x_2||x_1'|}x_1x_1'\otimes
x_2x_2'
$$
where $x_1,x_2,x_1',x_2'\in {\bff}$ are homogeneous. There exists a
unique coproduct
$r\colon \bff\to\bff\otimes\bff$
such that
\begin{itemize}
    \item $r(\theta_i)=\theta_i\otimes 1+1\otimes \theta_i$ for $i\in I$,
    \item $r$ is a $\bbQ(q)$-algebra homomorphism.
\end{itemize}
\begin{df}
Let $_\mathcal{A}\mathbf{f}$ be the $\mathcal{A}$-subalgebra of
$\mathbf{f}$ generated by the elements $\theta_i^{(a)}$ with $i\in
I,a\in \mathbb{N}$.
\end{df}

\subsection{A new $\bbZ$-basis in $_\calA\bff$\label{subs_new-bas}}
The construction of this section is very similar to the construction given in Section \ref{subs_Y'}. Let
$K(\calQ_V)$ be the split Grothendieck group of the additive
category $\calQ_V$, i.e., $K(\calQ_V)$ is the Abelian group with one
generator $[\calF]$ for each isomorphism class of objects of
$\calQ_V$ and with relations $[\calL']+[\calL'']=[\calL]$ whenever
$\calL$ is isomorphic to $\calL'\oplus\calL''$. Set
$$
\calQ=\bigoplus_{V}\calQ_V,\qquad K(\calQ)=\bigoplus_{V}K(\calQ_V),
$$
where $V$ runs over the isomorphism classes of finite dimensional
$I$-graded $\bfk$-vector spaces and
$$
R_\bfk=\bigoplus_{\nu\in \bbN[I]}R_{\nu,\bfk},\qquad
K(R_\bfk)=\bigoplus_{\nu\in \bbN[I]}K(R_{\nu,\bfk}).
$$
The functors ${\Ind}_{\nu_1,\nu_2}$ induce an associative unital
$\mathcal{A}$-algebra structure on $K(R_\bfk)$. The functors
${\Res}_{\nu_1,\nu_2}$ induce a coassociative counital
$\mathcal{A}$-coalgebra structure on $K(R_\bfk)$. We will denote the
multiplication and the comultiplication on $K(R_\bfk)$ by $\Ind$ and
$\Res$ respectively.
\begin{rk}
There exists a non-degenerate $\calA$-bilinear form
$(,)\colon K(R_\bfk)\times K(R_\bfk)\to \bbQ(q)$
such that
$$
(x,\Ind(y_1,y_2))=(\Res(x),y_1\otimes y_2), \qquad \forall
x,y_1,y_2\in K(R_\bfk),
$$
see \cite[Sec.~2.5]{KL}. 
\end{rk}

The following theorem is proved in \cite[Prop.~3.4,~Sec.~3.2]{KL}.
\begin{thm}
\label{thm_isom-f-KLR} There is an $\mathcal{A}$-bialgebra
isomorphism $\gamma_{\calA}:{_\mathcal{A}\bff}\to K(R_\bfk)$ that
takes $\theta_y$ to $[\bfP_y]$ for each $y\in Y_\nu$.
\end{thm}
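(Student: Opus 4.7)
The plan is to define $\gamma_\calA$ on the algebra generators of $_\calA\bff$ by $\theta_i^{(a)}\mapsto [\bfP_{i,a}]$, check that the quantum Serre relations hold in $K(R_\bfk)$, verify coalgebra compatibility against the coproduct $r$, and finally establish bijectivity.

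To extend the assignment $\theta_i \mapsto [\bfP_i]$ to an $\calA$-algebra homomorphism, I must verify the quantum Serre relation $\sum_{a+b=1-i\cdot j}(-1)^a \theta_i^{(a)}\theta_j\theta_i^{(b)} = 0$ in $K(R_\bfk)$. By Lemma \ref{lem_proj-mod-isom}(b), the induction product translates into concatenation of sequences, and by Lemma \ref{lem_proj-mod-isom}(a) divided powers are sent consistently to their projective avatars, so this relation becomes the identity
\[
\sum_{a+b=1-i\cdot j}(-1)^a[\bfP_{(i^{(a)},j,i^{(b)})}] = 0
\]
in $K(R_{(1-i\cdot j)i+j,\bfk})$. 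The natural way to prove it is to pair both sides against arbitrary tensors using the non-degenerate bilinear form $(,)$ recalled in the remark above: by the adjunction $(x,\Ind(y_1,y_2))=(\Res(x),y_1\otimes y_2)$, the vanishing reduces to a Mackey-style computation of $\Res_{\nu_1,\nu_2}$ applied to the alternating sum. A direct application of the Mackey filtration for KLR-algebras shows that for every splitting $\nu_1+\nu_2 = (1-i\cdot j)i + j$ the alternating sum collapses, exactly mirroring the Serre identity inside $\bff\otimes\bff$.

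Once the algebra map is constructed, iterated application of Lemma \ref{lem_proj-mod-isom}(b) gives $\gamma_\calA(\theta_y)=[\bfP_y]$ for every $y=(i_1^{(a_1)}\cdots i_k^{(a_k)})\in Y_\nu$, since $\theta_y = \theta_{i_1}^{(a_1)}\cdots\theta_{i_k}^{(a_k)}$ and the induction of the corresponding projectives matches the concatenation. For coalgebra compatibility it suffices to verify the identity on the generators: $r(\theta_i)=\theta_i\otimes 1 + 1\otimes\theta_i$ matches $\Res([\bfP_i]) = [\bfP_i]\otimes 1 + 1\otimes[\bfP_i]$, because the dimension vector $i$ admits only the two trivial splittings. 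The twisted algebra structure on $\bff\otimes\bff$ with twist $q^{-|x_2||x_1'|}$ corresponds to the natural grading shift governing the induction/restriction adjunction on the categorical side, so this compatibility propagates from generators to all of $_\calA\bff$ using that both $r$ and $\Res$ are algebra morphisms.

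Surjectivity is then automatic since every $[\bfP_y]$ lies in the image and these classes span $K(R_\bfk)$ over $\calA$. Injectivity follows either from a rank comparison in each weight space, or, more canonically, from the non-degenerate pairings on both sides which are intertwined by $\gamma_\calA$ so that any kernel element would pair trivially with everything. The main obstacle is the Serre relation verification: writing out the Mackey filtration of $\Res$ applied to a composite $\Ind$ of divided-power projectives, and then matching the resulting alternating sum of graded $R_\nu$-modules with the combinatorial Serre identity, is the technical heart of the argument and is the content that Khovanov and Lauda established in detail in the reference cited.
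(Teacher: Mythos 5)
The paper offers no proof of this theorem at all: it is stated as a citation to Khovanov--Lauda \cite[Prop.~3.4, Sec.~3.2]{KL}, so there is nothing in the text to compare against except the structure of the cited argument. Your outline does follow that structure in its first half, and the well-definedness and injectivity steps are essentially sound as sketches: checking the Serre relations by pairing against everything requires knowing that the assignment $\theta_{\ui}\mapsto[\bfP_{\ui}]$ intertwines Lusztig's form on the free algebra with the form on $K(R_\bfk)$ (a graded-dimension computation of $1_{\ui}R_{\nu,\bfk}1_{\uj}$, which is the real content hiding behind your ``Mackey-style computation''), and then non-degeneracy of the form on $K(R_\bfk)$ kills the Serre element; injectivity via the intertwined pairings and non-degeneracy of Lusztig's form on $\bff$ is exactly KL's Proposition 3.4.

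The genuine gap is surjectivity. You write that it is ``automatic since every $[\bfP_y]$ lies in the image and these classes span $K(R_\bfk)$ over $\calA$,'' but the spanning claim \emph{is} surjectivity: $K(R_{\nu,\bfk})$ is free over $\calA$ on the classes of indecomposable projectives, and while every indecomposable projective is a summand of some $\bfP_{\ui}$, inverting the resulting decomposition matrix over $\calA$ (rather than over $\bbQ(q)$ or $\bbZ((q))$) is precisely the hard step of \cite[Sec.~3.2]{KL}; it is handled there by pairing $K(R_{\nu,\bfk})$ against the Grothendieck group of finite-dimensional modules and analyzing characters of simple modules via the crystal-type operators. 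Your fallback for injectivity, ``a rank comparison in each weight space,'' has the same problem: the equality of the $\calA$-rank of $K(R_{\nu,\bfk})$ with $\dim_{\bbQ(q)}\bff_\nu$ is a consequence of the theorem (equivalently, of the parametrization of simples), not an input to it. So the pairing argument is the only one of your two proposed routes to injectivity that works a priori, and surjectivity needs the character-map argument rather than being automatic.
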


The grading of $R_{\nu, \bfk}$ from Section \ref{subs_KLR} is
different from the $\Ext$-grading, see Section \ref{subs_Y'}. To
avoid this we must modify a bit the complex $\calL_{V,\bfk}$. We get
the following theorem, see \cite[Thm.~3.6]{VV}. Recall the
complex $^\delta\calL_{V,\bfk}$ from Section \ref{subs_QV}.
\begin{thm}
\label{thm_isom-grad-KLR-sh} We have the following graded
$\bfk$-algebra isomorphism
$$
R_{\nu,\bfk}\to
\Ext_{G_V}^*({^\delta\calL_{V,\bfk}},{^\delta\calL_{V,\bfk}}).
$$

\end{thm}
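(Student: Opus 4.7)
The approach is to upgrade the $\bfk$-algebra isomorphism $\Phi_\bfk\colon R_{\nu,\bfk}\simeq \Ext_{G_V}^*(\calL_{V,\bfk},\calL_{V,\bfk})$ already provided by Theorem \ref{thm_isom-KLR-Ext} (specialised to $A=\bfk$) to a graded isomorphism after replacing $\calL_{V,\bfk}$ with $^\delta\calL_{V,\bfk}$. Since the defining relations of $R_{\nu,\bfk}$ are homogeneous and the generators are $1_\ui$, $x_\ui(k)$, $\tau_\ui(l)$, it suffices to verify that each of these elements lands in the Ext-degree prescribed by Definition \ref{def_KLR}, namely $0$, $2$, and $a_\ui(l)$ respectively.

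Writing $d_\ui := \dim_{\bbC} \widetilde{F}_\ui$ so that $^\delta\calL_\ui = \calL_\ui[d_\ui]$, the identity $\Ext^n(\calF[p], \calG[q]) = \Ext^{n+q-p}(\calF, \calG)$ together with the isomorphism $\Ext^n_{G_V}(\calL_\ui, \calL_\uj) \cong \mathrm{H}^{G_V}_{2d_\uj - n}(Z_{\ui\uj}, \bfk)$ appearing in the proof of Theorem \ref{thm_isom-KLR-Ext} yields
$$
\Ext^n_{G_V}(^\delta\calL_\ui, ^\delta\calL_\uj) \cong \mathrm{H}^{G_V}_{d_\ui + d_\uj - n}(Z_{\ui\uj}, \bfk),
$$
so that a Borel--Moore class of degree $d$ in $\mathrm{H}^{G_V}_*(Z_{\ui\uj}, \bfk)$ has shifted-Ext degree $d_\ui + d_\uj - d$. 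Applying this to the easy generators: $1_\ui$ maps to the fundamental class of the diagonal $\widetilde{F}_\ui \subset Z_{\ui\ui}$, of BM-degree $2d_\ui$ and shifted-Ext degree $0$; and $x_\ui(k)$ maps to the pullback of $c_1(\calO_{\widetilde{F}_\ui}(k))$ to the diagonal, of ordinary cohomological degree $2$ and thus BM-degree $2d_\ui - 2$, giving shifted-Ext degree $2$.

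The non-trivial verification is for $\tau_\ui(l)$, which corresponds up to the sign $(-1)^{h_\ui(l)}$ to $\sigma_\ui(l) = [Z^{s_l}_{s_l(\ui),\ui}]$. One must establish the dimension identity $d_\ui + d_{s_l(\ui)} - 2\dim_{\bbC} Z^{s_l}_{s_l(\ui),\ui} = a_\ui(l)$. This is the principal obstacle; it reduces, via the affine $G_V$-fibration of Lemma \ref{lem_auxil-fibr}, to a direct fibre-dimension calculation expressed in terms of $h_{i_l,i_{l+1}}$ and $h_{i_{l+1},i_l}$, treated separately in the cases $s_l(\ui)=\ui$ (where $a_\ui(l)=-2$) and $s_l(\ui)\ne\ui$ (where $a_\ui(l)=h_{i_l,i_{l+1}}+h_{i_{l+1},i_l}$). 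Because this calculation involves only complex dimensions of algebraic varieties, it is insensitive to the coefficient field $\bfk$: it is precisely the computation of \cite[Thm.~3.6]{VV} over $\bbC$, and transfers verbatim to the present setting.
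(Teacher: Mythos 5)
Your proposal is correct and follows essentially the same route as the paper, which offers no argument beyond remarking that the shift by $\dim_\bbC\widetilde F_\ui$ repairs the grading mismatch and citing \cite[Thm.~3.6]{VV}; you make explicit the degree bookkeeping (the identification $\Ext^n_{G_V}({^\delta\calL_\ui},{^\delta\calL_\uj})\cong \mathrm{H}^{G_V}_{d_\ui+d_\uj-n}(Z_{\ui\uj},\bfk)$ and the degree checks for $1_\ui$, $x_\ui(k)$, $\sigma_\ui(l)$) while deferring the same dimension identity $d_\ui+d_{s_l(\ui)}-2\dim_\bbC Z^{s_l}_{s_l(\ui),\ui}=a_\ui(l)$ to \cite{VV}, which is legitimate since it concerns only complex dimensions of varieties and is therefore independent of $\bfk$.
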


The proof of the following theorem is the same as the proof of
Theorem \ref{thm_cat-equiv-nongrad}.
\begin{thm}
\label{thm_cat-equiv-grad} We have the following equivalence of
categories.
$$
Y\colon \calQ_V^{\mathrm{op}}\to
\mathrm{proj}(R_{\nu,\mathbf{k}}),\qquad \frakL\mapsto
\Ext_{G_V}^*(\frakL,{^\delta \calL_{V,\bfk}}).
$$
\end{thm}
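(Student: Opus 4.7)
The plan is to adapt the proof of Theorem \ref{thm_cat-equiv-nongrad} verbatim, with two modifications: the reference complex $\calL_{V,\bfk}$ is replaced by its normalization $^\delta\calL_{V,\bfk}$, and Theorem \ref{thm_isom-KLR-Ext} is replaced by its graded refinement Theorem \ref{thm_isom-grad-KLR-sh}. The sole purpose of the shifts in $^\delta\calL_{V,\bfk}$ is precisely to make the Yoneda grading on $\Ext^*_{G_V}({^\delta\calL_{V,\bfk}},{^\delta\calL_{V,\bfk}})$ coincide with the intrinsic KLR-grading of Section \ref{subs_KLR}, so that the category $\mathrm{proj}'(R_{\nu,\bfk})$ appearing in Theorem \ref{thm_cat-equiv-nongrad} gets replaced by $\mathrm{proj}(R_{\nu,\bfk})$. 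Aside from this grading convention, the argument is structurally identical.

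First I would note that $\calQ_V$ is unchanged by this renormalization, since finite direct sums of shifts of direct factors of $\calL_{V,\bfk}$ coincide with those of $^\delta\calL_{V,\bfk}$. Then, for a shift of a direct factor $\frakL$ of $^\delta\calL_{V,\bfk}$, writing $^\delta\calL_{V,\bfk}\simeq\frakL\oplus\frakG$ in $D_{G_V}(E_V,\bfk)$ and applying $Y$ yields a splitting
$$
R_{\nu,\bfk} = \Ext^*_{G_V}({^\delta\calL_{V,\bfk}},{^\delta\calL_{V,\bfk}}) = Y(\frakL)\oplus Y(\frakG)
$$
of graded $R_{\nu,\bfk}$-modules, via Theorem \ref{thm_isom-grad-KLR-sh}. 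Hence $Y(\frakL)$ is a finitely generated projective graded $R_{\nu,\bfk}$-module (with the KLR grading), and the same follows by additivity for every object of $\calQ_V$.

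Next I would check full faithfulness. The map
$$
Y_{R,S}\colon \Hom_{\calQ_V}(R,S)\to\Hom_{\mathrm{proj}(R_{\nu,\bfk})}(Y(S),Y(R))
$$
is shown to be an isomorphism first in the case $R=S={^\delta\calL_{V,\bfk}}$, where both sides identify with $R^0_{\nu,\bfk}$, the degree-zero component for the KLR-grading, again using Theorem \ref{thm_isom-grad-KLR-sh}. The statement then propagates to direct factors, to arbitrary shifts, and finally to finite direct sums, exactly as in the proof of Theorem \ref{thm_cat-equiv-nongrad}.

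Finally, for essential surjectivity, a choice of finitely many homogeneous generators of $P\in\mathrm{proj}(R_{\nu,\bfk})$ yields a split epimorphism $\pi\colon\bigoplus_{i\in J}R_{\nu,\bfk}[n_i]\to P$. Setting $\frakL=\bigoplus_{i\in J}{^\delta\calL_{V,\bfk}}[-n_i]\in\calQ_V$, so that $Y(\frakL)=\bigoplus_{i\in J}R_{\nu,\bfk}[n_i]$, the composition of $\pi$ with its splitting produces an idempotent $e\in\End_{\mathrm{proj}(R_{\nu,\bfk})}(Y(\frakL))$. By full faithfulness $e$ lifts to an idempotent $u\in\End_{\calQ_V}(\frakL)$, and by Karoubianness of $D_{G_V}(E_V,\bfk)$ (Lemma \ref{lem_Kr-Sch}) $u$ splits, yielding $X\in\calQ_V$ with $Y(X)\simeq P$. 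The only genuinely new input beyond the proof of Theorem \ref{thm_cat-equiv-nongrad} is the grading compatibility supplied by Theorem \ref{thm_isom-grad-KLR-sh}; once this is granted, every other step is a cosmetic rewriting.
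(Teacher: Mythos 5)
Your proposal is correct and follows exactly the route the paper intends: the paper's own proof of Theorem \ref{thm_cat-equiv-grad} consists precisely of the remark that it is the same as the proof of Theorem \ref{thm_cat-equiv-nongrad}, with the grading now matched via Theorem \ref{thm_isom-grad-KLR-sh} and the normalized complex $^\delta\calL_{V,\bfk}$. Your explicit verification that $\calQ_V$ is unchanged and that each step (projectivity, full faithfulness, idempotent splitting) carries over is exactly the intended argument.
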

So we have an $\calA$-linear map
$Y\colon K(\calQ)\to K(R_\bfk)$ such that $[\calF]\mapsto [Y(\calF)]$.
Moreover, it is clear from the definition of $Y$ that $Y(^\delta
\calL_{\ui})=P_{\ui}$ for $\ui\in I^\nu$. If $\ui\in I^\nu$ is the
expansion of the element $y=(\uj,a)$ in $Y_\nu$ than we have
${^\delta \calL_{\ui}}=[a]!{^\delta \calL_y}$ (see \cite[(4.7)]{VV})
and $[P_{\ui}]=[a]![P_y]$ (see Lemma \ref{lem_proj-mod-isom}). Thus,
$Y(^\delta \calL_y)=P_{y}$ for each $y\in Y_\nu$. So we have
$\gamma_{\calA}^{-1}\circ Y([^\delta \calL_y])=\theta_y$ for $y\in
Y_\nu$, where $\gamma_{\calA}$ is as in Theorem
\ref{thm_isom-f-KLR}.

Consider $\nu_1,\nu_2\in \bbN I$. Let $V_1,~V_2$ be $I$-graded
$\bbC$-vector spaces with graded dimensions $\nu_1,~\nu_2$
respectively. Analogically with the case when the characteristic of
$\bfk$ is zero, see \cite[Sec.~9.2.7]{Lu},  we have a
multiplication
$$
\circ\colon\calQ_{V_1}\times \calQ_{V_1}\to \calQ_{V_1\oplus V_2},
$$
such that ${^\delta \calL_{y_1}}\circ {^\delta \calL_{y_2}}={^\delta
\calL_{y_1y_2}}$ for $y_1\in Y_{\nu_1}$, $y_2\in Y_{\nu_2}$. Thus,
$\gamma_{\calA}^{-1}\circ Y$ is an algebra homomorphism. We get the
following theorem.
\begin{thm}
\label{thm_isom-sh-f} There exists an algebra isomorphism $
\lambda_{\calA}\colon K(\calQ)\to {_\calA\bff} $ such that for each
$y\in Y_\nu$, $\nu\in \bbN I$ we have
$\lambda_\calA([^\delta\calL_y])=\theta_y$.
\end{thm}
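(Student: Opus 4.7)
The plan is to take $\lambda_\calA:=\gamma_\calA^{-1}\circ Y$, where by abuse of notation $Y\colon K(\calQ)\to K(R_\bfk)$ denotes the $\calA$-module isomorphism induced on split Grothendieck groups by the categorical equivalence of Theorem~\ref{thm_cat-equiv-grad}, and $\gamma_\calA\colon{_\calA\bff}\to K(R_\bfk)$ is the $\calA$-bialgebra isomorphism of Theorem~\ref{thm_isom-f-KLR}. Since $Y$ is $\calA$-linear (the equivalence intertwines the shift $[1]$ on $\calQ_V$ with the $q$-shift on graded projectives) and bijective, and $\gamma_\calA$ is an isomorphism, bijectivity of $\lambda_\calA$ is immediate.

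The first step is to verify $\lambda_\calA([{^\delta\calL_y}])=\theta_y$. From the definition of $Y$ together with Theorem~\ref{thm_isom-grad-KLR-sh}, one has $Y({^\delta\calL_\ui})=\bfP_\ui$ for $\ui\in I^\nu$. For a general $y=(\uj,\mathbf{a})\in Y_\nu$ with expansion $\ui\in I^\nu$, comparing the divisibility ${^\delta\calL_\ui}=[\mathbf{a}]!\,{^\delta\calL_y}$ of \cite[(4.7)]{VV} with $\bfP_\ui=[\mathbf{a}]!\,\bfP_y$ of Lemma~\ref{lem_proj-mod-isom}(a) forces $Y({^\delta\calL_y})=\bfP_y$. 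Composing with $\gamma_\calA^{-1}$, which by Theorem~\ref{thm_isom-f-KLR} sends $[\bfP_y]$ to $\theta_y$, yields the claimed formula.

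For multiplicativity, since $\gamma_\calA$ is already an algebra map, it suffices to show that $Y$ intertwines the product $\circ$ on $K(\calQ)$ with the induction product on $K(R_\bfk)$. On the distinguished classes this is clear: the identity ${^\delta\calL_{y_1}}\circ{^\delta\calL_{y_2}}={^\delta\calL_{y_1y_2}}$ recalled just before the statement, together with $\Ind_{\nu_1,\nu_2}(\bfP_{y_1},\bfP_{y_2})\cong\bfP_{y_1y_2}$ from Lemma~\ref{lem_proj-mod-isom}(b), shows that both $Y([{^\delta\calL_{y_1}}\circ{^\delta\calL_{y_2}}])$ and $\Ind(Y[{^\delta\calL_{y_1}}],Y[{^\delta\calL_{y_2}}])$ equal $[\bfP_{y_1y_2}]$.

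The main obstacle is a generation claim: one must argue that the classes $[{^\delta\calL_y}]$ with $y\in Y_\nu$ and $\nu\in\bbN I$ span $K(\calQ)$ as an $\calA$-module, so that $\calA$-bilinear multiplicativity on this family extends to all of $K(\calQ)$. Any object of $\calQ_V$ is a finite direct sum of shifts of direct summands of some $\calL_{V,\bfk}=\bigoplus_{\ui}\calL_\ui$, so by the Krull--Schmidt property from Lemma~\ref{lem_Kr-Sch} its class lies in the $\calA$-span of indecomposable summand classes; these are in turn expressible in terms of the $[\calL_\ui]=q^{-\dim_\bbC\widetilde{F}_\ui}[{^\delta\calL_\ui}]$ via a triangularity argument on the grading. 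Once this is established, multiplicativity on generators propagates and the theorem is proven.
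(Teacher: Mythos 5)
Your proposal is correct and follows essentially the same route as the paper: define $\lambda_\calA=\gamma_\calA^{-1}\circ Y$, deduce $Y({^\delta\calL_y})=\bfP_y$ from the matching divisibilities ${^\delta\calL_\ui}=[\mathbf a]!\,{^\delta\calL_y}$ and $\bfP_\ui=[\mathbf a]!\,\bfP_y$, and match $\circ$ with $\Ind$ on these classes. Your extra generation step is the one point the paper leaves implicit (it is essentially Lemma~\ref{lem_bas-QV-Lus}); just note that the required triangularity is with respect to the closure order on the orbits $\Lambda_V$ (supports of indecomposable summands), and that one must use the classes $[{^\delta\calL_y}]$ for all $y\in Y_\nu$, not only $\ui\in I^\nu$, since the factors $[\mathbf a]!$ are not invertible in $\calA$.
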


For each $I$-graded finite dimensional $\bbC$-vector space $V$ the
indecomposable complexes in $\calQ_V$ form a $\bbZ$-basis in
$K(\calQ_V)$. Combining this for all $V$ we get a $\bbZ$-basis in
$K(\calQ)$. The homomorphism $\lambda_\calA$ takes this $\bbZ$-basis
to a $\bbZ$-basis of $_\calA\bff$.

\subsection{Indecomposable objects in $\calQ_V$\label{subs_indec-QV}}

For each $\lambda\in \Lambda_V$ there is $y_\lambda\in Y_\nu$ such
that $\widetilde{F}_{y_\lambda}\to \overline{\calO_\lambda}$ is a
resolution of the orbit closure $\overline{\calO_\lambda}$, see
\cite[Thm.~2.2]{Rein}. Let $i_\lambda$ and $\tilde{i}_\lambda$ be
the inclusions
$i_\lambda\colon \calO_\lambda\to 
E_V$ and $\tilde{i}_\lambda\colon(\pi_{y_\lambda})^{-1}(\calO_\lambda)\to
\widetilde{F}_{y_\lambda}$.
Denote also $\pi'_{y_\lambda}$ the morphism
$\pi'_{y_\lambda}\colon(\pi_{y_\lambda*})^{-1}(\calO_\lambda)\to
\calO_\lambda$
induced by $\pi_{y_\lambda}$. By the base change theorem we have
$$
i_\lambda^*\pi_{y_\lambda*}\underline{\bfk}_{\widetilde{F}_y}=\pi'_{y_\lambda*}\tilde{i}_\lambda^*\underline{\bfk}_{\widetilde{F}_y}=\underline{\bfk}_\lambda.
$$
So the complex
$\calL_{y_\lambda}=\pi_{y_\lambda*}\underline{\bfk}_{\widetilde{F}_y}$
has a unique indecomposable factor supported on all
$\overline{\calO_\lambda}$. Denote by $R_{y_\lambda}$ the shift by
$d_\lambda$ of this direct factor. We have
$i_\lambda^*R_{y_\lambda}=\underline{\bfk}_\lambda[d_\lambda]$. Note
that $R_{y_\lambda}$ is a shift of a direct factor of $\calL_\ui$
for some $\ui\in I^\nu$, because if $\ui=(i_1^{a_1}\cdots
i_k^{a_k})$ is the expansion of $y=(i_1^{(a_1)}\cdots i_k^{(a_k)})$
then we have ${^\delta\calL}_\ui=[a]!{^\delta\calL}_y$, where
$\mathbf a=(a_1\cdots,a_k)$, see Section \ref{subs_new-bas}. Thus,
$R_{y_\lambda}$ belongs to $\calQ_V$. For $\lambda\in\Lambda_V$ we
denote by $\mathrm{IC}(O_\lambda)$ the $G_V$-equivariant
intersection cohomology complex on $E_V$ associated with
$\calO_\lambda$ and the $G_V$-equivariant local system
$\underline{\bfk}_\lambda$, see \cite[Sec.~8.4]{CG}.

\begin{lem}
\label{lem_L_lamb-indep} The complex $R_{y_\lambda}$ does not depend
on the choice of the element $y_\lambda\in Y_\nu$ such that
$\pi_{y_\lambda}$ is a resolution of the orbit closure
$\overline{\calO_\lambda}$.
\end{lem}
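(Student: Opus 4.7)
The plan is to show $R_{y_\lambda} \cong R_{y'_\lambda}$ by constructing an explicit morphism whose restriction to the open orbit $\calO_\lambda$ is a nonzero scalar, and then deducing it is an isomorphism by a local-algebra nilpotency argument. Writing $R_{y_\lambda} = S[d_\lambda]$ and $R_{y'_\lambda} = S'[d_\lambda]$, where $S$ and $S'$ are the unique indecomposable summands of $\calL_{y_\lambda}$ and $\calL_{y'_\lambda}$ supported on all of $\overline{\calO_\lambda}$, it suffices to prove $S\cong S'$ in the Krull--Schmidt category $\calQ_V$ (Lemma~\ref{lem_Kr-Sch}).

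For the construction, consider the fiber product $Z = \widetilde{F}_{y_\lambda}\times_{E_V}\widetilde{F}_{y'_\lambda}$. Over $\calO_\lambda$ both projections are isomorphisms, so the preimage of $\calO_\lambda$ in $Z$ is the graph of the induced identification and has dimension $d_\lambda$; let $Z^{\mathrm{gr}}$ be its Zariski closure, a $G_V$-stable irreducible $d_\lambda$-dimensional subvariety. Extending the proof of Theorem~\ref{thm_isom-KLR-Ext} verbatim from $\ui,\uj\in I^\nu$ to arbitrary $y,y'\in Y_\nu$ (the base-change, adjunction, and Verdier-duality arguments there only need $\widetilde{F}_y$ smooth and $\pi_y$ proper), one obtains
\[
\Hom_{D_{G_V}(E_V,\bfk)}(\calL_{y_\lambda},\calL_{y'_\lambda}) = \mathrm{H}^{G_V}_{2d_\lambda}(Z,\bfk),
\]
so that the fundamental class $[Z^{\mathrm{gr}}]$ yields a morphism $f\colon \calL_{y_\lambda}\to\calL_{y'_\lambda}$. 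By the symmetric construction we obtain $g\colon\calL_{y'_\lambda}\to\calL_{y_\lambda}$. Because $Z^{\mathrm{gr}}\cap\pi^{-1}(\calO_\lambda)$ is the graph of the identity of $\underline{\bfk}_{\calO_\lambda}$, both $i_\lambda^*f$ and $i_\lambda^*g$ are nonzero scalars.

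Decomposing $\calL_{y_\lambda}=S\oplus T$ and $\calL_{y'_\lambda}=S'\oplus T'$ where $T,T'$ are supported on $\overline{\calO_\lambda}\setminus\calO_\lambda$, let $f_{SS'}\colon S\to S'$ and $g_{S'S}\colon S'\to S$ be the corresponding matrix entries of $f$ and $g$. Since $T|_{\calO_\lambda}=T'|_{\calO_\lambda}=0$, the scalars $i_\lambda^*f_{SS'}$ and $i_\lambda^*g_{S'S}$ remain nonzero. By Lemma~\ref{lem_Kr-Sch} the ring $\End^0(S)$ is local, and it is finite-dimensional over $\bfk$ (being a summand of the top Borel--Moore homology of the finite-type variety $Z_{y_\lambda,y_\lambda}$), hence Artinian, so every element is either invertible or nilpotent. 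The composition $\phi=g_{S'S}\circ f_{SS'}\in\End^0(S)$ cannot be nilpotent, since a nilpotent $\phi$ would force $(i_\lambda^*\phi)^n=i_\lambda^*(\phi^n)=0$ in $\bfk$, contradicting $i_\lambda^*\phi\ne 0$; hence $\phi$ is a unit. The symmetric argument gives invertibility of $f_{SS'}\circ g_{S'S}$ in $\End^0(S')$, so $f_{SS'}\colon S\to S'$ is both split mono and split epi, thus an isomorphism.

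The principal technical point is the extension of the Hom computation of Theorem~\ref{thm_isom-KLR-Ext} to arbitrary $y,y'\in Y_\nu$, together with the observation that the idempotent projections onto $S$ and $S'$ act trivially on the generic point of $\overline{\calO_\lambda}$, so that they preserve the nonzero scalar $i_\lambda^*f$. A smaller subtlety is confirming that $Z^{\mathrm{gr}}$ is irreducible of dimension $d_\lambda$ and that its fundamental class is nonzero in the equivariant top Borel--Moore homology; both follow because $\calO_\lambda$ is open and dense in $Z^{\mathrm{gr}}$ and carries a natural $G_V$-equivariant structure.
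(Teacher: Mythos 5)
Your argument is correct, but it is genuinely different from the one in the paper. The paper proves the lemma by a global counting argument: in characteristic zero the decomposition theorem identifies $R_{y_\lambda}$ with $\mathrm{IC}(\calO_\lambda)$, which gives $\dim_{\calA}({_\calA\bff_\nu})=\#\Lambda_V$; the categorification isomorphism $K(\calQ_V)\simeq{_\calA\bff_\nu}$ of Theorem \ref{thm_isom-sh-f} then forces the number of indecomposables of $\calQ_V$ modulo shift over an arbitrary field to equal $\#\Lambda_V$, so the pairwise non-isomorphic complexes $R_{y_\lambda}$ exhaust them and any indecomposable summand with support $\overline{\calO_\lambda}$ and restriction $\underline{\bfk}_\lambda$ on the open orbit must coincide with $R_{y_\lambda}$. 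You instead work locally: you realize $\Hom(\calL_{y_\lambda},\calL_{y'_\lambda})$ as equivariant Borel--Moore homology of the fiber product (the computation of Theorem \ref{thm_isom-KLR-Ext} indeed extends verbatim to arbitrary $y,y'\in Y_\nu$, and $\dim_{\bbC}\widetilde F_{y'_\lambda}=d_\lambda$ since $\pi_{y'_\lambda}$ is birational onto $\overline{\calO_\lambda}$), take the class of the closure of the graph over $\calO_\lambda$, and use Krull--Schmidtness (Lemma \ref{lem_Kr-Sch}) plus the locality and Artinianness of $\End^0(S)$ to promote ``isomorphism over the open orbit'' to an isomorphism $S\simeq S'$. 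Your route buys independence from both the characteristic-zero decomposition theorem and the KLR categorification machinery, whereas the paper's route yields as a free byproduct the full classification of indecomposables in $\calQ_V$ (Lemma \ref{lem_indec-QV}), which your argument does not address. The one step you should make explicit with a reference is the compatibility of the convolution description of Hom spaces with restriction to the open stratum, i.e.\ that $i_\lambda^*f$ is computed by restricting the Borel--Moore class to the part of the fiber product lying over $\calO_\lambda$; this is standard (it is the argument of \cite[Sec.~8.6]{CG}) but it is the load-bearing point that makes $i_\lambda^*f$ the identity of $\underline{\bfk}_\lambda$ rather than merely some class.
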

\begin{proof}
Fix an element $y_\lambda$ as above for each $\lambda\in\Lambda_V$.
Suppose that $\bfk$ is a field of characteristic zero. Then by the
decomposition theorem \cite[Thm.~6.2.5]{BBD} we have
$R_{y_\lambda}=\mathrm{IC}(\calO_\lambda)$. Thus, the complex
$\mathrm{IC}(\calO_\lambda)$ belongs to $\calQ_V$. Hence, in view of
Corollary \ref{coro_loc-sys-triv} the complexes
$R_{y_\lambda}=\mathrm{IC}(\calO_\lambda)$ with $\lambda\in
\Lambda_V$, are the representatives of the isomorphism classes of
indecomposable objects in $\calQ_V$ modulo shifts, and they form an
$\calA$-basis in $K(\calQ_V)$. This yields the equality
$$
\dim_{\calA}(_\calA\bff_\nu)=\#(\Lambda_V). \eqno (2.2)
$$
Now let $\bfk$ be an arbitrary field. The number of indecomposable
objects modulo shifts in $\calQ_V$ is equal to
$\dim_{\calA}K(\calQ_V)=\dim_{\calA}(_\calA\bff_\nu)$, see Theorem
\ref{thm_isom-sh-f}, and the complexes $R_{y_\lambda},~
\lambda\in\Lambda_V$ are among them. Thus, $(2.2)$ shows that
$R_{y_\lambda},~ \lambda\in\Lambda_V$ are the representatives of
isomorphism classes of indecomposable objects in $\calQ_V$ modulo
shifts. Hence, each indecomposable object in $\calQ_V$ with support
equal to $\overline{\calO_\lambda}$ is equal to a shift of
$R_{y_\lambda}$. From this we see that $R_{y_\lambda}$ does not
depend on $y_\lambda$.
\end{proof}
\begin{df}
 Let us write $R_\lambda$ instead of $R_{y_\lambda}$.
\end{df}
The following lemma follows directly from the proof of Lemma
\ref{lem_L_lamb-indep}.
\begin{lem}
\label{lem_indec-QV} The indecomposable objects in $\calQ_V$ are
exactly the complexes $R_{\lambda}$, $\lambda\in\Lambda_V$ modulo
shifts.
\end{lem}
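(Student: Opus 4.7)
The plan is to extract the statement directly from the counting argument already carried out in the proof of Lemma \ref{lem_L_lamb-indep}. The category $\calQ_V$ is Krull-Schmidt by Lemma \ref{lem_Kr-Sch}, so every object decomposes uniquely (up to order and isomorphism) into indecomposable summands, and shifts act freely on isomorphism classes of indecomposables. Hence the number of isomorphism classes of indecomposables in $\calQ_V$ modulo shifts equals the $\calA$-rank of $K(\calQ_V)$.

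The first step is to compute this $\calA$-rank. Via the algebra isomorphism $\lambda_\calA\colon K(\calQ)\to {_\calA\bff}$ from Theorem \ref{thm_isom-sh-f}, which respects the $\bbN I$-grading, we obtain $\dim_\calA K(\calQ_V)=\dim_\calA({_\calA\bff_\nu})$. The equality $(2.2)$ established inside the proof of Lemma \ref{lem_L_lamb-indep} (using the characteristic-zero decomposition theorem and the fact that the intersection cohomology complexes $\mathrm{IC}(\calO_\lambda)$ belong to $\calQ_V$ in that case) gives $\dim_\calA({_\calA\bff_\nu})=\#\Lambda_V$. Hence there are exactly $\#\Lambda_V$ isomorphism classes of indecomposable objects in $\calQ_V$ modulo shifts.

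The second step is to exhibit $\#\Lambda_V$ pairwise non-isomorphic (mod shifts) indecomposable objects inside $\calQ_V$. For each $\lambda\in\Lambda_V$, the complex $R_\lambda$ is an indecomposable summand of some $\calL_\ui$ and is therefore an object of $\calQ_V$. Moreover, $i_\lambda^*R_\lambda=\underline{\bfk}_\lambda[d_\lambda]$, so the support of $R_\lambda$ is exactly $\overline{\calO_\lambda}$. In particular, for distinct $\lambda,\lambda'\in\Lambda_V$ no shift of $R_\lambda$ can be isomorphic to $R_{\lambda'}$, since their supports differ.

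Combining these two steps: the family $\{R_\lambda\mid \lambda\in\Lambda_V\}$ provides $\#\Lambda_V$ pairwise non-isomorphic (mod shifts) indecomposable objects in $\calQ_V$, and this number already exhausts the cardinality of the set of indecomposable isomorphism classes modulo shifts. Hence every indecomposable object of $\calQ_V$ is a shift of some $R_\lambda$, $\lambda\in\Lambda_V$, which is the claimed statement. There is essentially no obstacle here beyond quoting the preceding results; the only point to keep in mind is that Lemma \ref{lem_Kr-Sch} must apply to $\calQ_V$ (i.e., $\bfk$ is a field or complete DVR), but this was the running assumption throughout Section \ref{subs_res-ind-KLR} onward.
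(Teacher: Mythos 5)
Your argument is correct and is essentially the paper's own: the paper derives this lemma directly from the counting argument inside the proof of Lemma \ref{lem_L_lamb-indep}, namely that the number of indecomposables in $\calQ_V$ modulo shifts equals $\dim_\calA K(\calQ_V)=\dim_\calA({_\calA\bff_\nu})=\#\Lambda_V$ by $(2.2)$ and Theorem \ref{thm_isom-sh-f}, while the $R_\lambda$ already supply that many pairwise non-isomorphic indecomposables. Your added remarks on Krull--Schmidtness and on distinguishing the $R_\lambda$ by their supports are correct fillings-in of details the paper leaves implicit.
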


\section{Parity sheaves}
Let $A$ be a complete discrete valuation ring or a field.
\subsection{Parity sheaves\label{subs_par-sh}}
First, we recall some basic facts about parity sheaves. See
\cite{PSh} for more details. Let $G$, $X$ be as in Section
\ref{subs_strat-var}. Here we will use the notation introduced in
Section \ref{subs_strat-var}. We make the following assumption
$${\rm H}_G^\mathrm{odd}(X_\lambda,\mathcal{L})=0,~{\rm H}_G^*(X_\lambda,\mathcal{L}) \text{ is a free $A$-module}\quad\forall \mathcal{L}\in\mathrm{Loc}_G(X_\lambda), \forall \lambda\in\Lambda.\eqno (3.1)$$

\begin{df}
\label{def_ev-compl} A complex $\mathcal{F}\in D_G(X,A)$ is
\emph{even} (resp. \emph{odd}) if
$\mathcal{H}^r(\mathcal{F})=\mathcal{H}^r(\mathbb{D}\mathcal{F})=0$
for each odd (resp. even) positive integer $r$. A complex
$\mathcal{F}$ is \emph{parity} if it is a sum of an even complex and
an odd complex.
\end{df}

\begin{lem}
\label{lem_def-ev-equiv} Suppose $\mathcal{F}\in D_G(X,A)$. The
following conditions are equivalent.
\begin{itemize}
    \item[(a)] $\mathcal{F}\in D_G(X,A)$ is even,
    \item[(b)] $\calH^k(i_\lambda^*\calF)=\calH^k(i_\lambda^*\bbD\calF)=0$ for each odd integer $k$ and each $\lambda\in\Lambda$,
    \item[(c)] $\calH^k(i_x^*\calF)=\calH^k(i_x^*\bbD\calF)=0$ for each odd integer $k$ and each $x\in X$.
\end{itemize}
\end{lem}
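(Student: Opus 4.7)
The plan is to prove the two implications (a) $\Leftrightarrow$ (b) and (b) $\Leftrightarrow$ (c) separately, both by reducing to elementary statements about constructible sheaves. The key facts are: (i) the inverse image functor $i^*$ on sheaves (as opposed to its derived version) is exact, so it commutes with taking cohomology sheaves $\calH^r$; (ii) a sheaf on $X$ is zero iff its restriction to every stratum $X_\lambda$ is zero; and (iii) since $\calF$ is constructible with respect to $X=\coprod X_\lambda$, each $i_\lambda^*\calF$ has locally constant cohomology sheaves on the connected smooth stratum $X_\lambda$, so each such cohomology sheaf vanishes iff it has vanishing stalk at some (equivalently, every) point.

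For (a) $\Leftrightarrow$ (b): Since $X=\coprod_\lambda X_\lambda$, a sheaf $\scrG$ on $X$ is zero iff $i_\lambda^*\scrG=0$ for every $\lambda\in\Lambda$. Applying this to $\scrG=\calH^r(\calF)$ and using that the ordinary inverse image $i_\lambda^*$ is exact (hence commutes with $\calH^r$), we get
\[
\calH^r(\calF)=0 \iff i_\lambda^*\calH^r(\calF)=\calH^r(i_\lambda^*\calF)=0\ \forall\lambda.
\]
Since $\bbD\calF$ is again constructible with respect to the same stratification, the same argument gives $\calH^r(\bbD\calF)=0$ iff $\calH^r(i_\lambda^*\bbD\calF)=0$ for all $\lambda$. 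Combining these yields the equivalence.

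For (b) $\Leftrightarrow$ (c): The implication (b) $\Rightarrow$ (c) is immediate, since for $x\in X_\lambda$ we factor $i_x=i_\lambda\circ j_x$ where $j_x\colon\{x\}\to X_\lambda$ is the inclusion; exactness of $j_x^*$ then gives
\[
\calH^k(i_x^*\calF)=j_x^*\calH^k(i_\lambda^*\calF),
\]
which is zero if $\calH^k(i_\lambda^*\calF)=0$, and symmetrically for $\bbD\calF$. For the converse (c) $\Rightarrow$ (b), fix $\lambda\in\Lambda$ and an odd $k$. Because $\calF$ is constructible with respect to the stratification $X=\coprod X_\lambda$, the restriction $i_\lambda^*\calF$ has locally constant cohomology sheaves on the smooth connected stratum $X_\lambda$. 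A local system on a connected space vanishes iff its stalk at a single point vanishes; picking any $x\in X_\lambda$, the stalk of $\calH^k(i_\lambda^*\calF)$ at $x$ is $\calH^k(i_x^*\calF)=0$ by (c), so $\calH^k(i_\lambda^*\calF)=0$. The same reasoning applied to $\bbD\calF$ concludes the argument.

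The only mildly delicate point is confirming that $\bbD\calF$ is constructible with respect to the same $G$-stable stratification as $\calF$, so that its cohomology sheaves restricted to a stratum are local systems. This is standard for Verdier duality on a stratified variety of finite type, and in the equivariant setting follows by passing through the forgetful functor to the non-equivariant derived category (which commutes with $\bbD$, $i_\lambda^*$ and $\calH^r$). No other substantive obstacle arises; the whole lemma is essentially an exercise in constructibility and the exactness of ordinary inverse image.
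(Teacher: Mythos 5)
Your proof is correct and uses essentially the same ingredients as the paper's: the commutation of (non-derived) inverse image with cohomology sheaves and the detection of vanishing on stalks/strata. The only cosmetic difference is the order of the equivalences (you prove (a)$\Leftrightarrow$(b) then (b)$\Leftrightarrow$(c), while the paper proves (a)$\Leftrightarrow$(c) then (b)$\Leftrightarrow$(c)); your appeal to local constancy in (c)$\Rightarrow$(b) is not even needed, since (c) already gives vanishing of \emph{all} stalks of $\calH^k(i_\lambda^*\calF)$.
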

\begin{proof}
First we show that (a)$\Leftrightarrow$(c) By \cite[Rem.~2.6.9]{KSch} we have
$$
i_x^*\mathcal{H}^k(\mathcal{F})=\calH^k(i_x^*\mathcal{F}).
$$
Now $\mathcal{H}^k(\mathcal{F})$ is zero iff
$i_x^*\mathcal{H}^k(\mathcal{F})$ is zero for each $x\in X$. Thus,
$\mathcal{H}^k(\mathcal{F})=0$ iff $\calH^k(i_x^*\mathcal{F})=0$ for
all $x\in X$. In the same way we prove that
$\mathcal{H}^k(\mathbb{D}\mathcal{F})=0$ iff
$\calH^k(i_x^*\bbD\mathcal{F})=0$ for all $x\in X$. Thus,
(a)$\Leftrightarrow$(c). To see that (b)$\Leftrightarrow$(c) we
prove in the same way that $\calH^k(i_\lambda^*\calF)=0$ iff
$\calH^k(i_x^*\mathcal{F})=0$ for all $x\in X_\lambda$ and
$\calH^k(i_\lambda^*\bbD\calF)=0$ iff
$\calH^k(i_x^*\bbD\mathcal{F})=0$ for all $x\in X_\lambda$.
\end{proof}

\begin{lem}
\label{lem_unic-par-sh} Assume that {\rm (3.1)} holds. Let
$\mathcal{F}$ be an indecomposable parity complex. Then
\begin{enumerate}
    \item the support of $\mathcal{F}$ is of the form $\overline{X_\lambda}$ for some $\lambda\in\Lambda$,
    \item the restriction $i_\lambda^*\mathcal{F}$ is isomorphic to $\mathcal{L}[m]$ for some indecomposable object $\mathcal{L}$ in ${\rm Loc}_G(X_\lambda)$ and some integer m,
    \item any indecomposable parity complex supported on $\overline{X_\lambda}$ which extends $\mathcal{L}[m]$ is isomorphic to $\mathcal{F}$, where $\mathcal{L}$ is as in $(2)$.
\end{enumerate}
\end{lem}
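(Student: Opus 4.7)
The proof follows the standard paradigm for parity sheaves, combining three ingredients: the Krull--Schmidt property of $D_G(X,A)$ from Lemma \ref{lem_Kr-Sch}, the restriction/costalk characterization from Lemma \ref{lem_def-ev-equiv}, and the freeness/vanishing assumption (3.1).

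First I would choose a maximal $\lambda\in\Lambda$ such that $X_\lambda\subseteq\mathrm{supp}(\calF)$; then $X_\lambda$ is open in $\mathrm{supp}(\calF)$. Applying Lemma \ref{lem_def-ev-equiv} to $\calF$ (and to $\mathbb{D}\calF$) shows that $i_\lambda^*\calF$ has nonzero cohomology sheaves only in degrees of a single parity, and each such sheaf lies in $\mathrm{Loc}_G(X_\lambda)$. Since $X_\lambda$ is smooth and connected, a standard formality argument lets one write $i_\lambda^*\calF\simeq\bigoplus_n\calL_n[n]$ with $\calL_n\in\mathrm{Loc}_G(X_\lambda)$.

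The central technical step is a Hom-lifting statement: for parity $\calF,\calG\in D_G(X,A)$ with $X_\lambda$ open in $\mathrm{supp}(\calF)\cup\mathrm{supp}(\calG)$, the restriction map
$$ \mathrm{Hom}(\calF,\calG)\longrightarrow\mathrm{Hom}(i_\lambda^*\calF,i_\lambda^*\calG) $$
is surjective, and any morphism that restricts to zero on $X_\lambda$ factors through a complex with strictly smaller support. This is proved by induction on the number of strata in $\mathrm{supp}(\calG)$ using the open-closed distinguished triangle $j_!j^*\to\mathrm{id}\to i_*i^*\to$, where the obstruction Ext-groups are odd Ext-groups between even complexes and hence vanish by (3.1) together with the parity hypothesis. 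I expect this lifting step to be the main obstacle, as it is the place where assumption (3.1) is essentially used.

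Granting the lifting, parts (1)--(3) follow formally. For (1): were $\mathrm{supp}(\calF)$ to contain two distinct maximal strata $X_\lambda$ and $X_{\lambda'}$, the orthogonal idempotents in $\mathrm{End}(i_\lambda^*\calF\oplus i_{\lambda'}^*\calF)$ coming from the two factors would lift to orthogonal idempotents in $\mathrm{End}(\calF)$, contradicting the locality of that ring afforded by Lemma \ref{lem_Kr-Sch}. For (2): any nontrivial splitting of $\bigoplus_n\calL_n[n]$ (either across distinct $n$, or from an internal decomposition of some $\calL_n$) would lift similarly, so a single indecomposable summand $\calL[m]$ must remain. For (3): given another indecomposable parity $\calF'$ supported on $\overline{X_\lambda}$ with $i_\lambda^*\calF'\simeq\calL[m]$, I would lift the isomorphism and its inverse on $X_\lambda$ to morphisms $f\colon\calF\to\calF'$ and $g\colon\calF'\to\calF$; since $g\circ f$ restricts to the identity on $X_\lambda$, the element $\mathrm{id}-g\circ f$ lies in the maximal ideal of the local ring $\mathrm{End}(\calF)$, so $g\circ f$ is invertible, and symmetrically $f\circ g$ is invertible.
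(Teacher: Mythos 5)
Your proposal is correct and follows essentially the same route as the paper, which itself only gives an ``idea of proof'' by citing \cite[Thm.~2.12]{PSh} and quoting the key consequence of (3.1) (the decomposition of $\Hom(\calF,\calG)$ over strata); your Hom-lifting statement via the open--closed triangle and the vanishing of odd Ext-groups between even complexes is exactly the engine of that argument, and your deductions of (1)--(3) from it via the Krull--Schmidt property of Lemma \ref{lem_Kr-Sch} match the standard Juteau--Mautner--Williamson proof.
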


\begin{proof}[Idea of proof]
The lemma is proved in \cite[Thm.~2.12]{PSh}. This proof uses the
following claim, which is based on the property (3.1).
\begin{lem}
Assume that {\rm (3.1)} holds. Let $\calF,\calG\in D_G(X,A)$.
Suppose that $\calH^{\mathrm{odd}}(\calF)=0$ and
$\calH^{\mathrm{odd}}(\bbD\calF)=0$. Then we have
$$
\Hom(\calF,\calG)\simeq\bigoplus_{\lambda\in\Lambda}\Hom(i_\lambda^*\calF,i_\lambda^!\calG).
$$
\end{lem}
\end{proof}

\begin{df}
\label{dem_par-sh}
 A \emph{parity sheaf} is an indecomposable parity complex supported on $\overline{X_\lambda}$ extending $\mathcal{L}[d_\lambda]$ for some indecomposable $\mathcal{L}\in{\rm Loc}_{G}(X_\lambda,A)$ and some $\lambda\in\Lambda$. If such a complex exists, we will denote it by $\mathcal{E}(\lambda,\mathcal{L})$. If $\mathcal{L}$ is the constant sheaf $\underline{A}_\lambda$ we will write $\mathcal{E}(\lambda)=\mathcal{E}(\lambda,\calL)$.
\end{df}

\subsection{Parity sheaves on quiver varieties\label{subs_par-sh-on-quiv}}
We still suppose that $\Gamma$ is a Dynkin quiver. We want to study
$G_V$-equivariant parity sheaves on $E_V$ with respect to the
stratification
$\coprod\limits_{\lambda\in\Lambda_V}\mathcal{O}_\lambda$, see
Section \ref{subs_QV}. We must check that the $G_V$-variety $X=E_V$
satisfy the condition $(3.1)$. This is true by the following lemma,
see Corollary \ref{coro_loc-sys-triv}.
\begin{lem}
\label{lem_prop*} For every $G_V$-orbit $\mathcal{O}$ in $E_V$ we
have
$\mathrm{H}_{G_V}^\mathrm{odd}(\mathcal{O},\underline{A}_\lambda)=0$
and $\mathrm{H}_{G_V}^*(\mathcal{O},\underline{A}_\lambda)$ is a
free $A$-module.
\end{lem}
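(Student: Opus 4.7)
By Corollary \ref{coro_loc-sys-triv}, every object of $\mathrm{Loc}_{G_V}(\calO)$ is a direct sum of copies of the trivial local system $\underline{A}_\calO$. Hence it suffices to compute $\mathrm{H}_{G_V}^*(\calO, \underline{A}_\calO)$ and show that it vanishes in odd degrees and is free over $A$. The plan is to reduce this to the cohomology of a product of classifying spaces of general linear groups.

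I would first fix a point $x\in\calO$ and let $H=\mathrm{Stab}_{G_V}(x)$, so that $\calO\simeq G_V/H$ as $G_V$-varieties. By the standard identification between equivariant cohomology of a homogeneous space and ordinary cohomology of the classifying space of the stabilizer, one has
$$
\mathrm{H}_{G_V}^*(\calO, \underline{A}_\calO)=\mathrm{H}_{G_V}^*(G_V/H, A)=\mathrm{H}^*(BH, A).
$$
The group $H$ has a Levi decomposition $H=H^{\mathrm{red}}\ltimes U$, where $U$ is unipotent, hence contractible (it is isomorphic to an affine space over $\bbC$). Therefore $BH$ is homotopy equivalent to $BH^{\mathrm{red}}$, and Lemma \ref{lem_stab-GL} identifies $H^{\mathrm{red}}\simeq\prod_{i}GL_{n_i}(\bbC)$ for some positive integers $n_i$.

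It remains to observe that
$$
\mathrm{H}^*(BH^{\mathrm{red}}, A)=\bigotimes_{i}\mathrm{H}^*(BGL_{n_i}(\bbC), A)=\bigotimes_{i} A[c_1^{(i)},\ldots,c_{n_i}^{(i)}],
$$
where $c_k^{(i)}$ is the $k$-th universal Chern class, of (cohomological) degree $2k$. This is a polynomial ring in even degree generators, so it is a free $A$-module concentrated in even degrees. Both claims of the lemma follow at once.

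No step in this argument is really the bottleneck: the substantive input, namely the reductivity statement for stabilizers, is already encoded in Lemma \ref{lem_stab-GL}. The only point at which one must be slightly careful is the passage from $H$ to $H^{\mathrm{red}}$; but since over $\bbC$ any algebraic group is homotopy equivalent to any of its Levi subgroups (the unipotent radical being a complex affine space), this is routine and requires no further assumption on $A$.
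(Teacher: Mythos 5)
Your argument is correct, and its first half coincides with the paper's proof: both reduce $\mathrm{H}_{G_V}^*(\calO,\underline{A}_\calO)$ to $\mathrm{H}^*(BH_{\mathrm{red}},A)$ via $\calO\simeq G_V/H$ and the contractibility of the unipotent radical, and both then feed in Lemma \ref{lem_stab-GL} to identify $H_{\mathrm{red}}$ with a product of general linear groups. Where you diverge is in the last step. The paper does not compute $\mathrm{H}^*(BH_{\mathrm{red}},A)$ explicitly; instead it observes that $\pi_1(H_{\mathrm{red}})$ is torsion-free, concludes from \cite{Totaro} that the torsion index of $H_{\mathrm{red}}$ equals $1$, and then quotes \cite{Tor} to get evenness and freeness of $\mathrm{H}^*(BH_{\mathrm{red}},A)$. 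You instead use the classical identification $\mathrm{H}^*(BGL_{n}(\bbC),A)=A[c_1,\dots,c_n]$ with $\deg c_k=2k$, together with K\"unneth for the product. Both routes are valid. Yours is more elementary and self-contained for type $A$ (it rests only on the even-cell structure of Grassmannians and universal coefficients, and needs no hypothesis on $A$ beyond commutativity); the paper's torsion-index argument is the general-purpose criterion that would survive if the stabilizers were reductive groups of other types, and it matches the tool already used in Section \ref{subs_alg-S-P} for $\mathbf{S}_{V,A}$. One cosmetic remark: the reduction from arbitrary equivariant local systems to the constant sheaf via Corollary \ref{coro_loc-sys-triv}, which you include at the start, is in the paper performed outside the lemma (the lemma itself is stated only for the constant sheaf), so including it does no harm but is not strictly part of what is being proved.
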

\begin{proof}
Let $x\in \mathcal{O}$. Denote by $H$ the stabilizer of $x$ in
$G_V$. Let $H_\mathrm{red}$ be the reductive part of $H$. We have
\begin{eqnarray*}
\mathrm{H}_{G_V}^*(\mathcal{O},\underline{A}_{\calO}) & = & \mathrm{H}_{G_V}^*(G_V/H,A)\\
& = & \mathrm{H}_H^*(\bullet,A)\\
& = & \mathrm{H}_{H_{\mathrm{red}}}^*(\bullet,A)\\
& = & \mathrm{H}^*(BH_{\mathrm{red}},A).
\end{eqnarray*}

Note that for each positive integer $n$ the fundamental group of
$GL_n(\mathbb{C})$ is equal to $\mathbb{Z}$. Then the fundamental
group of $H_{\mathrm{red}}$ does not contain elements of finite
order, see Lemma \ref{lem_stab-GL}. Moreover, the reductive group
$H_{\mathrm{red}}$ is of type $A$. Then by \cite[Lem.~1]{Totaro}
the torsion index of $H_{\mathrm{red}}$ is equal to $1$, see Section
\ref{subs_alg-S-P}. Thus, the classifying space $BH_{\mathrm{red}}$
has no odd cohomology and its cohomology is free over $A$, see
\cite[Cor.~2.3]{Tor}.
\end{proof}
The following lemma is helpful to prove the parity of some complexes
on $E_V$.

\begin{lem}
\label{lem_ev-resol} Suppose that there exists a smooth
$G_V$-variety $Y$ and a $G_V$-equivariant proper morphism $
\pi\colon Y\to E_V. $ Then the complex $\pi_*\underline{A}_Y$ is
even if and only if $ \mathrm{H}^{\mathrm{odd}}(\pi^{-1}(x),A)=0 $
for each $x\in E_V$.
\end{lem}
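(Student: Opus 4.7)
The plan is to use the stalk-wise characterization of evenness provided by Lemma \ref{lem_def-ev-equiv}(c), combined with proper base change and Verdier duality on the smooth variety $Y$, to translate the two conditions ``$\mathcal{H}^k(i_x^*\mathcal{F}) = 0$'' and ``$\mathcal{H}^k(i_x^*\mathbb{D}\mathcal{F}) = 0$'' for $\mathcal{F} = \pi_*\underline{A}_Y$ into a single parity condition on the cohomology of the fibers.

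First I would set $\mathcal{F} = \pi_*\underline{A}_Y$. Applying proper base change to the Cartesian square
$$
\begin{CD}
\pi^{-1}(x) @>\tilde i_x>> Y\\
@V{\pi'}VV @V{\pi}VV\\
\{x\} @>i_x>> E_V,
\end{CD}
$$
(using that $\pi$ is proper, so $\pi_*=\pi_!$) gives $i_x^*\pi_*\underline{A}_Y = \pi'_*\underline{A}_{\pi^{-1}(x)}$, and hence $\mathcal{H}^k(i_x^*\mathcal{F}) = \mathrm{H}^k(\pi^{-1}(x),A)$ for every integer $k$.

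Next I would handle the dual. Decomposing $Y$ into its connected components $Y = \coprod_j Y_j$ with each $Y_j$ smooth of pure complex dimension $d_j$, one has $\mathcal{D}_{Y_j,A} = \underline{A}_{Y_j}[2d_j]$ by the formula recalled in Section \ref{subs_BM}. Since $\mathbb{D}$ intertwines $\pi_!$ and $\pi_*$ (see Section \ref{subs_BM}) and $\pi_!=\pi_*$ by properness, we obtain
$$
\mathbb{D}(\pi_*\underline{A}_Y) = \bigoplus_j \pi_*\bigl(\underline{A}_{Y_j}[2d_j]\bigr).
$$
Applying proper base change again yields $\mathcal{H}^k(i_x^*\mathbb{D}\mathcal{F}) = \bigoplus_j \mathrm{H}^{k+2d_j}(\pi^{-1}(x)\cap Y_j,A)$, whose parity (as a function of $k$) agrees with that of the ordinary cohomology of $\pi^{-1}(x)\cap Y_j$ because each $2d_j$ is even.

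Combining the two computations, both $\mathcal{H}^k(i_x^*\mathcal{F})$ and $\mathcal{H}^k(i_x^*\mathbb{D}\mathcal{F})$ vanish for all odd $k$ and all $x\in E_V$ if and only if $\mathrm{H}^{\mathrm{odd}}(\pi^{-1}(x),A)=0$ for each $x\in E_V$. By Lemma \ref{lem_def-ev-equiv}(c), the first condition is exactly evenness of $\mathcal{F}$, which finishes both implications. The only real subtlety is keeping track of components of possibly different dimensions when applying duality, but since shifts by even integers preserve odd-vanishing this causes no genuine difficulty; everything else is a formal application of base change and Verdier duality in the equivariant derived category.
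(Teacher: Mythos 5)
Your argument is correct and follows essentially the same route as the paper's proof: reduce to the stalk-wise criterion of Lemma \ref{lem_def-ev-equiv}, apply proper base change to identify $\mathcal{H}^k(i_x^*\pi_*\underline{A}_Y)$ with $\mathrm{H}^k(\pi^{-1}(x),A)$, and use $\mathbb{D}\pi_*=\pi_*\mathbb{D}$ together with $\mathbb{D}\underline{A}_Y=\underline{A}_Y[2d]$ to see that the dual side only introduces an even shift. Your extra care with connected components of different dimensions is a harmless refinement of the paper's argument, which implicitly treats $Y$ as being of pure dimension $d$.
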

\begin{proof}
Note that the complex $\pi_{*}\underline{A}_{Y}$ is constructible
with respect to the stratification of $E_V$ because it is
$G_V$-equivariant and the strata are $G_V$-orbits. By Lemma
\ref{lem_def-ev-equiv} the complex $\pi_*\underline{A}_Y$ is even if
and only if for every inclusion $j:\{x\}\hookrightarrow E_V$ we have
$ \mathrm{H}^\mathrm{odd}(j^*\pi_{*}\underline{A}_{Y})=0 $ and $
\mathrm{H}^\mathrm{odd}(j^*\mathbb{D}\pi_{*}\underline{A}_{Y})=0. $
Let $d$ be the complex dimension of $Y$. Note that
$$
\mathbb{D}\pi_{*}\underline{A}_{Y}=\pi_{*}\mathbb{D}\underline{A}_{Y},\eqno(3.2)
$$
$$
\mathbb{D}\underline{A}_{Y}=\underline{A}_{Y}[2d],\eqno(3.3)
$$
because the morphism $\pi$ is proper. We set $F=\pi^{-1}(x)$. We
have the following commutative diagram
$$
\begin{CD}
Y @>\pi>> E_V\\
@AiAA                    @AjAA\\
F               @>\pi>> \{x\},
\end{CD}
$$
where $i$ is the inclusion of $F$ to $Y$. We denote the restriction
of $\pi$ to $F$ again by $\pi$. By the base change theorem we have
$$
j^*\pi_{*}\underline{A}_{Y}=\pi_{*}i^*\underline{A}_{Y},\eqno(3.4)
$$
$$
j^*\pi_{*}\mathbb{D}\underline{A}_{Y}=\pi_{*}i^*\mathbb{D}\underline{A}_{Y}.\eqno(3.5)
$$

Now, from $(3.2)$, $(3.3)$, $(3.4)$ and $(3.5)$ we get
$$
\mathrm{H}^*(\{x\},j^*\pi_{*}\underline{A}_{Y})=\mathrm{H}^*(F,A),
$$
$$
\mathrm{H}^*(\{x\},j^*\mathbb{D}\pi_{*}\underline{A}_{Y})=\mathrm{H}^{*+2d}(F,A).
$$
This completes the proof.
\end{proof}

\subsection{Extensions of parity complexes\label{subs_ext-par-sh}}
Let $\calF$ be a parity complex in $D_{G_V}(E_V,A)$. For each $\lambda\in \Lambda_V$ and $n\in\bbN$ the sheaf $\calH^n(i_\lambda^*\calF)$ is a direct sum of copies of $\underline{A}_\lambda$, see Corollary \ref{coro_loc-sys-triv}. Let us denote the rank of the sheaf $\calH^n(i_\lambda^*\calF)$ by $d_{\lambda,n}(\calF)$. 

\begin{lem}
\label{lem_d-par-compl} Let $\calF,\calG$ be parity complexes in
$D_{G_V}(E_V,A)$. Suppose that we have
$d_{\lambda,n}(\calF)=d_{\lambda,n}(\calG)$ for all
$\lambda\in\Lambda_V,n\in\bbN$.
Then $\calF\simeq\calG$.
\end{lem}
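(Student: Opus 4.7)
The plan is to reduce the statement to a bookkeeping argument using the classification of parity complexes. Since $A$ is either a field or a complete discrete valuation ring, Lemma \ref{lem_Kr-Sch} tells us that $D_{G_V}(E_V,A)$ is Krull--Schmidt, so every parity complex admits an essentially unique decomposition into indecomposable parity complexes. Combining Lemma \ref{lem_unic-par-sh} with Corollary \ref{coro_loc-sys-triv} (which says every equivariant local system on an orbit is a direct sum of trivial ones), the indecomposable parity complexes in $D_{G_V}(E_V,A)$ are exactly the shifts of the parity sheaves $\calE(\mu)$ with $\mu \in \Lambda_V$ (for those $\mu$ for which $\calE(\mu)$ exists). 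I would therefore write
$$
\calF \;\simeq\; \bigoplus_{\mu,k} \calE(\mu)[k]^{\oplus m_{\mu,k}(\calF)}, \qquad \calG \;\simeq\; \bigoplus_{\mu,k} \calE(\mu)[k]^{\oplus m_{\mu,k}(\calG)},
$$
with finitely many nonzero multiplicities, and reduce the lemma to showing $m_{\mu,k}(\calF) = m_{\mu,k}(\calG)$ for all $\mu, k$.

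Next I would observe that $d$ is additive on direct sums and transforms under shifts by $d_{\lambda,n}(\calH[k]) = d_{\lambda,n+k}(\calH)$. The key computation is of the ``elementary'' numbers $d_{\lambda,n}(\calE(\mu))$. By definition of a parity sheaf, $\calE(\mu)$ is supported on $\overline{\calO_\mu}$, so $d_{\lambda,n}(\calE(\mu)) = 0$ unless $\calO_\lambda \subset \overline{\calO_\mu}$; and for $\mu = \lambda$ we have $i_\lambda^* \calE(\lambda) = \underline{A}_\lambda[d_\lambda]$, giving $d_{\lambda,n}(\calE(\lambda)) = \delta_{n, -d_\lambda}$. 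Assembling these facts produces the triangular relation
$$
d_{\lambda,n}(\calF) \;=\; m_{\lambda,-d_\lambda-n}(\calF) \;+\; \sum_{\substack{\mu > \lambda \\ k}} m_{\mu,k}(\calF)\, d_{\lambda,n+k}(\calE(\mu)),
$$
where $\mu > \lambda$ means $\calO_\lambda \subsetneq \overline{\calO_\mu}$.

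Finally, I would carry out a descending induction on $\lambda$ with respect to the closure order on $\Lambda_V$. For a stratum $\lambda$ maximal in the support of $\calF$ and $\calG$, only the first term survives on the right and each $m_{\lambda,k}(\calF)$ is read off directly from $d_{\lambda,\bullet}(\calF)$; for smaller $\lambda$, the sum over $\mu > \lambda$ involves only multiplicities already recovered by the inductive hypothesis, so $m_{\lambda,k}(\calF)$ is again uniquely determined by the assumed equality $d_{\lambda,n}(\calF) = d_{\lambda,n}(\calG)$. The finiteness needed to start and to carry out the induction comes from the parity complexes having only finitely many nonzero multiplicities. The only real obstacle is conceptual rather than technical: one must be careful that Krull--Schmidt gives the classification of indecomposables, and that the elementary stalk computation $i_\lambda^*\calE(\lambda) = \underline{A}_\lambda[d_\lambda]$ holds as specified in Definition \ref{dem_par-sh}; everything else is linear bookkeeping.
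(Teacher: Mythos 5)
Your proposal is correct and follows essentially the same route as the paper: decompose both complexes into shifts of parity sheaves $\calE(\mu)[k]$, use that $d_{\lambda,n}(\calE(\mu))$ vanishes unless $\calO_\lambda\subset\overline{\calO_\mu}$ together with the normalization $i_\lambda^*\calE(\lambda)=\underline{A}_\lambda[d_\lambda]$, and recover the multiplicities by a triangularity argument with respect to the closure order. The only (immaterial) difference is that the paper organizes the induction on the number of indecomposable summands, peeling off the top stratum's isotypic piece, whereas you run a descending induction over the strata directly.
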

\begin{proof}
Let us decompose $\calF$ and $\calG$ into direct sums of shifts of
parity sheaves
$$
\calF=\bigoplus_{\lambda\in\Lambda_V,k\in\bbZ}\calE(\lambda)[k]^{\oplus
f_{\lambda,k}},\qquad
\calG=\bigoplus_{\lambda\in\Lambda_V,k\in\bbZ}\calE(\lambda)[k]^{\oplus
g_{\lambda,k}}.
$$
We will prove the statement by induction on the number of
indecomposable factors of $\calF$. If $\calF=0$ then for each
$\lambda\in\Lambda_V,n\in\bbZ$ we have
$d_{\lambda,n}(\calF)=d_{\lambda,n}(\calG)=0$. Thus,
$\calF=\calG=0$. Now suppose that $\calF\ne 0$. Define a partial
order $\preccurlyeq$ on $\Lambda_V$ by setting $\mu\preccurlyeq
\lambda$ if and only if $\calO_\mu\subset\overline{\calO_\lambda}$.
Fix a total order $\leqslant$ on $\Lambda_V$ that refines
$\preccurlyeq$. Let $\lambda_0$ be the largest element of
$\Lambda_V$ with respect to $\leqslant$ such that
$f_{\lambda_0,k}\ne 0$ for some integer $k$. Then the decomposition
of $\calF$ does not contain any shift of the parity sheaves
$\calE(\mu)$ with $\lambda_0<\mu$. If for some $\mu,\lambda\in
\Lambda_V$ and $n\in\bbN$ we have $\calH^n(i_\mu^*\calE(\lambda))\ne
0$ then $\mu\preccurlyeq \lambda$. Thus, we have
$d_{\mu,n}(\calF)=0$ for each $\mu>\lambda_0,n\in\bbZ$. Now, the
equality  $d_{\mu,n}(\calG)=0$ for each $\mu>\lambda_0,n\in\bbZ$
assures that the decomposition of $\calG$ does not contain any shift
of the parity sheaves $\calE(\mu)$ with $\lambda_0<\mu$. Choose
$n_0$ such that
$d_{\lambda_0,n_0}(\calF)=d_{\lambda_0,n_0}(\calG)\ne 0$. We have
$$
\calH^{n_0}(i_{\lambda_0}^*\calF)\simeq
\underline{A}_{\lambda_0}^{\oplus f_{\lambda_0,k_0}},\qquad
\calH^{n_0}(i_{\lambda_0}^*\calG)\simeq
\underline{A}_{\lambda_0}^{\oplus g_{\lambda_0,k_0}}
$$
for $k_0=-d_{\lambda_0}-n_0$. This shows that
$$
f_{\lambda_0,k_0}=d_{\lambda_0,n_0}(\calF)=d_{\lambda_0,n_0}(\calG)=g_{\lambda_0,k_0}.
$$
Thus, we have
$$
\calF=\calF'\oplus \calE(\lambda_0)[k_0]^{\oplus
f_{\lambda_0,k_0}},\qquad \calG=\calG'\oplus
\calE(\lambda_0)[k_0]^{\oplus f_{\lambda_0,k_0}}.
$$
Thus, we have $d_{\lambda,n}(\calF')=d_{\lambda,n}(\calG')$ for each
$\lambda\in\Lambda_V,n\in\bbZ$. Thus, by induction hypothesis we
have $\calF'\simeq\calG'$. Hence, $\calF\simeq\calG$.
\end{proof}

\begin{lem}
\label{lem_ext-par-compl} Let $ A\to B\to C\stackrel{+1}{\to} $ be a
distinguished triangle in $D_{G_V}(E_V,A)$. Suppose that the
complexes $A$ and $C$ are even. Then we have $B\simeq A\oplus C$. In
particular the complex $B$ is also even.
\end{lem}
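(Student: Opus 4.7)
The strategy is to show that the connecting morphism $C \to A[1]$ in the distinguished triangle is zero. Once this is established, the triangle splits, giving $B \simeq A \oplus C$. The fact that $B$ is then even is immediate: the cohomology sheaves of a direct sum are the direct sums of the cohomology sheaves, and Verdier duality commutes with finite direct sums, so $\mathcal{H}^{\mathrm{odd}}(B) = \mathcal{H}^{\mathrm{odd}}(A) \oplus \mathcal{H}^{\mathrm{odd}}(C) = 0$ and similarly for $\mathbb{D}B$.

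To show $\Hom(C, A[1]) = 0$, I will invoke the Hom-decomposition lemma stated inside the proof sketch of Lemma \ref{lem_unic-par-sh}: since $C$ is even, we have $\mathcal{H}^{\mathrm{odd}}(C) = \mathcal{H}^{\mathrm{odd}}(\mathbb{D}C) = 0$, so that lemma applies with $\calF = C$ and $\calG = A[1]$, giving
\[
\Hom(C, A[1]) \;\simeq\; \bigoplus_{\lambda \in \Lambda_V} \Hom\bigl(i_\lambda^{*} C,\; i_\lambda^{!} A[1]\bigr).
\]
It therefore suffices to show that each summand vanishes.

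Fix $\lambda \in \Lambda_V$. Since $C$ is even, the sheaves $\mathcal{H}^{n}(i_\lambda^{*} C)$ vanish for odd $n$; by Corollary \ref{coro_loc-sys-triv} each nonzero one is a multiple of $\underline{A}_\lambda$. For the other complex, the identity $i_\lambda^{!} = \mathbb{D}_{\calO_\lambda} \circ i_\lambda^{*} \circ \mathbb{D}_{E_V}$ together with the equality $\mathcal{D}_{\calO_\lambda,A} = \underline{A}_{\calO_\lambda}[2 d_\lambda]$ on the smooth stratum $\calO_\lambda$ shows that $i_\lambda^{!} A$ differs from $i_\lambda^{*}(\mathbb{D}A)$ by an even shift. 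Because $A$ is even, $\mathbb{D}A$ has no odd cohomology sheaves either, so $i_\lambda^{!} A$ likewise has cohomology concentrated in even degrees and (again by Corollary \ref{coro_loc-sys-triv}) built from copies of $\underline{A}_\lambda$. After applying $[1]$, the cohomology of $i_\lambda^{!}A[1]$ lives in odd degrees.

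Now, under assumption $(3.1)$, the Ext-groups $\Hom(\underline{A}_\lambda[-n], \underline{A}_\lambda[-m]) = \mathrm{H}_{G_V}^{n-m}(\calO_\lambda, \underline{A}_\lambda)$ vanish whenever $n-m$ is odd; this also makes every such complex split as a direct sum of shifts of $\underline{A}_\lambda$. Hence both $i_\lambda^{*}C$ and $i_\lambda^{!}A[1]$ split into shifts concentrated in degrees of opposite parity, and the Hom between them reduces to a sum of groups $\mathrm{H}_{G_V}^{\mathrm{odd}}(\calO_\lambda, \underline{A}_\lambda) = 0$. I expect the main subtlety to be handling the duality bookkeeping for $i_\lambda^{!}$ cleanly, and making sure the Hom-splitting lemma from \cite{PSh} is correctly invoked in the form needed; once those are in place the rest is formal.
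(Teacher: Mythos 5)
Your proof is correct, but it follows a genuinely different route from the paper's. You split the triangle by showing the connecting map $C\to A[1]$ vanishes, via the Hom-decomposition over strata and the parity mismatch between $i_\lambda^*C$ (even, sums of shifts of $\underline{A}_\lambda$) and $i_\lambda^!A[1]$ (odd, by the duality computation $i_\lambda^!=\bbD_{\calO_\lambda}i_\lambda^*\bbD$ on the smooth stratum), with condition $(3.1)$ killing the odd $\Hom$'s. This is essentially the argument of \cite[Cor.~2.8]{PSh}, which the paper cites only with a ``see also.'' The paper instead argues in two steps: it first shows $B$ is even from the long exact sequences of cohomology sheaves of the triangle and its dual; then, restricting to each stratum, the evenness of $A$ and $C$ forces the long exact sequence to break into short exact sequences, giving $d_{\lambda,n}(B)=d_{\lambda,n}(A)+d_{\lambda,n}(C)=d_{\lambda,n}(A\oplus C)$, and it concludes by Lemma \ref{lem_d-par-compl}, which says a parity complex is determined up to isomorphism by its multiplicities $d_{\lambda,n}$. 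Your route is more direct and does not use Lemma \ref{lem_d-par-compl} at all; the paper's route avoids the $\Hom$-vanishing machinery but needs the classification of indecomposable parity complexes. One caveat on your argument: the auxiliary Hom-decomposition lemma, as it is quoted in the paper inside the proof sketch of Lemma \ref{lem_unic-par-sh}, imposes hypotheses only on $\calF$, whereas the statement actually proved in \cite{PSh} requires a compatible $!$-parity condition on $\calG$ as well (the induction over strata does not degenerate otherwise). Your application is nonetheless safe, since you verify precisely that $\calG=A[1]$ is $!$-odd on every stratum, which is what the genuine statement needs.
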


\begin{proof}
See also \cite[Cor.~2.8]{PSh}. First we prove that $B$ is an
even complex. The duality yields a distinguished triangle
$$
\bbD C\to\bbD B\to\bbD A\stackrel{+1}{\to}.
$$
The long exact sequences in cohomology are
$$
\to\calH^{n-1}(C)\to \calH^n(A)\to \calH^n(B)\to\calH^n(C)\to
\calH^{n+1}(A)\to,
$$
$$
\to\calH^{n-1}(\bbD A)\to \calH^n(\bbD C)\to \calH^n(\bbD
B)\to\calH^n(\bbD A)\to \calH^{n+1}(\bbD C)\to.
$$
For each odd integer $n$ we have
$$
\calH^n(A)=\calH^n(\bbD A)=\calH^n(C)=\calH^n(\bbD C)=0
$$
and thus $\calH^n(B)=\calH^n(\bbD B)=0$. So the complex $B$ is also
even. For each $\lambda\in\Lambda_V$ the distinguished triangle
$$
i_\lambda^*A\to i_\lambda^* B\to i_\lambda^*C\stackrel{+1}{\to}
$$
yields a long exact sequence in cohomology
$$
\to\calH^{n-1}(i_\lambda^*C)\to \calH^n(i_\lambda^*A)\to
\calH^n(i_\lambda^*B)\to\calH^n(i_\lambda^*C)\to
\calH^{n+1}(i_\lambda^*A)\to.
$$
If $n$ is an even integer then
$\calH^{n-1}(i_\lambda^*C)=\calH^{n+1}(i_\lambda^*A)=0$, see Lemma
\ref{lem_def-ev-equiv}. Thus, we have a short exact sequence
$$
0\to \calH^n(i_\lambda^*A)\to
\calH^n(i_\lambda^*B)\to\calH^n(i_\lambda^*C)\to 0.
$$
Hence, we have
$$
d_{\lambda,n}(B)=d_{\lambda,n}(A)+d_{\lambda,n}(C)=d_{\lambda,n}(A\oplus
C)
$$
for each $\lambda\in \Lambda_V$ and each even integer $n$ and
$$
d_{\lambda,n}(A)=d_{\lambda,n}(B)=d_{\lambda,n}(C)=0
$$
for each odd integer $n$. Thus, by Lemma \ref{lem_d-par-compl} we
have $B\simeq A\oplus C$.
\end{proof}

\subsection{Nakajima quiver varieties\label{subs_Nak-quiv}} Let $\bfk$ be a field. From now all sheaves and cohomology groups are to be understood with coefficients in $\bfk$-vector spaces.

We recall the notion of a (graded) Nakajima quiver variety. We
keep the notation of Section \ref{subs_quiv} and we still suppose
that the quiver $\Gamma$ is a Dynkin quiver. The notation $i \sim j$
means that there is an arrow $i\to j$ or $j\to i$ in $\Gamma$. Fix a function $\xi\colon I\to \bbZ,~i\mapsto \xi_i $ such that
$\xi_i=\xi_j+1$ if there is an arrows $i\to j$. It is unique modulo
a shift by an integer constant. Consider the sets
$$
\widehat I=\{(i,n)\in I\times \bbZ;~\xi_i-n\in 2\bbZ\},
$$
$$
\widehat J=\{(i,n)\in I\times \bbZ;~(i,n-1)\in \widehat I\}.
$$
To the quiver $\Gamma$ we will associate the quiver
$\widehat{\Gamma}$ as follows:
\begin{itemize}
    \item the quiver $\widehat{\Gamma}$ contains two types of vertices:
\begin{itemize}
    \item a vertex $w_i(n)$ for each $(i,n)\in \widehat I$, we will call them \emph{w-vertices},
    \item a vertex $t_i(n)$ for each $(i,n)\in \widehat J$, we will call them \emph{t-vertices},
\end{itemize}
    \item the quiver $\widehat{\Gamma}$ contains three types of edges:
\begin{itemize}
    \item an edge $w_i(n)\to t_i(n-1)$ for each $(i,n)\in \widehat{I}$,
    \item an edge $t_i(n)\to w_i(n-1)$ for each $(i,n)\in \widehat{J}$,
    \item an edge $t_i(n)\to t_j(n-1)$ for each $(i,n),(j,n-1)\in \widehat{J}$ such that $i\sim j$.
\end{itemize}
\end{itemize}

Consider an $\widehat I$-graded finite dimensional $\bbC$-vector
space $ W=\bigoplus_{(i,n)\in \widehat I}W_i(n) $ and a $\widehat
J$-graded finite dimensional $\bbC$-vector space $
T=\bigoplus_{(i,n)\in \widehat J}T_i(n). $ Set
$$
L^\bullet (T,W)=\bigoplus_{(i,n)\in \widehat J}
\Hom(T_i(n),W_i(n-1)),
$$
$$
L^\bullet (W,T)=\bigoplus_{(i,n)\in \widehat I}
\Hom(W_i(n),T_i(n-1)),
$$
$$
E^\bullet (T)=\bigoplus_{(i,n)\in \widehat J, i\sim j}
\Hom(T_i(n),T_j(n-1)),
$$
and put
$$
M^\bullet (T,W)=E^\bullet (T)\oplus L^\bullet (W,T)\oplus L^\bullet
(T,W).
$$
Note that $M^\bullet (T,W)$ is just the variety of representations
of $\widehat{\Gamma}$ in the $\widehat I\coprod \widehat J$-graded
vector space $T\oplus W$. We will write $(B,\alpha,\beta)$ for an
element of $M^\bullet (T,W)$. The components of $B, \alpha, \beta$
are
$$
B_{ij}(n)\in \Hom(T_i(n),T_j(n-1)),
$$
$$
\alpha_i(n)\in \Hom(W_i(n),T_i(n-1)),
$$
$$
\beta_i(n)\in \Hom(T_i(n),W_i(n-1)).
$$
Let $\Lambda^\bullet(T,W)$ be the subvariety of the affine space
$M^\bullet(T,W)$ defined by the equations
$$
\alpha_i(n-1)\beta_i(n)+\sum_{i\sim
j}\varepsilon(i,j)B_{ji}(n-1)B_{ij}(n)=0, \qquad \forall(i,n)\in
\widehat J,
$$
where $\varepsilon(i,j)=1$ (resp. $\varepsilon(i,j)=-1$) if $i\to j$
is an arrow of $\Gamma$ (resp. $i\to j$ is not an arrow of
$\Gamma$).

The algebraic group $ G_T=\prod_{(i,n)\in \widehat J}GL_{T_i(n)} $ acts
on $M^\bullet (T,W)$ by $ g\cdot
(B,\alpha,\beta)=(B',\alpha',\beta'), $ where
$$
B'_{ij}(n)=g_j(n-1)B_{ij}(n)g_i(n)^{-1},\quad
\alpha'_i(n)=g_i(n-1)\alpha_i(n),\quad
\beta'_i(n)=\beta_i(n)g_i(n)^{-1}.
$$
Consider the categorical quotient
$$
\frakM_0^\bullet(T,W)=\Lambda^\bullet(T,W)\slash\!\!\slash G_T,
$$
i.e., the coordinate ring of $\frakM_0^\bullet(T,W)$ is the ring of
$G_T$-invariant functions on $\Lambda^\bullet(T,W)$. The closed
points of $\frakM_0^\bullet(T,W)$ parametrize the closed
$G_T$-orbits in $\Lambda^\bullet(T,W)$. We will denote by
$[B,\alpha,\beta]$ is the element of $\frakM_0^\bullet(T,W)$
represented by $(B,\alpha,\beta)\in\Lambda^\bullet(T,W)$. We say
that a point $(B,\alpha,\beta)\in \Lambda^\bullet(T,W)$ is stable if
the following condition holds: if a $\widehat J$-graded subspace
$T'$ of $T$ is $B$-invariant and contained in $\Ker \beta$, then
$T'=0$. Let $\Lambda_{\mathrm s}^\bullet(T,W)$ be the set of stable
points in $\Lambda^\bullet(T,W)$. Consider the set theoretical
quotient
$$
\frakM^\bullet(T,W)=\Lambda_{\mathrm s}^\bullet(T,W)/G_T.
$$
This quotient coincides with a quotient in the geometric invariant
theory (see \cite[Sec.~3]{N0}). There exists a projective
morphism
$$
\pi_{T,W}\colon\frakM^\bullet(T,W)\to \frakM_0^\bullet(T,W),
$$
see \cite[Sec.~3.18]{N0}. For each $\widehat J$-graded subspace $T'$
of $T$ we have a closed embedding
$$
\frakM_0^\bullet(T',W)\subset \frakM_0^\bullet(T,W).
$$
This allows to define the variety
$$
\frakM_0^\bullet(\infty,W)=\bigcup\limits_T\frakM_0^\bullet(T,W).
$$

Let $\frakM_0^{\bullet\mathrm{reg}}(T,W)$ be the open subset of
$\frakM_0^\bullet(T,W)$ parametrizing the closed free $G_T$-orbits
in $\Lambda^\bullet(T,W)$. Denote by $\bfS(W)$ the set of
isomorphism classes of finite dimensional $\widehat J$-graded vector
spaces $T$ such that $\frakM_0^{\bullet\mathrm{reg}}(T,W)\ne
\emptyset$. For each $W$ as above we have a decomposition
$$
\frakM_0^{\bullet}(\infty,W)=\coprod_{T\in \bfS(W)}
\frakM_0^{\bullet\mathrm{reg}}(T,W),
$$
see \cite[(4.5)]{N3}.


\subsection{Resolutions via Nakajima quiver varieties\label{subs_res-Nak}}
\begin{df}
Suppose $(i,n),(j,k)$ are in $\widehat{I}$. A \emph{path} from
$(i,n)$ to $(j,k)$ is a path in the quiver $\widehat{\Gamma}$ from
$w_i(n)$ to $w_j(k)$ that does not contain another $w$-vertex. To a
path $p$ from $(i,n)$ to $(j,k)$ and an element $(B,\alpha,\beta)\in
\Lambda^\bullet(T,W)$ we can associate an element
$A_{p}(B,\alpha,\beta)\in\Hom(W_i(n),W_j(k))$ as follows. If
$$
p=(w_i(n)\to t_{i_1}(n-1)\to t_{i_2}(n-2)\to\cdots\to
t_{i_r}(n-r)\to w_{j}(k)),
$$
where $r=n-k-1$, then we set
$$
A_{p}(B,\alpha,\beta)=\beta_{j}(n-r)B_{i_{r-1}i_r}(n-(r-1))\cdots
B_{i_1i_2}(n-1)\alpha_{i}(n)\colon W_i(n)\to W_j(k).
$$
\end{df}
The following theorem is proved in \cite[Prop.~9.4,~Thm.~9.11]{HL}.
\begin{thm}
\label{thm_isom-Nak-Lus-quiv} There exists an injective map
$\tau\colon I\to \widehat I$ (depending on the orientation of
$\Gamma$) such that for each arrow $i\to j$ in $\Gamma$ there exists
a path $i\leadsto j$ from $\tau(i)$ to $\tau(j)$ such that the
following property holds: to each finite dimensional $I$-graded
$\bbC$-vector space $V$ we assign the $\widehat I$-graded
$\bbC$-vector space $W$ by setting
$$
W_{\tau(i)}=V_i, \quad\forall i\in I,\qquad W_{\hat \iota}=0
\quad\forall  \hat \iota\in \widehat I\backslash \tau(I).
$$
We have an isomorphism
$$
\frakM_0^\bullet(\infty,W)\to E_V,\qquad [B,\alpha,\beta]\mapsto
\bigoplus_{i\to j} A_{i\leadsto j}(B,\alpha,\beta).
$$

\end{thm}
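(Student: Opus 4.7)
The plan is to deduce the theorem from the Hernandez--Leclerc identification of Lusztig's representation variety with a stratum in a graded Nakajima quiver variety, following \cite[Sec.~9]{HL}. First I would set $\tau(i)=(i,3\xi_i)$, which lies in $\widehat I$ since $3\xi_i\equiv\xi_i\pmod 2$, and for each arrow $i\to j$ in $\Gamma$ (so $\xi_i=\xi_j+1$) take the length-three path
$$
i\leadsto j:\quad w_i(3\xi_i)\to t_i(3\xi_i-1)\to t_j(3\xi_i-2)\to w_j(3\xi_j)
$$
in $\widehat\Gamma$; its middle edge exists because $i\sim j$, and its interior $t$-vertices lie in $\widehat J$ since $3\xi_i-1-\xi_i=2\xi_i-1$ is odd. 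The associated composition
$$
A_{i\leadsto j}(B,\alpha,\beta)=\beta_j(3\xi_i-2)\,B_{ij}(3\xi_i-1)\,\alpha_i(3\xi_i)\in\Hom(V_i,V_j)
$$
is $G_T$-invariant as a product of three equivariant maps with the intermediate group actions cancelling, so the assignment $[B,\alpha,\beta]\mapsto\bigoplus_{i\to j}A_{i\leadsto j}(B,\alpha,\beta)$ descends to a well-defined morphism $\Phi\colon\frakM_0^\bullet(\infty,W)\to E_V$.

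To prove $\Phi$ is an isomorphism I would match the natural stratifications on the two sides: the decomposition $\frakM_0^\bullet(\infty,W)=\coprod_{T\in\bfS(W)}\frakM_0^{\bullet\mathrm{reg}}(T,W)$ on the source and the orbit stratification $E_V=\coprod_{\lambda\in\Lambda_V}\calO_\lambda$ on the target. For Dynkin $\Gamma$ both index sets parametrize isomorphism classes of representations of $\Gamma$ with dimension vectors bounded by $\dim V$, and one expects $\Phi$ to send strata to strata. On each stratum the restriction of $\Phi$ would be shown to be a bijection by constructing an inverse: given $x\in E_V$, build the $\widehat J$-graded space $T$ and a representative $(B,\alpha,\beta)\in\Lambda^\bullet_{\mathrm s}(T,W)$ from the Auslander--Reiten translates of $V$ viewed as a $\bbC\Gamma$-module, with the components of $B,\alpha,\beta$ being the canonical maps between indecomposables and their AR-translates; then $\Phi$ applied to this datum returns $x$ by construction.

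The main obstacle is this last step, whose key structural input is that for Dynkin $\Gamma$ the Auslander--Reiten translation is realized geometrically by the shift $(i,n)\mapsto(i,n-2)$ in $\widehat\Gamma$, so that the chosen paths $i\leadsto j$ encode precisely the morphisms between neighbouring AR-translates. In particular the stability condition for points of $\Lambda^\bullet(T,W)$ must be matched with the representation-theoretic requirement that the reconstructed datum involves no extraneous AR-translate. Rather than redeveloping this machinery I would invoke \cite[Prop.~9.4,~Thm.~9.11]{HL} for the AR-combinatorial core and limit my own contribution to verifying that the explicit path data above produces their isomorphism.
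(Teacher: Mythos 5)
Your proposal is correct and takes essentially the same route as the paper: the paper's entire ``proof'' of this theorem is the citation to \cite[Prop.~9.4,~Thm.~9.11]{HL}, and you likewise defer the substantive content (the identification of $\frakM_0^\bullet(\infty,W)$ with $E_V$ and the matching of strata) to the same results. Your added explicit choices of $\tau(i)=(i,3\xi_i)$ and of the length-three paths are consistent with the definitions of $\widehat I$, $\widehat J$ and $\widehat\Gamma$ given in the paper, so they constitute a harmless elaboration rather than a different argument.
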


From now for each arrow $i\to j$ in $\Gamma$ we fix a path
$i\leadsto j$ from $\tau(i)$ to $\tau(j)$ as in Theorem
\ref{thm_isom-Nak-Lus-quiv}.

\begin{rk}
\label{rem_isom-unic-path} The isomorphism in Theorem
\ref{thm_isom-Nak-Lus-quiv} depends on the choice of the path
$i\leadsto j$ from $\tau(i)$ to $\tau(j)$ for each arrow $i\to j$ in
$\Gamma$. Given another path $p$ from $\tau(i)$ to $\tau(j)$, the
morphisms
$$
A_{i\leadsto j}\colon \frakM_0^\bullet(\infty,W)\to \Hom(V_i,V_j),
\qquad A_{p}\colon \frakM_0^\bullet(\infty,W)\to \Hom(V_i,V_j)
$$
are such that $A_{p}=\lambda A_{i\leadsto j}$ for some
$\lambda\in\bbC$. This follows from two following lemmas.
\end{rk}
For each arrow $i\to j$ in $\Gamma$ we equip the variety
$\Hom(V_i,V_j)$ with the $G_V$-action by $g\cdot \phi=g_j\phi
g_i^{-1}$ for $g\in G_V$, $\phi\in\Hom(V_i,V_j)$.
\begin{lem}
\label{lem_isom-unic-path-lin} The map
$$
A_{p}\colon E_V=\frakM_0^\bullet(\infty,W)\to \Hom(V_i,V_j)
$$
is linear and $G_V$-equivariant.
\end{lem}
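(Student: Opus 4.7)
The plan is to verify $G_V$-equivariance by direct calculation, and then to establish linearity by first descending $A_p$ through the $G_T$-quotient and then pinning down its polynomial degree with a weight argument under a natural $\mathbb{G}_m$-scaling on $\Lambda^\bullet(T,W)$.

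For $G_V$-equivariance: identifying $V_l$ with $W_{\tau(l)}$ via the map $\tau$ of Theorem~\ref{thm_isom-Nak-Lus-quiv}, an element $g=(g_l)_{l\in I}\in G_V$ acts on $\Lambda^\bullet(T,W)$ by replacing $\alpha_l(n)$ with $\alpha_l(n)g_l^{-1}$ and $\beta_l(n)$ with $g_l\beta_l(n)$ precisely at the $w$-vertices $\tau(l)$, while fixing every $B_{ij}(n)$. In the monomial
\[
A_p(B,\alpha,\beta)=\beta_j(n-r)\,B_{i_{r-1}i_r}(n-r+1)\cdots B_{i_1i_2}(n-1)\,\alpha_i(n),
\]
the only factors attached to $w$-vertices are the outermost $\beta_j(n-r)$ and $\alpha_i(n)$, yielding $A_p(g\cdot(B,\alpha,\beta))=g_j\,A_p(B,\alpha,\beta)\,g_i^{-1}$, which is precisely the $G_V$-action on $\Hom(V_i,V_j)$.

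For the descent to $E_V=\mathfrak{M}_0^\bullet(\infty,W)$, I would check that $A_p$ is $G_T$-invariant on $\Lambda^\bullet(T,W)$: the $G_T$-action conjugates the factors of $A_p$ by matrices $g_{i_s}(n-s)$, and the factors $g_{i_s}(n-s)$ and $g_{i_s}(n-s)^{-1}$ appearing between consecutive matrices cancel pairwise. Hence $A_p$ factors through the categorical quotient $\mathfrak{M}_0^\bullet(T,W)$, and these descents are compatible with the inclusions $\mathfrak{M}_0^\bullet(T',W)\hookrightarrow\mathfrak{M}_0^\bullet(T,W)$ for $T'\subset T$, yielding a regular map $A_p\colon E_V\to\Hom(V_i,V_j)$. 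Since $E_V$ is an affine space, this map is represented by a polynomial. To show it is of degree exactly $1$ I would consider the $\mathbb{G}_m$-scaling $(B,\alpha,\beta)\mapsto(tB,t\alpha,t\beta)$ on $\Lambda^\bullet(T,W)$: the preprojective equation scales homogeneously (by $t^2$), so this action preserves $\Lambda^\bullet(T,W)$ and commutes with $G_T$. Each coordinate $A_{i'\leadsto j'}$ on $E_V$ scales with weight equal to the length of the chosen path $i'\leadsto j'$, while $A_p$ scales with weight equal to the length of $p$. Together with $G_V$-equivariance—which restricts every homogeneous component of $A_p$ to a sum of compositions along oriented paths of the corresponding length in $\Gamma$ from $i$ to $j$—this weight calculation, combined with the Hernandez--Leclerc choice of $\tau$ in Theorem~\ref{thm_isom-Nak-Lus-quiv}, will force the polynomial to be homogeneous of degree $1$, i.e.\ linear.

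The main obstacle is the linearity claim. While $G_T$-invariance and $G_V$-equivariance reduce to bookkeeping with matrix products, ruling out higher-degree polynomial contributions in the $E_V$-coordinates demands careful use of the $\mathbb{G}_m$-weight analysis above, and exploits the specific structure of Hernandez--Leclerc's map $\tau$, under which the chosen paths $i'\leadsto j'$ realizing the isomorphism of Theorem~\ref{thm_isom-Nak-Lus-quiv} behave uniformly enough for the scaling to descend cleanly to scalar multiplication on $E_V$. This is the step where invariant-theoretic rigidity for $G_V$-equivariant maps $E_V\to\Hom(V_i,V_j)$ meets the geometry of the preprojective variety.
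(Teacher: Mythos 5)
The paper's own proof of this lemma is the single sentence ``The statement is obvious,'' so you are supplying an argument where the author supplies none; your verification of $G_V$-equivariance (the action touches only the outer factors $\alpha_i(n)$, $\beta_j(n-r)$ attached to $w$-vertices) and of $G_T$-invariance, hence descent of $A_p$ to a regular, $G_V$-equivariant polynomial map on $\frakM_0^\bullet(\infty,W)\simeq E_V$, is correct and is surely the intended content.

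The gap is in the mechanism you propose for linearity. The $\bbC^*$-weight under $(B,\alpha,\beta)\mapsto(tB,t\alpha,t\beta)$ is additive under concatenation of paths, and all paths in $\widehat\Gamma$ between two fixed $w$-vertices have the same length; consequently, writing $\tau(l)=(l,n_l)$, a hypothetical quadratic term such as $x\mapsto x_{k\to j}\circ x_{i\to k}$ would have weight $(n_i-n_k)+(n_k-n_j)=n_i-n_j$, exactly the weight of $A_{i\leadsto j}$. So the weight analysis cannot rule out higher-degree components; it only kills the constant term, which equivariance kills anyway. What actually forces degree one is the fact you mention in passing but do not use: equivariance under the central torus $(\bbC^*)^I\subset G_V$ alone forces every monomial $x_{h_1}\cdots x_{h_d}$ occurring in a homogeneous component to satisfy $\sum_{s}(h_s''-h_s')=j-i$ in $\bbZ I$, and since the underlying graph of a Dynkin quiver is a tree with at most one arrow per edge (so the boundary map on $1$-chains is injective and there is no oriented path from $i$ to $j$ other than $h_0$ itself, nor any oriented cycle to multiply by), the unique solution is $d=1$, $h_1=h_0$. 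Replacing the $\bbC^*$-weight computation by this central-character/tree argument closes the proof; as written, if $\Gamma$ admitted a second oriented path from $i$ to $j$ your argument would not exclude the corresponding higher-degree contribution.
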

\begin{proof}
The statement is obvious.
\end{proof}
\begin{lem}
\label{lem_isom-unic-path-const} Let $h_0=(i\to j)$ be an arrow of
$\Gamma$. Then each linear $G_V$-equivariant morphism $ f\colon
E_V\to \Hom(V_i,V_j) $ is of the form $ x\mapsto \lambda x_{h_0} $
for some $\lambda\in \bbC$.
\end{lem}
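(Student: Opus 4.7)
The plan is to use the direct sum decomposition $E_V = \bigoplus_{h \in H} \Hom(V_{h'}, V_{h''})$ to reduce to studying, for each arrow $h \in H$, the linear maps $f_h \colon \Hom(V_{h'}, V_{h''}) \to \Hom(V_i, V_j)$ defined by $f(x) = \sum_h f_h(x_h)$. By restricting $f$ to tuples $x$ supported on a single arrow and exploiting that the $G_V$-action preserves each summand of $E_V$, each $f_h$ is itself $G_V$-equivariant. The rest of the proof is a representation-theoretic reduction showing that $f_h = 0$ unless $h = h_0$ and that $f_{h_0}$ is a scalar.

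First I would eliminate all $h \neq h_0$ by a weight argument. For each vertex $k \in I$, consider the one-parameter subgroup $\mathbb{C}^\times \hookrightarrow G_V$ acting as $\lambda \cdot \mathrm{id}$ on $V_k$ and trivially on every $V_{k'}$ with $k' \neq k$. It scales $\Hom(V_a, V_b)$ by $\lambda^{\delta_{k,b} - \delta_{k,a}}$, so equivariance of $f_h$ forces the equality $\delta_{k,h''} - \delta_{k,h'} = \delta_{k,j} - \delta_{k,i}$ for every $k \in I$. The only solutions are $(h', h'') = (i, j)$ or $(h', h'') = (j, i)$. Since $\Gamma$ is Dynkin and $h_0$ is an arrow from $i$ to $j$, the first option forces $h = h_0$; in the second option, the weights at $k = i$ are $-1$ on the source but $+1$ on the target, so $f_h = 0$ (the degenerate case $V_i = 0$ or $V_j = 0$ is trivial because $\Hom(V_i,V_j)$ is then zero and $\lambda$ may be taken arbitrary).

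It then remains to analyse $f_{h_0}$, a $G_V$-equivariant, hence $GL(V_i) \times GL(V_j)$-equivariant, endomorphism of $\Hom(V_i, V_j) \cong V_i^* \boxtimes V_j$. This is an irreducible representation of $GL(V_i) \times GL(V_j)$ as the outer tensor product of the two standard representations, so Schur's lemma gives $f_{h_0} = \lambda \cdot \mathrm{id}_{\Hom(V_i, V_j)}$ for a unique $\lambda \in \mathbb{C}$, and hence $f(x) = \lambda x_{h_0}$. No step presents a genuine obstacle; the main point requiring care is the weight bookkeeping in the second paragraph, which ensures that the only surviving component of $f$ lives on the summand indexed by $h_0$.
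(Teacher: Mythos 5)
Your proof is correct and takes essentially the same route as the paper: decompose $E_V=\bigoplus_{h\in H}\Hom(V_{h'},V_{h''})$ into $G_V$-summands and apply Schur's lemma, the paper simply asserting that the summands are pairwise non-isomorphic irreducibles where you verify it explicitly by comparing central characters of one-parameter subgroups. The extra weight bookkeeping and the degenerate case $V_i=0$ or $V_j=0$ are harmless elaborations of the same argument.
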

\begin{proof}
We view $E_V$ and $\Hom(V_i,V_j)$ as linear representations of
$G_V$. The representation $E_V$ is a direct sum of irreducible
representations $ E_V=\bigoplus_{h\in H}E_h, $ where
$E_h=\Hom(V_{h'},V_{h''})$ and $h'$, $h''$ are as in Section
\ref{subs_quiv}. In particular, we have $E_{h_0}=\Hom(V_i,V_j)$.
Now, by Schur's lemma we have
$$
\Hom_{G_V}(E_h,E_{h_0})=
\begin{cases}
0 & \text{if } h\ne h_0,\\
\bbC \cdot \mathrm{Id}_{E_{h_0}} & \text{if }  h=h_0.
\end{cases}
$$
\end{proof}

The following lemma is proved in \cite[Thm.~7.4.1]{N1}.
\begin{lem}
\label{lem_fib-Nak} Let $T$ and $W$ be as in Section
\ref{subs_Nak-quiv}. Then the fibers of $\pi_{T,W}$ have no odd
cohomology groups over $\bbZ$ and their cohomology groups over $\bbZ$
are torsion free.
\end{lem}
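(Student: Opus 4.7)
The plan is to exhibit an affine paving of each fiber of $\pi_{T,W}$ via a Bialynicki-Birula decomposition with respect to a well-chosen $\bbC^*$-action, and to deduce the torsion-freeness and odd-degree vanishing from the paving. First I would introduce the $\bbC^*$-action on $\Lambda^\bullet(T,W)$ defined by $t\cdot(B,\alpha,\beta)=(tB,t\alpha,t\beta)$; since each defining equation $\alpha_i(n-1)\beta_i(n)+\sum_{i\sim j}\varepsilon(i,j)B_{ji}(n-1)B_{ij}(n)=0$ is homogeneous of degree $2$, this preserves $\Lambda^\bullet(T,W)$ and commutes with $G_T$, so it descends to $\bbC^*$-actions on $\frakM^\bullet(T,W)$ and $\frakM_0^\bullet(T,W)$ making $\pi_{T,W}$ equivariant. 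The induced grading on $\bbC[\frakM_0^\bullet(T,W)]$ is nonnegative with degree-zero part $\bbC$, so $\frakM_0^\bullet(T,W)$ is an affine cone contracting under the flow $t\to 0$ to a unique fixed point $0$.

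Second, I would apply Bialynicki-Birula to the smooth variety $\frakM^\bullet(T,W)$. Because $\pi_{T,W}$ is proper and the $\bbC^*$-orbit of any base point $\pi_{T,W}(y)$ has compact closure (being contained in an affine line inside the cone), the limit $\lim_{t\to 0}t\cdot y$ exists in $\frakM^\bullet(T,W)$ for every $y$. Hence $\frakM^\bullet(T,W)$ is covered by the attractors of the connected components of the fixed locus $\frakM^\bullet(T,W)^{\bbC^*}$, and each attractor is a locally trivial affine bundle over its fixed component. Intersecting these attractors with a given fiber $\pi_{T,W}^{-1}(x)$ yields an affine paving of that fiber, provided the fixed-point components themselves admit affine cell decompositions.

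The main obstacle is to identify $\frakM^\bullet(T,W)^{\bbC^*}$ and to verify inductively that its components are pavable. A $\bbC^*$-fixed point of $\frakM^\bullet(T,W)$ corresponds, up to the $G_T$-action, to the data of a $\bbC^*$-equivariant structure on $(T,W)$ compatible with $(B,\alpha,\beta)$ together with a matching cocharacter of $G_T$; this forces a refined grading decomposition $T=\bigoplus_{k}T_{\bullet}(k)$ under which each $\bbC^*$-fixed component is isomorphic to a product of smaller Nakajima quiver varieties $\frakM^\bullet(T',W')$ with $\dim T'<\dim T$. Noetherian induction on $\dim T$, with base case $T=0$ giving a single point, then yields the desired paving of each fiber. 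Once $\pi_{T,W}^{-1}(x)$ is shown to be paved by complex affine cells, classical topology (the long exact sequence associated with attaching affine strata) implies that $\mathrm{H}^*(\pi_{T,W}^{-1}(x),\bbZ)$ is a free abelian group concentrated in even degrees, which is the statement of the lemma.
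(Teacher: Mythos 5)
The paper itself does not prove this lemma: it is quoted from Nakajima (\cite[Thm.~7.4.1]{N1}), so what you are attempting is a reproof of that theorem. Your Bialynicki--Birula strategy is the right spirit for the \emph{central} fiber, but as written it has a gap that affects every other fiber. Since $\pi_{T,W}(t\cdot y)=t\cdot\pi_{T,W}(y)$ and the only $\bbC^*$-fixed point of the cone $\frakM_0^\bullet(T,W)$ is its vertex, the fiber $\pi_{T,W}^{-1}(x)$ is \emph{not} $\bbC^*$-stable when $x\neq 0$; the attractors of the fixed components (all of which lie over $0$) therefore do not induce a decomposition of $\pi_{T,W}^{-1}(x)$, and intersecting an attractor with that fiber yields a subvariety of an affine bundle cut out by the nonlinear equations $\pi_{T,W}=x$, which is not an affine cell in any evident way. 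The missing ingredient is Nakajima's transversal slice theorem (\cite[Sec.~3.3]{N1}), which identifies $\pi_{T,W}^{-1}(x)$, for $x$ in a stratum $\frakM_0^{\bullet\mathrm{reg}}(T^0,W)$, with the central fiber of the morphism attached to the slice data; only after this reduction may one assume $x=0$.

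Even for $x=0$ two points need repair. First, $\pi_{T,W}^{-1}(0)$ is $\bbC^*$-stable but is not a union of full attracting sets for the scaling action (a point $y$ with $\pi_{T,W}(y)\neq 0$ still flows into some fixed component), so the intersection of an attractor with $\pi_{T,W}^{-1}(0)$ is again only a closed subvariety of an affine bundle; Nakajima circumvents this by exhibiting an $\alpha$-partition of the central fiber whose pieces are \emph{entire} affine bundles over the fixed components of a suitably chosen one-parameter subgroup. Second, your description of the fixed locus is inaccurate: a fixed point of the scaling action carries a cocharacter $\rho$ of $G_T$ for which $B$ raises the induced weight by one, $\alpha$ lands in weight $1$ and $\beta$ is supported on weight $-1$; the stability condition then forces all nonnegative weight spaces, hence $\alpha$, to vanish, so the fixed components are flag-type varieties rather than products of smaller $\frakM^\bullet(T',W')$, and the proposed induction would have to be set up on those instead.
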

Let us associate $W$ to $V$ as in Theorem
\ref{thm_isom-Nak-Lus-quiv}. We will identify $E_V$ with
$\frakM_0^{\bullet}(\infty,W)$ as in Theorem
\ref{thm_isom-Nak-Lus-quiv}. The following lemma is proved in
\cite[Prop.~9.8]{HL}. It interprets the stratification on
$E_V$ in terms of Nakajima quiver varieties.
\begin{lem}
\label{lem_strat-Nak} The decomposition $
\frakM_0^{\bullet}(\infty,W)=\coprod_{T\in\bfS(W)}
\frakM_0^{\bullet\mathrm{reg}}(T,W), $ coincides with the
decomposition of $E_V$ into $G_V$-orbits.
\end{lem}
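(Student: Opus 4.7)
The plan is to show that the two stratifications of $E_V \cong \frakM_0^\bullet(\infty,W)$ coincide by checking that each Nakajima stratum $\frakM_0^{\bullet\mathrm{reg}}(T,W)$ is a single $G_V$-orbit. First I would verify that the decomposition $\frakM_0^\bullet(\infty,W)=\coprod_{T}\frakM_0^{\bullet\mathrm{reg}}(T,W)$ is $G_V$-invariant under the identification of Theorem \ref{thm_isom-Nak-Lus-quiv}: the $G_V$-action on $\frakM_0^\bullet(\infty,W)$ is induced from the natural action on $W$, it commutes with the $G_T$-action used in the categorical quotient, and it preserves the conditions defining the regular locus (closed free $G_T$-orbits). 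Hence each $\frakM_0^{\bullet\mathrm{reg}}(T,W)$ is automatically a union of $G_V$-orbits.

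Next I would show each stratum is in fact a single $G_V$-orbit. Given $[B,\alpha,\beta]\in\frakM_0^{\bullet\mathrm{reg}}(T,W)$, its image under the isomorphism of Theorem \ref{thm_isom-Nak-Lus-quiv} is the representation of $\Gamma$ with components $A_{i\leadsto j}(B,\alpha,\beta)$ for each arrow $i\to j$. The key claim is that the isomorphism class of this $\Gamma$-representation---equivalently, by Gabriel's theorem, the vector of multiplicities of the indecomposables along the positive roots---depends only on $T$, not on the representative in the stratum. To see this, I would interpret $T\in\bfS(W)$ as encoding a multiplicity datum for how the semisimple $\widehat\Gamma$-representation sits inside $W$, and then check via the paths $i\leadsto j$ that this datum translates into the Jordan--H\"older multiplicities of the associated $\Gamma$-representation.

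A counting argument then finishes the job. By Gabriel's theorem the set $\Lambda_V$ of $G_V$-orbits on $E_V$ is in bijection with the set of functions $\Delta^+\to \bbN$ supporting the dimension vector $\nu$; one checks (using Theorem \ref{thm_isom-Nak-Lus-quiv} and the description of $\bfS(W)$) that the same combinatorial data parametrizes $\bfS(W)$. Since both decompositions are into connected locally closed $G_V$-stable pieces, each Nakajima stratum lies in a single orbit, and the two indexing sets have equal cardinality, the two stratifications must coincide.

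The hard part will be the middle step, namely the explicit identification of the $\Gamma$-isomorphism class attached to a point of $\frakM_0^{\bullet\mathrm{reg}}(T,W)$. It requires a careful analysis of the combinatorics of the paths $i\leadsto j$ and of the orbit and stabilizer structure for the $G_T$-action on $\Lambda^\bullet(T,W)$. This is precisely the substance of \cite[Prop.~9.8]{HL}, and I would expect any self-contained argument to essentially reproduce their bookkeeping rather than offer a meaningful shortcut.
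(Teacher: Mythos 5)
The paper offers no proof of this lemma at all --- it simply cites \cite[Prop.~9.8]{HL} --- and your sketch correctly identifies that the substantive content (showing the $\Gamma$-isomorphism class of $A_{i\leadsto j}(B,\alpha,\beta)$ depends only on $T$, and matching $\bfS(W)$ with $\Lambda_V$ via Gabriel's theorem) is precisely what that reference establishes. Your approach is therefore essentially the same as the paper's, and your outline of what a self-contained argument would require is sound.
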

Let $T$ be a finite dimensional $\widehat J$-graded $\bbC$-vector space.
The morphism $\pi_{T,W}$ defined in Section \ref{subs_Nak-quiv}
together with the inclusion $\frakM_0^{\bullet}(T,W)\subset
\frakM_0^{\bullet}(\infty,W)$ yields a morphism
$$
\pi_{T,W}\colon \frakM^{\bullet}(T,W)\to
\frakM_0^{\bullet}(\infty,W)\simeq E_V.
$$
We can consider the following complex in $D_{G_V}(E_V,\bfk)$.
$$
\widetilde\scrL_{T,W}=\pi_{T,W*}\underline\bfk_{\frakM^{\bullet}(T,W)}
$$
Lemmas \ref{lem_ev-resol}, \ref{lem_fib-Nak} and \ref{lem_strat-Nak}
yield the following result.
\begin{coro}
\label{coro_Nak-sh-even} The complex $\widetilde\scrL_{T,W}$ is an
even complex in $D_{G_V}(E_V,\bfk)$.
\end{coro}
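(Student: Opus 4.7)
The plan is to apply Lemma \ref{lem_ev-resol} directly with $Y = \frakM^{\bullet}(T,W)$ and $\pi = \pi_{T,W}\colon \frakM^{\bullet}(T,W)\to E_V$, so that $\widetilde\scrL_{T,W} = \pi_*\underline\bfk_Y$. Three hypotheses of Lemma \ref{lem_ev-resol} need to be verified: (i) $\frakM^{\bullet}(T,W)$ is a smooth $G_V$-variety, (ii) $\pi_{T,W}$ is a $G_V$-equivariant proper morphism, and (iii) the fibers of $\pi_{T,W}$ have no odd cohomology with coefficients in $\bfk$.

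For (i) and (ii), smoothness of $\frakM^{\bullet}(T,W)$ and projectivity of $\pi_{T,W}$ are standard properties of Nakajima quiver varieties recalled in Section \ref{subs_Nak-quiv}; in particular, projectivity implies properness. The $G_V$-equivariance comes from the identification $G_V \simeq \prod_i GL(W_i(\tau(i)))$ coming from the assignment $W_{\tau(i)} = V_i$ of Theorem \ref{thm_isom-Nak-Lus-quiv}: this group acts naturally on $\Lambda^\bullet(T,W)$, commutes with the $G_T$-action, descends to both $\frakM^{\bullet}(T,W)$ and $\frakM^{\bullet}_0(\infty,W) \simeq E_V$, and makes $\pi_{T,W}$ equivariant.

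For (iii), we invoke Lemma \ref{lem_fib-Nak}, which tells us that for each $x \in E_V$ the integral cohomology $\mathrm{H}^*(\pi_{T,W}^{-1}(x), \bbZ)$ vanishes in odd degrees and is torsion-free. The universal coefficient theorem then gives
\[
\mathrm{H}^n(\pi_{T,W}^{-1}(x), \bfk) \simeq \mathrm{H}^n(\pi_{T,W}^{-1}(x), \bbZ) \otimes_\bbZ \bfk,
\]
where the $\mathrm{Tor}$ term vanishes precisely because the integral cohomology is torsion-free. Hence $\mathrm{H}^{\mathrm{odd}}(\pi_{T,W}^{-1}(x), \bfk) = 0$, and Lemma \ref{lem_ev-resol} yields that $\widetilde\scrL_{T,W}$ is even.

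There is no real obstacle: all the substantive work is packaged into the prior lemmas. The only thing to be slightly careful about is the passage from $\bbZ$-coefficients (Lemma \ref{lem_fib-Nak}) to $\bfk$-coefficients, where torsion-freeness is used; and the constructibility of $\widetilde\scrL_{T,W}$ with respect to the $G_V$-orbit stratification of $E_V$, which is automatic from $G_V$-equivariance but implicitly relies on Lemma \ref{lem_strat-Nak} to identify the orbit stratification on $E_V$ with the stratification by $\frakM^{\bullet\mathrm{reg}}_0(T',W)$ used elsewhere.
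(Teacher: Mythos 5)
Your proof is correct and follows exactly the route the paper intends: the paper derives this corollary directly from Lemmas \ref{lem_ev-resol}, \ref{lem_fib-Nak} and \ref{lem_strat-Nak}, and you have simply spelled out the verification of the hypotheses (smoothness, properness, equivariance, and the passage from $\bbZ$- to $\bfk$-coefficients via torsion-freeness and universal coefficients). No changes needed.
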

The following lemma assures the existence of the parity sheaves on
$E_V$, see Definition \ref{dem_par-sh}
\begin{lem}
\label{lem_exist-par-sh} For each $\lambda\in\Lambda_V$ there exists
a parity sheaf $\calE(\lambda)$ on
$E_V=\coprod_{\lambda\in\Lambda_V}\calO_\lambda$.
\end{lem}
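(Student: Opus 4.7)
The plan is to realize $\calE(\lambda)$ as an indecomposable summand of one of the sheaves $\widetilde{\scrL}_{T,W}$ from Corollary \ref{coro_Nak-sh-even}, chosen so that its support is exactly $\overline{\calO_\lambda}$ and its restriction to $\calO_\lambda$ is the trivial local system.

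First, I would identify $E_V$ with $\frakM_0^{\bullet}(\infty,W)$ via Theorem \ref{thm_isom-Nak-Lus-quiv}. By Lemma \ref{lem_strat-Nak}, the orbit $\calO_\lambda$ equals $\frakM_0^{\bullet\mathrm{reg}}(T_\lambda,W)$ for a unique $T_\lambda\in\bfS(W)$, so I fix this $T_\lambda$ and consider the projective morphism $\pi_{T_\lambda,W}\colon \frakM^{\bullet}(T_\lambda,W)\to E_V$. Its image is closed and irreducible, and it contains the open subset $\frakM_0^{\bullet\mathrm{reg}}(T_\lambda,W)=\calO_\lambda$ over which $\pi_{T_\lambda,W}$ restricts to an isomorphism (the regular locus parametrizes free closed $G_{T_\lambda}$-orbits, so each stable fiber is a single $G_{T_\lambda}$-orbit). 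Hence the image of $\pi_{T_\lambda,W}$ is $\overline{\calO_\lambda}$ and base change gives $i_\lambda^{*}\widetilde{\scrL}_{T_\lambda,W}=\underline{\bfk}_{\calO_\lambda}$.

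Next I would apply Corollary \ref{coro_Nak-sh-even} to see that $\widetilde{\scrL}_{T_\lambda,W}$ is even, so the shifted complex $\widetilde{\scrL}_{T_\lambda,W}[d_\lambda]$ is parity. Since $\bfk$ is a field, Lemma \ref{lem_Kr-Sch} guarantees that $D_{G_V}(E_V,\bfk)$ is Krull--Schmidt, so I can decompose $\widetilde{\scrL}_{T_\lambda,W}[d_\lambda]=\bigoplus_{i} \calF_i$ into indecomposables. Each $\calF_i$ is then a direct summand of a parity (in fact even-up-to-shift) complex, hence itself parity. Applying the exact functor $i_\lambda^{*}$ gives $\underline{\bfk}_\lambda[d_\lambda]=\bigoplus_i i_\lambda^{*}\calF_i$; since $\underline{\bfk}_\lambda[d_\lambda]$ is indecomposable in $\mathrm{Loc}_{G_V}(\calO_\lambda)$ (shifted), exactly one summand, call it $\calF_{i_0}$, satisfies $i_\lambda^{*}\calF_{i_0}=\underline{\bfk}_\lambda[d_\lambda]$ while the others restrict to zero.

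Finally I would verify that $\calF_{i_0}$ is supported on all of $\overline{\calO_\lambda}$: its support is closed, contained in $\overline{\calO_\lambda}$ (since $\widetilde{\scrL}_{T_\lambda,W}$ is), and contains $\calO_\lambda$ (because $i_\lambda^{*}\calF_{i_0}\ne 0$); the two constraints force equality. Thus $\calF_{i_0}$ is an indecomposable parity complex supported on $\overline{\calO_\lambda}$ extending $\underline{\bfk}_\lambda[d_\lambda]$, which is precisely the parity sheaf $\calE(\lambda)$ of Definition \ref{dem_par-sh}. The only non-routine point is the identification of the image of $\pi_{T_\lambda,W}$ with $\overline{\calO_\lambda}$ and the fact that the map is an isomorphism over the regular locus; everything else is a formal consequence of Krull--Schmidt together with the evenness of $\widetilde{\scrL}_{T_\lambda,W}$.
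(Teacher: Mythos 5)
Your proposal is correct and follows essentially the same route as the paper's proof: take the Nakajima resolution $\pi_{T_\lambda,W}$ with $\frakM_0^{\bullet\mathrm{reg}}(T_\lambda,W)=\calO_\lambda$, use that it is an isomorphism over the regular locus and that its image is $\overline{\calO_\lambda}$ (via irreducibility of $\frakM^{\bullet}(T_\lambda,W)$), invoke the evenness of $\widetilde\scrL_{T_\lambda,W}$ from Corollary \ref{coro_Nak-sh-even}, and extract the parity sheaf by base change and Krull--Schmidt. The only difference is cosmetic: you spell out the Krull--Schmidt extraction that the paper delegates to the base change argument of Section \ref{subs_indec-QV}.
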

\begin{proof}

Let $T$ be in $\bfS(W)$, see Section \ref{subs_Nak-quiv}. By
\cite[Prop.~3.24]{N0} the morphism
$$
\pi_{T,W}\colon\frakM^{\bullet}(T,W)\to \frakM_0^{\bullet}(\infty,W)
$$
is an isomorphism over $\frakM_0^{\bullet\mathrm{reg}}(T,W)\subset
\frakM_0^{\bullet}(T,W)$. Its image is contained in
$\overline{\frakM_0^{\bullet\mathrm{reg}}(T,W)}$ because
$\frakM^{\bullet}(T,W)$ is smooth and connected by \cite[Thm.~5.5.6]{N1} and $\pi_{T,W}^{-1}(\frakM_0^{\bullet\mathrm{reg}}(T,W))$
is a dense open subset of $\frakM^\bullet(T,W)$. Moreover, the complex
$\widetilde\scrL_{T,W}$ is even by Corollary \ref{coro_Nak-sh-even}.
Thus, the base change argument in Section \ref{subs_indec-QV} shows
each parity sheaf $\calE(\lambda), ~\lambda\in \Lambda_V$ is
well-defined and appears as a direct factor of some
$\widetilde\scrL_{T,W}$ for some $T$.
\end{proof}

\begin{rk}
\label{rem_indec-compl} \text{ }
\begin{itemize}
    \item[(a)]
Lemma \ref{lem_exist-par-sh} is enough to insure the existence of
$G_V$-equivariant parity sheaves on $E_V$, see Corollary
\ref{coro_loc-sys-triv}. Note that in the same way we can prove the
existence of $G_V$-equivariant parity sheaves on $E_V$ over a
complete discrete valuation ring.
    \item[(b)]
We have $ i_\lambda^* R_\lambda=i_\lambda^*
\calE(\lambda)=\underline{\bfk}_\lambda[d_\lambda], $ see Section
\ref{subs_indec-QV}. It is natural to ask whether the indecomposable
complexes $\calE(\lambda)$ and $R_\lambda$ are isomorphic. To get
this, it is enough to prove that $R_\lambda$ is a parity complex,
see Lemma \ref{lem_unic-par-sh}.
    \item[(c)] For $\lambda\in \Lambda_V$ set $\widetilde\calE(\lambda)=\calE(\lambda)[-d_\lambda]$. Let $T\in \bfS(W)$ be the $\widehat J$-graded $\bbC$-vector space such that $\frakM^{\bullet\mathrm{reg}}_0(T,W)\simeq\calO_\lambda$. Suppose that the characteristic of $\bfk$ is zero. Then by decomposition theorem \cite[Thm.~6.2.5]{BBD} the unique indecomposable factor of $\scrL_{T,W}$ supported on all $\overline{\calO_\lambda}$ is $\mathrm{IC}(\calO_\lambda)[-d_\lambda]$, because we have
$ i_\lambda^* \widetilde\scrL_{T,W}=i_\lambda^*
\widetilde\calE(\lambda)=\underline{\bfk}_\lambda. $ On the other
hand $\widetilde\scrL_{T,W}$ is the sum of shifts of parity sheaves
and thus the same argument shows that this factor is equal to
$\widetilde\calE(\lambda)$. Thus, we have
$\mathrm{IC}(\calO_\lambda)\simeq\calE(\lambda)$. In particular in
characteristic zero the complex
$\mathrm{IC}(\calO_\lambda)[-d_\lambda]$ is even. Moreover, we have
already seen in the proof of Lemma \ref{lem_L_lamb-indep} that
$R_\lambda\simeq\mathrm{IC}(\calO_\lambda)$. Thus, in zero
characteristic we have $ \calE(\lambda)\simeq
R_\lambda\simeq\mathrm{IC}(\calO_\lambda). $
\end{itemize}
\end{rk}

\subsection{Restriction diagrams\label{subs_restr-diag}}
In this section we recall the construction of the restriction
diagrams for Nakajima quiver varieties and Lusztig quiver
varieties and we compare them.

Let $W$ be a finite dimensional $\widehat I$-graded $\bbC$-vector
space. Let $W_2$ be an $\widehat I$-graded subspace of $W$ and set
$W_1=W/W_2$. Consider the subvariety of $\frakM_0^\bullet(\infty,W)$
given by
$$
\frakZ_0^\bullet(W_1,W_2)=\{[B,\alpha,\beta]\in\frakM_0^\bullet(\infty,W);~
\beta B^k\alpha(W_2)\subset W_2 ~\forall k\in\bbN\},
$$
where $[B,\alpha,\beta]$ denotes the element of
$\frakM_0^\bullet(\infty,W)$ represented by $(B,\alpha,\beta)\in
\Lambda^\bullet(V,W)$ for some $V$. Consider the following diagram,
see \cite[Sec.~3.5]{VV2} and \cite[Sec.~3.5]{N4}.
$$
\frakM_0^\bullet(\infty,W_1)\times\frakM_0^\bullet(\infty,W_2)\stackrel{\kappa'}{\leftarrow}\frakZ_0^\bullet(W_1,W_2)\stackrel{\iota'}{\rightarrow}\frakM_0^\bullet(\infty,W),
$$
where $\iota'$ is the inclusion and $\kappa'$ is the induced map
defined as follows. Fix the element
$[B,\alpha,\beta]\in\frakZ_0^\bullet(W_1,W_2)$ represented by the
tuple $(B,\alpha,\beta)\in\Lambda^\bullet(T,W)$ for some $T$. Let
$T'$ be the maximal ($\widehat J$-graded) subspace of $T$ stable by
$B$ such that $\beta(T')\subset W_2$ to $W_2$. We have
$\alpha(W_2)\in T'$ by definition of $\frakZ_0^\bullet(W_1,W_2)$.
Then $(B,\alpha,\beta)$ defines elements of
$\Lambda^\bullet(T/T',W_1)$ and  $\Lambda^\bullet(T',W_2)$. These
elements yield elements of $\frakM_0^\bullet(\infty,W_1)$ and
$\frakM_0^\bullet(\infty,W_2)$.

Now let $V$ be an $I$-graded finite dimensional $\bbC$-vector space.
Let $V_2$ be an $I$-graded subspace of $V$ and set $V_1=V/V_2$.
Consider the subvariety $F$ of $E_V$ consisting of the
representations preserving $V_2$. We consider the following diagram,
see \cite[Sec.~9.2]{Lu}
$$
E_{V_1}\times
E_{V_2}\stackrel{\kappa}{\leftarrow}F\stackrel{\iota}{\rightarrow}E_V,
$$
where $\iota$ is the inclusion and $\kappa$ assigns to an elements
of $F$ its quotient and restriction. Now assign $W$ to $V$ as in
Theorem \ref{thm_isom-Nak-Lus-quiv}. The subspace $V_2\subset V$
yields a subspace $W_2\subset W$ and the quotient $W_1=W/W_2$ would
correspond to $V_1=V/V_2$.

\begin{thm}
\label{thm_isom-diag} We have an isomorphism of diagrams
$$
\begin{CD}
\frakM_0^\bullet(\infty,W_1)\times\frakM_0^\bullet(\infty,W_2)@<\kappa'<<\frakZ_0^\bullet(W_1,W_2) @>\iota'>>\frakM_0^\bullet(\infty,W),\\
@VVV         @VVV             @VVV\\
E_{V_1}\times E_{V_2} @<\kappa<< F @>\iota>>E_V,
\end{CD}
$$
where the leftmost and the rightmost isomorphisms are as in Theorem
\ref{thm_isom-Nak-Lus-quiv}.
\end{thm}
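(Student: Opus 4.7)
The plan is to take the rightmost vertical isomorphism of Theorem \ref{thm_isom-Nak-Lus-quiv} and restrict it to define the middle vertical arrow $\frakZ_0^\bullet(W_1,W_2)\to F$, and then to verify commutativity of both squares.

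For the right-hand square, I would first check that if $(B,\alpha,\beta)\in\Lambda^\bullet(T,W)$ represents a point of $\frakZ_0^\bullet(W_1,W_2)$, then the corresponding element $x=\bigoplus_{h:i\to j}A_{i\leadsto j}(B,\alpha,\beta)\in E_V$ lies in $F$. Indeed, under the identification $W_{\tau(i)}=V_i$, each $A_{i\leadsto j}$ is a composition of the form $\beta B^{\ast}\alpha$, so the condition $\beta B^{k}\alpha(W_2)\subset W_2$ for all $k\in\bbN$ translates directly into $x_h(V_{2,i})\subset V_{2,j}$ for every arrow $h\colon i\to j$, i.e., $x\in F$. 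Conversely, by Lemmas \ref{lem_isom-unic-path-lin} and \ref{lem_isom-unic-path-const}, the path compositions $A_{i\leadsto j}$ and their variants span (up to scalar) all $G_V$-equivariant linear maps $E_V\to\Hom(V_i,V_j)$, so the preimage in $\frakM_0^\bullet(\infty,W)$ of any $x\in F$ must itself satisfy $\beta B^k\alpha(W_2)\subset W_2$ for all $k$. Hence the rightmost isomorphism restricts to a bijection $\frakZ_0^\bullet(W_1,W_2)\simeq F$, and commutativity of the right square is then tautological.

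For the left-hand square I would unwind $\kappa'$ concretely. Given a representative $(B,\alpha,\beta)$ of $[B,\alpha,\beta]\in\frakZ_0^\bullet(W_1,W_2)$, write $T'\subset T$ for the maximal $B$-stable $\widehat J$-graded subspace with $\beta(T')\subset W_2$; then $\alpha(W_2)\subset T'$ by the defining condition of $\frakZ_0^\bullet(W_1,W_2)$. The restriction of $(B,\alpha,\beta)$ to $(T',W_2)$ produces a triple in $\Lambda^\bullet(T',W_2)$, while the quotient construction on $(T/T',W_1)$ produces a triple in $\Lambda^\bullet(T/T',W_1)$. Applying the path formulas, the image of the first in $E_{V_2}$ is precisely the subrepresentation $x|_{V_2}$, and the image of the second in $E_{V_1}$ is the induced quotient $\bar x$ on $V_1=V/V_2$, because each path composition $\beta B^{\ast}\alpha$ intertwines restriction to a $B$-stable subspace (respectively passage to a quotient) with restriction to $V_2$ (respectively passage to $V_1$). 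This identifies the composite vertical-then-horizontal map with $\kappa$, yielding commutativity.

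The main technical obstacle will be justifying that the construction of $\kappa'$ descends to closed orbits, i.e., that the image in $\frakM_0^\bullet(\infty,W_1)\times\frakM_0^\bullet(\infty,W_2)$ depends only on the class $[B,\alpha,\beta]$ and not on the chosen representative $(B,\alpha,\beta)$ or the chosen $T$. While the path compositions evidently commute with restriction and quotient, the canonical subspace $T'$ shifts when one moves along a $G_T$-orbit, and one must see that its restriction–quotient data produces representatives of well-defined closed $G_{T'}\times G_{T/T'}$-orbits. I would argue this either by using that closed orbits in $\Lambda^\bullet(T,W)$ correspond to semisimple preprojective modules (so that the decomposition into subobject and quotient is canonical), or more directly by observing that the $G_T$-invariant trace functions on $\Lambda^\bullet(T,W)$ that generate $\bbC[\frakM_0^\bullet(\infty,W)]$ are themselves built out of the path compositions $\beta B^\ast\alpha$, and hence behave functorially under the restriction–quotient operation on both sides.
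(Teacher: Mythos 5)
Your proposal is correct and follows essentially the same route as the paper: restrict the isomorphism of Theorem \ref{thm_isom-Nak-Lus-quiv} to identify $\frakZ_0^\bullet(W_1,W_2)$ with $F$, match the defining conditions via Remark \ref{rem_isom-unic-path} (i.e.\ Lemmas \ref{lem_isom-unic-path-lin} and \ref{lem_isom-unic-path-const}), and observe that $\kappa$ and $\kappa'$ are both built from quotient and restriction. The only difference is that you supply more detail, in particular on the well-definedness of $\kappa'$ on closed orbits, which the paper treats as part of the cited definition of the restriction diagram rather than as something to prove here.
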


\begin{proof}
Consider the element $[B,\alpha,\beta]\in\frakM_0^\bullet(\infty,W)$
represented by the tuple $(B,\alpha,\beta)\in\Lambda^\bullet(T,W)$
for some $T$. Let $i\to j$ be an arrow in $\Gamma$. Recall that the
map $V_i \to V_j$ associated with  $[B,\alpha,\beta]$ is
$$
\beta_{i_k}(n_k)B_{i_{k-1}i_k}(n_{k-1})\cdots
B_{i_0i_1}(n_0)\alpha_{i_0}(n_0+1).
$$
So in view of Remark \ref{rem_isom-unic-path} the conditions that
$[B,\alpha,\beta]\in\frakZ_0^\bullet(W_1,W_2)$ means that the
corresponding element of $E_V$ preserves $V_2$. So we have an
isomorphism $\frakZ_0^\bullet(W_1,W_2)\simeq F$ making the right
part of the diagram in the statement to be commutative. The left
part is also commutative because both $\kappa'$ and $\kappa$ were
defined as the product of the quotient and the restriction.
\end{proof}

The restriction diagram for Lusztig quiver varieties yields a
functor
$$
\widetilde{\Res}_{V_1,V_2}=\kappa_!\iota^*\colon
D_{G_V}(E_V,\bfk)\to  D_{G_{V_1}\times G_{V_2}}(E_{V_1}\times
E_{V_2},\bfk).
$$
Consider also its modification
$$
\Res_{V_1,V_2}=\widetilde\Res_{V_1,V_2}[M_{V_1,V_2}],
$$
where
$$
M_{V_1,V_2}=\sum_{h\in H}\dim (V_1)_{h'}\dim (V_2)_{h''}-\sum_{i\in
I}\dim (V_1)_i\dim (V_2)_i.
$$

\subsection{Restriction of Nakajima sheaves\label{subs_restr-Nak}}
Let $T$ be a $\widehat J$-graded finite dimensional $\bbC$-vector
space. Let $\scrL_{T,W}$ be the shift
$\scrL_{T,W}=\widetilde\scrL_{T,W}[\dim_{\bbC}
\frakM^\bullet(T,W)]$, where $\widetilde\scrL_{T,W}$ is as in
Section \ref{subs_res-Nak}. We identify the restriction diagrams for
Nakajima quiver varieties and Lusztig quiver varieties as in
Theorem \ref{thm_isom-diag}. We will write $\iota=\iota',
\kappa=\kappa'$ to simplify the notation. Set
$$
\widetilde{\frakZ}_0^\bullet(T,W_1,W_2)=(\pi_{T,W})^{-1}(\frakZ_0^\bullet(W_1,W_2)).
$$
We have the commutative diagram
$$
\begin{CD}
\widetilde{\frakZ}_0^\bullet(T,W_1,W_2) @>\tilde\iota>>\frakM^\bullet(T,W)\\
@VVpV             @VV{\pi_{T,W}}V\\
\frakZ_0^\bullet(W_1,W_2) @>\iota>>\frakM_0^\bullet(\infty,W),\\
\end{CD}
$$
where $\tilde\iota$ is the inclusion, $p$ is the restriction of
$\pi_{T,W}$.

Set
$$
U(T)=\{(\omega_1,\omega_2)\in \bbN\widehat J\times\bbN\widehat
J;~\omega_1+\omega_2=\grdim T\}.
$$
For each pair $(\omega_1,\omega_2)\in U(T)$ set
$$
\widetilde
F(\omega_1,\omega_2)=\{[B,\alpha,\beta]\in\widetilde{\frakZ}_0^\bullet(T,W_1,W_2);~\grdim
T/T'=\omega_1,\grdim T'=\omega_2\},
$$
where the subset $T'\subset T$ is associated with $[B,\alpha,\beta]$
as in the definition of the restriction diagram for Nakajima
quiver varieties (see the definition of the morphism $\kappa'$). For
each $(\omega_1,\omega_2)\in U(T)$ we have a morphism
$$
\alpha_{\omega_1,\omega_2}\colon \widetilde F(\omega_1,\omega_2)\to
\frakM^\bullet(T_1,W_1)\times\frakM^\bullet(T_2,W_2),
$$
where $T_1$, $T_2$ are $\widehat J$-graded $\bbC$-vector spaces
with dimension vectors $\omega_1,\omega_2$ respectively. The
morphism $\alpha_{\omega_1,\omega_2}$ is a vector bundle by
\cite[Prop.~3.8]{N5}. Let $d_{T_1,T_2}$ be its rank. The
following Lemma is a positive characteristic analogue of \cite[Lem.~4.1]{VV2}.
\begin{lem}
\label{lem_restr-Nak} We have
$$
\widetilde{\Res}_{V_1,V_2}(\widetilde\scrL_{T,W})=\bigoplus_{T_1\oplus
T_2\simeq T}\widetilde\scrL_{T_1,W_1}\otimes
\widetilde\scrL_{T_2,W_2}[-2d_{T_1,T_2}],
$$
where the sum is taken over the isomorphism classes of $\widehat
J$-graded $\bbC$-vector spaces $T_1$, $T_2$ such that $T_1\oplus
T_2\simeq T$.
\end{lem}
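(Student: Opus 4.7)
The plan is to reduce the computation of
\[
\widetilde\Res_{V_1,V_2}(\widetilde\scrL_{T,W}) \;=\; \kappa_!\,\iota^*\,\pi_{T,W*}\,\underline\bfk_{\frakM^\bullet(T,W)}
\]
to an explicit stratum-by-stratum contribution on $\widetilde\frakZ_0^\bullet(T,W_1,W_2)$. First I apply proper base change to the square displayed just before the lemma: since $\pi_{T,W}$ is proper, $\pi_{T,W!}=\pi_{T,W*}$, so $\iota^*\widetilde\scrL_{T,W}=p_!\,\underline\bfk_{\widetilde\frakZ_0^\bullet(T,W_1,W_2)}$, and hence $\widetilde\Res_{V_1,V_2}(\widetilde\scrL_{T,W})=(\kappa\circ p)_!\,\underline\bfk_{\widetilde\frakZ_0^\bullet(T,W_1,W_2)}$.

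Next I exploit the decomposition $\widetilde\frakZ_0^\bullet(T,W_1,W_2)=\coprod_{(\omega_1,\omega_2)\in U(T)}\widetilde F(\omega_1,\omega_2)$ into locally closed $(G_{V_1}\times G_{V_2})$-stable pieces. For each $(\omega_1,\omega_2)$, pick $\widehat J$-graded $\bbC$-vector spaces $T_1,T_2$ with $\grdim T_i=\omega_i$. Unwinding the definition of $\kappa$ recalled in Section \ref{subs_restr-diag} and of $\alpha_{\omega_1,\omega_2}$, the restriction of $\kappa\circ p$ to $\widetilde F(\omega_1,\omega_2)$ equals the composition $(\pi_{T_1,W_1}\times \pi_{T_2,W_2})\circ \alpha_{\omega_1,\omega_2}$ under the identifications of Theorem \ref{thm_isom-Nak-Lus-quiv}. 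Since $\alpha_{\omega_1,\omega_2}$ is a complex vector bundle of rank $d_{T_1,T_2}$, we have $\alpha_{\omega_1,\omega_2,!}\underline\bfk=\underline\bfk[-2d_{T_1,T_2}]$; combined with properness of each $\pi_{T_i,W_i}$, this identifies the stratum's contribution to $(\kappa\circ p)_!\underline\bfk$ with $\widetilde\scrL_{T_1,W_1}\otimes \widetilde\scrL_{T_2,W_2}[-2d_{T_1,T_2}]$.

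To assemble these contributions globally I choose a total order on $U(T)$ refining the closure order. The corresponding ascending filtration of $\widetilde\frakZ_0^\bullet(T,W_1,W_2)$ by closed subvarieties yields open-closed distinguished triangles. Pushing forward by $(\kappa\circ p)_!$, one obtains triangles in $D_{G_{V_1}\times G_{V_2}}(E_{V_1}\times E_{V_2},\bfk)$ whose outer terms are iterated extensions of complexes of the form $\widetilde\scrL_{T_1,W_1}\otimes \widetilde\scrL_{T_2,W_2}[-2d_{T_1,T_2}]$. Each such complex is even on $E_{V_1}\times E_{V_2}$: Corollary \ref{coro_Nak-sh-even} applies to each factor, the external tensor product of even complexes is even, and the product variety still satisfies the analogue of $(3.1)$ since Lemma \ref{lem_prop*} applies to both factors (both stabilizers are of type $A$). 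Lemma \ref{lem_ext-par-compl} then splits every triangle, and an induction along the filtration produces the claimed direct sum.

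The main obstacle is exactly this last splitting step: in positive characteristic the decomposition theorem used for the analogous statement in \cite[Lem.~4.1]{VV2} is unavailable. This is precisely why the argument must go through the parity formalism of Section 3.3; evenness, provided by Corollary \ref{coro_Nak-sh-even}, plays the role of semisimplicity and makes Lemma \ref{lem_ext-par-compl} force every triangle coming from the stratification to split.
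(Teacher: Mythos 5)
Your proof is correct and follows essentially the same route as the paper's: proper base change to reduce to $(\kappa\circ p)_!\underline{\bfk}$, the locally closed partition of $\widetilde{\frakZ}_0^\bullet(T,W_1,W_2)$ by the $\widetilde F(\omega_1,\omega_2)$, the rank-$d_{T_1,T_2}$ vector bundle shift on each stratum, and the splitting of the resulting distinguished triangles via evenness (Corollary \ref{coro_Nak-sh-even}) and Lemma \ref{lem_ext-par-compl}. Your explicit remark that the target $E_{V_1}\times E_{V_2}$ satisfies the analogue of $(3.1)$ (so that Lemma \ref{lem_ext-par-compl} applies there) makes precise a point the paper leaves implicit, but the argument is otherwise identical.
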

\begin{proof}
For each $(\omega_1,\omega_2)\in U(T)$ and each $\widehat J$-graded
$\bbC$-vector spaces $T_1$, $T_2$ of graded dimensions $\omega_1$,
$\omega_2$ we have a commutative diagram
$$
\begin{CD}
\widetilde F(\omega_1,\omega_2) @>>>\widetilde{\frakZ}_0^\bullet(T,W_1,W_2)\\
@V\alpha_{\omega_1,\omega_2}VV    @V{\kappa p}VV\\
\frakM^\bullet(T_1,W_1)\times\frakM^\bullet(T_2,W_2)
@>>>\frakM_0^\bullet(\infty,W_1)\times\frakM_0^\bullet(\infty,W_2).
\end{CD}
$$
The upper horizontal arrow is the obvious inclusion and the lower
one is $\pi_{T_1,W_1}\times \pi_{T_2,W_2}$. The varieties
$\widetilde F(\omega_1,\omega_2)$ with $(\omega_1,\omega_2)\in U(T)$
form a locally closed partition of
$\widetilde{\frakZ}_0^\bullet(T,W_1,W_2)$. Let us enumerate them
$\widetilde{F}_1,\cdots, \widetilde{F}_r$ in a such way that for
$j\in [1,r]$ the subvariety $\widetilde F_{\leqslant
j}=\bigcup_{j'\leqslant j} \widetilde F_{j'}$ is closed in
$\widetilde{\frakZ}_0^\bullet(T,W_1,W_2)$. Let $f_j$ and
$f_{\leqslant j}$ be the restrictions of $\kappa p$
$$
f_j\colon \widetilde F_j\to
\frakM_0^\bullet(\infty,W_1)\times\frakM_0^\bullet(\infty,W_2),
\qquad f_{\leqslant j}\colon \widetilde F_{\leqslant j}\to
\frakM_0^\bullet(\infty,W_1)\times\frakM_0^\bullet(\infty,W_2).
$$
We also set $\widetilde F_{<j}=\widetilde F_{\leqslant j-1}$,
$f_{<j}=f_{\leqslant j-1}$ for $j\in[2,r]$. Let $a_j$ and $b_j$ be
the inclusions
$$
a_j\colon \widetilde F_j\to \widetilde F_{\leqslant j}, \qquad
b_j\colon \widetilde F_{<j} \to \widetilde F_{\leqslant j}.
$$
By \cite[Sec.~1.4.3]{BBD}, we have a distinguished triangle
$$
a_{j!}a_j^*\underline{\bfk}_{\widetilde F_{\leqslant
j}}\to\underline{\bfk}_{\widetilde F_{\leqslant j}}\to
b_{j!}b_j^*\underline{\bfk}_{\widetilde F_{\leqslant
j}}\stackrel{+1}{\to}.
$$
Applying the functor $(f_{\leqslant j})_!$ to this triangle we get
the distinguished triangle
$$
(f_j)_!\underline{\bfk}_{\widetilde F_{j}}\to (f_{\leqslant
j})_!\underline{\bfk}_{\widetilde F_{\leqslant j}}\to
(f_{<j})_!\underline{\bfk}_{\widetilde F_{<j}}\stackrel{+1}{\to}.
\eqno (3.6)
$$
Now if $\widetilde F_j=\widetilde F(\omega_1,\omega_2)$ and $T_1$,
$T_2$ are $\widehat J$-graded $\bbC$-vector spaces with graded
dimensions $\omega_1$, $\omega_2$ respectively,  we have
$$
\alpha_{\omega_1,\omega_2!}\underline{\bfk}_{\widetilde
F_j}=\underline{\bfk}_{\frakM^\bullet(T_1,W_1)\times\frakM^\bullet(T_2,W_2)}[-2d_{T_1,T_2}]
$$
because $\alpha_{\omega_1,\omega_2}$ is a vector bundle of rank
$d_{T_1,T_2}$. Thus,
$$
f_{j!}\underline{\bfk}_{\widetilde
F_j}=\widetilde\scrL_{T_1,W_1}\otimes\widetilde\scrL_{T_2,W_2}[-2d_{T_1,T_2}].
$$
In particular it is an even complex, see Corollary
\ref{coro_Nak-sh-even}. Thus, from (3.6) and Lemma
\ref{lem_ext-par-compl} we have by induction
$$
\kappa_!p_!\underline{\bfk}_{\widetilde{\frakZ}_0^\bullet(T,W_1,W_2)}=\bigoplus_{T_1\oplus
T_2\simeq
T}\widetilde\scrL_{T_1,W_1}\otimes\widetilde\scrL_{T_2,W_2}[-2d_{T_1,T_2}].
$$
This finishes the proof because by the base change theorem we have
$$
\kappa_!\iota^*
\widetilde\scrL_{T,W}=\kappa_!p_!\underline{\bfk}_{\widetilde{\frakZ}_0^\bullet(T,W_1,W_2)}.
$$
\end{proof}

The split Grothendieck group $K(\calQ_V)$ of the additive category
$\calQ_V$ is given the $\calA$-module structure such that
$q[\calF]=[\calF[1]]$ for $\calF\in \calQ_V$. We set
$K(\calQ)=\bigoplus_V K(\calQ_V)$. Denote by
$\mathrm{Par}_{G_V}(E_V)$ the full additive subcategory of
$D_{G_V}(E_V,\bfk)$ whose objects are parity complexes. We have an
$\calA$-module structure on the split Grothendieck group
$K(\mathrm{Par}_{G_V}(E_V))$ of the category
$\mathrm{Par}_{G_V}(E_V)$ as above. We also set
$$
\mathrm{Par}=\bigoplus_V \mathrm{Par}_{G_V}(E_V),\qquad
K(\mathrm{Par})=\bigoplus_V K(\mathrm{Par}_{G_V}(E_V)).
$$
Let $\bfS(W)$ be as in Section \ref{subs_Nak-quiv}.
\begin{lem}
\label{lem_bas-parsh-Nak} The complexes $\widetilde \scrL_{T,W}$,
$T\in\bfS(W)$ form an $\calA$-basis in $K(\mathrm{Par}_{G_V}(E_V))$.
\end{lem}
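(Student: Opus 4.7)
The plan is to exhibit a unitriangular change-of-basis between the family $\{[\widetilde\scrL_{T,W}]\}_{T\in\bfS(W)}$ and the known $\calA$-basis $\{[\calE(\lambda)]\}_{\lambda\in\Lambda_V}$ of $K(\mathrm{Par}_{G_V}(E_V))$, with respect to the partial order $\preccurlyeq$ on $\Lambda_V$.

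First, recall that every parity complex decomposes as a finite direct sum of shifts of parity sheaves $\calE(\lambda)$, see Lemma \ref{lem_unic-par-sh} together with Corollary \ref{coro_loc-sys-triv}. Therefore the classes $[\calE(\lambda)]$ with $\lambda\in\Lambda_V$ form an $\calA$-basis of $K(\mathrm{Par}_{G_V}(E_V))$. By Lemma \ref{lem_strat-Nak}, the rule $T\mapsto \lambda(T)$ with $\calO_{\lambda(T)}=\frakM_0^{\bullet\mathrm{reg}}(T,W)$ sets up a bijection $\bfS(W)\leftrightarrow \Lambda_V$, so the two families have the same cardinality.

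Next, I use Corollary \ref{coro_Nak-sh-even} to know that $\widetilde\scrL_{T,W}$ is an even (hence parity) complex, so it can be written as
$$[\widetilde\scrL_{T,W}]=\sum_{\mu\in\Lambda_V} c_{T,\mu}(q)\,[\calE(\mu)],\qquad c_{T,\mu}(q)\in\calA.$$
The support of $\widetilde\scrL_{T,W}$ is contained in $\overline{\calO_{\lambda(T)}}$ (as established in the proof of Lemma \ref{lem_exist-par-sh}, where one uses that $\frakM^\bullet(T,W)$ is smooth connected and that $\pi_{T,W}^{-1}(\frakM_0^{\bullet\mathrm{reg}}(T,W))$ is dense). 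Since $\calE(\mu)$ is supported on $\overline{\calO_\mu}$, this forces $c_{T,\mu}(q)=0$ unless $\mu\preccurlyeq\lambda(T)$.

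To pin down the leading coefficient, I compute $i_{\lambda(T)}^*\widetilde\scrL_{T,W}$. By \cite[Prop.~3.24]{N0}, the morphism $\pi_{T,W}$ is an isomorphism over $\frakM_0^{\bullet\mathrm{reg}}(T,W)=\calO_{\lambda(T)}$, hence base change yields $i_{\lambda(T)}^*\widetilde\scrL_{T,W}\simeq \underline{\bfk}_{\calO_{\lambda(T)}}$. On the other hand, by definition $i_{\lambda(T)}^*\calE(\lambda(T))\simeq\underline{\bfk}_{\calO_{\lambda(T)}}[d_{\lambda(T)}]$, while for $\mu\prec\lambda(T)$ the sheaf $\calE(\mu)$ restricts to zero on $\calO_{\lambda(T)}$. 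Comparing stalks we conclude $c_{T,\lambda(T)}(q)=q^{-d_{\lambda(T)}}$, which is a unit in $\calA$.

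Finally, fix any total order on $\Lambda_V$ refining $\preccurlyeq$. The transition matrix $(c_{T,\mu})_{T,\mu}$ from $\{[\widetilde\scrL_{T,W}]\}$ to $\{[\calE(\mu)]\}$ is upper triangular with the units $q^{-d_{\lambda(T)}}$ on the diagonal, hence invertible over $\calA$. Therefore $\{[\widetilde\scrL_{T,W}]\}_{T\in\bfS(W)}$ is also an $\calA$-basis of $K(\mathrm{Par}_{G_V}(E_V))$. The only potentially subtle step is the support statement together with the stalk computation, but both follow directly from results already established in Sections \ref{subs_res-Nak} and \ref{subs_Nak-quiv}.
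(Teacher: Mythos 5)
Your proposal is correct and follows essentially the same route as the paper: both arguments decompose $\widetilde\scrL_{T,W}$ into shifts of parity sheaves, use the support containment in $\overline{\calO_{\lambda(T)}}$ and the base-change computation $i_{\lambda(T)}^*\widetilde\scrL_{T,W}\simeq\underline\bfk_{\lambda(T)}$ to get a triangular transition matrix with unit diagonal entries against the basis $\{[\calE(\lambda)]\}$. The only cosmetic difference is that the paper works with the normalized sheaves $\widetilde\calE(\lambda)=\calE(\lambda)[-d_\lambda]$ so the diagonal entries are $1$ rather than $q^{-d_{\lambda(T)}}$.
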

\begin{proof}
Note that
$i_\lambda^*\widetilde\calE(\lambda)=\underline{\bfk}_\lambda$,
where $\widetilde\calE(\lambda)$ is as in Remark
\ref{rem_indec-compl}. Let $\leqslant$ be the total order on
$\Lambda_V$ as in the proof of Lemma \ref{lem_d-par-compl}. Denote
by $T_\lambda$ the $\widehat J$-graded finite dimensional
$\bbC$-vector space such that
$\frakM_0^{\bullet\mathrm{reg}}(T_\lambda,W)\simeq \calO_\lambda$,
see Lemma \ref{lem_strat-Nak}. We have
$\{T_\lambda;~\lambda\in\Lambda_V\}=\bfS(W)$.  The complex
$\widetilde\scrL_{T_\lambda,W}$ is even, see Corollary
\ref{coro_Nak-sh-even}. By the base change theorem we have
$i_\lambda^*\widetilde\scrL_{T_\lambda,W}=\underline\bfk_\lambda$.
Thus,
$$
\widetilde\scrL_{T_\lambda,W}=\widetilde{\calE}(\lambda)\oplus\bigoplus_{\mu<
\lambda, d\in \bbZ}\widetilde{\calE}(\mu)[2d]^{\oplus
a_{\mu,d}},\qquad a_{\mu,d}\in\bbN.
$$
Hence, the classes $[\widetilde\scrL_{T_\lambda,W}]$,
$\lambda\in\Lambda_V$  form an $\calA$-basis in
$K(\mathrm{Par}_{G_V}(E_V))$ because the classes
$[\widetilde{\calE}(\lambda)]$, $\lambda\in\Lambda_V$ form an
$\calA$-basis in $K(\mathrm{Par}_{G_V}(E_V))$.
\end{proof}

By Lemmas \ref{lem_restr-Nak}, \ref{lem_bas-parsh-Nak} the functor
$\widetilde\Res_{V_1,V_2}$ yields an $\calA$-module homomorphism
$$
K(\mathrm{Par}_{G_V}(E_V))\to K(\mathrm{Par}_{G_{V_1}\times
G_{V_2}}(E_{V_1}\times E_{V_2})),
$$
where $\mathrm{Par_{G_{V_1}\times G_{V_2}}(E_{V_1}\times E_{V_2})}$
is defined as $\mathrm{Par}_{G_V}(E_V)$ using the quiver
$\Gamma\coprod\Gamma$. A $G_{V_1}\times G_{V_2}$-orbit in
$E_{V_1}\times E_{V_2}$ is of the form $\calO_{\lambda_1}\times
\calO_{\lambda_2}$. So we identify $\Lambda_{V}$ with
$\Lambda_{V_1}\times \Lambda_{V_2}$. The complex
$\widetilde\calE(\lambda_1)\times \widetilde\calE(\lambda_2)$ is
indecomposable because its endomorphism ring is a tensor product of
endomorphism rings of complexes $\widetilde\calE(\lambda_1)$ and
$\widetilde\calE(\lambda_2)$, thus it is local. It is even,
supported on $\overline{\calO_{\lambda_1}\times \calO_{\lambda_2}}$,
and we have
$$
i_{\lambda_1\times \lambda_2}^*(\widetilde\calE(\lambda_1)\otimes
\widetilde\calE(\lambda_2))=\underline\bfk_{\lambda_1\times
\lambda_2}.
$$
Thus, by Lemma \ref{lem_unic-par-sh}  we have
$$
\widetilde\calE(\lambda_1)\otimes \widetilde\calE(\lambda_2)\simeq
\widetilde\calE(\lambda_1\times\lambda_2).
$$
Thus, we have
$$
K(\mathrm{Par}_{G_{V_1}\times G_{V_2}}(E_{V_1}\times E_{V_2}))\to
K(\mathrm{Par}_{G_{V_1}}(E_{V_1}))\otimes_{\calA}
K(\mathrm{Par}_{G_{V_2}}(E_{V_2}))
$$

Finally, the functors $\widetilde\Res_{V_1,V_2}$ yield a
comultiplication on
$$
K(\mathrm{Par})=\bigoplus_{V} K(\mathrm{Par}_{G_V}(E_V)),
$$
where the sum is taken by isomorphism classes of $I$-graded finite
dimensional $\bbC$-vector spaces. Thus, the modified restriction
functors $\Res_{V_1,V_2}$ yield a coproduct $\Res$ on
$K(\mathrm{Par})$. From now we will always consider the coalgebra
structure on $K(\mathrm{Par})$ given by $\Res$.

\subsection{Restriction of Lusztig sheaves\label{subs_restr-Lus}}
Now we study the restriction  of Lusztig sheaves. It is similar to
the constructions of Nakajima sheaves in Section
\ref{subs_restr-Nak}.
Let
$$
E_{V_1}\times
E_{V_2}\stackrel{\kappa}{\leftarrow}F\stackrel{\iota}{\to}E_V
$$
be the restriction diagram as in Section \ref{subs_restr-diag}. Fix
a sequence $(a_1,\cdots,a_r)$. Set $y=(i_1^{(a_1)}\cdots
i_r^{(a_k)})$, an element in $Y_{\nu}$. Set also
$$
U(y)=\{(y_1,y_2)\in Y_{\nu_1}\times
Y_{\nu_2};~y_1=(i_1^{(a'_1)}\cdots
i_r^{(a'_k)}),y_2=(i_1^{(a''_1)}\cdots i_r^{(a''_k)}),
$$
$$
a'_r,a''_r\in \bbN,~a'_r+a''_r=a_r ~\forall r\in[1,k]\}.
$$
We have a commutative diagram
$$
\begin{CD}
\widetilde F@>>> \widetilde{F}_y\\
@VVV             @VV\pi_yV\\
F           @>>> E_V,
\end{CD}
$$
where $\widetilde F=\pi_y^{-1}(F)$. The variety $\widetilde{F}$
consists of the pairs $(x,\phi)\in E_V\times F_y$ such that $x$
preserves $V_2$ and $x$ preserves the flag $\phi=(\{0\}=V^0\subset
V^1\subset V^2\subset \cdots\subset V^k=V)$. For $(y_1,y_2)$, denote
by $\widetilde{F}(y_1,y_2)$ the subset of $\widetilde F$ consisting
of pairs $(x,\phi)\in \widetilde F$ such that the flag
$(V^0/(V^0\cap V_2)\subset V^1/(V^1\cap V_2)\subset\cdots\subset
V^r/(V^r\cap V_2))$ has type $y_1$ and the flag $(V^0\cap V_2\subset
V^1\cap V_2\subset\cdots\subset V^r\cap V_2)$ has type $y_2$. We
have a vector bundle
$$
\widetilde F(y_1,y_2)\to\widetilde F_{y_1}\times \widetilde F_{y_2},
$$
see \cite[Sec.~9.2.4]{Lu}. Denote its rank by $m_{y_1,y_2}$.

\begin{lem}
\label{lem_restr-Lus} Suppose that for each $(y_1,y_2)\in U(y)$ the
complexes $\calL_{y_1}$, $\calL_{y_2}$ are even. Then we have
$$
\widetilde{\Res}_{V_1,V_2}(\calL_y)=\bigoplus_{(y_1,y_2)\in
U(\nu)}\calL_{y_1}\otimes \calL_{y_2}[-2m_{y_1,y_2}].
$$
\end{lem}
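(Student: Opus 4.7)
My plan is to mimic the argument of Lemma \ref{lem_restr-Nak}, with Nakajima's resolution $\pi_{T,W}$ replaced by Lusztig's $\pi_y$ and the partition of $\widetilde{\frakZ}_0^\bullet(T,W_1,W_2)$ replaced by the partition of $\widetilde F=\pi_y^{-1}(F)$ into the pieces $\widetilde F(y_1,y_2)$.

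First, I would form the Cartesian square
$$
\begin{CD}
\widetilde F @>\tilde\iota>> \widetilde F_y\\
@VV\tilde\pi V   @VV\pi_yV\\
F @>\iota>> E_V
\end{CD}
$$
and apply proper base change to obtain $\iota^*\calL_y=\tilde\pi_*\underline{\bfk}_{\widetilde F}$, so that $\widetilde\Res_{V_1,V_2}(\calL_y)=(\kappa\tilde\pi)_!\underline{\bfk}_{\widetilde F}$. Next, I would enumerate the locally closed strata $\widetilde F(y_1,y_2)$, indexed by $(y_1,y_2)\in U(y)$, as $\widetilde F_1,\dots,\widetilde F_N$ in an order such that each union $\widetilde F_{\leqs j}=\bigcup_{j'\leqs j}\widetilde F_{j'}$ is closed in $\widetilde F$; writing $g_j,g_{\leqs j},g_{<j}$ for the restrictions of $\kappa\tilde\pi$, the standard open/closed distinguished triangle gives
$$
g_{j!}\underline{\bfk}_{\widetilde F_j}\to g_{\leqs j!}\underline{\bfk}_{\widetilde F_{\leqs j}}\to g_{<j!}\underline{\bfk}_{\widetilde F_{<j}}\stackrel{+1}{\to}.
$$

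For the leftmost term, the fact that $\widetilde F(y_1,y_2)\to \widetilde F_{y_1}\times\widetilde F_{y_2}$ is a $G_{V_1}\times G_{V_2}$-equivariant vector bundle of rank $m_{y_1,y_2}$ (from \cite[Sec.~9.2.4]{Lu}) together with the K\"unneth formula yields
$$
g_{j!}\underline{\bfk}_{\widetilde F_j}=\calL_{y_1}\otimes\calL_{y_2}[-2m_{y_1,y_2}].
$$
Here the hypothesis that each $\calL_{y_1},\calL_{y_2}$ (and hence their tensor product, which is the pushforward from a product) is even enters: it guarantees $g_{j!}\underline{\bfk}_{\widetilde F_j}$ is even. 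Then Lemma \ref{lem_ext-par-compl} applied inductively to the above triangles splits each extension, yielding
$$
(\kappa\tilde\pi)_!\underline{\bfk}_{\widetilde F}=\bigoplus_{(y_1,y_2)\in U(y)}\calL_{y_1}\otimes\calL_{y_2}[-2m_{y_1,y_2}],
$$
which is the desired formula.

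The main technical point (and the only real obstacle) is verifying that the induction hypothesis of Lemma \ref{lem_ext-par-compl} remains satisfied at each step: one has to know both $g_{j!}\underline{\bfk}_{\widetilde F_j}$ and $g_{<j!}\underline{\bfk}_{\widetilde F_{<j}}$ are even before concluding $g_{\leqs j!}\underline{\bfk}_{\widetilde F_{\leqs j}}$ is the direct sum. The first of these is controlled by the assumption of the lemma (via K\"unneth and Corollary \ref{coro_loc-sys-triv}-style triviality arguments for the bundle), and the second follows from the inductive hypothesis. Everything else is a routine translation of the proof of Lemma \ref{lem_restr-Nak} into the language of Lusztig's sheaves, using the partition of $\widetilde F$ recalled in Section \ref{subs_restr-Lus} in place of the partition $\widetilde F(\omega_1,\omega_2)$ used there.
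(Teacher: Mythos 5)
Your proposal is correct and is essentially the paper's own proof: the paper simply states that the argument is identical to that of Lemma \ref{lem_restr-Nak}, and you have spelled out that translation (base change, the partition into the strata $\widetilde F(y_1,y_2)$, the vector-bundle identification of each $g_{j!}\underline{\bfk}_{\widetilde F_j}$, and the inductive splitting via Lemma \ref{lem_ext-par-compl}) accurately, including the correct identification of where the evenness hypothesis is used.
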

\begin{proof}
The proof is the same as the proof of Lemma \ref{lem_restr-Nak}. See
also \cite[Sec.~9.2]{Lu}.
\end{proof}

\begin{lem}
\label{lem_bas-QV-Lus} For each $\lambda\in \Lambda_V$ let
$y_\lambda\in Y_\nu$ be as in Section \ref{subs_indec-QV}. The
classes $[\calL_{y_\lambda}]$, $\lambda\in \Lambda_V$ form an
$\calA$-basis in $K(\calQ_V)$.
\end{lem}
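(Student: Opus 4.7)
The plan is to exploit the fact that by Lemma \ref{lem_indec-QV} we already know the classes $[R_\lambda]$, $\lambda\in\Lambda_V$, form an $\calA$-basis of $K(\calQ_V)$. So it suffices to show that the transition matrix from $\{[\calL_{y_\lambda}]\}_{\lambda\in\Lambda_V}$ to $\{[R_\lambda]\}_{\lambda\in\Lambda_V}$ is invertible over $\calA=\bbZ[q,q^{-1}]$, and my approach will be to prove it is upper triangular (with respect to a refinement of the closure order) with unit diagonal entries.

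First I would verify that $\calL_{y_\lambda}\in\calQ_V$: if $\ui$ denotes the expansion of $y_\lambda=(\uj,\mathbf a)$ into $I^\nu$, then the identity ${^\delta\calL}_\ui=[\mathbf a]!\,{^\delta\calL}_{y_\lambda}$ recalled in Section \ref{subs_new-bas} shows that $\calL_{y_\lambda}$ is a direct summand (up to shifts) of $\calL_\ui$, hence lies in $\calQ_V$. By Krull-Schmidt (Lemma \ref{lem_Kr-Sch}) and Lemma \ref{lem_indec-QV} we may decompose
\[
\calL_{y_\lambda}\simeq\bigoplus_{\mu\in\Lambda_V,\,k\in\bbZ}R_\mu[k]^{\oplus n_{\mu,k}(\lambda)}
\]
for some non-negative integers $n_{\mu,k}(\lambda)$. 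Since $\pi_{y_\lambda}\colon\widetilde F_{y_\lambda}\to\overline{\calO_\lambda}$ is a resolution, the support of $\calL_{y_\lambda}$ is contained in $\overline{\calO_\lambda}$, so the summand $R_\mu[k]$ can appear only if $\overline{\calO_\mu}\subseteq\overline{\calO_\lambda}$, i.e.\ $\mu\preccurlyeq\lambda$.

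Next I would isolate the top term. By the construction in Section \ref{subs_indec-QV}, the complex $R_\lambda$ is the shift by $d_\lambda$ of the unique indecomposable direct factor of $\calL_{y_\lambda}$ whose support equals the whole $\overline{\calO_\lambda}$, and $i_\lambda^*R_\lambda=\underline{\bfk}_\lambda[d_\lambda]$. Combined with base change $i_\lambda^*\calL_{y_\lambda}=\underline\bfk_\lambda$ (used in Section \ref{subs_indec-QV}), this forces the coefficient $n_{\lambda,-d_\lambda}(\lambda)=1$ and $n_{\mu,k}(\lambda)=0$ for all $\mu$ with $\calO_\mu=\calO_\lambda$ but $(\mu,k)\ne(\lambda,-d_\lambda)$. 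All other summands have $\mu\prec\lambda$ strictly.

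Fix a total order $\leqslant$ on $\Lambda_V$ refining $\preccurlyeq$. Passing to $K(\calQ_V)$ and using $q[\calF]=[\calF[1]]$ I obtain
\[
[\calL_{y_\lambda}]=q^{-d_\lambda}[R_\lambda]+\sum_{\mu<\lambda}a_{\lambda,\mu}(q)\,[R_\mu],\qquad a_{\lambda,\mu}(q)\in\calA.
\]
The resulting transition matrix is upper triangular with diagonal entries $q^{-d_\lambda}$, all of which are units in $\calA$; therefore it is invertible over $\calA$, and the classes $[\calL_{y_\lambda}]$, $\lambda\in\Lambda_V$, form an $\calA$-basis of $K(\calQ_V)$. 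The only real step here is verifying that $\calL_{y_\lambda}$ decomposes inside $\calQ_V$ (rather than merely in the ambient $D_{G_V}(E_V,\bfk)$), which is handled by the $[\mathbf a]!$-relation above together with the already established Krull-Schmidt property of $\calQ_V$; everything else is a formal upper-triangularity argument.
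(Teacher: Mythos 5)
Your argument is correct and is exactly the route the paper takes: the paper simply refers to the proof of Lemma \ref{lem_bas-parsh-Nak}, which is the same unitriangularity argument — each $\calL_{y_\lambda}$ equals $R_\lambda[-d_\lambda]$ plus shifts of $R_\mu$ with $\calO_\mu$ strictly contained in $\overline{\calO_\lambda}$, by the support containment and the base-change computation $i_\lambda^*\calL_{y_\lambda}=\underline{\bfk}_\lambda$. Since the $[R_\mu]$ form an $\calA$-basis of $K(\calQ_V)$ by Lemma \ref{lem_indec-QV} and the transition matrix is triangular with units $q^{-d_\lambda}$ on the diagonal, the conclusion follows, just as you wrote.
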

\begin{proof}
The proof is the same as the proof of Lemma \ref{lem_bas-parsh-Nak}.
\end{proof}


\subsection{Coalgebra structure on $K(\mathrm{Par})$\label{subs_coal-Par}}

The following Lemma follows directly from Lemma
\ref{lem_d-par-compl}.
\begin{lem}
\label{lem_incl-Par} Let $\calA[\Lambda_V\times \bbZ]$ be a free
$\calA$-module with basis $1_{\lambda,n}$, $\lambda\in \Lambda_V$,
$n\in \bbZ$. Then there is an $\calA$-module inclusion
$$
K(\mathrm{Par}_{G_V}(E_V))\hookrightarrow \calA[\Lambda_V\times
\bbZ],\qquad \calF\mapsto
\sum_{\lambda,n}d_{\lambda,n}(\calF)1_{\lambda,n},
$$
where $d_{\lambda,n}$ are as in Section \ref{subs_ext-par-sh}.
\end{lem}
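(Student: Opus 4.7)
The plan is to reduce the statement to Lemma \ref{lem_d-par-compl}, checking only the bookkeeping (well-definedness, $\calA$-linearity) and then reading off injectivity from that lemma.

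First I would verify that the formula $\calF\mapsto \sum_{\lambda,n}d_{\lambda,n}(\calF)1_{\lambda,n}$ descends to a homomorphism from the split Grothendieck group $K(\mathrm{Par}_{G_V}(E_V))$. The sum is finite because every parity complex $\calF\in D_{G_V}(E_V,\bfk)$ has bounded cohomology and $\Lambda_V$ is a finite set (the quiver $\Gamma$ is Dynkin). Additivity under direct sums is clear: $i_\lambda^*$ and $\calH^n$ commute with $\oplus$, so $\calH^n(i_\lambda^*(\calF\oplus\calG))=\calH^n(i_\lambda^*\calF)\oplus\calH^n(i_\lambda^*\calG)$, and since all these sheaves are trivial local systems on $\calO_\lambda$ by Corollary \ref{coro_loc-sys-triv}, their ranks add, giving $d_{\lambda,n}(\calF\oplus\calG)=d_{\lambda,n}(\calF)+d_{\lambda,n}(\calG)$.

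Next, I would check compatibility with the $\calA$-action. The action of $q$ on $K(\mathrm{Par}_{G_V}(E_V))$ is given by $q[\calF]=[\calF[1]]$, and $\calH^n(\calF[1])=\calH^{n+1}(\calF)$ gives the identity $d_{\lambda,n}(\calF[1])=d_{\lambda,n+1}(\calF)$. With the convention that $q$ acts on the target by $q\cdot 1_{\lambda,n}=1_{\lambda,n-1}$ (so that the $1_{\lambda,n}$ form a basis of the underlying $\bbZ$-module, permuted by $q$), a direct index shift shows that the map intertwines the two $q$-actions.

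Finally, injectivity is the heart of the matter and follows directly from Lemma \ref{lem_d-par-compl}. Given an element $[\calF]-[\calG]\in K(\mathrm{Par}_{G_V}(E_V))$ (where $\calF,\calG$ are parity complexes) whose image is zero, we have $d_{\lambda,n}(\calF)=d_{\lambda,n}(\calG)$ for all $\lambda\in\Lambda_V$ and $n\in\bbZ$. Lemma \ref{lem_d-par-compl} then gives $\calF\simeq\calG$, hence $[\calF]-[\calG]=0$ in the split Grothendieck group. I do not foresee any real obstacle, as the content of the lemma has already been carried out in Lemma \ref{lem_d-par-compl}; the only subtle point is the convention for the $q$-action on $\calA[\Lambda_V\times\bbZ]$, which must be chosen so as to match the shift $[1]$ on parity complexes.
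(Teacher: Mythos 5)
Your proposal is correct and matches the paper, which simply states that the lemma ``follows directly from Lemma \ref{lem_d-par-compl}'' and leaves the routine verifications (finiteness of the sum, additivity under $\oplus$, and the convention making the map commute with the $q$-shift) implicit. Your injectivity argument --- writing an arbitrary class as $[\calF]-[\calG]$ and applying Lemma \ref{lem_d-par-compl} --- is exactly the intended reduction.
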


\begin{thm}
\label{thm_coal-isom}
There exists an $\calA$-coalgebra isomorphism $\beta_\calA\colon{_\calA}\bff\to K(\mathrm{Par})$. 
\end{thm}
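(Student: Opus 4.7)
The plan is to construct $\beta_\calA$ by transferring the characteristic-zero coalgebra isomorphism along the Nakajima basis of $K(\mathrm{Par})$, using the fact that the restriction formula of Lemma \ref{lem_restr-Nak} is independent of the characteristic of $\bfk$. By Lemma \ref{lem_bas-parsh-Nak} the classes $[\widetilde\scrL_{T,W}]$ with $T \in \bfS(W)$ form an $\calA$-basis of $K(\mathrm{Par}_{G_V}(E_V))$, and summing over $V$ an $\calA$-basis of $K(\mathrm{Par})$.

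First, I treat $\bfk = \bbC$ as a template. In this case the decomposition theorem makes all Lusztig complexes $\pi_{y*}\underline\bbC_{\widetilde F_y}$ semisimple, so $\Gamma$ is $\bbC$-even; hence $\calQ_V = \mathrm{Par}_{G_V}(E_V)$ by Remark \ref{rem_indec-compl}(c), and by Section \ref{subs_ev-quiv} the map $\lambda_\calA : K(\mathrm{Par}) \to {_\calA}\bff$ is a bialgebra isomorphism that intertwines the sheaf-theoretic coproduct $\Res$ with the algebraic coproduct $r$ on ${_\calA}\bff$. Writing $\widetilde\scrL_{T,W}^{\bbC}$ for the Nakajima complex with $\bbC$-coefficients, I set $\xi_{T,W} := \lambda_\calA([\widetilde\scrL_{T,W}^{\bbC}]) \in {_\calA}\bff$. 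The family $\{\xi_{T,W}\}$ is then an $\calA$-basis of ${_\calA}\bff$, and each $\xi_{T,W}$ lives in the fixed $\calA$-module ${_\calA}\bff$, which is defined purely algebraically and does not depend on the target field.

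For arbitrary $\bfk$ I then define $\beta_\calA : {_\calA}\bff \to K(\mathrm{Par})$ by $\xi_{T,W} \mapsto [\widetilde\scrL_{T,W}]$, extended $\calA$-linearly; this is automatically an $\calA$-linear isomorphism. To check $(\beta_\calA \otimes \beta_\calA) \circ r = \Res \circ \beta_\calA$, it suffices to compare both sides on each $\xi_{T,W}$. Combining Lemma \ref{lem_restr-Nak} with the shift $[M_{V_1,V_2}]$ in the definition of $\Res_{V_1,V_2}$ gives
$$
\Res_{V_1,V_2}([\widetilde\scrL_{T,W}]) = \sum_{T_1 \oplus T_2 \simeq T} q^{M_{V_1,V_2} - 2 d_{T_1,T_2}}\, [\widetilde\scrL_{T_1,W_1}] \otimes [\widetilde\scrL_{T_2,W_2}]
$$
in $K(\mathrm{Par}) \otimes_\calA K(\mathrm{Par})$, with coefficients depending only on the rank $d_{T_1,T_2}$ of a vector bundle between Nakajima varieties over $\bbC$ and on the combinatorial integer $M_{V_1,V_2}$, hence identical in every characteristic. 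Specializing to $\bfk = \bbC$ and applying $\lambda_\calA$ (a coalgebra map there) produces the same expansion for $r(\xi_{T,W})$ in the basis $\{\xi_{T_1,W_1} \otimes \xi_{T_2,W_2}\}$; since this is an identity in the $\bfk$-independent algebra $\bff \otimes \bff$, it holds universally, and the coalgebra property follows.

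The main obstacle is the characteristic-zero input recorded in Section \ref{subs_ev-quiv}: one must verify that the coproduct transported from ${_\calA}\bff$ via $\lambda_\calA$ over $\bbC$ coincides with the sheaf-theoretic $\Res$, including the shift $[M_{V_1,V_2}]$ and the matching of the twisting factor $q^{-|x_2||x_1'|}$ on $\bff \otimes \bff$ with the geometric conventions. This is Lusztig's classical bialgebra identification for Dynkin quivers in characteristic zero, and careful bookkeeping of shifts is the only delicate point.
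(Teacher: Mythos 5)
Your proposal is correct and follows essentially the same route as the paper: reduce to the characteristic-zero case (Lusztig's bialgebra isomorphism), use the Nakajima complexes $\widetilde\scrL_{T,W}$, $T\in\bfS(W)$, as an $\calA$-basis of $K(\mathrm{Par})$ that is independent of the field, and transfer the coproduct using the field-independence of the coefficients $d_{T_1,T_2}$ from Lemma \ref{lem_restr-Nak}. The only cosmetic difference is that you define the transfer map directly on the basis via Lemma \ref{lem_bas-parsh-Nak}, whereas the paper routes the field-independence through the embedding of Lemma \ref{lem_incl-Par}; both are valid.
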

\begin{proof}
At first suppose $k=\bbC$. In this case the statement is already
known, see \cite[Thm.~13.2.11]{Lu}. Now let $\bfk$ be an
arbitrary field. To avoid confusion we will indicate the field
$\bfk$ or $\bbC$ in the rest of the proof.
By Lemma \ref{lem_fib-Nak} the numbers
$d_{\lambda,n}(\widetilde\scrL_{T,W,\bfk})$ do not depend on the
field $\bfk$, where $\lambda\in \Lambda_V$, $n\in \bbN$, $T$ is a
finite dimensional $\widehat J$-graded $\bbC$-vector space. Thus, the
image of the inclusion in Lemma \ref{lem_incl-Par} does not depend
on the field either. Hence, there exists a linear isomorphism
$$
K(\mathrm{Par}_{G_V}(E_V),\bfk)\to K(\mathrm{Par}_{G_V}(E_V),\bbC)
$$
sending $\widetilde\scrL_{T,W,\bfk}$ to $\widetilde\scrL_{T,W,\bbC}$
for each $T$. To conclude we need only to note that this isomorphism
preserves the coproduct because the constants $d_{T_1,T_2}$ from
Lemma \ref{lem_restr-Nak} are independent of the field.
\end{proof}
\begin{rk}
There is an $\calA$-basis in $K(\mathrm{Par})$ given by parity
sheaves. It corresponds to some $\calA$-basis of $_\calA\bff$ by
$\beta_\calA$. So Theorem \ref{thm_coal-isom} yields an
$\calA$-basis in $_\calA\bff$ in terms of parity sheaves.
\end{rk}

\subsection{Even quivers\label{subs_ev-quiv}}
\begin{df}
The quiver $\Gamma$ is $\bfk$-\emph{even} if the complex $\calL_y$
over $\bfk$ is even for each $y\in Y_\nu$ and each $\nu\in \bbN I$.
The quiver $\Gamma$ is \emph{even} if it is $\bfk$-even for each
field $\bfk$.
\end{df}
In this section we suppose that our Dynkin quiver $\Gamma$ is
even. This hypothesis is necessary to apply Lemma
\ref{lem_restr-Lus}.

\begin{thm}
\label{thm_ev-isom-Nak-Lus} Suppose that $\Gamma$ is an even Dynkin
quiver. Let $V$ be as above. Then the full subcategories $\calQ_V$
and $\mathrm{Par}_{G_V}(E_V)$ of $D_{G_V}(E_V,\bfk)$ coincide.
Moreover, there exists an isomorphism of bialgebras $K(\calQ)\simeq
{_\calA\bff}$, where the algebra structure on $K(\calQ)$ is as in
Section \ref{subs_new-bas} and the coproduct is given by $\Res$.
\end{thm}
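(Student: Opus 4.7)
The statement comprises two claims: a categorical equality $\calQ_V = \mathrm{Par}_{G_V}(E_V)$ for each $V$, and a bialgebra isomorphism $K(\calQ) \simeq {_\calA\bff}$. For the categorical equality, the inclusion $\calQ_V \subseteq \mathrm{Par}_{G_V}(E_V)$ is immediate from the even hypothesis: each $\calL_\ui = \pi_{\ui*}\underline{\bfk}_{\widetilde{F}_\ui}$ is even, and the class of parity complexes is closed under shifts, direct sums, and direct factors. For the reverse inclusion, by Lemma \ref{lem_indec-QV} the indecomposable objects of $\calQ_V$ (up to shift) are the $R_\lambda$, $\lambda \in \Lambda_V$, while those of $\mathrm{Par}_{G_V}(E_V)$ are the parity sheaves $\calE(\lambda)$. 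It thus suffices to show $R_\lambda \simeq \calE(\lambda)$. Since $\calL_{y_\lambda}$ is even by hypothesis, so is its direct factor $R_\lambda$; being indecomposable, supported on $\overline{\calO_\lambda}$, and satisfying $i_\lambda^* R_\lambda = \underline{\bfk}_\lambda[d_\lambda]$, Lemma \ref{lem_unic-par-sh} gives $R_\lambda = \calE(\lambda)$, as already noted in Remark \ref{rem_indec-compl}(b).

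With this equality in hand, the $\calA$-module $K(\calQ) = K(\mathrm{Par})$ carries the algebra structure from Theorem \ref{thm_isom-sh-f} (via $\lambda_\calA$) and the coalgebra structure from $\Res$. The remaining content is the compatibility $(\lambda_\calA \otimes \lambda_\calA) \circ \Res = r \circ \lambda_\calA$. I would verify this on the $\calA$-algebra generators $[^\delta \calL_y]$ of $K(\calQ)$. By Lemma \ref{lem_restr-Lus}, applicable precisely because $\Gamma$ is $\bfk$-even, the geometric restriction admits the explicit formula $\widetilde\Res_{V_1,V_2}(\calL_y) = \bigoplus_{(y_1,y_2) \in U(y)} \calL_{y_1} \otimes \calL_{y_2}[-2m_{y_1,y_2}]$. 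After the shift by $M_{V_1,V_2}$ converting $\widetilde\Res$ into $\Res$, and accounting for the normalization in $^\delta\calL_y$, this matches term-by-term the classical Lusztig formula for $r(\theta_y)$; compare \cite[Sec.~9.2]{Lu}.

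The main obstacle is ensuring that the classical matching persists in positive characteristic, since $_\calA\bff$ is defined over $\bbQ(q)$ and base change to $\bfk$ requires care. However, both sides of the identity depend only on the combinatorial data $U(y)$, the vector bundle ranks $m_{y_1,y_2}$, and the shifts $M_{V_1,V_2}$, none of which are affected by the characteristic of $\bfk$. Hence the identity propagates from the characteristic-zero case, where it is a theorem of Lusztig, to arbitrary $\bfk$ under the even hypothesis, and the compatibility of coproducts holds on generators and thus everywhere. An alternative route would be to invoke Theorem \ref{thm_coal-isom} and observe that, under the even hypothesis, $\beta_\calA^{-1}$ also sends $[^\delta\calL_y]$ to $\theta_y$, forcing $\lambda_\calA = \beta_\calA^{-1}$ and therefore making both maps bialgebra isomorphisms simultaneously.
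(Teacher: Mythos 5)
Your proof is correct and follows essentially the same route as the paper: the categorical equality via Remark \ref{rem_indec-compl}(b) (evenness makes $R_\lambda$ a parity complex, so Lemma \ref{lem_unic-par-sh} forces $R_\lambda\simeq\calE(\lambda)$), and the coproduct compatibility by reducing to Lusztig's characteristic-zero theorem through Lemma \ref{lem_restr-Lus} and the field-independence of the constants $m_{y_1,y_2}$. (Only your parenthetical ``alternative route'' is shaky, since $\beta_\calA$ is built from the Nakajima sheaves $\widetilde\scrL_{T,W}$ and it is not immediate that $\beta_\calA^{-1}$ sends $[{^\delta}\calL_y]$ to $\theta_y$; but your main argument does not rely on it.)
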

\begin{proof}
The first statement follows from Remark \ref{rem_indec-compl} (b).
The second statement is already known if the characteristic of
$\bfk$ is zero, see \cite[Thm.~13.2.11]{Lu}. In positive characteristic
we already have the algebra isomorphism
$$
\lambda_\calA\colon_\calA\bff\to K(\calQ)
$$
by Theorem \ref{thm_isom-sh-f}. We need only to verify that
$\lambda_\calA$ preserves the coproduct. This follows from the zero
characteristic case and Lemma \ref{lem_restr-Lus} because the
constants $m_{y_1,y_2}$ is Lemma \ref{lem_restr-Lus} do not depend on
the field.
\end{proof}

\subsection{Type $A$\label{subs_typeA}} We say that a quiver is of type $A$ if each of its connected components has type $A_n$ for some positive integer $n$. Now we show that the quivers of type $A$ are even. 
We start by proving some helpful lemmas.

\begin{df}
\label{def_varX} Suppose $n,k,s\in \bbN$, $k\leqslant n$, $s>0$.
Suppose $\bfv$ and $\bfd$ are sequences of nonnegative integers
$\bfv=(v_1,\cdots,v_s)$, $\bfd=(d_1,\cdots,d_s)$ such that
$v_1+\cdots+v_s=n$. Let $W$ be an $n$-dimensional $\bbC$-vector
space and let
$$
\phi=(\{0\}=W_0\subset W_1\subset\cdots\subset W_s=W)
$$
be a flag of type $\bfv$ in $W$, i.e., $\dim W_a/W_{a-1}=v_a$ for
each $a$ in $[1,s]$. Set
$$
X_{\bfv,\bfd,k}=\{U\in \Gr_k(W);~\dim U\cap W_a=d_a, \forall
a\in[1,s]\}.
$$
\end{df}

\begin{lem}
\label{lem_varX} The set $X_{\bfv,\bfd,k}$ is either empty or a
variety with an affine cell decomposition.
\end{lem}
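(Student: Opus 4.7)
\textbf{The plan is to} refine the flag $\phi$ to a complete flag of $W$ and then exhibit $X_{\bfv,\bfd,k}$ as a disjoint union of Schubert cells of the resulting complete-flag Grassmannian stratification. Concretely, I would fix a basis $e_1,\dots,e_n$ of $W$ compatible with $\phi$, meaning $W_a=\mathrm{span}(e_1,\dots,e_{v_1+\cdots+v_a})$ for each $a$. Setting $\widetilde W_j=\mathrm{span}(e_1,\dots,e_j)$, I get a complete flag refining $\phi$, and so a Schubert cell decomposition $\Gr_k(W)=\bigsqcup_S C_S$ indexed by $k$-element subsets $S=\{s_1<\cdots<s_k\}\subset\{1,\dots,n\}$, where $C_S$ is the locus of subspaces whose reduced row-echelon basis has pivot set $S$. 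Each $C_S$ is isomorphic to an affine space.

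The second step is the combinatorial identification. If $U\in C_S$, then $\dim U\cap \widetilde W_j=|S\cap[1,j]|$ for every $j$. Specializing $j=v_1+\cdots+v_a$, I see that the function $U\mapsto(\dim U\cap W_a)_{a=1}^s$ is constant on each $C_S$, and depends only on the combinatorial data
$$
\delta_a(S)=|S\cap[1,v_1+\cdots+v_a]|,\qquad a=1,\dots,s.
$$
Let $\calS$ be the set of $S$ with $\delta_a(S)=d_a$ for all $a$. Then $X_{\bfv,\bfd,k}=\bigsqcup_{S\in\calS}C_S$. If $\calS=\emptyset$ the variety is empty; otherwise it is locally closed in $\Gr_k(W)$ and is written as a disjoint union of affine cells.

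The third step is to promote this to a genuine affine cell decomposition, i.e., to produce a filtration of $X_{\bfv,\bfd,k}$ by closed subvarieties with successive differences equal to the cells $C_S$, $S\in\calS$. Since $X_{\bfv,\bfd,k}$ is a union of entire Schubert cells, its closure inside itself of any $C_S$ equals $\overline{C_S}\cap X_{\bfv,\bfd,k}$. Now $\overline{C_S}$ in $\Gr_k(W)$ is a union of cells $C_{S'}$ with $S'\leqslant S$ in the standard Bruhat order; intersecting with $X_{\bfv,\bfd,k}$ selects precisely those $S'\in\calS$ with $S'\leqslant S$. Ordering $\calS$ by any linear refinement of the Bruhat order yields a filtration by closed subsets whose successive strata are exactly the affine cells $C_S$.

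The proof is essentially an exercise in Schubert calculus, so no step is genuinely hard. The only point that requires a moment's care is the last one: one must verify that the closure of $C_S$ taken inside the locally closed subvariety $X_{\bfv,\bfd,k}$ is itself a union of the cells $C_{S'}$, $S'\in\calS$, and this reduces immediately to the observation that $X_{\bfv,\bfd,k}$ is saturated with respect to the Schubert stratification.
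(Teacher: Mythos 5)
Your proof is correct and is essentially the paper's argument in explicit coordinates: your Schubert cells $C_S$ for the refining complete flag are exactly the $B$-orbits the paper uses, and your identification of $X_{\bfv,\bfd,k}$ as the union of cells with prescribed $\delta_a(S)$ is the paper's observation that it is a single $P$-orbit for the parabolic $P$ stabilizing $\phi$. Your third step, checking that the Bruhat closure relations yield a genuine closed filtration inside $X_{\bfv,\bfd,k}$, is a point the paper leaves implicit, so no objection there.
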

\begin{proof}
Suppose that $X_{\bfv,\bfd,k}$ is not empty. Let $P$ be the
parabolic subgroup in $\mathrm{GL}(W)$ preserving the flag $\phi$.
The variety $X_{\bfv,\bfd,k}$ is isomorphic to a $P$-orbit in
$\Gr_k(W)$. Let $B$ be a Borel subgroup contained in $P$. A
$P$-orbit in $\Gr_k(W)$ is a disjoint union of $B$-orbits. Each of
these $B$-orbits is isomorphic to an affine space.
\end{proof}
\begin{df}
\label{def_varY} Let $W$ be a finite dimensional $\bbC$-vector
space. Suppose $s,t\in \bbN$, $s>0,t>0$. Let
$$
\phi=(\{0\}=W_0\subset W_1\subset \cdots\subset W_s=W),\quad
\psi=(\{0\}=V_0\subset V_1\subset \cdots \subset V_t=W)
$$
be two partial flags in $W$. Let $\bfv$ be a sequence of nonnegative
integers $\bfv=(v_1,\cdots,v_s)$ and let $D=(d_{ij})$ be an $s\times
t$-matrix of integers. Set
$$
Y_{\phi,\psi, \bfv,D}=\{(\{0\}=U_0\subset U_1\subset\cdots\subset
U_s=W):
$$
$$
\dim U_a/U_{a-1}=v_a, \dim U_a\cap V_b=d_{a,b}, W_a\subset U_a,
\quad \forall a\in[1,s], b\in[1,t]\}.
$$
\end{df}
\begin{lem}
\label{lem_varY} The set $Y_{\phi,\psi, \bfv,D}$ is either empty or
a variety with an affine cell decomposition.
\end{lem}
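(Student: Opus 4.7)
I proceed by induction on $s$. The base case $s=1$ forces $U_1=W$; then $Y_{\phi,\psi,\bfv,D}$ is either empty or a single point, and has a trivial affine cell decomposition.

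For the inductive step, I consider the projection
\[
p\colon Y_{\phi,\psi,\bfv,D}\to\Gr_{v_1}(W),\qquad U_\bullet\mapsto U_1,
\]
and analyze its base and fibers separately. The image $Z'\subset\Gr_{v_1}(W)$ of $p$ lies inside the variety $Z$ of $v_1$-dimensional subspaces $U_1$ with $W_1\subset U_1$ and $\dim(U_1\cap V_b)=d_{1,b}$ for every $b$. Passing to the quotient $W/W_1$ and consolidating the induced partial flag $((V_b+W_1)/W_1)_b$, I identify $Z$ with a variety of the form $X_{\bfv',\bfd',k'}$ of Definition \ref{def_varX}, so by Lemma \ref{lem_varX} it admits an affine cell decomposition $Z=\coprod_\alpha Z_\alpha$. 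For fixed $U_1\in Z$, the quotient $W\to W/U_1$ identifies the fiber $p^{-1}(U_1)$ with a variety of type $Y_{\phi',\psi',\bfv',D'}$ in $W/U_1$ with $s-1$ steps, whose data is determined by $\phi$, $\psi$, $\bfv$, $D$ and the intersection dimensions of $U_1$. On each cell $Z_\alpha$ the combinatorial fiber type is constant, so either the fiber is empty for all $U_1\in Z_\alpha$ (and $Z_\alpha\cap Z'=\emptyset$) or non-empty for all $U_1\in Z_\alpha$ (and $Z_\alpha\subset Z'$); in the latter case the inductive hypothesis furnishes an affine cell decomposition of each fiber.

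The main obstacle is to assemble these decompositions into one for the total space, i.e.\ to show that the restriction of $p$ to $Z_\alpha\subset Z'$ is a Zariski-locally trivial fibration with fiber a single model $F_\alpha$. I plan to produce, for each $\alpha$ and a chosen base point $U_1^{(0)}\in Z_\alpha$, a regular section $\sigma\colon Z_\alpha\to\mathrm{GL}(W)$ taking values in a subgroup preserving both $\phi$ and $\psi$ and satisfying $\sigma(U_1)\cdot U_1^{(0)}=U_1$; the resulting isomorphisms $W/U_1^{(0)}\simeq W/U_1$ then transport the data $(\phi',\psi',\bfv',D')$ rigidly and so induce isomorphisms of fibers. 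If the cells $Z_\alpha$ arising from Lemma \ref{lem_varX} are not already orbits of the common stabilizer $P_\phi\cap P_\psi$, I will further refine the paving of $Z$, keeping each piece an affine cell, so that such a section exists. Once the trivialization is established, $p^{-1}(Z_\alpha)\simeq Z_\alpha\times F_\alpha$ has an affine cell decomposition, and the disjoint union over $\alpha$ completes the proof.
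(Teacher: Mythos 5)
Your induction runs in the opposite direction from the paper's, and this choice creates a real gap. The paper forgets $U_1$: it maps $Y_{\phi,\psi,\bfv,D}$ onto $Y_{\phi',\psi,\bfv',D'}$, where $\phi'$ merges the first two steps of $\phi$ and $D'$ deletes the first row of $D$; the fiber is then the set of $U_1$ with $W_1\subset U_1\subset U_2$ and $\dim U_1\cap V_b=d_{1,b}$, i.e.\ an $X$-type variety inside $U_2/W_1$ whose combinatorial type is \emph{constant} over the base, because every dimension entering its definition ($\dim U_2\cap V_b=d_{2,b}$, $\dim W_1\cap V_b$, $W_1\subset U_2$) is pinned down by the equations defining the base. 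You instead remember only $U_1$, so your fiber is a $Y$-type variety in $W/U_1$ whose defining data depends on $U_1$: the induced flag $((W_a+U_1)/U_1)_a$ and its relative position with $((V_b+U_1)/U_1)_b$ are governed by $\dim(U_1\cap W_a)$ for $a\geq 2$ and $\dim((W_a+U_1)\cap(V_b+U_1))$, none of which are constrained by the conditions defining $Z$. In particular these quantities are \emph{not} constant on the cells $Z_\alpha$ furnished by Lemma \ref{lem_varX}: those cells are orbits of a Borel subgroup chosen inside the parabolic of the single flag $((V_b+W_1)/W_1)_b$, which does not preserve $W_2,\dots,W_{s-1}$. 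So the claim ``on each cell $Z_\alpha$ the combinatorial fiber type is constant'' fails already for $s=3$ with $\psi$ trivial.

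Your fallback --- refine the paving so that each piece admits a section into $P_\phi\cap P_\psi$ --- cannot be carried out in general. Such a section forces the piece to lie in a single $P_\phi\cap P_\psi$-orbit of $\Gr_{v_1}(W)$, and these orbits are the $\mathrm{GL}(W)$-orbits on the triple flag variety $\mathrm{Fl}_\phi\times\mathrm{Fl}_\psi\times\Gr_{v_1}(W)$, which has infinitely many orbits (positive-dimensional moduli) once both $\phi$ and $\psi$ are long enough; a finite partition of $Z$ into pieces each contained in one orbit is then impossible, and the orbits themselves need not be affine cells. Even restricting to the locus where the discrete fiber type is constant would leave you with pieces that are not affine cells and with no trivialization mechanism for the family. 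The fix is simply to fibre the other way, as the paper does: take the base to be the $(s-1)$-step variety $Y_{\phi',\psi,\bfv',D'}$ (handled by induction) and the fiber to be the $X$-type variety of intermediate subspaces $U_1$, whose type, and hence emptiness or non-emptiness, is uniform over the base.
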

\begin{proof}
The proof is by induction on $s$. The case $s=1$ follows from Lemma
\ref{lem_varX}. Suppose $s>1$. Set
$$
\phi'=(\{0\}=W_0\subset W_2\subset W_3\subset \cdots\subset
W_s=W),\quad \bfv'=(v_1+v_2,v_3,\cdots, v_s).
$$
Let $D'$ be the matrix that we get from $D$ by erasing the first
row. Forgetting the $U_1$-component of the flag yields the morphism
$ Y_{\phi,\psi, \bfv,D}\to Y_{\phi',\psi, \bfv',D'}. $ This is a
fibration with the fibre of the form $X_\bullet$, see Definition
\ref{def_varX}. So $Y_{\phi,\psi, \bfv,D}$ has a decomposition into
affine cells by induction hypothesis and Lemma \ref{lem_varX}.
\end{proof}

Suppose that the quiver $\Gamma$ is of type $A_n$. We enumerate its
vertices $I=\{i_1,\cdots, i_n\}$ such that there is an arrow in some
direction between $i_a$ and $i_{a+1}$ for $a\in [1,n-1]$. Let $V$
and $\nu$ be as in Section \ref{subs_quiv}. Suppose
$y=(j_1^{(a_1)},\cdots,j_m^{(a_m)})\in Y_\nu$. Let
$$
\{0\}=W_0\subset W_1\subset\cdots\subset W_k=V_{i_n}
$$
be a partial flag in $V_{i_n}$. For each $m\times k$-matrix
$D=(d_{ij})$ of integers we set
$$
F_{D}=\{(\{0\}=V^0\subset V^1\subset\cdots\subset V^m=V)\in
\pi_y^{-1}(x):
$$
$$
 \dim V^r\cap W_s=d_{rs}, \forall r\in [1,m],s\in [1,k]\}.
$$
We have the decomposition $ \pi_y^{-1}(x)=\coprod_D F_D. $

\begin{lem}
\label{lem_cell-dec-A} The set $F_D$ is either empty or a variety
with an affine cell decomposition.
\end{lem}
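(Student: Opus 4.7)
The plan is to argue by induction on $m$, the length of the sequence $y = (j_1^{(a_1)}, \ldots, j_m^{(a_m)})$. When $m \le 1$, the variety $F_D$ is either empty or a single point, so the claim is trivial.

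For $m \ge 2$, consider the projection
\[
p \colon F_D \to X, \qquad (V^0 \subset V^1 \subset \cdots \subset V^m) \mapsto V^{m-1},
\]
where $X$ parameterizes the $I$-graded, $x$-stable subspaces $V^{m-1} \subset V$ of the correct graded dimension satisfying $\dim V^{m-1} \cap W_s = d_{m-1,s}$ for all $s$. Such a $V^{m-1}$ is determined by its component $V^{m-1}_{j_m} \subset V_{j_m}$ (of fixed codimension $a_m$), and $x$-stability forces $V^{m-1}_{j_m}$ to contain $N := \sum_{h'' = j_m} \Im(x_h)$. I would stratify $X$ by the intersection numbers $\dim V^{m-1}_{i_n} \cap W_s$ (which vary only when $j_m = i_n$), so that each stratum $X_\alpha$ becomes a variety of the form $Y_{\phi,\psi,\bfv,D'}$ in the sense of Definition \ref{def_varY}, with $\phi$ encoding the containment $V^{m-1}_{j_m} \supset N$ and $\psi$ the flag $W_\bullet$ refined by the chosen intersection data. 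By Lemma \ref{lem_varY} each $X_\alpha$ then admits an affine cell decomposition.

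The fiber of $p$ over any $V^{m-1} \in X_\alpha$ is precisely the variety $F_{D''}$ attached to the shortened sequence $y' = (j_1^{(a_1)}, \ldots, j_{m-1}^{(a_{m-1})})$, the restricted representation $x|_{V^{m-1}} \in E_{V^{m-1}}$, the flag $W'_s = W_s \cap V^{m-1}_{i_n}$ in $V^{m-1}_{i_n}$, and the truncated matrix $D'' = (d_{rs})_{r \le m-1}$. By the inductive hypothesis this fiber has an affine cell decomposition, and along each stratum $X_\alpha$ its isomorphism class is constant. Combining the cell decompositions of base and fiber over each stratum and taking the disjoint union over $\alpha$ then yields the required affine cell decomposition of $F_D$.

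The main obstacle is showing that $p$ restricts to a Zariski-locally trivial fibration over each $X_\alpha$, so that cells of the base and of the fiber genuinely combine into cells of the total space. I expect this to follow by exhibiting each $X_\alpha$ as a single orbit of an appropriate parabolic subgroup of $G_V$, exactly in the spirit of the proofs of Lemmas \ref{lem_varX} and \ref{lem_varY}; the type-$A$ hypothesis enters both through the product structure $G_V = \prod_a GL(V_{i_a})$, which supplies suitable parabolics vertex by vertex, and through the applicability of Lemma \ref{lem_varY}, which requires a chain of subspaces in a single vector space.
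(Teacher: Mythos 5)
Your induction is on the length $m$ of the flag type with the quiver fixed, whereas the paper inducts on the number $n$ of vertices, peeling off the end vertex $i_n$ of the $A_n$-diagram; so this is a genuinely different route, and unfortunately it has a gap at exactly the point you flag as the "main obstacle". The fibers of your map $p\colon F_D\to X$ are \emph{not} constant along the strata $X_\alpha$ you define, because those strata only record the relative position of $V^{m-1}$ with respect to the fixed flag $W_\bullet$ in $V_{i_n}$, not its relative position with respect to the subspaces cut out by $x$ at the vertex $j_m$ (kernels and images of the arrow maps $x_h$ with $h'=j_m$). Concretely, take $\Gamma=A_2$ with arrow $i_1\to i_2$, $V_{i_1}=V_{i_2}=\bbC^2$, $x$ of rank one, and $y=(i_1^{(1)},i_2^{(2)},i_1^{(1)})$. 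Here $j_3=i_1$ has no incoming arrows, so $X\simeq\bbP(V_{i_1})=\bbP^1$ is a single stratum; but the fiber of $p$ over $V^2$ is nonempty (a point) precisely when $V^2_{i_1}=\Ker x_{h}$, and empty otherwise, since the compatibility $x(V^1)\subset V^1$ forces $V^1_{i_1}=V^2_{i_1}\subset\Ker x_h$. So $p$ is not a fibration over $X_\alpha$, and combining a cell decomposition of $X_\alpha$ with cell decompositions of the fibers is not legitimate. The proposed fix via a parabolic orbit does not repair this: the parabolic of $GL(V_{j_m})$ acting transitively on $X_\alpha$ does not stabilize $x$, hence does not act on $F_D$ at all, so its transitivity on the base gives no local triviality of $p$. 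Refining the stratification of $X$ until the isomorphism class of the pair $(x|_{V^{m-1}},W'_\bullet)$ becomes constant would require showing that loci defined by conditions like $\dim V^{m-1}_{j_m}\cap\Ker x_h=\mathrm{const}$ are themselves affine-cell-decomposable and that $p$ trivializes over them --- which is essentially the original problem again, not an instance of Lemma \ref{lem_varY}.

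The paper's induction avoids this because it forgets the $V_{i_n}$-components of the flag rather than the last step: the fiber of its map $e_{D,D'}$ then consists of flags $(U^r)$ in the \emph{single} vector space $V_{i_n}$ subject to containments $x_{h_0}(V'^r\cap V_{i_{n-1}})\subset U^r$ and dimension conditions against $W_\bullet$, i.e.\ it is a variety $Y_{\phi,\psi,\bfv,D}$ as in Definition \ref{def_varY}; and the types and relative positions of $\phi$ and $\psi$ are computed to be constant on $F'_{D'}$ (they are expressible through the entries of $D'$), so the fibers really are constant, with the inductive hypothesis applied to the base over the smaller quiver. If you want to keep an induction on $m$, you would need to restructure it so that the data being forgotten lives in a single vector space and the conditions it must satisfy depend on the base point only through discrete invariants already fixed by the stratum --- which is precisely what the paper's choice of projection achieves.
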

\begin{proof}
The proof is by induction on $n$. For $n=1$ the statement follows
from Lemma \ref{lem_varY}. Now suppose $n\geqslant 2$. Suppose that
the quiver $\Gamma$ contains the arrow $i_{n-1}\to i_n$. We denote
this arrow by $h_0$. Consider the following flag in $V_{i_{n-1}}$
$$
\{0\}=W'_0\subset W'_1\subset\cdots\subset W'_{k+1}=V_{i_{n-1}},
\qquad W'_r=x_{h_0}^{-1}(W_{r-1}),~ r\in [1,k],
$$
where $x_{h_0}$ is the ${h_0}$-component of $x$. Let $\Gamma'$ be
the quiver that we get from $\Gamma$ by deleting the vertex $i_n$.
Set $ V'=\bigoplus_{r=1}^{n-1} V_{i_r}, ~\nu'=\nu-\nu_{i_n}\cdot
i_n. $ As before we denote by $H$ the set of arrows in $\Gamma$ and
for an arrow $h$ in $H$ we write $h'$ and $h''$ for its source and
target respectively. Set $H'=H\backslash h_0$. Denote by $E_{V'}$
the variety of representations of $\Gamma'$ in $V'$, i.e., $
E_{V'}=\bigoplus_{h\in H'}\Hom(V_{h'},V_{h''}). $ Set $
x'=\bigoplus_{h\in H'}x_{h}\in E_{V'}, $ where $x_{h}$ is the
${h}$-component of $x\in E_V$. Set $
y'=(j_1^{(a'_1)},\cdots,j_m^{(a'_m)})\in Y_{\nu'}, $ where
$$
a'_t=
\begin{cases}
a_t &\text{if } j_t\ne i_n,\\
0 &\text{if } j_t=i_n,
\end{cases}
\quad\forall t\in[1,m].
$$

We leave the zero terms in $y'$ to simplify the notation. For each
$m\times (k+1)$-matrix of integers $D'=(d_{rs}')$ we set
$$
F'_{D'}=\{(\{0\}={V'}^0\subset {V'}^1\subset\cdots\subset
{V'}^m=V')\in \pi_{y'}^{-1}(x'):
$$
$$
\dim {V'}^r\cap W'_s=d'_{rs}, \forall r\in [1,m],s\in [1,k+1]\}.
$$
We have the morphism $ e\colon \pi_{y}^{-1}(x)\to\pi_{y'}^{-1}(x') $
given by the intersection of each component of the flag with $V'$.
Set
$$
F_{D,D'}=F_D\cap e^{-1}(F'_{D'}).
$$
We have the decomposition $ F_D=\coprod_{D'}F_{D,D'}. $ So it is
enough to construct a decomposition into affine cells for each
$F_{D,D'}$. Let us study the following restriction of $e$
$$
e_{D,D'}\colon F_{D,D'}\to F'_{D'}.
$$
We want to show that it is a fibration. Let $c_1<\cdots < c_t$ be
the integers in $[1,m]$ such that
$$
j_{c_1}=j_{c_2}=\cdots=j_{c_t}=i_n,\qquad j_s\ne i_n~\forall
s\in[1,m]\backslash\{c_1,\cdots, c_t\}.
$$
Set also $c_0=0$. Let
$$
\phi'=(\{0\}=V'^0\subset V'^1\subset \cdots\subset V'^m=V')
$$
be a flag in $F'_{D'}$. We have
\begin{eqnarray*}
e_{D,D'}^{-1}(\phi') &=& \{(\{0\}=V^0\subset V^1\subset \cdots \subset V^{m}=V)\in F_D: \\
&& V^s\cap V'=V'^s, \forall s\in[1,m]\}\\
& = & \{(\{0\}=U^0\subset U^1\subset \cdots \subset U^{m}=V_{i_n}):\\
&&(\{0\}={V'}^0\oplus U^0\subset {V'}^1\oplus U^1\subset \cdots \subset {V'}^{m}\oplus U^m=V)\in F_D\}\\
& = & \{(\{0\}=U^0\subset U^1\subset \cdots \subset U^{m}=V_{i_n}):\\
&& U^s=U^{s-1}, \forall s\in[1,m]\backslash \{c_1,\cdots, c_t\},\\
&&\dim U^s/U^{s-1}=a_s, \forall s\in\{c_1,\cdots, c_t\},\\
&& x_{h_0}(V'^{r}\cap V_{i_{n-1}})\subset U^{r}, \forall r\in[1,m],\\
&&\dim U^r\cap W_p=d_{r,p}, \forall r\in[1,m], p\in[1,k]\}. \\
& = & Y_{\phi,\psi, \bfv, D},
\end{eqnarray*}
where
$$
\phi=(x_{h_0}({V'}^0\cap V_{i_{n-1}})\subset x_{h_0}({V'}^1\cap
V_{i_{n-1}})\subset\cdots\subset x_{h_0}({V'}^{m-1}\cap
V_{i_{n-1}})\subset V_{i_n}),
$$
$$
\psi=(\{0\}=W_0\subset W_1\subset\cdots\subset W_k=V_{i_n}),\qquad
\bfv=(a_1-a'_1,\cdots, a_m-a'_m),
$$
see Definition \ref{def_varY}. Moreover, this fibre does not depend
on the choice of $\phi'\in F'_{D'}$ because the types of the flags
$\phi,\psi$ and their relative position do not depend on $\phi'$.
Really,
\begin{eqnarray*}
\dim x_{h_0}({V'}^a\cap V_{i_{n-1}})\cap W_b & = & \dim x_{h_0}({V'}^a\cap V_{i_{n-1}}\cap x_{h_0}^{-1}(W_b))\\
 & = & \dim x_{h_0}({V'}^a\cap W'_{b+1})\\
 & = & \dim {V'}^a\cap W'_{b+1} -\dim {V'}^a\cap W'_{b+1}\cap\Ker x_{h_0}\\
 & = & \dim {V'}^a\cap W'_{b+1}-\dim {V'}^a\cap W'_{1}\\
 & = & d'_{a,b+1}-d'_{a,1},\qquad \forall a\in[1,m-1], b\in[1,k],\\
\dim x_{h_0}({V'}^a\cap V_{i_{n-1}})& = &\dim x_{h_0}({V'}^a\cap V_{i_{n-1}})\cap W_k\\
& = & d'_{a,k+1}-d'_{a,1},\qquad \forall a\in[1,m-1].\\
\end{eqnarray*}
So $F_{D,D'}$ has a decomposition into affine cells by induction
hypothesis and Lemma \ref{lem_varY}.

Now suppose that the quiver $\Gamma$ contains the arrow $i_n\to
i_{n-1}$. Let $\Gamma^{\mathrm{op}}$ be the quiver that we get from
$\Gamma$ by inverting the orientation of each arrow. Consider the
$I$-graded $\bbC$-vector space $V^*=\bigoplus_{i\in I}V^*_i$ dual to
$V$. Consider the representation $x^*\in E_{V^*}$ dual to $x$. Let
$y^{\mathrm{op}}$ be the element $Y_\nu$ that we get from $y$ by
reversing the order of its components. Then we have the variety
isomorphism
$$
\pi_y^{-1}(x)\to \pi_{y^{\mathrm{op}}}^{-1}(x^*),
$$
$$
(\{0\}=V^0\subset V^1\subset\cdots\subset V^m=V)\mapsto
(\{0\}=(V^m)^\perp\subset \cdots\subset (V^1)^\perp\subset
(V^0)^\perp=V^*).
$$
Here $\bullet^\perp$ denotes the annihilator in the dual space
$V^*$. Moreover, this isomorphism preserves the decompositions
$$
\pi_y^{-1}(x)=\coprod_D F_D,
\quad\pi_{y^{\mathrm{op}}}^{-1}(x^*)=\coprod_D F^*_D
$$
(with some permutation of indices). Here $F^*_D$ is defined
analogically to $F_D$ with respect to the flag
$$
\{0\}=(W_k)^\perp\subset\cdots\subset (W_1)^\perp\subset (W_0)^\perp
=V^*_{i_n},
$$
where $\bullet^\perp$ denotes the annihilator in $V_{i_n}^*$ (not in
$V^*$). So we reduce the case  \newline $i_n\to i_{n-1}$ to the case
$i_{n-1}\to i_{n}$.
\end{proof}
\begin{thm}
\label{thm_cell-dec-A} Let $\Gamma$ be a quiver of type $A$ (maybe
not connected). Suppose $y\in Y_\nu$, $x\in E_V$. Then the fibre
$\pi_y^{-1}(x)$ is either empty or has a decomposition into affine
cells.
\end{thm}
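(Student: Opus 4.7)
My plan is to reduce the theorem to the connected case already treated by Lemma \ref{lem_cell-dec-A}. Decompose $\Gamma = \Gamma_1\sqcup\cdots\sqcup \Gamma_r$ into connected components. Since the vertex set $I$ partitions accordingly, the $I$-graded vector space $V$ splits as $V=V_1\oplus\cdots\oplus V_r$, where $V_s$ collects the graded components at vertices of $\Gamma_s$. Because there are no arrows between components, $E_V=\prod_s E_{V_s}$ and $x\in E_V$ decomposes as $(x_1,\ldots,x_r)$ with $x_s\in E_{V_s}$.

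The key observation is that a flag in $V$ of type $y=(i_1^{(a_1)}\cdots i_m^{(a_m)})$ canonically decomposes into a tuple of flags. Indeed, each successive quotient $V^l/V^{l-1}$ is concentrated at a single vertex $i_l$, which belongs to exactly one component $\Gamma_{c(l)}$. Hence $V^l\cap V_s=V^{l-1}\cap V_s$ for $s\ne c(l)$, and the flags
$$
\phi_s=\bigl(V^l\cap V_s\bigr)_{l=0}^m
$$
(after deleting consecutive equalities) is a flag in $V_s$ of type $y_s$, where $y_s$ is the subsequence of $y$ obtained by keeping only the pairs $(i_l^{(a_l)})$ with $i_l\in\Gamma_s$. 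This construction is manifestly invertible, and $x$ preserves the flag in $V$ if and only if each $x_s$ preserves $\phi_s$. We thus obtain a variety isomorphism
$$
\pi_y^{-1}(x)\simeq \prod_{s=1}^r \pi_{y_s}^{-1}(x_s).
$$

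Since a product of varieties admitting affine cell decompositions again admits one (take products of cells), it suffices to treat the case where $\Gamma$ is connected of type $A_n$. Here Lemma \ref{lem_cell-dec-A} applies directly: taking, for instance, the trivial flag $W_0=\{0\}\subset W_1=V_{i_n}$, the only matrix $D$ for which $F_D$ is nonempty gives $F_D=\pi_y^{-1}(x)$ itself, and the lemma shows this variety has an affine cell decomposition whenever it is nonempty. Combining with the product decomposition above yields the theorem. The only non-routine point is the flag-decomposition step, which is straightforward once one notes that each graded piece lives in a unique connected component.
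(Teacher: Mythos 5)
Your proposal is correct and follows the paper's proof exactly: reduce to the connected case via the product decomposition of the fibre over connected components, then invoke Lemma \ref{lem_cell-dec-A}. The paper leaves the reduction and the passage from the pieces $F_D$ to the whole fibre implicit; your filling in of these routine details (including the observation that the trivial flag $\{0\}\subset V_{i_n}$ recovers the full fibre as a single $F_D$) is accurate.
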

\begin{proof}
It is enough to prove the statement in the case when the quiver
$\Gamma$ is connected. In this case the statement follows from Lemma
\ref{lem_cell-dec-A}.
\end{proof}

\begin{coro}
A quiver of type $A$ is even.
\end{coro}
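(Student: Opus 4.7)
The plan is to combine the two preceding tools: Lemma \ref{lem_ev-resol}, which reduces $\bfk$-evenness of a pushforward to odd-vanishing of the cohomology of the fibers, and Theorem \ref{thm_cell-dec-A}, which provides an affine cell decomposition of every nonempty fiber $\pi_y^{-1}(x)$ in type $A$. Unwinding definitions, the quiver $\Gamma$ is even precisely when $\pi_{y*}\underline{\bfk}_{\widetilde F_y}$ is even in $D_{G_V}(E_V,\bfk)$ for every field $\bfk$, every dimension vector $\nu$, and every $y\in Y_\nu$. Since $\pi_y$ is proper and $\widetilde F_y$ is smooth, Lemma \ref{lem_ev-resol} tells us this is equivalent to requiring $\mathrm{H}^{\mathrm{odd}}(\pi_y^{-1}(x),\bfk)=0$ for every $x\in E_V$.

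The first step is therefore to fix a field $\bfk$, a point $x\in E_V$, and an element $y\in Y_\nu$, and to consider the fiber $F=\pi_y^{-1}(x)$. If $F$ is empty the required vanishing is trivial, so assume $F$ is nonempty. Then Theorem \ref{thm_cell-dec-A} provides a decomposition $F=\coprod_{j}C_j$ where each $C_j$ is isomorphic (as a variety) to an affine space, and one may enumerate the cells so that $F_{\leqslant j}=\bigcup_{j'\leqslant j}C_{j'}$ is closed in $F$ for each $j$. This is exactly the kind of stratification by affine cells whose cohomology is classically concentrated in even degrees.

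The second step is to deduce the odd-vanishing from the cell decomposition. The cleanest way is by induction on the number of cells, using the long exact sequence in cohomology associated to the open-closed pair $C_j\hookrightarrow F_{\leqslant j}\hookleftarrow F_{<j}$, together with the computation $\mathrm{H}^*_c(C_j,\bfk)=\bfk$ concentrated in degree $2\dim_{\bbC}C_j$ (an even degree) and the analogous statement for $\mathrm{H}^*(C_j,\bfk)$. Since each piece of the long exact sequence connects groups that live only in even degrees, no odd cohomology can appear at any stage, yielding $\mathrm{H}^{\mathrm{odd}}(F,\bfk)=0$. Feeding this back into Lemma \ref{lem_ev-resol} gives $\pi_{y*}\underline{\bfk}_{\widetilde F_y}$ even, hence $\Gamma$ is $\bfk$-even. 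As $\bfk$ was arbitrary, $\Gamma$ is even.

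There is essentially no hard step left: all of the geometric content has already been absorbed into Theorem \ref{thm_cell-dec-A}, and the cohomological consequence of an affine paving is classical. The only minor point to verify carefully is that the enumeration of cells provided by Theorem \ref{thm_cell-dec-A} is compatible with the closure order on $F$ so that the inductive long-exact-sequence argument really applies; this is built into the usual construction of an affine paving and causes no genuine difficulty.
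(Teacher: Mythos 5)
Your proposal is correct and follows exactly the route the paper intends: the corollary is stated as an immediate consequence of Theorem \ref{thm_cell-dec-A} together with Lemma \ref{lem_ev-resol}, and your long-exact-sequence argument (using compactly supported cohomology of the affine cells, which agrees with ordinary cohomology on the compact fiber) supplies the standard details the paper leaves implicit. The one point you rightly flag, the existence of an ordering of the cells with closed initial unions, is indeed satisfied by the inductive construction in Lemmas \ref{lem_varX}--\ref{lem_cell-dec-A}.
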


\subsection{Quiver Schur algebras} In this section, we apply a similar approach to the quiver Schur algebras in \cite{SW}.
Let $e$ be an integer, $e>1$. Let $\Gamma$ be a quiver with the set of vertices $I=\bbZ/e\bbZ$ and the set of arrows $H=\{i\to i+1;~i\in I\}$. We keep the notation of Section \ref{subs_quiv}. Let $E^n_V\subset E_V$ be the subvariety of nilpotent representations.

Let us change slightly the notation.
Let $\mathrm{VComp_e(\nu)}$ be the set of tuples $\mu=(\mu^{(1)},\cdots, \mu^{(k)})$ of nonzero elements of $\bbN I$ such that $\mu^{(1)}+\mu^{(2)}+\cdots+\mu^{(k)}=\nu$. We will call an element of $\mathrm{VComp_e(\nu)}$ a \emph{vector composition}. For $\mu$ as above we denote by $F_\mu$ the variety of all $I$-graded flags
$$
\phi=(0=V^0\subset V^1\subset\cdots\subset V^k=V)
$$
in the $I$-graded vector space $V=\bigoplus_{i\in I}V_i$ such that the $I$-graded vector space $V^k/V^{k-1}$ has graded dimension $\mu^{(r)}$ for each $r\in\{1,2,\cdots,k\}$. Let $\widetilde{F}_\mu$ be the variety of pairs $(x,\phi)\in E_V\times F_\mu$ that are compatible, i.e., we have $x(V^r)\subset V^{r-1}$ for $r\in[1,k]$. Let $\pi_\mu$ be the natural projection from $\widetilde{F}_\mu$ to $E_V$, i.e.,
$$
\pi_\mu:\widetilde{F}_\mu\to E_V,~(x,\phi)\mapsto x.
$$

Let $F$ be the set of all flags in $V$ (not necessarily complete). Let $F_V\subset F$ be the subset of $I$-graded flags. Set
$$
\widetilde{F}=\{(\phi=(V^0\subset\cdots\subset V^k=V),x)\in F\times N; ~x(V^r)\subset V^{r-1}~\forall r\in[1,k]\},
$$
$$
\widetilde{F}_V=\{(\phi=(V^0\subset\cdots\subset V^k=V),x)\in F_V\times E_V^n; ~x(V^r)\subset V^{r-1}~\forall r\in[1,k]\}.
$$
Let $\pi\colon \widetilde{F}\to N$, $\pi_V\colon \widetilde{F}_V\to E_V^n$ be natural projections. Set $G=GL(V)$, $\frakg=\mathfrak{gl}(V)=\Lie(G)$. Fix a primitive $e$th root of unity $\xi\in\bbC$. Consider the element $s\in G$ preserving each $V_i$ and acting by $\xi^i$ on $V_i$, $i\in I$. Note that the group $G\times \bbC^*$ acts on $\mathfrak{gl}(V)$ with $G$ acting by the adjoint action and $\bbC^*$ acting by the multiplication by scalars. Let $N\subset \frakg$ be the nilpotent cone. The $G\times \bbC^*$-action on $\frakg$ yields a $G\times \bbC^*$-action on $N$. Set $\widetilde s=(s,\xi^{-1})\in G\times \bbC^*$. We have
$\frakg^{\widetilde s}\simeq E_V$ and $N^{\widetilde s}\simeq E^n_V$.
Here the top index $\widetilde s$ always means the set of $\widetilde s$-stable points. 

Similarly, the $G$-action on $F$ yields a $G\times\bbC^*$-action on $F$, where $\bbC^*$ acts trivially. The $G\times \bbC^*$-action on $F$ and $N$ yields a $G\times \bbC^*$-action on $\widetilde F$. We have
$F^{\widetilde s}\simeq F_V$ and $\widetilde F^{\widetilde s}\simeq\widetilde F_V$.
The following lemma is an analogue of Conjecture \ref{conj_even}.
\begin{lem}
\label{lem_par-fib-QS}
For each $\mu\in \mathrm{VComp_e(\nu)}$ and each $x\in E_V^n$, we have $\mathrm{H}^{\rm odd}((\pi_\mu)^{-1}(x),\bbZ)=0$.
\end{lem}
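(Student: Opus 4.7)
My plan is to identify $\pi_\mu^{-1}(x)$ with (a clopen piece of) the $\widetilde s$-fixed locus of a classical partial Springer fiber in type $A$, and to deduce the vanishing of odd cohomology from a $\widetilde s$-equivariant affine paving of this ambient variety.

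First, using $N^{\widetilde s} \simeq E_V^n$ and the fact that any $\widetilde s$-stable flag must be $I$-graded (since $s$ acts on $V_i$ by the distinct eigenvalue $\xi^i$), I would set $n=\dim_\bbC V$ and $d_\mu=(|\mu^{(1)}|,\ldots,|\mu^{(k)}|)$ and consider the classical partial Springer variety $\widetilde F_{d_\mu}$ of pairs $(\phi,y)$ with $\phi$ an (ungraded) flag in $V$ of dimension sequence $d_\mu$ and $y\in N$ satisfying $y(V^r)\subset V^{r-1}$. This yields a decomposition
\[
(\pi_{d_\mu}^{-1}(x))^{\widetilde s} \;=\; \coprod_{\mu'} \pi_{\mu'}^{-1}(x),
\]
where $\mu'$ runs over vector compositions with $|\mu'^{(r)}|=|\mu^{(r)}|$ for every $r$. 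Hence $\pi_\mu^{-1}(x)$ is clopen in $(\pi_{d_\mu}^{-1}(x))^{\widetilde s}$, so its integral cohomology is a direct summand of that of $(\pi_{d_\mu}^{-1}(x))^{\widetilde s}$, and it suffices to control the latter.

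Next, by Spaltenstein--Shimomura (equivalently, by Theorem \ref{thm_cell-dec-A} applied to the linear quiver $A_{n-1}$), the classical partial Springer fiber $\pi_{d_\mu}^{-1}(x)$ admits a paving by affine spaces. I would then realise such a paving via the Bialynicki--Birula decomposition of $\pi_{d_\mu}^{-1}(x)$ for a generic one-parameter subgroup $\lambda\colon\bbC^*\to T$, where $T$ is a torus of the centraliser $Z_{G\times\bbC^*}(x)$ containing the semisimple finite-order element $\widetilde s$; such a $T$ exists since $\widetilde s$ centralises $x$. Each Bialynicki--Birula cell is a $T$-stable affine space with linear $T$-action, so its $\widetilde s$-fixed part is an affine subspace, and the non-empty fixed parts assemble into an affine paving of $(\pi_{d_\mu}^{-1}(x))^{\widetilde s}$, and hence of every clopen piece $\pi_\mu^{-1}(x)$. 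Since affinely paved complex varieties have torsion-free integral cohomology concentrated in even degrees, this gives the lemma.

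The main obstacle is producing a Spaltenstein-type affine paving that is genuinely $\widetilde s$-equivariant, or equivalently, Bialynicki--Birula for a cocharacter whose $T$-fixed points on $\pi_{d_\mu}^{-1}(x)$ are isolated. An alternative route, likely closer to the proof actually carried out in the paper, is to adapt the induction of Theorem \ref{thm_cell-dec-A} directly to the cyclic setting: using the nilpotence of $x$, cut the cycle at a vertex where some arrow component of $x$ has maximal kernel, reducing the variety of flags to a fibration over the corresponding variety for a quiver of type $A_{e-1}$, to which Theorem \ref{thm_cell-dec-A} applies.
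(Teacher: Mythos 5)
Your reduction is exactly the one the paper uses: identify $E_V^n$ with $N^{\widetilde s}$, note that an $\widetilde s$-stable flag is automatically $I$-graded (the eigenvalues $\xi^i$ of $s$ on the $V_i$ are pairwise distinct), and conclude that $\pi_\mu^{-1}(x)$ is a union of connected components of the $\widetilde s$-fixed locus of an ungraded partial Springer fibre. The gap is in the second half. The assertion that $(\pi_{d_\mu}^{-1}(x))^{\widetilde s}$ has no odd integral cohomology is precisely the theorem of De Concini--Lusztig--Procesi on the homology of the zero-set of a nilpotent vector field on a flag manifold, \cite[Thm.~3.9]{CLP}, and the paper simply cites it at this point. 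Your proposed rederivation via Bialynicki--Birula does not go through as written: for a \emph{singular} projective variety such as a partial Springer fibre, the attracting sets of a generic cocharacter are only locally closed pieces fibred over the fixed components, and even when the fixed points are isolated these pieces need not be affine spaces, so one does not get a $T$-stable affine paving for free; likewise Spaltenstein's paving is built combinatorially from a choice of Jordan basis for $x$ and is not a priori $\widetilde s$-stable. You correctly flag this as ``the main obstacle'', but once the reduction is made this obstacle \emph{is} the lemma; overcoming it is exactly the content of the De Concini--Lusztig--Procesi argument (an $\alpha$-partition of the fixed-point variety into affine-space bundles), so the clean fix is to invoke their theorem rather than to reprove it.

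A further caveat about your alternative route: the fibres $\pi_\mu^{-1}(x)$ in the quiver Schur setting impose the strict condition $x(V^r)\subset V^{r-1}$, whereas Theorem \ref{thm_cell-dec-A} concerns flags with $x(V^r)\subset V^{r}$; so cutting the cycle at a vertex cannot literally reduce to that theorem, and an induction of that type would have to be set up anew for the strict compatibility condition.
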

\begin{proof}
The restriction of $\pi$ to the set of $\widetilde s$-stable points yields a morphism $\pi^{\widetilde s}\colon \widetilde F^{\widetilde s}\to N^{\widetilde s}$. We have the following commutative diagram
$$
\begin{CD}
\widetilde F_V @>\pi_V>> E^n_V\\
@|                 @|\\
\widetilde F^{\widetilde s}@>\pi^{\widetilde s}>> N^{\widetilde s}.
\end{CD}
$$
Then for each $x\in E^n_V$ we can identify the fibre $\pi_V^{-1}(x)$ with $(\pi^{\widetilde s})^{-1}(x)=(\pi^{-1}(x))^{\widetilde s}$.
By \cite[Thm.~3.9]{CLP} we have $\mathrm{H}^{\rm odd}((\pi^{-1}(x))^{\widetilde s},\bbZ)=0$. This implies the statement because for each $\mu\in \mathrm{VComp_e(\nu)}$ the variety $\widetilde{F}_V$ contains a connected component isomorphic to $\widetilde F_\mu$ and the restriction of $\pi_V$ to this component coincides with $\pi_\mu$.
\end{proof}

It is well-known that the set of $G_V$-orbits in $E_V^n$ is finite. This observation together with Lemma \ref{lem_par-fib-QS} motivates us to study the parity sheaves on $E_V^n$. As before, we fix an arbitrary field $\bfk$. Let $E_V^n=\coprod_{\lambda\in \Lambda_V^n}\calO_\lambda$ be the stratification by $G_V$-orbits.
\begin{lem}
There are no nontrivial $G_V$-equivariant local systems on $\calO_\lambda$ for each $\lambda\in \Lambda_V^n$.
\end{lem}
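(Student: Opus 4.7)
\medskip

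The plan is to mimic the proof of Corollary \ref{coro_loc-sys-triv}, which reduces this kind of statement to a structural fact about stabilizers. First I would fix $x \in \calO_\lambda$ and set $H = \mathrm{Stab}_{G_V}(x)$, so that $\calO_\lambda \simeq G_V/H$ as $G_V$-varieties. By \cite[Lem.~8.4.11]{CG}, the category $\mathrm{Loc}_{G_V}(\calO_\lambda)$ is equivalent to the category of finite-dimensional representations of the component group $\pi_0(H)$, and the trivial local system corresponds to the trivial representation. Thus the claim is equivalent to $H$ being connected.

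Since the unipotent radical of any linear algebraic group is connected, $H$ is connected if and only if its reductive part $H_{\mathrm{red}}$ is connected. The reductive part $H_{\mathrm{red}}$ is the group of units of the semisimple quotient of the finite-dimensional $\bbC$-algebra $\mathrm{End}_\Gamma(V, x)$, so it suffices to prove that this semisimple quotient is isomorphic to $\prod_a \mathrm{Mat}_{n_a}(\bbC)$ for some positive integers $n_a$; then $H_{\mathrm{red}} \simeq \prod_a GL_{n_a}(\bbC)$, which is connected, and the argument is complete. This is the nilpotent analogue for the cyclic quiver of Lemma \ref{lem_stab-GL}.

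To establish the structure of $\mathrm{End}_\Gamma(V, x)$, I would decompose the nilpotent representation $(V, x)$ into indecomposable summands $(V, x) = \bigoplus_a M_a^{n_a}$, where the $M_a$ are pairwise non-isomorphic indecomposable nilpotent representations of the cyclic quiver $\Gamma$. These indecomposables are classified (they are string modules parametrized by a starting vertex $i \in I$ and a length $\ell \geqs 1$), and each of them has endomorphism algebra $\mathrm{End}_\Gamma(M_a) = \bbC$. Consequently
\[
\mathrm{End}_\Gamma(V, x) = \bigoplus_{a,b} \mathrm{Hom}_\Gamma(M_a, M_b) \otimes_\bbC \mathrm{Hom}_\bbC(\bbC^{n_a}, \bbC^{n_b}),
\]
and the off-diagonal blocks $\mathrm{Hom}_\Gamma(M_a, M_b)$ with $a \ne b$ all lie in the Jacobson radical, so the semisimple quotient is $\prod_a \mathrm{Mat}_{n_a}(\bbC)$ as required.

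The main obstacle is the last point, namely identifying $\mathrm{End}_\Gamma(M_a) = \bbC$ for each indecomposable nilpotent representation of the cyclic quiver. For Dynkin quivers this is immediate, but here $\Gamma$ has a cycle, so one must use the nilpotency of $x$: this forces $(V, x)$ to be a module over the Nakayama-type truncation of the path algebra, whose indecomposable modules are precisely the uniserial string modules described above, each with one-dimensional endomorphism ring. Once this classical fact is invoked (or proven by a direct argument analogous to \cite[Prop.~2.2.1]{Br}), the rest of the proof is a routine application of the cited facts from \cite{CG}.
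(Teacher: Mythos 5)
Your overall route is the same as the paper's: the paper proves this lemma by saying it is identical to the proof of Corollary \ref{coro_loc-sys-triv}, i.e.\ reduce via \cite[Lem.~8.4.11]{CG} to connectedness of the stabilizer and establish the analogue of Lemma \ref{lem_stab-GL} for nilpotent representations of the cyclic quiver. You have correctly identified that the only real content is this analogue of \cite[Prop.~2.2.1]{Br}.

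However, the key fact you invoke to prove it is false. Indecomposable nilpotent representations of the cyclic quiver are \emph{not} bricks once their length exceeds $e$: for $e=2$ take the uniserial module with basis $v_1,v_2,v_3,v_4$ and $x\colon v_1\mapsto v_2\mapsto v_3\mapsto v_4\mapsto 0$; then $v_1\mapsto v_3$, $v_2\mapsto v_4$, $v_3,v_4\mapsto 0$ is a nonzero nilpotent endomorphism, so $\End_\Gamma(M)\simeq\bbC[t]/(t^2)\neq\bbC$. (In general $\End_\Gamma(M[i;\ell])\simeq\bbC[t]/(t^{\lceil \ell/e\rceil})$; the brick property you cite is a feature of Dynkin quivers that does not survive the passage to the cyclic quiver.) Fortunately your argument only needs the weaker, true statement that each $\End_\Gamma(M_a)$ is a \emph{local} ring with residue field $\bbC$ — which holds for any finite-dimensional indecomposable module over $\bbC$ by Fitting's lemma. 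With that correction the radical of $\End_\Gamma(V,x)$ consists of the off-diagonal blocks together with $\mathrm{Mat}_{n_a}(\rad\End_\Gamma(M_a))$ on the diagonal, the semisimple quotient is still $\prod_a\mathrm{Mat}_{n_a}(\bbC)$, and the rest of your proof goes through. (Alternatively, for this lemma alone one can avoid the structure theory entirely: the stabilizer $\Aut_\Gamma(V,x)$ is the unit group of the finite-dimensional algebra $\End_\Gamma(V,x)$, hence a Zariski-open subset of an affine space, hence connected.)
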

\begin{proof}
The proof is the same as the proof of Corollary \ref{coro_loc-sys-triv}.
\end{proof}
We can now prove that the stratified variety $E_V^n$ satisfies the condition (3.1) in the same way as in Lemma \ref{lem_prop*}.

Consider the following complexes in $D_{G_V}(E_V^n)$:
$$
^\delta\calL_\mu=(\pi_\mu)_*\bfk_{\widetilde F_\mu}[\dim_\bbC F_\mu],\quad ^\delta\calL_{V,\bfk}=\bigoplus_{\mu\in \mathrm{VComp_e(\nu)}}{^\delta\calL_\mu}.
$$
Let $\calQ_V$ be the full additive subcategory of all direct sums of shifts of direct summands of $^\delta\calL_{V,\bfk}$ in $D_{G_V}(E_V^n)$.

\begin{lem}
For each $\lambda\in\Lambda_V^n$, the parity sheaf $\calE(\lambda)$ exists. It is contained in $\calQ_V$.
\end{lem}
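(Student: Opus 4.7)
The plan is to mimic the strategy of Lemma~\ref{lem_exist-par-sh} verbatim, with the Nakajima resolutions $\pi_{T,W}$ replaced by the Spaltenstein--type resolutions $\pi_\mu$ attached to vector compositions. Concretely, I would proceed in three steps.

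\textbf{Step 1: each $^\delta\calL_\mu$ is a parity complex.} By Lemma~\ref{lem_par-fib-QS} the fibres $\pi_\mu^{-1}(x)$ have no odd $\bbZ$-cohomology and their $\bbZ$-cohomology is torsion-free (the torsion-freeness also follows from \cite[Thm.~3.9]{CLP} since the varieties admit affine pavings after taking fixed points). By the universal coefficient theorem the same vanishing persists over an arbitrary field $\bfk$. Since $\pi_\mu$ is a proper $G_V$-equivariant morphism from the smooth $G_V$-variety $\widetilde F_\mu$, Lemma~\ref{lem_ev-resol} then implies that $(\pi_\mu)_*\underline{\bfk}_{\widetilde F_\mu}$ is even in $D_{G_V}(E_V^n,\bfk)$. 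Hence $^\delta\calL_\mu$ is parity (even up to the shift by $\dim_\bbC F_\mu$).

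\textbf{Step 2: every orbit closure admits such a resolution.} For each $\lambda\in\Lambda_V^n$ I would exhibit a vector composition $\mu_\lambda\in\mathrm{VComp}_e(\nu)$ such that the restriction of $\pi_{\mu_\lambda}$ over $\calO_\lambda$ is an isomorphism and its image is $\overline{\calO_\lambda}$. For the cyclic quiver with nilpotent representations the orbits are classified by graded Jordan types, and one obtains $\mu_\lambda$ from the ``column vector'' of the associated multipartition, in direct analogy with Reineke's theorem invoked in the Dynkin case (Section~\ref{subs_indec-QV}). Once such a $\mu_\lambda$ is chosen, the base change argument of Section~\ref{subs_indec-QV} gives $i_\lambda^*{^\delta\calL_{\mu_\lambda}}=\underline{\bfk}_\lambda[d_\lambda + c]$ for an appropriate shift $c$ coming from $\dim F_{\mu_\lambda}$ versus $d_\lambda$.

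\textbf{Step 3: extracting $\calE(\lambda)$ and placing it in $\calQ_V$.} Since $E_V^n$ satisfies the hypothesis $(3.1)$ (as noted in the excerpt immediately before the statement), the uniqueness result Lemma~\ref{lem_unic-par-sh} applies. By Step~1 the parity complex $^\delta\calL_{\mu_\lambda}$ decomposes, via the Krull--Schmidt property of $D_{G_V}(E_V^n,\bfk)$ (Lemma~\ref{lem_Kr-Sch}), into a direct sum of shifts of parity sheaves. Exactly one of these indecomposable summands has full support $\overline{\calO_\lambda}$ and restricts on $\calO_\lambda$ to $\underline{\bfk}_\lambda[d_\lambda]$; by Lemma~\ref{lem_unic-par-sh} this summand is $\calE(\lambda)$. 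Being a shift of a direct summand of $^\delta\calL_{\mu_\lambda}$, it lies in $\calQ_V$ by definition.

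The main obstacle is Step~2: producing an explicit $\mu_\lambda$ giving a resolution within the family $\{\pi_\mu\}$. For Dynkin quivers this was Reineke's theorem; for the cyclic nilpotent case the corresponding fact is well known (it is essentially Spaltenstein's construction adapted to the cyclic quiver) but must be stated carefully so that the top stratum is indeed isomorphic under $\pi_{\mu_\lambda}$. Once this is in place, Steps~1 and~3 are formal consequences of results already established in the paper.
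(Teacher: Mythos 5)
Your Steps 1 and 3 are correct and coincide with the paper's formal machinery: parity of $(\pi_\mu)_*\underline{\bfk}_{\widetilde F_\mu}$ via Lemma \ref{lem_par-fib-QS} and Lemma \ref{lem_ev-resol} (and you are right, slightly more careful than the paper, to note that torsion-freeness of the integral cohomology of the fibres is needed to pass from $\bbZ$ to $\bfk$), then Krull--Schmidt and Lemma \ref{lem_unic-par-sh} to extract the summand. The problem is Step 2, which you yourself flag as the main obstacle and then leave unproved. You assert the existence of $\mu_\lambda$ with $\pi_{\mu_\lambda}$ an isomorphism over $\calO_\lambda$ \emph{and with image exactly} $\overline{\calO_\lambda}$. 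The second half of this assertion is the nontrivial point: the image of $\pi_{\mu_\lambda}$ is a closed irreducible $G_V$-stable set containing $\calO_\lambda$, but to identify it with $\overline{\calO_\lambda}$ you would need to know that the degeneration order on nilpotent $G_V$-orbits of the cyclic quiver is controlled by the graded kernel conditions cut out by the flag type (equivalently, by the ranks of the path maps $V_i\to V_j$). For $\mathfrak{gl}_n$ this is the classical dominance-order fact behind Spaltenstein's resolutions, but for the graded/cyclic situation it is an additional input that you neither prove nor cite, so as written your Step 3 cannot rule out that some summand $\calE(\lambda')$ with $\calO_\lambda\subsetneq\overline{\calO_{\lambda'}}$ accounts for the stalk $\underline{\bfk}_\lambda$ instead of $\calE(\lambda)$.

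The paper circumvents exactly this point by a different route. It starts from the classical resolution of the \emph{ungraded} $GL(V)$-orbit closure $\overline{\calO^\lambda}\subset N$ by a partial flag variety, takes $\widetilde s$-fixed points to obtain $\coprod_{\mu\in C}\pi_\mu$ landing in $\overline{\calO^\lambda}^{\widetilde s}$, and isolates the unique $\mu$ that is an isomorphism over $\calO_\lambda$. It makes no claim that the image of this $\pi_\mu$ equals $\overline{\calO_\lambda}$; instead it proves the weaker statement that $\overline{\calO^\lambda}^{\widetilde s}$ contains no $G_V$-orbit $\calO_{\lambda'}\ne\calO_\lambda$ with $\calO_\lambda\subset\overline{\calO_{\lambda'}}$, by comparing the ambient $G$-orbits $\calO^\lambda$ and $\calO^{\lambda'}$ and their dimensions. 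That weaker statement is all that is needed to force $\calE(\lambda)$ to occur as a direct summand. So either adopt the paper's fixed-point argument, or supply a proof (or reference) that the closure order on nilpotent cyclic-quiver orbits is given by the graded rank conditions; without one of these, your proof has a genuine gap.
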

\begin{proof}
Consider the $G_V$-orbit $\calO_\lambda$. Let $\calO^\lambda$ be the $G$-orbit in $E_V$ that contains $\calO_\lambda$. We can find a flag type 
$\bfd=(d_1,\cdots,d_k)$ with $d_1,\cdots,d_k\in\bbN$ and $\sum_{r=1}^k d_k=\dim V$,
such that the restriction of $\pi$ to the connected component of $\widetilde F$ corresponding to $\bfd$ yields a resolution of singularities of $\overline{\calO^\lambda}$. After passing to $\widetilde s$-stable points in this resolution, we get a morphism of the form
$$
\coprod_{\mu\in C}\pi_\mu\colon\coprod_{\mu\in C}\widetilde F_\mu\to \overline{\calO^\lambda}^{\widetilde s}\subset E_V^{n}
$$
for some subset $C\subset \mathrm{VComp_e(\nu)}$.
Thus, there is a unique $\mu\in C\subset \mathrm{VComp_e(\nu)}$ such that $\pi_\mu$ induces an isomorphism $\pi_\mu^{-1}(\calO_\lambda)\to\calO_\lambda$.
Moreover, $\pi_\mu$ has even fibres, see Lemma \ref{lem_par-fib-QS}. Thus the complex $(\pi_\mu)_*\underline\bfk_{\widetilde F_\mu}$ is even, see Lemma \ref{lem_ev-resol}. To show that it contains a direct factor isomorphic to a shift of $\calE(\lambda)$ it is enough to prove that $\overline{\calO^\lambda}^{\widetilde s}$ can not contain a $G_V$-orbit $\calO_{\lambda'}$ such that $\calO_\lambda\subset \overline{\calO_{\lambda'}}$ and $\calO_\lambda\ne \calO_{\lambda'}$. Suppose that $\overline{\calO^\lambda}^{\widetilde s}$ contains such a $G_V$-orbit $\calO_{\lambda'}$. Let $\calO^{\lambda'}$ be the $G$-orbit containing $\calO_{\lambda'}$. Then $\calO_\lambda\subset \overline{\calO_{\lambda'}}$ implies $\calO^\lambda\subset \overline{\calO^{\lambda'}}$. Moreover, we have $\calO^\lambda\ne \calO^{\lambda'}$ because $\calO_\lambda$ and $\calO_{\lambda'}$ are not in the same $G$-orbit. (Otherwise, $\calO_\lambda$ and $\calO_{\lambda'}$ must have equal dimensions. This is not possible because $\calO_\lambda\subset \overline{\calO_{\lambda'}}\backslash \calO_{\lambda'}$.) Thus we get $\calO^\lambda\subset \overline{\calO^{\lambda'}}\backslash \calO^{\lambda'}$. Now we see that $\overline{\calO^\lambda}$ does not contain the $G$-orbit $\calO^{\lambda'}$. Thus $\overline{\calO^\lambda}^{\widetilde s}$ can not contain the $G_V$-orbit $\calO_{\lambda'}$.

Thus the parity sheaf $\calE(\lambda)$ exists and belongs to $\calQ_V$.
\end{proof}
\begin{coro}
The subcategories $\calQ_V$ and $\mathrm{Par}_{G_V}(E_V^n)$ of $D_{G_V}(E_V^n)$ coincide.
\end{coro}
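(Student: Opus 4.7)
The plan is to verify the two inclusions separately; both follow by reducing to the indecomposable objects and using the structural results already established.

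For $\calQ_V\subset \mathrm{Par}_{G_V}(E_V^n)$, I would first check that each generator $^\delta\calL_\mu=(\pi_\mu)_*\underline\bfk_{\widetilde F_\mu}[\dim_\bbC F_\mu]$ is a parity complex. The morphism $\pi_\mu$ is proper and $\widetilde F_\mu$ is smooth, and by Lemma \ref{lem_par-fib-QS} each fibre $\pi_\mu^{-1}(x)$ has vanishing odd cohomology over $\bbZ$, hence over $\bfk$ by the universal coefficient theorem. Applying (the obvious analogue on $E_V^n$ of) Lemma \ref{lem_ev-resol} then shows $(\pi_\mu)_*\underline\bfk_{\widetilde F_\mu}$ is an even complex, so its shift $^\delta\calL_\mu$ is parity. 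Since the class of parity complexes is closed under shifts, finite direct sums, and direct summands, and since $\calQ_V$ is generated under these operations by the $^\delta\calL_\mu$, every object of $\calQ_V$ is parity.

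For $\mathrm{Par}_{G_V}(E_V^n)\subset\calQ_V$, I would use the Krull--Schmidt property of $D_{G_V}(E_V^n,\bfk)$ (Lemma \ref{lem_Kr-Sch}), together with the verified condition (3.1) for $E_V^n$, to write any parity complex as a finite direct sum of shifts of indecomposable parity complexes. By Lemma \ref{lem_unic-par-sh}, each indecomposable parity complex is of the form $\calE(\lambda,\calL)[n]$ for some $\lambda\in\Lambda_V^n$, integer $n$, and indecomposable local system $\calL\in\mathrm{Loc}_{G_V}(\calO_\lambda)$; and by the preceding lemma asserting the triviality of $G_V$-equivariant local systems on each orbit, we must have $\calL=\underline{\bfk}_\lambda$, so each indecomposable parity complex is a shift of some $\calE(\lambda)$. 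By the immediately preceding lemma (which asserts that $\calE(\lambda)$ exists and belongs to $\calQ_V$ for each $\lambda\in\Lambda_V^n$), and closure of $\calQ_V$ under shifts and finite direct sums, we conclude that every parity complex lies in $\calQ_V$.

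The nontrivial step is already packaged into the previous lemma: proving that each $\calE(\lambda)$ belongs to $\calQ_V$ required the $\widetilde s$-stable-point construction and a careful comparison of $G$-orbit closures with their intersections with the $\widetilde s$-fixed locus. Granted that construction, the coincidence $\calQ_V=\mathrm{Par}_{G_V}(E_V^n)$ is essentially formal, being a combination of Krull--Schmidt, triviality of equivariant local systems, and closure of $\calQ_V$ under the relevant operations.
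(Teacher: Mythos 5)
Your proof is correct and is essentially the argument the paper intends: the corollary is stated without proof precisely because it is immediate from the preceding lemma (each $\calE(\lambda)$ exists and lies in $\calQ_V$), the evenness of the $(\pi_\mu)_*\underline\bfk_{\widetilde F_\mu}$ via Lemmas \ref{lem_par-fib-QS} and \ref{lem_ev-resol}, the verification of (3.1) for $E_V^n$, Lemma \ref{lem_unic-par-sh}, and the triviality of equivariant local systems. The only point worth flagging is that passing from odd-cohomology vanishing over $\bbZ$ to over $\bfk$ via universal coefficients also uses torsion-freeness of the integral cohomology of the fibres, which is supplied by the cited result of De Concini--Lusztig--Procesi.
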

\begin{df}
\label{def_QS}
For a given graded dimension vector $\nu\in\bbN I$ the \emph{quiver Schur algebra} is the graded algebra $A_{\nu,\bfk}=\Ext^*_{G_V}({^\delta\calL_{V,\bfk}},{^\delta\calL_{V,\bfk}})$,
where the $n$th graded component is given by the $n$th extension group.
\end{df}

\begin{rk}
It is clear from the definition that, for each $\mu\in \mathrm{VComp_e(\nu)}$, the algebra $A_{\nu,\bfk}$ contains an idempotent of degree zero $1_{\mu}$ such that $1_{\mu_1}A_{\nu,\bfk}1_{\mu_2}=\Ext_{G_V}^*(^\delta\calL_{\mu_2},^\delta\calL_{\mu_1})$.
\end{rk}

\begin{df}
For each $\mu\in \mathrm{VComp_e(\nu)}$, let $P_\mu$ be the graded projective $A_{\nu,\bfk}$-module $A_{\nu,\bfk}1_\mu$.
\end{df}

\begin{df}
Let $U_{e,\bbZ}^-$ be the integral form of the generic nilpotent Hall algebra of the quiver $\Gamma$, see \cite[Sec.~2.5]{SW}. For each $\nu\in\bbN I$, let $\bff_{\nu}\in U_{e,\bbZ}^-$ be the characteristic function of the semi-simple representation of $\Gamma$ with graded dimension vector $\nu$.
\end{df}

Let $\mathrm{proj}(A_{\nu,\bfk})$ be the category of graded projective finitely generated $A_{\nu,\bfk}$-modules. Let $K(A_{\nu,\bfk})$ be the split Grothendieck group of $\mathrm{proj}(A_{\nu,\bfk})$ viewed as an $\calA$-module.
Set $K(A_{\bfk})=\bigoplus_{\nu\in \bbN I}K(A_{\nu,\bfk})$ and $K(\calQ)=\bigoplus_{V}K(\calQ_V)$. For each $\nu_1,\nu_2\in \bbN I$, we have an algebra homomorphism $A_{\nu_1,\bfk}\otimes A_{\nu_2,\bfk}\to A_{\nu_1+\nu_2,\bfk}$, see \cite[Def.~2.8]{SW}. It yields an algebra structure on $K(A_\bfk)$. The following theorem is proved in \cite[Prop.~2.12]{SW}.
\begin{thm}
There is an $\calA$-algebra isomorphism
$K(A_\bfk)\to U_{e,\bbZ}^-$ such that $[P_\nu]\mapsto \bff_\nu$ for each $\nu\in\bbN I$,
where $\nu$ is viewed as a trivial vector multicomposition in $\mathrm{VComp_e(\nu)}$.
\end{thm}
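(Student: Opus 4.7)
The strategy is to run the same machinery developed in Sections 2 and 3 now on the subvariety $E_V^n\subset E_V$ of nilpotent representations, exploiting that we have just established $\calQ_V=\mathrm{Par}_{G_V}(E_V^n)$. First, in complete analogy with Theorem \ref{thm_cat-equiv-grad}, I would show that the contravariant functor $\frakL\mapsto \Ext^*_{G_V}(\frakL,{^\delta\calL_{V,\bfk}})$ yields an equivalence $\calQ_V^{\op}\simeq \mathrm{proj}(A_{\nu,\bfk})$ of $\bbZ$-graded additive categories. The proof of Theorem \ref{thm_cat-equiv-grad} goes through verbatim; no new ingredients are needed, since the definitions of $A_{\nu,\bfk}$ and $P_\mu$ were tailored to this setup. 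Under this equivalence, each ${^\delta\calL_\mu}$ corresponds to the idempotent $1_\mu$, hence to $P_\mu=A_{\nu,\bfk}1_\mu$. Passing to split Grothendieck groups gives an $\calA$-linear isomorphism $Y_\star\colon K(\calQ)\to K(A_\bfk)$ sending $[{^\delta\calL_\mu}]\mapsto [P_\mu]$, and in particular $[{^\delta\calL_\nu}]\mapsto [P_\nu]$ for the trivial vector composition.

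Next, I would compare algebra structures. The inclusions $A_{\nu_1,\bfk}\otimes A_{\nu_2,\bfk}\to A_{\nu_1+\nu_2,\bfk}$ from \cite[Def.~2.8]{SW} give $K(A_\bfk)$ an $\calA$-algebra structure. On the sheaf side, a Lusztig-type induction functor for parity complexes on the nilpotent variety $E_V^n$ (pull--push through the flag-preserving subvariety, in analogy with Sections \ref{subs_restr-diag} and \ref{subs_restr-Lus}) yields a product on $K(\calQ)$. The two products match under $Y_\star$ by the same Yoneda-versus-convolution comparison used in Theorem \ref{thm_isom-KLR-Ext} and in the construction of $\lambda_\calA$ in Theorem \ref{thm_isom-sh-f}.

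Finally, I would identify $K(\calQ)$ with $U^-_{e,\bbZ}$. In characteristic zero, the Ringel--Green--Lusztig-type theorem identifies the geometric Hall algebra of the cyclic quiver on $E_V^n$ with $U^-_{e,\bbZ}$, sending the class of the skyscraper supported at the semisimple representation of graded dimension $\nu$, namely $[{^\delta\calL_\nu}]$ for the trivial vector composition, to $\bff_\nu$. To pass from characteristic zero to arbitrary $\bfk$, apply the field-independence argument of Theorem \ref{thm_coal-isom}: the numerical invariants $d_{\lambda,n}$ of the parity complexes ${^\delta\calL_\mu}$ do not depend on $\bfk$ by Lemma \ref{lem_par-fib-QS}, hence, by Lemma \ref{lem_d-par-compl}, the decomposition of each ${^\delta\calL_\mu}$ into parity sheaves is the same over every field.

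The main obstacle will be verifying the last compatibility at the product level rather than just the coproduct level: we must show that the geometric induction diagram for $E_V^n$ decomposes into affine bundles whose ranks do not depend on $\bfk$, i.e., an exact analogue of Lemma \ref{lem_restr-Lus} in which the even-fibre hypothesis has been upgraded (via Lemma \ref{lem_par-fib-QS}) to an unconditional statement. Once this piece is in place, Lemma \ref{lem_ext-par-compl} lets us decompose induction bundles step by step into shifts of parity sheaves with $\bfk$-independent multiplicities, and the algebra isomorphism over $\bbC$ transports unchanged to $\bfk$, yielding the desired $\calA$-algebra isomorphism $K(A_\bfk)\simeq U^-_{e,\bbZ}$ with $[P_\nu]\mapsto \bff_\nu$.
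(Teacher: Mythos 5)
The paper does not actually prove this statement: it is imported verbatim from \cite[Prop.~2.12]{SW}, where it is established by algebraic means internal to the quiver Schur algebra. Your proposal is therefore a genuinely different, geometric route, and it inverts the paper's logical order: the paper uses \cite[Prop.~2.12]{SW} together with the equivalence $\calQ_V^{\op}\simeq\mathrm{proj}(A_{\nu,\bfk})$ to \emph{deduce} Corollary \ref{coro_isom-Q-QSh} (the isomorphism $K(\calQ)\simeq U^-_{e,\bbZ}$), whereas you propose to establish $K(\calQ)\simeq U^-_{e,\bbZ}$ first and transport it across the equivalence. That is legitimate provided your input is independent of \cite{SW}, and most of your outline (the equivalence of categories, the matching of products on the generators ${}^\delta\calL_\mu\leftrightarrow P_\mu$, the reduction to characteristic zero by field-independence of parity decompositions) is sound. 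Two points carry the real weight and are under-specified. First, the ``Ringel--Green--Lusztig-type theorem'' you invoke is the crux: $U^-_{e,\bbZ}$ is a generic Hall algebra defined by counting over finite fields, so identifying it with the split Grothendieck group of a category of $\bbC$-sheaves on $E_V^n$, with $[{}^\delta\calL_\nu]\mapsto\bff_\nu$, requires the sheaf--function correspondence for the cyclic quiver (as in \cite{VV2}) together with a rank/triangularity argument showing the classes $[{}^\delta\calL_\mu]$ span $K(\calQ_V)$; this is exactly the content that \cite{SW} supplies algebraically, and it cannot simply be waved at. Second, for the multiplicities $d_{\lambda,n}({}^\delta\calL_\mu)$ to be independent of $\bfk$ you need not only the odd-vanishing recorded in Lemma \ref{lem_par-fib-QS} but also freeness of the integral cohomology of the fibres $\pi_\mu^{-1}(x)$; this holds by \cite[Thm.~3.9]{CLP} but is not literally what the lemma states. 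With these gaps filled, your argument yields a self-contained geometric proof of the Stroppel--Webster isomorphism, at the cost of redoing over $\bbC$ the Hall-algebra comparison that the paper simply cites.
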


Analogically to Theorem \ref{thm_cat-equiv-grad} we get the following result.
\begin{thm}
The functor
$Y\colon \calQ_V^{op}\to \mathrm{proj}(A_{\nu,\bfk})$ such that $\calF\mapsto \Ext_{G_V}^*(\calF,{^\delta\calL_{V,\bfk}})$
is an equivalence of categories.
\end{thm}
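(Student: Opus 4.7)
The plan is to follow the blueprint of Theorem \ref{thm_cat-equiv-nongrad} (and its graded version Theorem \ref{thm_cat-equiv-grad}) essentially word-for-word, with $^\delta\calL_{V,\bfk}$ replacing $\calL_{V,A}$ and $A_{\nu,\bfk}$ replacing $R_{\nu,A}$. A structural simplification is that the identification $A_{\nu,\bfk}=\Ext^*_{G_V}({^\delta\calL_{V,\bfk}},{^\delta\calL_{V,\bfk}})$ is now built into Definition \ref{def_QS}, so the analogue of Theorem \ref{thm_isom-grad-KLR-sh} is not needed as a separate input.

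First I would check that $Y$ lands in $\mathrm{proj}(A_{\nu,\bfk})$. For any direct summand $\calF$ of $^\delta\calL_{V,\bfk}$ with complement $\calG$, the decomposition
$$A_{\nu,\bfk}=Y({^\delta\calL_{V,\bfk}})=Y(\calF)\oplus Y(\calG)$$
realizes $Y(\calF)$ as a graded direct summand of $A_{\nu,\bfk}$, hence as a graded projective finitely generated module. Additivity, together with the fact that homological shifts on $\calQ_V$ correspond to grading shifts on the module side, extends this to every object of $\calQ_V$.

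Next I would prove full faithfulness. The case $\calF=\calG={^\delta\calL_{V,\bfk}}$ is immediate: both $\Hom_{\calQ_V}({^\delta\calL_{V,\bfk}},{^\delta\calL_{V,\bfk}})$ and $\Hom_{\mathrm{proj}(A_{\nu,\bfk})}(A_{\nu,\bfk},A_{\nu,\bfk})$ equal the degree-$0$ part $A^0_{\nu,\bfk}$ by Definition \ref{def_QS}. Standard additivity then propagates the bijection to pairs of direct summands of $^\delta\calL_{V,\bfk}$, to their shifts, and finally to arbitrary finite direct sums of shifts of direct summands, i.e.\ to arbitrary $\calF,\calG\in\calQ_V$. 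For essential surjectivity, I would take $P\in\mathrm{proj}(A_{\nu,\bfk})$, pick finitely many homogeneous generators, and obtain a split surjection $\pi\colon\bigoplus_{i\in J}A_{\nu,\bfk}[n_i]\twoheadrightarrow P$ with section $\iota$. Setting $\frakL=\bigoplus_{i\in J}{^\delta\calL_{V,\bfk}}[-n_i]\in\calQ_V$, we have $Y(\frakL)=\bigoplus_{i\in J}A_{\nu,\bfk}[n_i]$, and $e:=\iota\circ\pi$ is an idempotent endomorphism of $Y(\frakL)$. By full faithfulness there is an idempotent $u\in\End_{\calQ_V}(\frakL)$ with $Y(u)=e$, and splitting $u$ produces a decomposition $\frakL=X\oplus X'$ with $Y(X)\simeq P$, exactly as in the final part of the proof of Theorem \ref{thm_cat-equiv-nongrad}.

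The main obstacle is justifying this last splitting step: it requires the ambient triangulated category $D_{G_V}(E_V^n,\bfk)$ to be Karoubian (equivalently, Krull--Schmidt). This is supplied by Lemma \ref{lem_Kr-Sch}, but one must first verify that $E_V^n$ with its $G_V$-orbit stratification meets the hypotheses of Section \ref{subs_strat-var}: the orbits are smooth, locally closed, and the closure of each orbit is a union of orbits, which follows from the well-known finiteness and closure structure of $G_V$-orbits on the nilpotent representation variety. Once this is in place, the argument of Theorem \ref{thm_cat-equiv-nongrad} applies verbatim.
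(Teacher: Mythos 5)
Your proposal is correct and follows exactly the route the paper intends: the paper proves this theorem only by the remark ``analogically to Theorem \ref{thm_cat-equiv-grad}'', and your write-up is precisely the expansion of that remark, transporting the proof of Theorem \ref{thm_cat-equiv-nongrad} with $^\delta\calL_{V,\bfk}$ and $A_{\nu,\bfk}$ in place of $\calL_{V,A}$ and $R_{\nu,A}$, and correctly noting that the identification of the Yoneda algebra is now built into Definition \ref{def_QS} and that the idempotent-splitting step rests on Lemma \ref{lem_Kr-Sch} applied to the orbit stratification of $E_V^n$.
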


For $\nu_1,\nu_2\in\bbN I$, $\mu_1\in \mathrm{VComp_e(\nu_1)}$, $\mu_2\in \mathrm{VComp_e(\nu_2)}$ let $\mu_1\cup\mu_2\in  \mathrm{VComp_e(\nu_1+\nu_2)}$ denotes the concatenation of $\mu_1$ and $\mu_2$. We can also define an algebra structure on $K(\calQ)$ as in Section \ref{subs_new-bas} such that $^\delta\calL_{\mu_1}\circ{^\delta\calL_{\mu_2}}={^\delta\calL_{\mu_1\cup\mu_2}}$. We get the following result.
\begin{coro}
\label{coro_isom-Q-QSh}
There is an $\calA$-algebra isomorphism
$K(\calQ)\to U_{e,\bbZ}^-$ such that $^\delta\calL_\nu \mapsto\bff_{\nu}$ for each $\nu\in\bbN I$,
where $\nu$ is viewed as a trivial vector multicomposition in $\mathrm{VComp_e(\nu)}$.
\end{coro}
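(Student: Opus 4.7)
The plan is to mirror the argument used to prove Theorem \ref{thm_isom-sh-f}, but with the KLR-algebra replaced by the quiver Schur algebra $A_{\nu,\bfk}$ and with Lusztig's algebra $_\calA\bff$ replaced by $U_{e,\bbZ}^{-}$. First I would observe that the equivalence of categories $Y\colon \calQ_V^{\mathrm{op}}\to \mathrm{proj}(A_{\nu,\bfk})$ induces an $\calA$-linear isomorphism $Y\colon K(\calQ_V)\to K(A_{\nu,\bfk})$, hence an $\calA$-linear isomorphism $Y\colon K(\calQ)\to K(A_\bfk)$. By the very construction of $Y$ and of the idempotent $1_\mu\in A_{\nu,\bfk}$, we have $Y({^\delta\calL_\mu})=\Ext^*_{G_V}({^\delta\calL_\mu},{^\delta\calL_{V,\bfk}})=A_{\nu,\bfk}\cdot 1_\mu=P_\mu$ for every $\mu\in\mathrm{VComp_e}(\nu)$. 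In particular $Y([{^\delta\calL_\nu}])=[P_\nu]$ when $\nu$ is viewed as a trivial vector multicomposition.

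Next I would verify that $Y$ is an $\calA$-algebra homomorphism for the product $\circ$ on $K(\calQ)$ and the product on $K(A_\bfk)$ induced by the algebra homomorphisms $A_{\nu_1,\bfk}\otimes A_{\nu_2,\bfk}\to A_{\nu_1+\nu_2,\bfk}$. Concretely, for $\nu_1,\nu_2\in\bbN I$ and $\mu_r\in\mathrm{VComp}_e(\nu_r)$ it is enough to check that
\[
Y({^\delta\calL_{\mu_1}}\circ{^\delta\calL_{\mu_2}})=Y({^\delta\calL_{\mu_1\cup\mu_2}})=P_{\mu_1\cup\mu_2}
\]
agrees with the induction product of $P_{\mu_1}$ and $P_{\mu_2}$ in $K(A_\bfk)$. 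This is a direct analogue of Lemma \ref{lem_proj-mod-isom}(b), and it follows formally from the fact that the idempotent $1_{\mu_1\cup\mu_2}$ is the image of $1_{\mu_1}\otimes 1_{\mu_2}$ under the structural inclusion used in \cite[Def.~2.8]{SW}; the classes $[{^\delta\calL_\mu}]$ generate $K(\calQ)$ as an $\calA$-module, so compatibility on these generators suffices.

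Finally, composing $Y$ with the isomorphism $K(A_\bfk)\simeq U_{e,\bbZ}^{-}$ from the previous theorem, $[P_\nu]\mapsto \bff_\nu$, produces the desired $\calA$-algebra isomorphism $K(\calQ)\to U_{e,\bbZ}^{-}$ sending $[{^\delta\calL_\nu}]$ to $\bff_\nu$.

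The main obstacle is the compatibility of the two multiplications under $Y$: the product on $K(\calQ)$ is defined geometrically via the operation $\circ$ on Lusztig-type complexes, while the product on $K(A_\bfk)$ is defined algebraically via the non-unital inclusion of $A_{\nu_1,\bfk}\otimes A_{\nu_2,\bfk}$ into $A_{\nu_1+\nu_2,\bfk}$. Matching these two structures requires identifying $P_{\mu_1\cup\mu_2}$ with the induction of $P_{\mu_1}\otimes P_{\mu_2}$; once this identification is established on the distinguished generators $[{^\delta\calL_\mu}]$, the algebra morphism property extends by $\calA$-linearity, because these classes span $K(\calQ)$ (as in Lemma \ref{lem_bas-QV-Lus}, these complexes exhaust a set of generators).
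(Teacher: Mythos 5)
Your proposal is correct and follows essentially the same route as the paper: transport the product through the equivalence $Y$, use $Y({^\delta\calL_\mu})=P_\mu$ together with $[P_{\mu_1}][P_{\mu_2}]=[P_{\mu_1\cup\mu_2}]$ (this is exactly \cite[Prop.~2.9]{SW}, the fact the paper cites), and then compose with the isomorphism $K(A_\bfk)\simeq U_{e,\bbZ}^-$. The only cosmetic difference is in the reduction step: you check multiplicativity on the spanning set $\{[{^\delta\calL_\mu}]\}$ of the $\calA$-module $K(\calQ)$, whereas the paper checks it on products of the classes $[{^\delta\calL_\nu}]$ and invokes that the elements $\bff_\nu$ generate $U_{e,\bbZ}^-$ as an $\calA$-algebra; both reductions are valid.
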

\begin{proof}
We need only to verify that the algebra structures on $K(\calQ)$ agrees with the algebra structure on $U_{e,\bbZ}^-$. This is true because $Y(^\delta\calL_\mu)=P_\mu$, $[P_{\mu_1}][P_{\mu_2}]=[P_{\mu_1\cup\mu_2}]$ (see \cite[Prop.~2.9]{SW}) and the elements $\bff_\nu$ for $\nu\in\bbN I$ generate $U_{e,\bbZ}^-$ as an $\calA$-algebra.
\end{proof}


In particular Corollary \ref{coro_isom-Q-QSh} yields an $\calA$-basis of $U^-_{e,\bbZ}$ in terms of parity sheaves.

\section*{Acknowledgements}
I would like to thank Eric Vasserot for his guidance and helpful
discussions during my work on this paper. I would also like to thank
Geordie Williamson for his comments on the paper and useful
discussions. I am also grateful to Catharina Stroppel for the suggestion to study the quiver Schur algebras in this paper.



\begin{thebibliography}{9}

\bibitem{BBD} A. Beilinson, J. Bernstein, P. Deligne, \emph{Faisceaux pervers}, Ast\' erisque, 1982.
\bibitem{BL} J. Bernstein, V. Lunts, \emph{Equivariant sheaves and functors}, Springer-Verlag Berlin Heidelberg, 1994.
\bibitem{Br} M. Brion, \emph{Representations of quivers}, Institut Fourier, B.P. 74
\bibitem{CG} N. Chriss, V. Ginzburg, \emph{Representation theory and complex geometry}, Birkh\"auser, 1997.
\bibitem{CLP} C. De Concini, G. Lusztig, and C. Procesi, \emph{Homology of the zero-set of a nilpotent vector field on a flag manifold}, J. Amer. Math. Soc. 1 (1988), 15-34.
\bibitem{CR} C.W. Curtis, I. Reiner, \emph{Methods of representation theory with applications to finite groups and orders}, vol. 1, Pure and Applied Mathematics: A Wiley Series of Texts, Monographs, and Tracts, 1981.
\bibitem{HL} D. Hernandez, B. Leclerc, \emph{Quantum Grothendieck rings and derived Hall algebras},  arXiv:1109.0862v2.
\bibitem{Tor} T.S. Holm, R. Sjamaar, \emph{Torsion and abelianization in equivariant cohomology, Transformation Groups}, Vol. 13, Nos. 3-4, 2008, pp. 585-615.
\bibitem{PSh} D. Juteau, C. Mautner, G. Wiliamson, \emph{Parity Sheaves}, preprint, arXiv:0906.2994v2.
\bibitem{KSch} M. Kashiwara, P. Schapira, \emph{Sheaves on manifolds}, Springer-Verlag Berlin Heidelberg, 1990.
\bibitem{KL} M. Khovanov, A.D. Lauda, \emph{A diagrammatic approach to categorification of quantum groups I}, preprint, arXiv:0803.4121.
\bibitem{KlRam} A. Kleshchev, A. Ram, \emph{Representations of Khovanov-Lauda-Rouquier algebras and combinatorics of Lyndon words}, preprint, arXiv:0909.1984v1.
\bibitem{Kar} J. Le, X.-W. Chen, \emph{Karoubianness of a triangulated category}, J. Algebra, 310(1):452-457, 2007.
\bibitem{Lu} G. Lusztig, \emph{Introduction to quantum groups}, Progress in Mathematics, vol. 110, Birkh\" auser, 1993.
\bibitem{N0} H. Nakajima, \emph{Quiver varieties and Kac-Moody algebras}, Duke Math. J. 91 (1998), no. 3, 515-560.
\bibitem{N1} H. Nakajima, \emph{Quiver varieties and finite-dimensional representations of quantum affine algebras}, J. Amer. Math. Soc.
14 (2001), 145-238.
\bibitem{N3} H. Nakajima, \emph{Quiver varieties and $t$-analogs of $q$-characters of quantum affine algebras}, Annals of Math. 160 (2004),
1057-1097.
\bibitem{N4} H. Nakajima, \emph{Quiver varieties and cluster algebras}, Kyoto J. Math. 51 (2011), 71-126.
\bibitem{N5} H. Nakajima, \emph{Quiver varieties and tensor products}, Invent. Math. 146 (2001), no. 2, 399-449.
\bibitem{Rein} M. Reineke, \emph{Quivers, desingularizations and
canonical bases}, preprint, arXiv:math/0104284v1.
\bibitem{Rouq} R. Rouquier, \emph{2-Kac-Moody algebras}, preprint arXiv:0812.5023.
\bibitem{Rouq2011} R. Rouquier, \emph{Quiver Hecke algebras and 2-Lie algebras}, preprint arXiv:1112.3619v1.
\bibitem{SW} C. Stroppel, B. Webster, \emph{Quiver Schur algebras and q-Fock space}, preprint, arXiv:1110.1115.
\bibitem{Totaro} B. Totaro, \emph{Torsion in cohomology and torsors for simple
algebraic groups}, University of Michigan, Ann Arbor, April 21,
2005.
\bibitem{VV} M. Varagnolo, E. Vasserot, \emph{Canonical bases and KLR-algebras}, Journal f\"ur die reine une angewandte Mathematik, 2011.
\bibitem{VV2} M. Varagnolo, E. Vasserot, \emph{Perverse sheaves and quantum Grothendieck rings}, Studies in memory of Issai Schur (Chevaleret/Rehovot, 2000), Progr. Math., vol. 210, Birkh\"auser Boston, Boston, MA, 2003,
pp. 345-365.

\end{thebibliography}
\end{document}